\documentclass[headings=small,parskip=half]{scrartcl}

\usepackage[utf8]{inputenc}
\usepackage[a4paper, top=1in, bottom=1in, left=0.9in, right=0.9in]{geometry} 
\usepackage{microtype} 

\usepackage{mathtools}
\usepackage{tikz-cd}
\usepackage{xparse}
\usepackage[numbers]{natbib}
\usepackage{authblk}
\usepackage{amsmath,amssymb}
\usepackage{lipsum}
\usepackage{dsfont}
\usepackage{subcaption}
\usepackage{enumerate}
\usepackage{hyperref} 
\usepackage{multicol}
\usepackage[dvipsnames]{xcolor}
\usepackage{MnSymbol}
\usepackage{amsthm}
\usepackage{bm}
\usepackage{float}
\usepackage{graphicx}
\graphicspath{ {./} }

\newcounter{mycounter} 

\newcommand\showmycounter{\stepcounter{mycounter}\themycounter}

\newcommand{\const}{C_{\showmycounter}}

\setcitestyle{square,numbers}

\renewcommand{\P}{\mathbb{P}}

\numberwithin{equation}{section} 

\DeclareMathOperator*{\esssup}{ess\,sup} 
\DeclareMathOperator*{\essinf}{ess\,inf} 

\DeclarePairedDelimiterX{\Norm}[1]{\|}{\|}{\Normargs{#1}}
\NewDocumentCommand{\Normargs}{>{\SplitArgument{1}{;}}m}
{\Normargsaux#1}
\NewDocumentCommand{\Normargsaux}{mm}
{\IfNoValueTF{#2}{#1} {#1\nonscript\:\delimsize\vert\allowbreak\nonscript\:\mathopen{}#2}}%
\def\norm{\Norm*}%

\DeclarePairedDelimiterX{\set}[1]{\{}{\}}{\setargs{#1}}
\NewDocumentCommand{\setargs}{>{\SplitArgument{1}{;}}m}
{\setargsaux#1}
\NewDocumentCommand{\setargsaux}{mm}
{\IfNoValueTF{#2}{#1} {#1\nonscript\:\delimsize\vert\allowbreak\nonscript\:\mathopen{}#2}}%
\def\Set{\set*}%

\providecommand{\keywords}[1]{\textbf{Keywords. } #1}

\providecommand{\MSC}[1]{\textbf{MSC (2020). } #1}

\theoremstyle{plain}
\newtheorem{theorem}{Theorem}[section] 
\newtheorem{lemma}[theorem]{Lemma}
\newtheorem{proposition}[theorem]{Proposition}
\newtheorem{corollary}[theorem]{Corollary}

\theoremstyle{definition}
\newtheorem{definition}[theorem]{Definition} 
\newtheorem{example}[theorem]{Example}
\newtheorem{remark}[theorem]{Remark}
\newtheorem{assumption}{Assumption}

\newcommand{\R}{\mathbb{R}}
\newcommand{\N}{\mathbb{N}}

\renewcommand{\d}{\,\mathrm{d}}

\date{}

\begin{document}

\title{Well-posedness of stochastic parabolic equations with gradient nonlinearities and applications to phase-field models}

\author{Amjad Saef}
\affil{\small{
  Institute of Mathematics, Technische Universität Berlin,\linebreak
  Straße des 17.\ Juni 136, 10623 Berlin, Germany,\linebreak
  e-mail: \href{mailto:saef@math.tu-berlin.de}{saef@math.tu-berlin.de},\linebreak
  ORCID: \href{https://orcid.org/0009-0006-9503-3367}{0009-0006-9503-3367}}}
\maketitle
\vspace{-1.7cm}

\begin{abstract}
We study well-posedness of stochastic 
parabolic equations with gradient nonlinearities. Our analysis is based on recent maximal-regularity frameworks for nonlinear stochastic parabolic equations 
in critical spaces. We extend the existing results by controlling drift and noise coefficient separately. This way we can allow for less regular driving noise in case of subcritical dispersion 
coefficients. Our approach, based on gluings of local solutions, moreover implies new continuation criteria. We then apply our existence result and the continuation criteria to show global well-posedness of stochastic phase-field models of moving boundary problems.
\end{abstract}
\keywords{stochastic partial differential equations,  stochastic maximal regularity, blow-up criteria, weighted spaces, phase field model}

\MSC{35K55, 60H15, 58D25, 76M35, 80A22} 

\section{Introduction}
In this article, we study local well-posedness of systems of semilinear parabolic stochastic partial differential equations
\begin{equation}\label{eq:StrongSolIntroSPDE}
\d {\bm z}_t = \left(\Delta {\bm z}_t + F({\bm z}_t) \right)\d t + G({\bm z}_t)\d {\bm W}_t
\end{equation}
on the $d$-dimensional torus $\mathbb T^d$, $d \geq 2$, with initial datum $\bm z_0$.

Let $2d >q > d \geq 2$, $1 > \delta > d/q$ and let $m$ denote the dimensionality of the system. Let $H^{s,q}(\mathbb T^d)$ denote the Sobolev space with smoothness $s$ and integrability $q$. We assume that both the drift $$F \colon (H^{1,q}(\mathbb T^d))^m \rightarrow (L^{q/2}(\mathbb T^d))^m$$ and dispersion coefficient $$G \colon (H^{\delta,q}(\mathbb T^d))^m \rightarrow \gamma(U,(H^{\sigma,q}(\mathbb T^d))^m)$$ are locally Lipschitz and of quadratic growth in the input variable. Correspondingly, the noise $ {\bm W}_t$ in this equation is a cylindrical Wiener process on some Hilbert space $U$. Here, and in subsequent parts of this section, $\gamma(U, \mathcal B)$ denotes the space of $\gamma$-radonifying operators between $U$ and some Banach space $\mathcal B$.  This is a generalisation of the space of Hilbert--Schmidt operators; for more details, see e.g. \cite{VanNeervenVeraarStochIntUMD}. 

We develop a local-in-time solution theory at critical regularity for the initial datum. This can be viewed as an extension of the abstract framework developed by \citet{AgrestiVeraarNonLinParab} in a specific example. A novelty of our approach is that for subcritical regularity of the dispersion coefficient $G$, we can relax the spatial regularity required of the stochastic forcing. Moreover, we derive maximality of solutions and blow-up criteria. This is the content of our first main result, Theorem \ref{thm:localexistence}. 

\begin{remark}
In the remaining part of this section, we will omit the dimensionality $m$ of the system for readability's sake. We will instead accentuate vector-valued variables through boldface notation.
\end{remark}

We then apply the developed theory to phase-field models of cell motility with singular diffusion terms. To this end, we reduce such systems to reaction-diffusion systems with gradient nonlinearities, by means of a logarithmic transform. The result of our analysis, Theorem \ref{thm:phasefieldglobalstrongex}, provides conditions which ensure global well-posedness of such systems for driving noise with nearly minimal spatial roughness, in a non-renormalised context. To the best of the author's knowledge, these results are not yet part of the literature.

The principal application of the abstract well-posedness result is the case of an interface growth type nonlinearity $F(z) = |\nabla z|^2$ and a Nemytskii operator $G(z) = z^2$. Such equations generally do not satisfy the usual coercivity conditions of variational frameworks, such as \cite{RoecknerFullyLocallyMonotone, GoodairVariational}. Though in the special case $F = |\nabla z_t|^2$, a Cole-Hopf transform reasonably simplies such an equation, this approach already fails in the deterministic case for systems such as
\begin{equation} \label{eq:ColeHopfExample}
\begin{cases}
\partial_t z_t = \Delta z_t + |\nabla z_t|^2 - \nabla z_t \nabla c_t \\
\partial c_t = \Delta c_t + \nabla z_t \nabla c_t.
\end{cases}
\end{equation}
There, the Cole-Hopf transform $\phi = e^u$ results in equations containing logarithmic derivatives
\begin{equation} \label{eq:TransformedColeHopfExample}
\begin{cases}
\partial_t \phi_t = \Delta z_t - \nabla \phi\nabla c_t \\
\partial c_t = \Delta c_t + \frac{\nabla \phi_t}{\phi_t} \nabla c_t.
\end{cases}
\end{equation}
If the initial datum $\phi_0$ is not strictly bounded away from $0$, the logarithmic derivative in the second equation potentially becomes singular. This is a relevant obstruction, since our interest in \eqref{eq:StrongSolIntroSPDE} actually derives from stochastic phase-field models of cell motility the form \begin{equation} 
\label{IntroABSModel}
\left\{ \begin{aligned}
&\partial_t \phi_t = \gamma \Delta \phi_t + g(\phi_t, c_t) + \Psi(\phi_t, c_t)|\nabla \phi_t|, \\
&\mathrm d c_t = \left( D\Delta c_t + D\frac{\nabla \phi_t  \nabla  c_t}{\phi_t}  +  f(\phi_t,  c_t) \right) \mathrm d t  +  b(\phi_t,c_t)\mathrm d  W,
\end{aligned} \right.,
\end{equation}
where the zero level-set signifies the cell exterior.  Here, the domain $\mathcal D$ is assumed to be the $d$-dimensional torus $\mathbb T^d$ for $d \geq 2$, and $\gamma, D > 0$ are diffusion coefficients. The operators $f$ and $g$ denote nonlocal reaction terms, the dispersion coefficient $b$ is a Nemytskii operator, $\Psi$ is a possibly nonlocal Nemytskii operator and $W$ is coloured-in-space, white-in-time Wiener noise. 

The correspondence of \eqref{eq:TransformedColeHopfExample} with \eqref{eq:ColeHopfExample} motivated the investigations in the present work. We observe that \eqref{IntroABSModel} contains exactly the type of singular diffusion term encountered in \eqref{eq:TransformedColeHopfExample}: This term penalises diffusion into the zero level-set, i.e. the cell exterior. 

This type of phase-field model was first applied in biophysical modelling in \cite{ABS}. There, the space-time \textit{order parameter} $\phi(t, \bm x)$, $t \geq 0$, $\bm x \in \mathbb T^d$, changes rapidly but continuously between two equilibria, e.g. $\phi \equiv 0$ and $\phi \equiv 1$. In cell motility models, one may then interpret the region $$\Set{\bm x \in \mathcal D:\phi(t,\bm x) \approx 0}$$ as the cell exterior at time $t$, while the transition region to $\Set{\phi \approx 1}$ represents the cell membrane or a diffuse approximation thereof. Other subject areas where singular diffusion terms of the form $$L_\phi c \coloneqq \Delta c + \frac{\nabla \phi}{\phi} \nabla c$$
are encountered include population ecology \cite{KirkpatrickModel} and stochastic mechanics \cite{CarlenDiff}.

In a previous work \cite{Ich2}, existence of martingale solutions of \eqref{IntroABSModel}, both in the classical sense and in a weighted sense, was shown under rather mild conditions. In particular, the condition $\log \phi_0 \in L^1$ was derived as sufficient, and in a sense optimal, for existence of analytically weak martingale solutions of \eqref{IntroABSModel}; see in particular \cite[Corollary 4.6 and Remark 4.12]{Ich2}. For a more detailed overview of applications and mathematical analysis of such phase-field models, we also refer the interested reader to the aforementioned work.

However, in that work, uniqueness and measurable dependence on driving noise of solutions had not been shown. A central question behind the present work was whether uniqueness of solutions can be derived even when the initial data are not bounded away from $0$. As $\phi \approx 0$ signifies the cell exterior in applications of \eqref{IntroABSModel}, this is a desirable stability property. Theorem \ref{thm:phasefieldglobalstrongex} answers this question positively, since $\log \phi_0$ need only take values in Besov spaces of critical order; see also Remark \ref{remark:MorreyRemark}.

\subsection{Overview of previous approaches}
We first provide a short discussion of previous approaches to this problem. Consider the equation 
\begin{equation} \label{eq:StrongSolIntroColeHopf}
\partial_t z_t = \Delta z_t + |\nabla z_t|^2
\end{equation}
Notably, \citet{BENARTZI2002343} solved deterministic variants of \eqref{eq:StrongSolIntroColeHopf} using fixed-point arguments in a weighted sup-norm in an auxiliary space with polynomial time-weight $t^\alpha$, $\alpha >0$. This approach is sometimes called \textit{Kato's method} \cite{KunstmannHaakKato}, after the seminal work of \citet{FujitaKato}. Here, appropriate spaces of initial conditions and the corresponding weights are determined by the scaling properties of solutions of \eqref{eq:StrongSolIntroColeHopf}. A crucial observation in this specific example is that the parabolic scaling $(t,x) \mapsto (\lambda^2 t, \lambda x)$ leaves the solution space to this equation invariant. 

Finding sufficient regularity conditions on initial data using scaling arguments goes back at least to the already mentioned work by Fujita and Kato. Namely, for the incompressible Navier--Stokes equations, the parabolic scaling
$$
u(t,x)\mapsto u_\lambda(t,x) \coloneqq \lambda u(\lambda^2t,\lambda x)
$$
leaves invariant the homogeneous Sobolev norm
$
\norm{u_0}_{\dot H^{d/2-1}},
$, $d \geq 2$,
and this regularity class is the appropriate class of initial conditions to apply their fixed-point argument. In this sense, a space of initial data is called \textit{critical} if its norm is invariant under the natural scaling of the equation. Spaces of higher regularity are subcritical, while spaces of lower regularity are supercritical. The notion of a critical space was subsequently narrowed down in works by \citet{WeisslerCriticalSchroedinger} and \citet{DanchinCriticalViscous}.

In the scale of Lebesgue norms, the natural scaling of \eqref{eq:StrongSolIntroColeHopf} leaves invariant only $L^\infty$-type norms of $z$.  However, if one additionally measures derivatives, then in the homogeneous Sobolev scale one locally has the Euclidean scaling relationship
$$
\norm{z_0(\lambda\cdot)}_{\dot H^{s,q}}
\sim
\lambda^{s-d/q}\norm{z_0}_{\dot H^{s,q}},
$$
and therefore the critical Sobolev index of initial data is $s_c=\frac dq$. A similar argument works for Besov spaces $B^s_{q,p}$. Similarly, appropriate weighted auxiliary space-time norms are determined by the natural scaling of the equation. For $d<q$, let $\alpha = \frac12 - \frac{d}{2q}$ and 
$$
\mathcal N^{\alpha,\infty}(T,z) =\sup_{t\in (0,T]}
t^{\alpha}
\norm{\nabla z_t}_{L^q}
$$
Then, after rescaling
$
z_\lambda(t,x)=z(\lambda^2t,\lambda x)
$, one locally has the Euclidean scaling relationship
$
\mathcal N^{\alpha,\infty}(T/\lambda^2,z_\lambda)
= \mathcal N^{\alpha,\infty}(T,z)$.
These ideas lie at the core of the approach laid out in \citet{BENARTZI2002343}. Analyticity and hypercontractivity of the heat semigroup $(S(t))_{t \geq 0}$ then show that the map 
$z \mapsto   S\ast |\nabla z|^2 =\int_0^\cdot S(\cdot-s) |\nabla z_s|^2 \d s$ satisfies the local Lipschitz property 
\begin{equation}\label{eq:PinfinityLocalLipschitz}
\mathcal N^{\alpha,\infty}(T,S\ast (|\nabla z_1|^2-|\nabla z_2|^2)) \leq (\mathcal N^{\alpha,\infty}(T,z_1)+\mathcal N^{\alpha,\infty}(T,z_2))\,\mathcal N^{\alpha,\infty}(T,z_1-z_2)
\end{equation}
The use of time-weighted solution spaces is in line with the observation that parabolic smoothing regularises singularities instantaneously. Moreover, this heuristic is widely used with considerable success in isolating regularity conditions on initial data to ensure existence and uniqueness of solutions of nonlinear (S)PDEs. Thus, these methods suggest themselves as appropriate to study well-posedness of the transformed phase-field models.

However, weighted $L^\infty$-in-time approaches do not translate well to stochastic equations. The main reason is that stochastic integrals do not generally interact well with weighted supremum norms, since the additional regularity encoded by the time weight cannot be incorporated directly into Burkholder--Davis--Gundy type estimates.

It turns out that this obstruction can be circumvented if one switches to a weighted $L^p([0,T];t^\beta \d t)$ framework for $p < \infty$ and $\beta \in \mathbb R$. In this case, \citet{PruessSimonettWilke} developed a quite general deterministic theory of weighted maximal regularity in abstract critical spaces, which is meant to unify the previously developed scaling heuristic across a multitude of examples. They give an explicit definition of what constitutes a critical nonlinearity with respect to a given space and embed arguments such as \eqref{eq:PinfinityLocalLipschitz} into their maximal regularity framework.

We remark that the weight $\beta$ is not generally equal to $\alpha$; rather, $\beta = \alpha p -1$. The definition of the corresponding norm $\mathcal N^{\alpha,p}$ follows the pattern of abstract interpolation spaces (cf. Prop. \ref{prop:InterpolationIdentity}): if $p < \infty$, then 
$$\mathcal N^{\alpha,p} \coloneqq \left(\int_0^T \left(t^\alpha \norm{z_t}_{H^{1,q}}\right)^p\frac{\d t}{t}\right)^\frac{1}{p}$$ 
is the appropriate fixed-point norm. In the following, we suppress reference to $T$ for readability. By weighted maximal regularity theory and embeddings of weighted Sobolev spaces, we find that $$\mathcal N^{\alpha,2p}(S\ast (|\nabla z_1|^2-|\nabla z_2|^2)) \lesssim \mathcal N^{\alpha,p}(|\nabla z_1|^2-|\nabla z_2|^2) \lesssim (\mathcal N^{\alpha,2p}(z_1)+\mathcal N^{\alpha,2p}(z_2))\mathcal N^{\alpha,2p}(z_1- z_2)$$
The important contribution to transfer such estimates to the stochastic setting was made by \citet[Sec. 7]{AgrestiVeraarStability}. There, they showed that for suitably regularising generators, stochastic convolutions satisfy a weighted maximal regularity property. One notable consequence of their results is that for suitable weights $\beta$ and stochastically integrable $G \in L^p([0,t],t^\beta;\gamma(U,H^{s,q}))$, the stochastic convolution $S \diamond G$ satisfies
$$\norm{S \diamond G}_{L^p(\Omega \times [0,T],\d \mathbb P \otimes t^\beta \d t;H^{s+1,q}}) \leq \norm{G}_{L^p(\Omega \times [0,T],\d \mathbb P \otimes t^\beta \d t;\gamma(U,H^{s,q}))}.$$ Subsequently, they carefully and quite generally transferred the abstract critical framework by Prüss, Simonett and Wilke to the stochastic case \cite{AgrestiVeraarNonLinParab}. For detailed surveys of the deterministic and stochastic theory, see \cite{WilkeSurvey} and \cite{VeraarSurvey}, respectively.

\section{Methodology and main results}

Throughout this section, we fix some separable Hilbert space $U$ and a cylindrical Wiener process ${\bm W}_t$ on $U$, with respect to a filtered probability space $(\Omega, \mathcal F, (\mathcal F_t)_{t\geq 0}, \mathbb P)$ satisfying the usual conditions. 
Further, we assume that the dimension $d$ and the integrability exponent $q$ satisfy $d \geq 2$ and $q \in (d,2d)$. Further, let $\sigma > 0$ denote a smoothness parameter and denote by $S = (S(t))_{t \geq 0}$ the analytic semigroup generated by the periodic 
Laplacian on $(L^q(\mathbb T^d))^m$. In this section, we establish local existence of probabilistically strong, analytically mild solutions of the \eqref{eq:StrongSolIntroSPDE}. Ergo, these solutions should satisfy 
\begin{equation}
{\bm z}_t
=
S(t){\bm z}_0
+\int_0^{t} S(t-s)F({\bm z}_s) \d s
+\int_0^{t} S(t-s)G({\bm z}_s)\d {\bm W}_s,
\end{equation}
in a suitable sense. 
We impose the following assumptions on the nonlinearities. 
\begin{assumption} \label{ass:DriftNonlin} For any ${\bm u},{\bm v} \in H^{1,q}(\mathbb T^d)$, $F \colon H^{1,q}(\mathbb T^d) \rightarrow L^{q /2}(\mathbb T^d)$ satisfies
$$\norm{F({\bm u}) - F({\bm v})}_{L^{q/ 2}(\mathbb T^d)} \leq \const \left(1+\norm{{\bm u}}_{H^{1,q}(\mathbb T^d)}+\norm{{\bm v}}_{H^{1,q}(\mathbb T^d)}\right)\norm{{\bm u}-{\bm v}}_{H^{1,q}(\mathbb T^d)}.$$
\end{assumption}
\begin{example}
This holds, for example, when 
$$
|F({\bm u}) - F({\bm v})| \leq \const (1+|{\bm u}|+|{\bm v}|+|\nabla {\bm u}| + |\nabla {\bm v}|)(|{\bm u}-{\bm v}|+|\nabla {\bm u}-\nabla {\bm v}|)
$$
pointwise. However, this condition also allows for nonlocal nonlinearities. 
\end{example}

\begin{assumption} \label{ass:nemytskiiLipschitz}
The dispersion coefficient $G \colon H^{\delta,q}(\mathbb T^d) \rightarrow \gamma(U,H^{\sigma,q}(\mathbb T^d))$ satisfies $$\norm{G({\bm u})-G({\bm v})}_{\gamma(U,H^{\sigma,q}(\mathbb T^d))} \leq \const \left(1+\norm{{\bm u}}_{H^{\delta,q}(\mathbb T^d)}+\norm{{\bm v}}_{H^{\delta,q}(\mathbb T^d)}\right)\norm{{\bm u} - {\bm v}}_{H^{\delta,q}(\mathbb T^d)}$$
for some $\delta \in \left(\frac dq, 1\right)$ and  $\sigma > \delta-\frac dq > 0$.
\end{assumption}

Let $B^s_{q,p}(\mathbb T^d)$ denote the Besov space with smoothness $s$, integrability $q$ and fine index $p$ on $\mathbb T^d$. The following theorem constitutes the main result of this subsection.
\begin{theorem}[Local existence of mild solutions] \label{thm:localexistence} Let $d \geq 2$, $q \in (d,2d)$, $\delta \in \left(\frac dq, 1\right)$ and $\sigma >\delta-\frac dq$. Assume that $F$ and $G$ satisfy Assumptions \ref{ass:DriftNonlin} and \ref{ass:nemytskiiLipschitz}. Then, for any $p \in \left[\frac{q}{\delta q-d},\infty\right)$ and $\mathcal F_0$-measurable ${\bm z}_0 \in L^0(\Omega;B^{d/q}_{q,2p}(\mathbb T^d))$, there exists a unique maximal local solution $({\bm z},\tau)$ of \eqref{eq:FullNoiseSPDE} in the sense of Definition \ref{def:localsol}, with
\begin{enumerate}
\item (Maximality) If $(\widetilde {\bm z},\widetilde\tau)$ is any other local solution, then $\widetilde\tau\leq \tau_{\max}$ a.s. and
$\widetilde {\bm z}_t={\bm z}_t$ for all $t<\widetilde\tau$ a.s.
\item (Blow-up criterion) For any stopping time $\tau \leq \tau_{\mathrm{max}}$,
\begin{equation}
\limsup_{t\nearrow \tau_{\max}}   ~\norm{{\bm z}}_{L^\infty([\tau,t];B^{d/q}_{q,2p}(\mathbb T^d))}+ \norm{{\bm z}}_{L^{2}([\tau,t];H^{1,q}(\mathbb T^d))} + \norm{{\bm z}}_{L^4([\tau,t];H^{\delta,q}(\mathbb T^d))} =\infty.
\end{equation}
$\mathbb P$-almost surely on the event $\{\tau < \tau_{\max}<T\}$,
\end{enumerate}
\end{theorem}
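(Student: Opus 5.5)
We prove Theorem \ref{thm:localexistence} by a fixed-point argument in time-weighted spaces, followed by a gluing procedure. We set $\alpha = \frac12-\frac{d}{2q}$ and work in the weighted space $L^{2p}(0,T;t^{\kappa}\d t;H^{1,q})$ with $\kappa = 2p\alpha-1$. The trace space of the pair $(H^{1,q},H^{-1,q/2})$ is then $B^{d/q}_{q,2p}$, by Prop. \ref{prop:InterpolationIdentity} together with Sobolev embedding $L^{q/2}\hookrightarrow H^{-d/q,q}$. In addition, we track an auxiliary norm $L^{4}(t^{\kappa'}\d t;H^{\delta,q})$ adapted to $G$. The condition $p\geq q/(\delta q-d)$ is exactly what makes the weight for the $H^{\delta,q}$-norm admissible, so that the trace of the stochastic maximal regularity space embeds into $H^{\delta,q}$-valued $L^4$-type norms.

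\textbf{Step 1: deterministic part.} Using weighted maximal regularity of the heat semigroup on $H^{-1,q}$, combined with $L^{q/2}\hookrightarrow H^{-1,q}$ (valid since $q<2d$), we have
\[
\mathcal N^{\alpha,2p}(S\ast F(z))\lesssim \mathcal N^{\alpha,p}(F(z)).
\]
By Assumption \ref{ass:DriftNonlin} and Hölder's inequality in time, this is bounded by $(T^{\epsilon}+\mathcal N^{\alpha,2p}(z))\,\mathcal N^{\alpha,2p}(z)$, and the same argument gives the corresponding Lipschitz difference estimate.

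\textbf{Step 2: stochastic part.} We invoke the weighted stochastic maximal regularity of \cite{AgrestiVeraarStability}. Since $G$ takes values in $\gamma(U,H^{\sigma,q})$, the stochastic convolution $S\diamond G(z)$ gains one derivative, landing in $H^{\sigma+1,q}$ in $L^p$-weighted time. By embedding, this controls $S\diamond G(z)$ in the $H^{\delta,q}$-auxiliary norm, and, because $\sigma>\delta-d/q$, also in the $H^{1,q}$-norm with the critical weight. Here Assumption \ref{ass:nemytskiiLipschitz} is used only at the $H^{\delta,q}$ level. The quadratic term $\|z\|_{H^{\delta,q}}^2$ is controlled in $L^{2}$ by the $L^4$ auxiliary norm, which explains the $L^4$ appearing in the blow-up criterion. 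The subcriticality $\sigma>\delta-d/q$ yields a small factor $T^{\epsilon}$.

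\textbf{Step 3: existence and uniqueness of local solutions.} We truncate the nonlinearities by a cutoff $\theta_n$ applied to the running weighted norms $\mathcal N(t,z)$. With the estimates of Steps 1–2, the fixed-point map is then a contraction on $L^{2p}(\Omega;\cdot)$ for small $T$ and small initial data. For large data, we split $z_0$ into a smooth part and a small critical part, or alternatively exploit that $\lim_{T\to0}\mathcal N^{\alpha,2p}(T,S(\cdot)z_0)=0$ for $z_0\in B^{d/q}_{q,2p}$ with $2p<\infty$. We localise in $\Omega$ via $\mathbf 1_{\{\|z_0\|\leq n\}}$ using $\mathcal F_0$-measurability. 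Stopping at the first time the truncation becomes active then gives local solutions. Uniqueness follows from the same contraction estimate applied to the difference of two solutions up to a common stopping time.

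\textbf{Step 4: maximality and the blow-up criterion.} We glue the local solutions by Zorn's lemma or an increasing sequence of stopping times, which yields $\tau_{\max}$ and the maximality statement. For the blow-up criterion, suppose that on $\{\tau<\tau_{\max}<T\}$ the three norms remain bounded. Then $z_t$ is bounded in $B^{d/q}_{q,2p}$ and, via the $L^2H^{1,q}$ and $L^4H^{\delta,q}$ bounds, the nonlinearities are integrable up to $\tau_{\max}$. Hence $z_{\tau_{\max}}$ exists as a limit in $B^{d/q}_{q,2p}$, and restarting from it with Step 3 extends the solution, a contradiction.

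The main obstacle is this last step. Restarting requires the existence time to depend on the initial datum only through a quantity that stays uniform along the trajectory, whereas at the critical level it depends on the profile and not merely the norm of the datum. I would first try to use relative compactness of $\{z_t\}$ near $\tau_{\max}$, obtained from the $L^2H^{1,q}$ control. If that fails, I would restart at sub-critical regularity: the unweighted $L^2H^{1,q}$ bound gives times $t$ arbitrarily close to $\tau_{\max}$ at which $z_t\in H^{1,q}$ with controlled norm, and subcritical data yields a uniform existence time.
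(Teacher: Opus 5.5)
Your overall architecture is the paper's: a truncated contraction in time-weighted spaces, $F$ and $G$ handled at different spatial regularities, localisation in $\Omega$, and gluing along stopping times. However, the linear estimates do not close as you set them up. In Step~1, maximal regularity on $H^{-1,q}$ only gives $S\ast F(z)\in {}_0W^{1,p}_{\beta_1}(H^{-1,q})\cap L^p_{\beta_1}(H^{1,q})$. Since $H^{1,q}$ is then the top space, there is no spatial regularity left to trade for the gain from $L^p$ to $L^{2p}$ in time, so $\mathcal N^{\alpha,2p}(S\ast F)\lesssim \mathcal N^{\alpha,p}(F)$ does not follow. You have to use the sharp loss $d/q$: work on $L^{q/2}$ with domain $H^{2,q/2}$ (equivalently on $H^{-d/q,q}$). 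Then the mixed-derivative embedding and $H^{1+d/q,q/2}\hookrightarrow H^{1,q}$ give ${}_0H^{\alpha_1/2,p}_{\beta_1}(H^{1,q})\hookrightarrow L^{2p}_{\beta_1}(H^{1,q})$ (Lemma~\ref{lemma:XYembeddings}). In Step~2, an $L^4$ auxiliary norm controls $G(z)$ only in $L^2_t$. Theorem~\ref{thm:finitemaxregfractional}, however, needs $G(z)\in L^p_{\beta}(\gamma(U,H^{\sigma,q}))$ with $p>2$, i.e.\ $z\in L^{2p}_{\beta}(H^{\delta,q})$, and here $2p\geq 2/(\delta-d/q)>4$. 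The paper therefore closes the argument in $\mathbb W_1\cap\mathbb W_\delta$ with $\mathbb W_\delta=L^{2p}_{\beta_\delta}(H^{\delta,q})$. It places the stochastic part in its own space $\mathbb Y_\delta$ carrying the smaller weight $\beta_\delta$. It uses $\sigma>\delta-d/q$ for ${}_0H^{\sigma/2,p}_{\beta_\delta}(H^{1,q})\hookrightarrow L^{2p}_{\beta_\delta}(H^{1,q})$ and for the continuous-trace embedding at $t=0$. The $L^4_tH^{\delta,q}$ in the blow-up functional is only the integrability that makes $S\diamond G(z)$ well defined; it is not a fixed-point norm.

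Uniqueness among all local solutions in the sense of Definition~\ref{def:localsol} is not a one-line consequence of the contraction. The factor $C(\lambda+\lambda^{-1}T^{\alpha})$ is small only on a short interval on which both solutions have small weighted norms measured from the starting time. To reach $\tau_1\wedge\tau_2$ you must restart at the first disagreement time $\rho$, using the noise $W_{\rho+\cdot}-W_\rho$ and the splitting of stochastic convolutions at random times (Corollary~\ref{cor:StochConvStopDecomp}). You also need the re-centred quantity $\Sigma_\rho$ to be small for a short time after $\rho$. This does not follow from the path regularity in Definition~\ref{def:localsol}(i), since an $L^2_tH^{1,q}$ bound does not put $F(z_{\rho+\cdot})$ in $L^p_{\beta_1}(L^{q/2})$. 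That is exactly why condition (iii) is built into the definition and must be re-verified for every glued solution (Lemma~\ref{lemma:CriticalUniqueness}).

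On the other hand, the obstacle you name in Step~4 is not one. Since $\tau_{\max}$ is an essential supremum of lifetimes, extending on an event of positive probability already contradicts maximality, and no uniform existence time is needed. On $E=\{\rho<\tau_{\max}<T,\ \sup_{t<\tau_{\max}}\Xi_\rho(t,z)<\infty\}$ you have $F(z)\in L^1_tL^{q/2}$ and $G(z)\in L^2_t\gamma(U,H^{\sigma,q})$. The mild formula then gives $z_t\to z_{\tau_{\max}}$ in $L^{q/2}$, and the $L^\infty_tB^{d/q}_{q,2p}$ bound plus reflexivity upgrade this to a weak limit in $B^{d/q}_{q,2p}$. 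A strong limit, as you assert, is not available from these bounds, which is why the definition only asks for weak continuity. One then restarts at $\tau_{\max}$ itself from the $\mathcal F_{\tau_{\max}}$-measurable datum $\mathds 1_E z_{\tau_{\max}}$, driven by $W_{\tau_{\max}+\cdot}-W_{\tau_{\max}}$. The $L^0$-data existence result (Proposition~\ref{prop:localisation}) supplies a strictly positive random lifetime, and one glues. Your fallbacks are unnecessary, and the compactness route would fail anyway, since an $L^2$-in-time bound gives no control of $z_t$ at individual times.
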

For dispersion coefficients of critical growth (i.e. $\delta = 1$), existence and uniqueness of an analytically weak notion of solution is already implied by the local existence theory in \cite{AgrestiVeraarNonLinParab} under our conditions. Though these results extend to $\delta < 1$ by the embedding $H^{1,q} \hookrightarrow H^{\delta,q}$, the theory of \cite{AgrestiVeraarNonLinParab} continues to require $\sigma \geq 1-\frac{d}{q}$ in this stronger setting. This is particularly restrictive in the case of Nemytskii operators, which often satisfy local Lipschitz estimates in $H^{\delta,q}(\mathbb T^d)$  for any $\delta > \frac dq$.

We then apply this result to the phase-field model \eqref{eq:StrongPhaseFieldABS}. Importantly, the particular case of Nemytskii operators permits any spatial regularity $\sigma > 0$.  We provide a detailed study of conditions on coefficients and initial data under which global existence and uniqueness of probabilistically strong solutions to \eqref{eq:StrongPhaseFieldABS} follows. The following statement contains the main result of our analysis, see also Theorem \ref{thm:phasefieldglobalstrongex}.

\begin{theorem} Let the coefficients of \eqref{IntroABSModel} satisfy assumptions \ref{ass:ColeHopfGLipschitz} to \ref{ass:CriticalNemytskiiDispersion}. Let $W^Q$ take values in $H^{\sigma,q}$ for some arbitrary $\sigma > 0$ and $q \in (d,2d)$. Then, for suitable $p$ and $\mathcal F_0$-measurable $\log \phi_0, c_0 \in B^{d/q}_{q,2p}$ with $0 \leq \phi_0, c_0 \leq 1$ almost surely, there exists a unique global-in-time solution $(\phi, c)$ of \eqref{IntroABSModel} with initial data $\phi_0, c_0$, in the sense of Def. \ref{def:localsol}. In particular, $\mathbb P$-a.s.,
$$
\phi \in C([0,T];B^{d/q}_{q,2p}) \cap C((0,T];H^{1,q}),~ c \in L^4([0,T];H^{1,q})
$$
with $$\sup_{t\in (0,T]} t^{\frac12 - \frac{d}{2q}} \norm{\phi}_{H^{1,q}} < \infty$$ and
$0 \leq \phi_t, c_t \leq 1$ for all $t \in [0,T]$. Moreover, $(\phi,c)$ is the unique solution with these regularities. 
\end{theorem}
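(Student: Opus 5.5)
The proof proceeds in three stages: a logarithmic change of variables, local existence via Theorem \ref{thm:localexistence}, and a priori bounds feeding the blow-up criterion.

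\textbf{Step 1: logarithmic transform.} Set $z = \log \phi$. Formally, on $\{\phi>0\}$ we have $\nabla\phi/\phi = \nabla z$ and $\Delta\phi/\phi = \Delta z + |\nabla z|^2$. The system \eqref{IntroABSModel} then becomes
\begin{equation*}
\begin{cases}
\partial_t z = \gamma\Delta z + \gamma|\nabla z|^2 + e^{-z}g(e^{z},c) + \Psi(e^{z},c)|\nabla z|,\\
\mathrm d c = \bigl(D\Delta c + D\nabla z\cdot\nabla c + f(e^{z},c)\bigr)\mathrm d t + b(e^{z},c)\,\mathrm d W.
\end{cases}
\end{equation*}
After rescaling time (or working with the diagonal operator $\mathrm{diag}(\gamma,D)\Delta$, which generates an analytic semigroup with the same properties used earlier), this is of the form \eqref{eq:StrongSolIntroSPDE} with $\bm z=(z,c)$. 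I verify Assumption \ref{ass:DriftNonlin} from two facts. First, the gradient products $|\nabla z|^2$ and $\nabla z\cdot\nabla c$ are quadratic, and Hölder's inequality maps $L^q\times L^q\to L^{q/2}$. Second, the reaction terms are locally Lipschitz on $H^{1,q}\hookrightarrow L^\infty$, since $q>d$. Here assumptions \ref{ass:ColeHopfGLipschitz}ff. are used, in particular that $e^{-z}g(e^z,c)$ is Lipschitz, which is what the Cole--Hopf Lipschitz assumption is meant to give. For the noise, $b$ is a Nemytskii operator, and $H^{\delta,q}$ is an algebra embedded in $L^\infty$ for $\delta>d/q$. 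Composed with the multiplier $W^Q\in\gamma(U,H^{\sigma,q})$, this gives Assumption \ref{ass:nemytskiiLipschitz} with any $\sigma\in(0,\delta)$, after choosing $\delta$ close to $d/q$ so that $\sigma>\delta-d/q$. Theorem \ref{thm:localexistence} then yields a unique maximal solution $(z,c,\tau_{\max})$ for $p\ge q/(\delta q-d)$, which fixes the meaning of ``suitable $p$''. Setting $\phi=e^z$ and applying Itô's formula (which is trivial here, since the $\phi$-equation has no noise) recovers a local solution of \eqref{IntroABSModel}. Uniqueness in the stated class transfers back, because any such $\phi$ with $\log\phi$ in the right space gives a solution of the transformed system.

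\textbf{Step 2: invariance $0\le\phi,c\le1$.} I use a stopped comparison or maximum principle. For $c$, I apply Itô's formula to $\int (c-1)_+^2$ and $\int c_-^2$, using the structural assumptions that $b$ vanishes at $c\in\{0,1\}$ and that $f$ is inward pointing. For $\phi$, the deterministic parabolic comparison principle applies pathwise, since the equation for $\phi$ is random but contains no Itô term. These bounds hold up to every $\tau<\tau_{\max}$.

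\textbf{Step 3: global existence via the blow-up criterion (main obstacle).} I must show that the three norms in the blow-up criterion of Theorem \ref{thm:localexistence} stay finite on $[0,\tau_{\max})$ almost surely. The $L^\infty$ bounds from Step 2 do not control $z=\log\phi$ from below, so this is where the difficulty lies. I first try the following strategy. Treat the $\phi$-equation as a linear heat equation with bounded coefficients in $\phi$: by Step 2, the term $\Psi|\nabla\phi|$ has $\Psi$ bounded and $g$ bounded. Deterministic maximal $L^p$-regularity in weighted spaces then bounds $\phi$ in $H^{1,q}$ with weight $t^{1/2-d/(2q)}$, giving the stated $\sup_t t^{1/2-d/(2q)}\|\phi\|_{H^{1,q}}$ bound. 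Next, a lower bound $\phi_t\ge \underline\phi(t)>0$ on compact subintervals, from the strong maximum principle or a Harnack argument for the $\phi$-equation, controls $\nabla z=\nabla\phi/\phi$ in $L^2_tH^{1,q}$. For $c$, the equation is linear in $c$ given $\nabla z$. Stochastic maximal regularity (\cite{AgrestiVeraarStability}) with the bounded drift coefficient then yields $c\in L^4H^{1,q}$, and by interpolation the $L^4H^{\delta,q}$ norm is bounded, using a stopping-time localisation and Gronwall to handle the product term $\nabla z\cdot\nabla c$. The continuity $\phi\in C([0,T];B^{d/q}_{q,2p})$ follows from the trace embedding of the weighted maximal regularity space. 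If the Harnack lower bound fails near $t=0$, I would instead invoke the blow-up criterion from a positive stopping time $\tau$, which the statement of Theorem \ref{thm:localexistence} explicitly allows, so only bounds on $[\tau,t]$ are needed.
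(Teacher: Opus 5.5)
\paragraph{Main gap: invariance of $c$ (Step 2).} Step 2 is load-bearing, because Step 1 cannot apply Theorem \ref{thm:localexistence} to the transformed system as written.

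\emph{Why truncation is needed.} Assumption \ref{ass:DriftNonlin} requires a Lipschitz constant growing linearly in the $H^{1,q}$-norms. The reaction terms $e^{-z}g(e^{z},c)$, $\Psi(e^{z},c)$, $f(e^{z},c)$ only have constants depending on $\|e^{z}\|_{L^\infty}$ and $\|c\|_{L^\infty}$, hence exponentially on $\|z\|_{L^\infty}$. Assumption \ref{ass:CriticalNemytskiiDispersion} gives only locally bounded second derivatives of $b$, while Proposition \ref{prop:noisecoeffbounds} needs bounded ones. Remark \ref{rem:LocalLipschitzTraceNorm} does not help, since $B^{d/q}_{q,p}\not\hookrightarrow L^\infty$. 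So you must solve a truncated system as in \eqref{eq:StrongColeHopfTrunc} and remove the truncation afterwards by invariance.

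\emph{Why your It\^o argument fails.} In It\^o's formula for $\int (c-1)_+^2\,\mathrm{d}x$ you must control $D\int \nabla z\cdot\nabla (c-1)_+^2\,\mathrm{d}x$. With $\nabla z$ only in $L^q$, H\"older, Gagliardo--Nirenberg and Young give $\varepsilon\|\nabla (c-1)_+\|_{L^2}^2+C_\varepsilon\|\nabla z\|_{L^q}^{2/\alpha_1}\|(c-1)_+\|_{L^2}^2$ with $\alpha_1=1-\frac dq$. Gr\"onwall therefore needs $\nabla z\in L^{2/\alpha_1}([0,t];L^q)$, which is exactly the borderline that fails at critical regularity. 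Near $t=0$ you only know $t^{\alpha_1/2}\|\nabla z_t\|_{L^q}\lesssim 1$, or $\nabla z\in L^{2p}_{\beta_1}$ with $2p>2/\alpha_1$. This gives $\|\nabla z_t\|_{L^q}^{2/\alpha_1}\lesssim t^{-1}$, a logarithmically divergent Gr\"onwall exponent. By \eqref{eq:semigroupcharacbesov}, already $\nabla S(\cdot)z_0\in L^{2/\alpha_1}_tL^q$ would force $z_0\in B^{d/q}_{q,2/\alpha_1}$.

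\emph{How the paper gets around it.} The paper flags precisely this obstruction in the proof of Theorem \ref{thm:c_global_existence}. It works instead with \eqref{eq:CriticalDecoupledChemTruncGradEq}, using the truncated gradient $\nabla^\theta z$; the drift is then bounded and the comparison argument applies. It then lets $\theta\to\infty$ via a contraction-type estimate in $\mathbb W_1\cap\mathbb W_\delta$ on a short window. Note also that the vanishing of $b$ at $c\in\{0,1\}$ you invoke is not among the listed assumptions.

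\paragraph{Uniqueness transfer.} This step is incomplete. The uniqueness class in the statement is defined by the regularity of $\phi$ and $c$, not of $\log\phi$. For a competitor $(\bar\phi,\bar c)$ you must first show that $\log\bar\phi$ has the regularity required in Definition \ref{def:localsol}. This is not automatic near $t=0$, where $\bar\phi$ may be small. The paper solves \eqref{eq:decoupledcolehopf} with the frozen coefficient $\bar c$ and initial value $\log\frac{1}{\phi_0}$. It then identifies $e^{-\bar z}=\bar\phi$ through uniqueness for the deterministic $\phi$-equation (Lemma \ref{lemma:initialtimephisol}, Corollary \ref{cor:whole-interval-phisol}).

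\paragraph{Step 3 is a sound alternative route.} The idea of your Step 3 works and differs genuinely from the paper's.
\begin{itemize}
\item Restart the blow-up criterion at $s\wedge\tau_{\max}$ for deterministic $s>0$.
\item A Harnack lower bound on $[s,T]$, together with $\phi\in C([s,T];H^{1,q})$, makes $\nabla z$ pathwise bounded in $L^\infty([s,\tau_{\max});L^q)$.
\item The kernel $(t-r)^{-\frac12-\frac{d}{2q}}$ of $\nabla S(t-r)\colon L^{q/2}\to L^q$ is then integrable, so a pathwise singular Gr\"onwall closes the estimate for $c$.
\item Once the coefficients are truncated (so $\widetilde\beta$ is bounded), the stochastic convolution is handled by maximal regularity with a $\gamma(U,L^q)$-valued integrand.
\end{itemize}
This avoids the compact-set, $\int e^{-z_0}\,\mathrm{d}x>\varepsilon$ and Ulam-exhaustion machinery the paper uses to obtain $L^\infty(\Omega)$-smallness of $z$. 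However, you need this $L^\infty_t$ control of $\nabla z$ (or at least $L^r_t$ with $r>2/\alpha_1$). The $L^2_tH^{1,q}$ control you state is insufficient for the product term $\nabla z\cdot\nabla c$.
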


\begin{remark}
After the contents of this manuscript were largely completed, the author became aware of the recent preprint \cite[Theorem 4.5]{agresti2026optimallocaltheoryreactiondiffusion}, where a more general well-posedness condition than Theorem \ref{thm:localexistence} is announced. The proof of that statement is not included there and is indicated to appear in forthcoming work. The present paper gives a complete proof of the special case stated above, using the methods developed below. 
\end{remark}

Thus, for subcritical dispersion coefficients, relative to the deterministic nonlinearity $F$, we can allow for considerably rougher forcings than the results of \cite{AgrestiVeraarNonLinParab} allow for, under comparable conditions on the initial data. To achieve this, we isolate deterministic and stochastic components of the equations in the fixed point argument. Writing the solution as $z=u+v$, we let $u$ carry the initial trace and solve a deterministic convolution equation in a weighted deterministic maximal-regularity space $\mathbb X_1$, while $v$ is treated as the zero-trace stochastic convolution in a different weighted stochastic maximal-regularity space $\mathbb Y_\delta$, where $\delta$ indicates the lower regularity needed to control the dispersion coefficient. 

The crucial insight is contained in Lemma \ref{lemma:XYembeddings}. Namely, even if the weights of the stochastic and deterministic components are different, the respective maximal regularity spaces embed into an intermediate scale of weighted Lebesgue spaces under which the fixed point argument closes. This in particular needs particular care to ensure validity of weighted space-time Sobolev embeddings and introduces the lower bound $\sigma > \delta -\frac{d}{q}$ on the Sobolev regularity of the driving noise process.

\begin{figure}[H]
    \centering
    \includegraphics[width=0.9\textwidth]{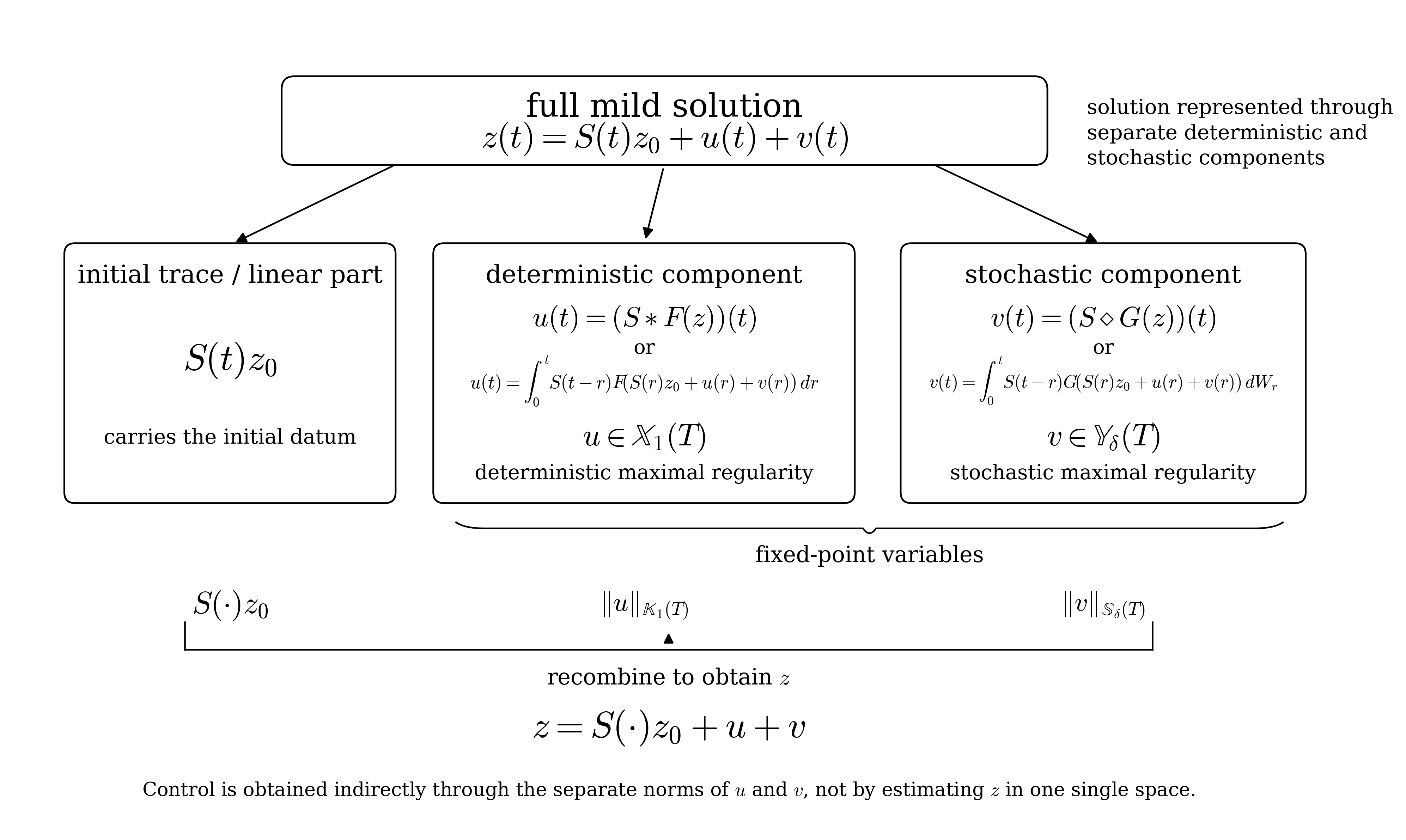}
    \caption{Illustration of decomposition of mild solutions}
    \label{fig:Strategy}
\end{figure}

A further methodological distinction lies in the consequent use of the semigroup formalism, compared to the analytically weak formalism in \cite{AgrestiVeraarNonLinParab}. To this end, we introduce a notion of local mild solution with inbuilt local regularity assumptions (cf. Definition \ref{def:localsol}). We then prove uniqueness of maximal solutions in this mild sense, together with a blow-up criterion.
Central to this approach is an extension procedure for local solutions: once a solution is controlled up to a stopping time, one may restart the dynamics from the random terminal value, driven by the randomly shifted Wiener process, and then concatenate the old and new piecewise solutions. This is technically different, though conceptually comparable to the approach in e.g. \citet{AgrestiVeraarBlowup}.

The resulting blow-up criterion is determined by conditions which ensure well-definedness of the deterministic and stochastic convolution against the heat semigroup, and is generally incomparable to the criteria laid out in \cite{VeraarSurvey}. In special cases, it provides slightly weaker conditions to exclude blow-up.

\begin{figure}[H] 
    \centering
    \includegraphics[width=0.7\textwidth]{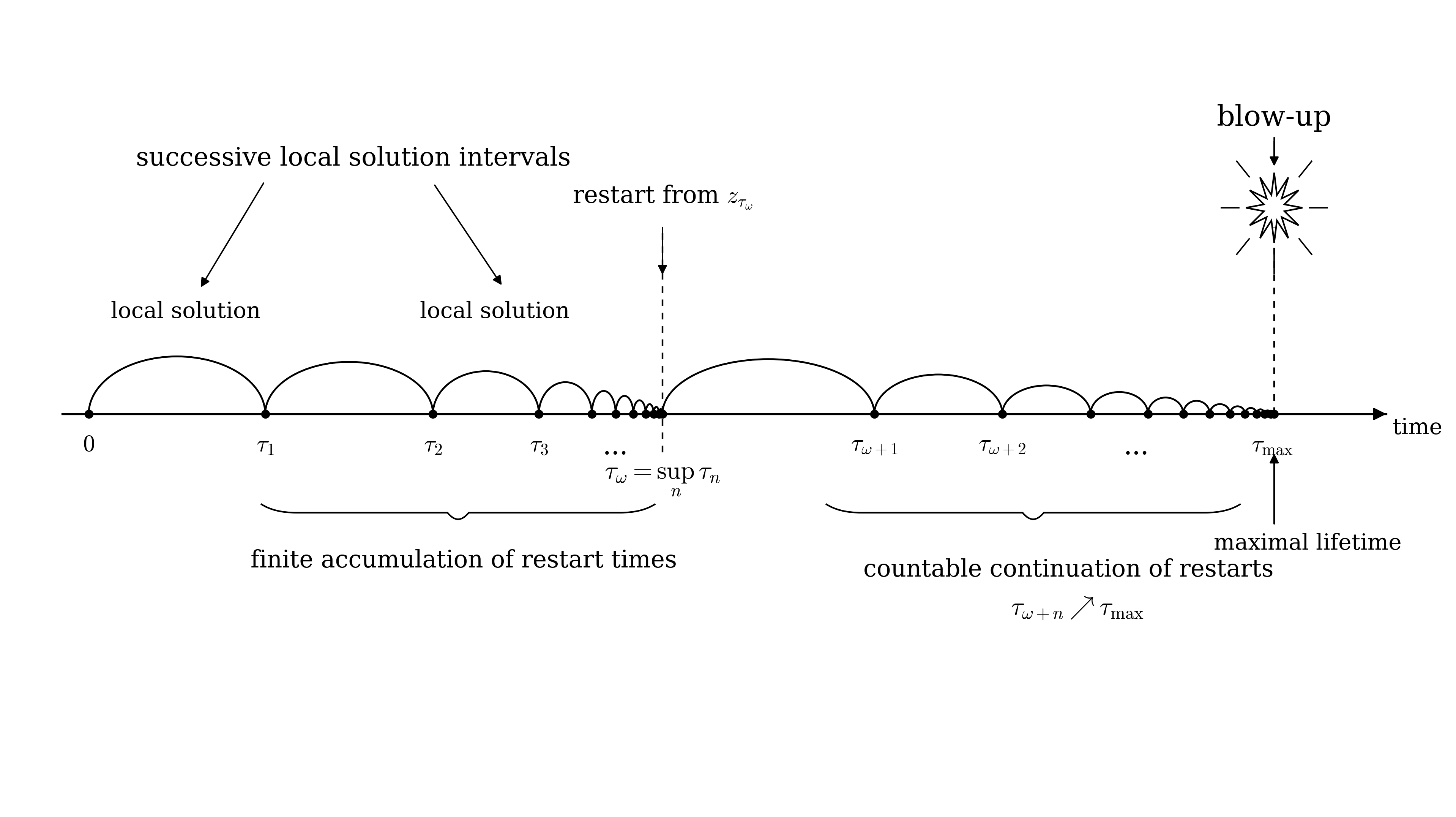}
    \caption{Illustration of gluings of solutions}
    \label{fig:LocalSolution}
\end{figure}

\subsection{Contribution to the existing maximal regularity theory in critical spaces}
We now lay out how our results contribute to this special case of the critical theory of \citet{AgrestiVeraarNonLinParab}. To this end, we need to first study the implications of the existence- and uniqueness result \cite[Theorem 4.8]{AgrestiVeraarNonLinParab} in the setting of this chapter.

Introduce the spaces 
$X_0 \coloneqq H^{-s,q}(\mathbb T^d)$ and $X_1 \coloneqq H^{2-s,q}(\mathbb T^d)$, so that the intermediate complex interpolation space is given by
$X_{1/2} \coloneqq [X_0,X_1]_{1/2} = H^{1-s,q}(\mathbb T^d)$. In the following, it is convenient to
write $p \in (2,\infty)$ for the time-integrability exponent of the maximal regularity theorem and to set
$$\alpha \coloneqq \frac{1+\kappa}{p}$$ given some $\kappa \in [0,\frac{p}{2}-1)$. 
In our case, the results in \cite{AgrestiVeraarNonLinParab} imply existence  of local-in-time solutions of \eqref{eq:StrongSolIntroSPDE} for initial conditions in the real interpolation space
$X_{1-\alpha,p} \coloneqq  (X_0,X_1)_{1-\alpha,p} = B^{2-s-2\alpha}_{q,p}(\mathbb T^d)$, whenever drift and dispersion coefficient satisfy $$\norm{F(\bm u) - F(\bm v)}_{X_0}  \leq \const \left(1+\norm{\bm u}^{\rho_1}_{X_{\varphi_1}}+\norm{\bm v}^{\rho_1}_{X_{\varphi_1}}\right)\norm{\bm u - \bm v}_{X_{\beta_1}}$$
and $$\norm{G(\bm u) - G(\bm v)}_{\gamma(U,X_{1/2})}  \leq \const \left(1+\norm{\bm u}^{\rho_2}_{X_{\varphi_2}}+\norm{\bm v}^{\rho_2}_{X_{\varphi_2}}\right)\norm{\bm u - \bm v}_{X_{\beta_2}},$$
for \begin{equation} \label{eq:CriticalRelation}
\rho_i\left(\varphi_i-1+\alpha\right)+\beta_i \leq 1,\quad i = 1,2. 
\end{equation}
Since the drift $F$ is modelled on the quadratic gradient term $|\nabla z|^2$,  the natural choice in the Lipschitz condition is $\rho = 1$ and $\varphi_1 = \beta_1$, together with $X_{\beta_1} = H^{1,q}(\mathbb T^d)$. Since
$$
X_{\beta_1} \coloneqq  [X_0,X_1]_{\beta_1} = H^{-s+2\beta_1,q}(\mathbb T^d),
$$
this forces $\beta_1 = \frac{1+s}{2}$. Moreover, in the present situation the map $F \colon H^{1,q} \to L^{q/2}$ can be regarded as $X_0$-valued only through the embedding $L^{q/2}(\mathbb T^d) \hookrightarrow H^{-s,q}(\mathbb T^d)$, and this identifies the loss of smoothness as $s = \frac{d}{q}$. Consequently, the strict criticality relation imposed by \eqref{eq:CriticalRelation} becomes
$$
\alpha = 2(1-\beta_1) = 1-\frac{d}{q}.
$$
Note that $\kappa < p/2-1$ iff $\alpha < \frac12$ iff $q <2d$.
Therefore, we can compute the critical trace space as
$$
X_{1-\alpha,p} = B^{2-s-2\alpha}_{q,p}(\mathbb T^d) = B^{d/q}_{q,p}(\mathbb T^d).
$$
Observe that a restriction on $p$ enters through these conditions; namely, $\kappa \geq 0$ is equivalent to $$\alpha \coloneqq \frac{1+\kappa}{p}\geq \frac{1}{p} \iff 1-\frac{d}{q} \geq \frac{1}{p} \iff p \geq \frac{q}{q-d}.$$
So if $p \geq \frac{q}{q-d}$ and dispersion coefficient $G$ also satisfies the above condition, a direct application of the Agresti--Veraar theorem yields local maximal solutions for $\mathcal F_0$-measurable
$
{\bm z}_0 \in  B^{d/q}_{q,p}(\mathbb T^d)
$, 
since the Laplacian satisfies the stochastic maximal regularity hypothesis on $\mathbb T^d$. 

A few restrictions are worth noting. 
\begin{enumerate}[(i)]
    \item First, the endpoint $q=d$ is excluded in the setup of \cite{AgrestiVeraarNonLinParab}, since in that case $s=\frac{d}{q}=1$ and therefore $\beta = \frac{1+s}{2}=1$, whereas the strict theory requires $\beta<1$. Hence one needs $q>d$ for the above choice to fit into the framework.

    The endpoint $q=2d$ can however occur in the exceptional case $p=2$ with $\kappa=0$. We do not cover this case in this article. This is not problematic, since our conditions are not subsumed in this special case. The reason is that the Besov embeddings do not yield that $B^{d/q}_{q,r} \hookrightarrow B^{1/2}_{2d,2}$ for $r > 2$.
    \item Finally, even if the dispersion estimate is stronger than what is needed to verify the hypothesis of the framework in \cite{AgrestiVeraarNonLinParab}, one cannot relax the spatial regularity required of the noise in the theorem.
Indeed, the stochastic term is always measured in
$\gamma(U,X_{1/2})$, which is already determined by the deterministic nonlinearity.  
\end{enumerate}

The latter observation lies at the basis of our contributions to the solution theory of such equations at critical regularity. Since the critical space of initial conditions is constrained by the deterministic theory, there is little space for improvement (see e.g. \cite{SimonettPruess} and subsequent works).

However, we demonstrate in this article that tighter estimates on the dispersion coefficient can be exploited in order to relax the spatial regularity required of the stochastic forcing.

\begin{remark}
For $\delta = 1$, one can see that our conditions on initial data are essentially equivalent to the condition in \cite{AgrestiVeraarNonLinParab}. Except for the edge case $\sigma = 1-d/q$, our result applies to this class of initial conditions, too. This follows from the Besov embedding $B^{d/q}_{q,p}(\mathbb T^d) \hookrightarrow B^{d/q}_{q,2p}(\mathbb T^d)$.  On the other hand, if $p \geq \frac{q}{q-d}$, then trivially $2p  \geq \frac{q}{q-d}$, so the existence theorem applies to ${\bm z_0} \in B^{d/q}_{q,2p}(\mathbb T^d)$. We note though that the continuity properties of the respective solutions are different, since we consider weakly continuous solutions; see also Remark \ref{remark:improvedcontinuity}. Moreover, the differing conditions on the fine structure parameter will start to make a difference when we compare blow-up conditions, see also the discussion below.
\end{remark}

Now consider an $\mathcal F_0$-measurable $z_0 \in B^{d/q}_{q,2p}$ for $p \geq \frac{q}{\delta q-d}$. It is worth comparing our blow-up criterion with the criteria in \cite[Section 5, Thm. 1]{VeraarSurvey}. Since we do not obtain the first blow-up criterion $$\mathbb P\left(\tau < T,B^{d/q}_{q,2p}-\lim_{t \nearrow \tau} {\bm z}_t \text{ exists}\right) = 0$$ in this article, we compare our result to the criterion 
\begin{equation} \label{eq:avblowup}
\mathbb P\left(\tau < T,\norm{{\bm z}}_{L^\infty([0,t];X_{1-\frac{1+\kappa}{2p},2p})}
+
\norm{{\bm z}}_{L^{2p}([0,\tau];X_{1-\frac{\kappa}{2p}})}
<\infty \right) = 0.
\end{equation}
The above identity is the content of  \cite[Theorem 5.1, claim (2)]{VeraarSurvey}.

\begin{enumerate}[(i)]
    \item Recall that the strict criticality relation imposes the condition $\frac{1+\kappa}{2p} = 1-\frac{d}{q}$, so $X_{1-\frac{1+\kappa}{2p},2p} = B^{d/q}_{q,2p}$ and more importantly, $X_{1-\frac{\kappa}{2p}} = H^{\frac{d}{q} + \frac{1}{p},q} \hookrightarrow H^{\delta,q}$, so the criterion excludes blow-up whenever \begin{equation}\label{eq:AgrestiVeraarBlowup}
    \norm{{\bm z}}_{L^\infty([0,t];B^{d/q}_{q,2p})}
    +
    \norm{{\bm z}}_{L^{2p}([0,\tau];H^{d/q + 1/p,q})}
    <\infty,
    \end{equation}
    where in particular $d/q+1/p = \delta$ if $p = \frac{q}{\delta q-d}$.
    \item By contrast, our continuation criterion is based on the finiteness of
    $$
    \Xi_t({\bm z})
    =
    \norm{{\bm z}}_{L^\infty([0,t];B^{d/q}_{q,2p})}
    +
    \norm{{\bm z}}_{L^2([0,t];H^{1,q})}
    +
    \norm{{\bm z}}_{L^4([0,t];H^{\delta,q})}.
    $$
    \item We remark that this expression simplifies for $\delta < \frac{1}{2}+\frac{d}{2q}$, since then$$L^\infty([0,t];B^{d/q}_{q,2p}) \cap L^2([0,t];H^{1,q}) \hookrightarrow L^4([0,t];H^{\delta,q})$$
    \item Our condition is not subsumed by \eqref{eq:avblowup}: even in the special case $\delta = 1$, the condition $q \in (d,2d)$ results in
    $$L^\infty([0,t];H^{d/q,q}) \cap L^4([0,t];H^{1,q}) \not \hookrightarrow L^{2r}([0,T];H^{d/q+1/r,q})$$ for any $r \geq \frac{q}{q-d}$.
    \item Meanwhile, condition \eqref{eq:avblowup} also is not subsumed by ours. Evidently, criterion \eqref{eq:avblowup} requires less smoothness for $p > \frac{q}{q-d}$. Moreover, since $2p \geq \frac{2q}{\delta q-d}  > 4$ if $q \in (d,2d)$, even for $\delta = 1$, condition \eqref{eq:avblowup} allows to account for lower Besov fine indices in the range $[\frac{q}{q-d},\frac{2q}{q-d}]$.
    \item In the main comparable case $\delta=1$ and $p = \frac{q}{q-d}$, our blow-up condition collapses to 
    $$\norm{{\bm z}}_{L^\infty([0,t];B^{d/q}_{q,2p})}
    +
    \norm{{\bm z}}_{L^4([0,t];H^{1,q})}.
    $$
    Then, $d/q+1/p = 1$ and condition \eqref{eq:avblowup} reads 
    $$
    \norm{{\bm z}}_{L^\infty([0,t];B^{d/q}_{q,2p})}
    +
    \norm{{\bm z}}_{L^{2p}([0,\tau];H^{1,q})}
    <\infty,
    $$
    which is strictly stronger than our condition since then $2p = \frac{2q}{q-d}>4$. Thus, in some instances, our criterium provides less strict conditions to exclude blow-up.
\end{enumerate}
\section{Well-posedness of stochastic parabolic equations with gradient nonlinearities}
In this section, we derive the local-in-time well-posedness theory for stochastic equations of the type 
$$
\d {\bm z}_t = \left(\Delta {\bm z}_t + F({\bm z}_t) \right)\d t + G({\bm z}_t)\d {\bm W}_t,
$$
where $F$ and $G$ satisfy Assumptions \ref{ass:DriftNonlin} and \ref{ass:nemytskiiLipschitz}, respectively, for $q \in (d,2d)$, $d \geq 2$, $\delta \in (\frac{d}{q},1)$ and $\sigma > \delta - \frac{d}{q}$. The main results of this section, Prop. \ref{prop:localisation} and Thm. \ref{thm:maximallocalsolution}, show existence of maximal local solutions and establish the blow-up criteria laid out in Theorem \ref{thm:localexistence}.
\subsection{Weighted vector-valued Sobolev spaces}
Let $\mathcal B$ denote a separable, type $2$, UMD Banach space and let $0 < T \leq \infty$ be given. In this subsection, we introduce weighted vector-valued Sobolev spaces and state embedding theorems between such spaces. These embeddings will be the main tools to close fixed point arguments.

The contents of this subsection are mainly based on \citet[Section 2.2]{AgrestiVeraarNonLinParab}, where the proofs of these theorems and related results in the literature are discussed in detail. We will not include the proofs here and instead refer the reader to this article.

From hereon, let $w_{\beta}(t)\coloneqq t^{\beta}$, $t\in [0,T]$, denote a power-type weight. By
$L^p([0,T],w_{\beta};\mathcal B)$, we denote  the Banach space of all strongly measurable functions $u:[0,T]\to \mathcal B$ for which
$$
\norm{u}_{L^p([0,T],w_{\beta};\mathcal B)}^p\coloneqq \int_0^T\norm{u_t}^p_{\mathcal B} w_{\beta}(t)\d t<\infty.
$$
\begin{remark}
Assume that $-1 <\beta < \frac{p_0}{2}-1$, and $p_1 < \frac{p_0}{1+\beta}$. Then for each $T< \infty$, $$L^{p_0}([0,T],w_\beta;\mathcal B) \rightarrow L^{p_1}([0,T];\mathcal B)$$ by Hölder's inequality. In particular, this applies to $p_1 = 2$ by assumption.
\end{remark}
\begin{definition}[Weighted $\mathcal B$-valued Sobolev space]

For $k \geq 1$, we denote by $W^{k,p}([0,T],w_{\beta};\mathcal B)$ the set of all $f\in L^p([0,T],w_{\beta};\mathcal B)$ such that their $j$-th distributional derivatives, $0 \leq j \leq k$, have representatives $f^{(j)} \in L^p([0,T],w_{\beta};\mathcal B)$. When endowed with the norm
$$
\norm{u}_{W^{k,p}([0,T],w_{\beta};\mathcal B)}\coloneqq\sum_{j=0}^k\norm{u^{(j)}}_{L^p([0,T],w_{\beta};\mathcal B)},
$$
this becomes a Banach space.
\end{definition}
As stated in \cite{AgrestiVeraarNonLinParab}, the trace map $f\mapsto f(0)$ is a bounded mapping from $W^{1,p}([0,T],w_{\beta};\mathcal B)$ into $\mathcal B$ for $\beta \in (-1,p-1)$.  We can then define the closed subspace of zero-trace functions as
$$
{_0W}^{1,p}([0,T],w_{\beta};\mathcal B) = \left\{f\in W^{1,p}([0,T],w_{\beta};\mathcal B): f(0)=0 \right\}.
$$
Fractional order Sobolev spaces will now be defined via complex interpolation between $L^p([0,T],w_{\beta};\mathcal B)$ and ${_0W}^{1,p}([0,T],w_{\beta};\mathcal B)$. 
\begin{definition}
We define 
$$
H^{\theta,p}([0,T],w_{\beta};\mathcal B) \coloneqq [L^p([0,T],w_{\beta};\mathcal B),W^{1,p}([0,T],w_{\beta};\mathcal B)]_{\theta}
$$ 
and 
$$ 
{_0H}^{\theta,p}([0,T],w_{\beta};\mathcal B)\coloneqq[L^p([0,T],w_{\beta};\mathcal B),{_0W}^{1,p}([0,T],w_{\beta};\mathcal B)]_{\theta}.
$$
through complex interpolation.
\end{definition}
\begin{remark}
It is a standard result of complex interpolation that the embedding $_0W \hookrightarrow W$ transfers to $_0H$, i.e. 
\begin{equation}\label{eq:contractiveembd0}
{_0H}^{\theta,p}([0,T],w_{\beta};\mathcal B)\hookrightarrow H^{\theta,p}([0,T],w_{\beta};\mathcal B)  \text{ contractively}.
\end{equation}
Moreover, for any $0 < T_1 < T_2$, 
$$H^{\theta,p}([0,T_2],w_{\beta};\mathcal B) \hookrightarrow H^{\theta,p}([0,T_1],w_{\beta};\mathcal B)  \text{ contractively}$$ via the restriction operator, and the same holds for $_0H^{\theta,p}$.
\end{remark}

\begin{theorem}[\cite{AgrestiVeraarNonLinParab}]\label{thm:equivalence_h_H}
Let $\mathcal B$ be a UMD space, $p \in (1, \infty)$, $\beta \in (-1,p-1)$, $s\in (0,1)$. If $s \neq \frac{1+\beta}{p}$, then
$$
_0H^{s,p}(\R_+,w_{\beta};\mathcal B) = \begin{cases}
\{u\in H^{s,p}(\R_+,w_{\beta};\mathcal B): u(0) = 0\}, & s>\frac{1+\beta}{p}, \\
H^{s,p}(\R_+,w_{\beta};\mathcal B), & s<\frac{1+\beta}{p},
\end{cases}
$$
isomorphically.
\end{theorem}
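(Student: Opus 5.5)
The statement to prove is Theorem~\ref{thm:equivalence_h_H}, the characterisation of ${_0H}^{s,p}(\R_+,w_\beta;\mathcal B)$ inside $H^{s,p}(\R_+,w_\beta;\mathcal B)$. The plan is to identify both spaces with domains of fractional powers of the derivative and then use the weighted Hardy inequality and the trace theorem.

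First, realise the two spaces as fractional domains. On $L^p(\R_+,w_\beta;\mathcal B)$, the operator $A_0 = \frac{\d}{\d t}$ with domain ${_0W}^{1,p}(\R_+,w_\beta;\mathcal B)$ generates the right-translation semigroup. For $\beta\in(-1,p-1)$ the weight is $A_p$, and $\mathcal B$ is UMD. Under these hypotheses $A_0$ has a bounded $H^\infty$-calculus, hence bounded imaginary powers. Consequently, complex interpolation gives
$$
{_0H}^{s,p} = D(A_0^s).
$$
Similarly, $H^{s,p}(\R_+,w_\beta;\mathcal B)$ is the restriction to $\R_+$ of $H^{s,p}(\R,w_\beta;\mathcal B)$. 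To see this, use a Stein/Seeley-type extension operator that is simultaneously bounded $L^p\to L^p$ and $W^{1,p}\to W^{1,p}$ for $A_p$ weights, and interpolate. On $\R$, the space $H^{s,p}$ is characterised by Bessel potentials through the weighted Mikhlin theorem.

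Second, treat the case $s<\frac{1+\beta}{p}$, where we must show $H^{s,p}\subseteq {_0H}^{s,p}$. The key estimate is the weighted fractional Hardy inequality
$$
\norm{t^{-s}u}_{L^p(\R_+,w_\beta;\mathcal B)} \lesssim \norm{u}_{H^{s,p}(\R_+,w_\beta;\mathcal B)},
$$
valid exactly when $sp<1+\beta$. It can be proved by interpolating the classical Hardy inequality, which holds for $p$-integrable functions with $\beta$-weight in this range, against the trivial $s=0$ bound. Given the inequality, extension by zero $E_0$ maps $H^{s,p}(\R_+)$ boundedly into $H^{s,p}(\R)$. Zero extensions of functions supported in $\R_+$ lie in ${_0H}^{s,p}$, since $D(A_0^s)$ is the range of $(1+A_0)^{-s}$, a Volterra (causal) operator. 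Hence the two spaces coincide. The reverse inclusion is \eqref{eq:contractiveembd0}.

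Third, treat the case $s>\frac{1+\beta}{p}$. The trace $u\mapsto u(0)$ is bounded on $H^{s,p}$ via the embedding $H^{s,p}\hookrightarrow C([0,\infty);\mathcal B)$, which holds because $B^{s}_{p,p}$-type spaces with $s>\frac{1+\beta}{p}$ embed into continuous functions. The trace vanishes on ${_0W}^{1,p}$ and so on ${_0H}^{s,p}$ by density, giving one inclusion. For the converse, take $u\in H^{s,p}$ with $u(0)=0$. Apply the Hardy inequality with the shifted parameter, in its form for functions with vanishing trace, to get $t^{-s}u\in L^p(w_\beta)$. Then argue as in the second step: $E_0u\in H^{s,p}(\R)$, and it is supported in $\R_+$, hence lies in ${_0H}^{s,p}$. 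The isomorphism constants then follow from the open mapping theorem.

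The main obstacle is the fractional weighted Hardy inequality together with the boundedness of zero extension at non-integer $s$. This step is also where the exclusion of the critical value $s=\frac{1+\beta}{p}$ originates. I would try to establish it by reiteration between $s=0$ and the integer cases, using Theorem~\ref{thm:equivalence_h_H}-free arguments and the known real-interpolation identification of $D(A_0^s)$ from Meyries--Veraar.
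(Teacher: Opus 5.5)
The paper gives no proof of this statement; it is quoted from \cite{AgrestiVeraarNonLinParab}, which relies on \cite{LindemulderMeyriesVeraar}. Your Step 1 is a sound starting point. Steps 2 and 3, however, have a genuine gap exactly where you locate the main obstacle, and your proposed way around it is circular.

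The classical weighted Hardy inequality $\norm{t^{-1}u}_{L^p(\R_+,w_\beta;\mathcal B)}\lesssim\norm{u'}_{L^p(\R_+,w_\beta;\mathcal B)}$ with $\beta<p-1$ requires $u(0)=0$. For $u\in W^{1,p}$ with $u(0)\neq 0$ the left-hand side is infinite, since $\int_0^1 t^{\beta-p}\d t=\infty$. Interpolating $u\mapsto t^{-z}u$ against the trivial bound at $\operatorname{Re} z=0$ therefore gives $\norm{t^{-s}u}_{L^p(w_\beta)}\lesssim\norm{u}_{{}_0H^{s,p}}$ only on $[L^p,{}_0W^{1,p}]_s={}_0H^{s,p}$. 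It gives it there for every $s\in(0,1)$, so it cannot see the threshold $\frac{1+\beta}{p}$ at all. Applying it on $H^{s,p}$ (Step 2) or on $\{u\in H^{s,p}: u(0)=0\}$ (Step 3) presupposes the identification you are trying to prove.

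Your fallback, reiteration between $s=0$ and the integer cases, fails for the same reason. Zero extension is bounded $W^{1,p}(\R_+)\to W^{1,p}(\R)$ only on ${}_0W^{1,p}$, and real interpolation produces Besov rather than Bessel potential spaces. Moreover, even granting a Hardy inequality on the right space, the inference ``hence $E_0$ is bounded on $H^{s,p}$'' is not automatic. For $\mathcal B$-valued Bessel potential spaces there is no Gagliardo-type difference norm into which the Hardy term could be fed. What you actually need is that $\mathds{1}_{\R_+}$ is a pointwise multiplier on $H^{s,p}(\R,w_\beta;\mathcal B)$.

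That multiplier theorem is due to Meyries and Veraar and holds for UMD $\mathcal B$ in the range $\frac{1+\beta}{p}-1<s<\frac{1+\beta}{p}$. It is the real substance behind the cited result, and with it no Hardy inequality is needed. For $s<\frac{1+\beta}{p}$, you get $E_0u=\mathds{1}_{\R_+}\mathcal Eu\in H^{s,p}(\R,w_\beta;\mathcal B)$, where $\mathcal E$ is your extension operator. For $s>\frac{1+\beta}{p}$ and $u(0)=0$, one has $(E_0u)'=\mathds{1}_{\R_+}(\mathcal Eu)'$ with no Dirac mass at $0$. Since $s-1\in(\frac{1+\beta}{p}-1,0)$ lies in the multiplier range, again $E_0u\in H^{s,p}(\R,w_\beta;\mathcal B)$.

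To pass from support in $[0,\infty)$ to membership in ${}_0H^{s,p}=D((1+A_0)^s)$, you need causality of $(1+\partial_t)^{s}=(1+\partial_t)(1+\partial_t)^{s-1}$, not merely of $(1+\partial_t)^{-s}$ as you state.

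Finally, your trace argument in Step 3 is wrong for $\beta<0$. If $\frac{1+\beta}{p}<s\leq\frac1p$, then $H^{s,p}(\R_+,w_\beta;\mathcal B)$ does not embed into $C([0,\infty);\mathcal B)$: away from $0$ the weight is harmless, and such functions may jump. Boundedness of $u\mapsto u(0)$ follows instead from $\norm{u(0)}_{\mathcal B}\lesssim\norm{u}_{L^p(w_\beta)}^{1-\theta_0}\norm{u}_{W^{1,p}(w_\beta)}^{\theta_0}$ with $\theta_0=\frac{1+\beta}{p}$. Combine this with $[L^p,W^{1,p}]_s\hookrightarrow(L^p,W^{1,p})_{\theta_0,1}$ for $s>\theta_0$.
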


\begin{theorem}[\cite{LindemulderMeyriesVeraar, AgrestiVeraarNonLinParab}]
Theorem \ref{thm:equivalence_h_H} extends to finite intervals $[0,T]$. In particular, if $s\neq \frac{1+\beta}{p}$, then ${_0H}^{s,p}([0,T],w_{\beta};\mathcal B)$ is a closed subspace of $H^{s,p}([0,T],w_{\beta};\mathcal B)$.
\end{theorem}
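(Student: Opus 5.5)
The plan is to reduce to the half-line case, Theorem \ref{thm:equivalence_h_H}, by a retraction/coretraction argument. I will build a linear extension operator $E_T$ from functions on $[0,T]$ to functions on $\R_+$ that is bounded simultaneously on
\begin{itemize}
\item[(a)] $L^p([0,T],w_\beta;\mathcal B)\to L^p(\R_+,w_\beta;\mathcal B)$,
\item[(b)] $W^{1,p}([0,T],w_\beta;\mathcal B)\to W^{1,p}(\R_+,w_\beta;\mathcal B)$,
\item[(c)] ${_0W}^{1,p}([0,T],w_\beta;\mathcal B)\to {_0W}^{1,p}(\R_+,w_\beta;\mathcal B)$.
\end{itemize}
Let $R_T$ denote restriction to $[0,T]$. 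Then $R_T E_T=\mathrm{id}$, and $R_T$ is contractive on each of these pairs, including the zero-trace spaces, since restriction does not alter the value at $0$. Complex interpolation is a functor, so both $E_T$ and $R_T$ interpolate. Hence $H^{s,p}([0,T],w_\beta;\mathcal B)$ is a retract of $H^{s,p}(\R_+,w_\beta;\mathcal B)$, and ${_0H}^{s,p}([0,T],w_\beta;\mathcal B)$ is a retract of ${_0H}^{s,p}(\R_+,w_\beta;\mathcal B)$, both via the same pair $(E_T,R_T)$.

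To construct $E_T$, fix a smooth cutoff $\chi$ with $\chi=1$ on $[0,T/3]$ and $\chi=0$ on $[2T/3,\infty)$. For $u$ on $[0,T]$, split $u=\chi u+(1-\chi)u$ and treat the two pieces separately.
\begin{itemize}
\item Extend $\chi u$ by zero beyond $T$.
\item Extend $(1-\chi)u$ by even reflection $t\mapsto u(2T-t)(1-\chi(2T-t))$ on $[T,2T]$, and by zero beyond $2T$.
\end{itemize}
Even reflection preserves $W^{1,p}$ because there is no jump at $T$. The reflected piece is supported in $[T/3,5T/3]$. There $w_\beta$ is bounded above and below by constants depending only on $T$ and $\beta$, so the weight causes no difficulty. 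Near $0$ nothing is changed, so the trace at $0$ is preserved, which gives (c). This step is routine bookkeeping with product and chain rules.

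With the retraction in hand, the identification follows in two directions. Take the case $s>\frac{1+\beta}{p}$.
\begin{itemize}
\item If $u\in{_0H}^{s,p}([0,T])$, then $E_Tu\in{_0H}^{s,p}(\R_+)$. By Theorem \ref{thm:equivalence_h_H}, $E_Tu\in H^{s,p}(\R_+)$ with $(E_Tu)(0)=0$. Restricting gives $u\in H^{s,p}([0,T])$ with $u(0)=0$, together with a norm bound.
\item Conversely, let $u\in H^{s,p}([0,T])$ with $u(0)=0$. Then $E_Tu\in H^{s,p}(\R_+)$ has zero trace, so Theorem \ref{thm:equivalence_h_H} places it in ${_0H}^{s,p}(\R_+)$. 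Applying $R_T$ gives $u\in{_0H}^{s,p}([0,T])$ with an equivalent norm.
\end{itemize}
The case $s<\frac{1+\beta}{p}$ runs identically without the trace condition.

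Closedness then follows from the identification. For $s<\frac{1+\beta}{p}$ the two spaces coincide isomorphically, so there is nothing to prove. For $s>\frac{1+\beta}{p}$, the trace $u\mapsto u(0)$ is continuous on $H^{s,p}([0,T])$: write it as $u\mapsto (E_Tu)(0)$ and use the bounded trace on $H^{s,p}(\R_+)$ for $s>\frac{1+\beta}{p}$ (the content behind Theorem \ref{thm:equivalence_h_H}). Hence ${_0H}^{s,p}([0,T])$, being isomorphic to the kernel of a continuous map, is closed.

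The main obstacle I expect is this last point: the continuity of the trace on the fractional weighted space, and in particular checking that the trace obtained by interpolation agrees with pointwise evaluation at $0$. I would handle it by density of $W^{1,p}$, where the trace is classical, combined with the half-line statement transported through $E_T$.
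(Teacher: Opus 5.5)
Your proposal is correct. You transfer Theorem \ref{thm:equivalence_h_H} to $[0,T]$ through an extension--restriction retraction that is compatible with both couples $(L^p, W^{1,p})$ and $(L^p, {_0W}^{1,p})$, then obtain closedness as the kernel of the bounded trace; this is essentially the argument behind the cited results of Lindemulder--Meyries--Veraar and Agresti--Veraar, which the paper itself invokes via their extension operator $\mathcal E_T$, and the $T$-dependence of your cutoff constants is harmless since the statement concerns a fixed $T$.
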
 
As a consequence the estimate $\norm{u}_{{_0H}^{s,p}([0,T],w_{\beta};\mathcal B)}\simeq \norm{u}_{H^{s,p}([0,T],w_{\beta};\mathcal B)}$ holds, where we need the condition  $u(0)=0$ if $s>\frac{1+\beta}{p}$. The theorem will usually be applied through the latter norm equivalence.
\subsection{(Stochastic) maximal regularity in weighted spaces}
let $A$ be a closed
operator on $\mathcal B$ such that $-A$ generates a strongly continuous semigroup
$P=(P(t))_{t\geq 0}$ on $\mathcal B$. We begin this subsection with a host of definitions and results taken from \cite{AgrestiVeraarStability, AgrestiVeraarNonLinParab, SimonettPruess}.

\begin{definition}[$H^\infty$-calculus for sectorial operators]
The operator $A$ is sectorial if the domain and the range of $A$ are dense in $\mathcal B$ and 
\begin{enumerate}
    \item[(i)] there exists $\phi\in (0,\pi)$ such that $\sigma(A)\subset \overline{\Sigma}_{\phi}$, where $ \Sigma_{\phi}\coloneqq\{z\in \mathbb C\,:\,|\arg z|< \phi\}$
    \item[(ii)] there exists $C_{\mathrm{res}}>0$ such that
    \begin{equation}
    \label{eq:sectorial}
    |\lambda|\norm{(\lambda-A)^{-1}}_{\mathcal L(\mathcal B)}\leq C_{\mathrm{res}}, \qquad \forall \lambda \in \mathbb C\setminus \overline{\Sigma_{\phi}}.
    \end{equation}
\end{enumerate}
Then $\omega(A)\coloneqq\inf\{\phi\in (0,\pi)\,:\,\eqref{eq:sectorial}\,\,\text{holds for some}\; C_{\mathrm{res}}>0\}$ is called the angle of sectoriality of $A$. We say that $A$ has a bounded $H^{\infty}$-calculus of angle $\phi \in (0,\pi)$ if
for all holomorphic functions $f:\Sigma_{\phi} \rightarrow  \mathbb C$ with $$\norm{\frac{f}{\min\{|z|^{\varepsilon},|z|^{-\varepsilon}\}}}_{L^\infty} < \infty$$ for some $\varepsilon > 0$, the bound 
\begin{equation}
\label{eq:H_infinite_calculus}
\norm{f(A)}_{\mathcal L(\mathcal B)}\leq \const \norm{f}_{L^{\infty}(\Sigma_{\phi})}
\end{equation} 
holds for some constant $C_{\themycounter}$ independent of $f$. Here, the operator $f(A)$ is defined through the line integral
$$
f(A)\coloneqq \frac{1}{2\pi i} \int_{\Gamma} f(z)(z-A)^{-1}\d z \in \mathcal L(\mathcal B)
$$
for a contour $\Gamma \subset \Sigma_\phi$ in an intermediate sector enclosing the spectrum. Then, $$\omega_{H^\infty}(A) \coloneqq \inf\{\phi\in (0,\pi)\,:\,\eqref{eq:H_infinite_calculus}\text{ holds for some }\const >0\}$$ is the angle of the $H^{\infty}$-calculus of $A$.
\end{definition}
\begin{remark}
It is well-known that $A$ is sectorial of angle $\phi < \frac{\pi}{2}$ if and only if $-A$ generates an analytic semigroup.
\end{remark}

\begin{example}
Importantly, second order uniformly elliptic differential operators with Dirichlet, Neumann and periodic boundary conditions have a bounded $H^\infty$-calculus, see e.g. \cite[Ex. 2.1]{AgrestiVeraarNonLinParab}.
\end{example}
\begin{definition}[Real interpolation space associated with a sectorial operator]
Let $A$ be a sectorial operator of angle $< \frac\pi2$ on $\mathcal B$. For $\theta\in(0,1)$ and $p\in (1,\infty)$, we define
$
D_A(\theta,p)
$ as the real interpolation of $\mathcal B$ and $D(A)$ equipped with the Graph norm, i.e.
$$
D_A(\theta,p)=(\mathcal B,D(A))_{\theta,p}.
$$ 
\end{definition}
\begin{proposition} \label{prop:InterpolationIdentity}
For arbitrary $1\geq \rho > \theta > 0$ and $p \in [1,\infty]$,
$$
\norm{u}_{D_A(\theta,p)}
\simeq
\norm{u}_{\mathcal B}
+
\left(
\int_0^\infty
\left(t^{\rho-\theta}\norm{A^\rho P(t)u}_{\mathcal B}\right)^p
\,\frac{\d t}{t}
\right)^{1/p},
$$
and $$
\norm{u}_{D_A(\theta,p)}
\simeq
\norm{u}_{\mathcal B}
+
\sup_{t \in (0,\infty)} t^{\rho-\theta}\norm{A^\rho P(t)u}_{\mathcal B}
$$ in the case $p = \infty$.
\end{proposition}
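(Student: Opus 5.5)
This is the standard semigroup characterisation of real interpolation spaces. I would prove it by reducing to the classical case $\rho = 1$ and then passing to general $\rho$ via the moment inequality and the semigroup property. Since $A$ is sectorial of angle $<\frac\pi2$, $P$ is a bounded analytic semigroup, so $\sup_{t>0}\norm{t^{\rho}A^{\rho}P(t)}_{\mathcal L(\mathcal B)}<\infty$ for every $\rho\in(0,1]$. Fractional powers $A^\rho$ are well defined, at least on $D(A)$ and on the range of $P(t)$, because $A$ has dense range.

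\textbf{Step 1: the case $\rho=1$.} The identity
$$\norm{u}_{D_A(\theta,p)}\simeq \norm{u}_{\mathcal B}+\Big(\int_0^\infty (t^{1-\theta}\norm{AP(t)u}_{\mathcal B})^p\tfrac{\d t}{t}\Big)^{1/p}$$
is the classical characterisation (Lunardi, Triebel, Pr\"uss--Simonett). I would obtain it from the $K$-functional. For the upper bound, split $u=(u-P(t)u)+P(t)u$ and use $u-P(t)u=-\int_0^tAP(s)u\,\d s$ together with Hardy's inequality. For the lower bound, take any decomposition $u=a+b$ with $b\in D(A)$ and estimate $t\norm{AP(t)u}\lesssim \norm{a}+t\norm{Ab}$ by analyticity, which gives $t\norm{AP(t)u}\lesssim K(t,u)$. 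The case $p=\infty$ is the same argument with suprema in place of integrals. Because the statement integrates over $(0,\infty)$ rather than $(0,1)$, the inhomogeneous norm needs a little care: for $t\ge1$ one has $t^{1-\theta}\norm{AP(t)u}\lesssim t^{-\theta}\norm{u}$, which is integrable against $\d t/t$ on $[1,\infty)$. So the large-time part is absorbed into $\norm{u}_{\mathcal B}$ in one direction and is harmless in the other.

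\textbf{Step 2: from $\rho=1$ to general $\rho\in(\theta,1]$.} I would compare the two integrands using the semigroup property. For ``$\rho$-norm $\lesssim$ $1$-norm'', write
$$A^\rho P(t)u=-\int_t^\infty A^{1+\rho}P(s)u\,\d s=-\int_t^\infty A^{\rho}P(s/2)\,AP(s/2)u\,\d s,$$
which is valid since $A^\rho P(s)u\to0$ as $s\to\infty$ (this follows from the density of the range). Then $\norm{A^\rho P(s/2)}\lesssim s^{-\rho}$, so
$$t^{\rho-\theta}\norm{A^\rho P(t)u}\lesssim t^{\rho-\theta}\int_t^\infty s^{-\rho}\norm{AP(s/2)u}\d s.$$
A dual Hardy inequality, which requires $\rho-\theta>0$, bounds this in $L^p(\d t/t)$ by the $\rho=1$ quantity. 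For the converse, write $AP(t)u=A^{1-\rho}P(t/2)\,A^\rho P(t/2)u$ and use $\norm{A^{1-\rho}P(t/2)}\lesssim t^{\rho-1}$. This gives the pointwise bound $t^{1-\theta}\norm{AP(t)u}\lesssim t^{\rho-\theta}\norm{A^\rho P(t/2)u}$, and the change of variables $t\mapsto t/2$ preserves $\d t/t$. The case $p=\infty$ is identical with suprema.

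I expect the main obstacle to be Step 2's first direction. It needs the representation as an integral from $t$ to $\infty$, which requires decay at infinity (or an alternative treatment of large $t$ using $\norm{u}_{\mathcal B}$), and it needs the Hardy inequality with the correct sign condition $\rho>\theta$. If the decay is problematic, I would first try to split the integral at $t=1$ and bound the tail directly by $t^{-\theta}\norm{u}_{\mathcal B}$.
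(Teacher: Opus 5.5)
Your proof is correct, but it takes a different route from the paper.

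\textbf{The paper's route.} The paper never compares different values of $\rho$ by hand. It uses the reiteration theorem to write $(\mathcal B,D(A))_{\theta,p}=(\mathcal B,D(A^\rho))_{\theta/\rho,p}$. It then quotes Haase's functional-calculus characterisation of real interpolation spaces between $\mathcal B$ and $D(A^\rho)$ (Example 7.2 and Corollary 7.3 of the cited work). That characterisation directly gives the weighted $L^p(\d t/t)$ condition on $t^{\rho-\theta}A^\rho P(t)u$.

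\textbf{Your route.} You prove the classical case $\rho=1$ from the $K$-functional. You then pass to general $\rho$ using only:
\begin{itemize}
\item the bounds $\norm{s^\alpha A^\alpha P(s)}\lesssim 1$ for $\alpha\in\{\rho,1-\rho\}$;
\item the factorisations $AP(t)=A^{1-\rho}P(t/2)A^\rho P(t/2)$ and $A^{1+\rho}P(s)=A^\rho P(s/2)AP(s/2)$;
\item the dual Hardy inequality.
\end{itemize}

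\textbf{Comparison.} The paper's argument is a two-line reduction, but it rests on heavier black boxes. Reiteration requires $D(A^\rho)$ to be of class $J_\rho\cap K_\rho$ between $\mathcal B$ and $D(A)$, i.e.\ $D_A(\rho,1)\hookrightarrow D(A^\rho)\hookrightarrow D_A(\rho,\infty)$ (Komatsu). On top of that comes Haase's functional-calculus theorem. Your argument is self-contained and gives explicit constants. It also shows exactly where each hypothesis enters: $\rho>\theta$ is needed for the dual Hardy inequality, and $\rho\le 1$ for the bound $\norm{A^{1-\rho}P(t/2)}\lesssim t^{\rho-1}$.

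\textbf{Two minor corrections.}
\begin{itemize}
\item The sign in your tail representation should be $A^\rho P(t)u=+\int_t^\infty A^{1+\rho}P(s)u\,\d s$. This is irrelevant for the norm estimates.
\item The decay at infinity that you flag as the main obstacle is automatic and has nothing to do with density of the range. You have $\norm{A^\rho P(s)u}\le Cs^{-\rho}\norm{u}\to 0$, and $\norm{A^{1+\rho}P(s)u}\le Cs^{-1-\rho}\norm{u}$ is integrable on $[t,\infty)$. So the representation is justified directly, and no splitting at $t=1$ is needed in that step.
\end{itemize}
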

\begin{proof}

This characterisation of interpolation spaces for sectorial operators essentially follows from \cite[Example 7.2 and Corollary 7.3]{HaaseFunCalcInterpol}. First, the reiteration property shows that $(\mathcal B,D(A))_{\theta,p} = (\mathcal B,D(A^\rho))_{\frac\theta\rho,p}$. Inserting Example 7.2 therefore gives that \begin{equation*}
D_A(\theta,p) = \Set{u \in \mathcal B; \int_0^\infty
\left(t^{\rho-\theta}\norm{A^\rho P(t)x}_{\mathcal B}\right)^p
\,\frac{\d t}{t} < \infty}. \qedhere
\end{equation*}
\end{proof}
Hereafter, we adopt the notation $$H^{1,p}_\beta([0,T];\mathcal B) \coloneqq H^{1,p}([0,T],w_\beta;\mathcal B)$$ and the corresponding variants for $L^p, W^{k,p}$ and such.
\begin{definition}[Weighted deterministic maximal $L^p$-regularity]
Let $\mathcal B_1 \coloneqq D(A) \subset \mathcal B$. Given $p \in (1,\infty)$ and $\beta \in (-1,p-1)$, we say that $A$ has \textit{weighted deterministic maximal $L^p$-regularity} on $[0,T]$ if for every
$f\in L^p_\beta([0,T];\mathcal B)$ and $u_0\in (\mathcal B,\mathcal B_1)_{1-(1+\beta)/p,p}$
there exists a unique
$$
u\in W^{1,p}_\beta([0,T];\mathcal B)\cap L^p_\beta([0,T];\mathcal B_1)
$$
such that $u(0)=u_0$ and
$$
u'(t)+Au(t)=f(t)\quad \text{for a.e. } t\in(0,T),
$$
and moreover, there is a constant $C_{MR}>0$ independent of $f$ and $u_0$ such that
\begin{equation}
\norm{u}_{W^{1,p}_\beta([0,T];\mathcal B)}+\norm{u}_{L^p_\beta([0,T];\mathcal B_1)}
\leq C_{MR}\left(\norm{f}_{L^p_\beta([0,T];\mathcal B)}+\norm{u_0}_{(\mathcal B,\mathcal B_1)_{1-(1+\beta)/p,p}}\right).
\end{equation}
\end{definition}
\begin{remark} \label{rem:detconvmaxreg}
In particular, if $u_0 = 0$, we conclude that the convolution $$u = P \ast f \coloneqq \int_0^\cdot P(\cdot-s)f(s) \d s$$ satisfies
\begin{equation}
\norm{u}_{_0W^{1,p}_\beta([0,T];\mathcal B)}+\norm{u}_{L^p_\beta([0,T];\mathcal B_1)}
\leq C_{MR}\norm{f}_{L^p_\beta([0,T];\mathcal B)}.
\end{equation}
\end{remark}
We now introduce the stochastic integration framework necessary to apply stochastic maximal regularity. In the following, denote by $L^p_{\mathcal F}(\Omega;\gamma(L^2([0,T];U),\mathcal B))$ the closure of the $\mathcal F_t$-adapted elementary processes in $L^p(\Omega;\gamma(L^2([0,T];U),\mathcal B))$. Here, $\gamma(\cdot, \cdot)$ denotes the space of $\gamma$-radonifying operators. For UMD Banach spaces $\mathcal B$, this is the appropriate space of integrands for which the stochastic integral is well-defined. Namely, for $G \in L^p_{\mathcal F}(\Omega;\gamma(L^2([0,T];U),\mathcal B))$, the norm equivalence
\begin{equation} \label{eq:itoisometrybanach}
\norm{\int_0^\cdot G \d {W}_s}_{L^p(\Omega;C([0,T];\mathcal B))} \simeq \norm{G}_{L^p(\Omega;\gamma(L^2([0,T];U),\mathcal B))}
\end{equation}
holds by the results of \citet[Thm. 4.4]{VanNeervenVeraarStochIntUMD}.
For type $2$ Banach spaces, in particular, there exists the embedding
\begin{equation} \label{eq:stochintembedding}
L^2([0,T],\gamma(U,\mathcal B)) \hookrightarrow \gamma(L^2([0,T];U),\mathcal B),
\end{equation} 
see e.g.
\cite[Cor. 2.5]{AgrestiVeraarStability}. This lets us conclude that 
$$L^p_{\mathcal F}(\Omega\times [0,T];\gamma(U,\mathcal B)) \hookrightarrow L^p_{\mathcal F}(\Omega;\gamma(L^2([0,T];U),\mathcal B)),
$$
where the former denotes the space of $p$-integrable, progressively measurable processes with values in $\gamma(U,\mathcal B)$ \cite[Prop. 2.1]{VeraarWeisStochConv}. It was shown in detail in \cite{VanNeervenVeraarStochIntUMD} that we can localise the stochastic integral to the $\mathcal F_t$-progressively measurable integrands in $L^0(\Omega;L^p( [0,T];\gamma(U,\mathcal B)))$, where $L^0(\Omega,X)$ denotes the space of strongly measurable functions in a Banach space $X$. Now, for
$
G\in L^0_{\mathcal F}(\Omega;L^p( [0,T];\gamma(U,\mathcal B)))
$, $p \geq 2$, the stochastic convolution process is defined by
$$
(P\diamond G)(t)
\coloneqq
\int_0^t P(t-s)G_s\d {W}_s,
\qquad t \geq 0.
$$
In our framework, the following result assures well-definedness and existence of continuous modifications of the stochastic convolution process.
\begin{theorem}[\cite{VeraarWeisStochConv}] \label{thm:maxestimatestochconv}
Let $\mathcal B$ be a UMD space of type 2 and assume that $A$ has a bounded $H^\infty$-calculus of angle $< \frac{\pi}{2}$. Then, for any $G\in L^0_{\mathcal F}(\Omega;L^p( \mathbb R_+;\gamma(U,\mathcal B)))$, $P \diamond G$ has a continuous modification. Moreover, for all $p \in (0,\infty)$, 
\begin{equation} \label{eq:maxestimatestochconvo}
\norm{P \diamond G}_{L^p(\Omega;C(\mathbb R_+;\mathcal B))} \leq \const  \norm{G}_{L^p(\Omega;L^2( \mathbb R_+;\gamma(U,\mathcal B)))},
\end{equation}
where $C_{\themycounter}$ depends on $p$.
\end{theorem}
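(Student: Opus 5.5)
The plan is the dilation argument of Veraar and Weis: it reduces the stochastic convolution to a stochastic integral with a fixed operator in front, so that the maximal estimate \eqref{eq:itoisometrybanach} applies directly. Because $A$ has a bounded $H^\infty$-calculus of angle $<\frac{\pi}{2}$ on the UMD space $\mathcal B$, I will invoke the Fröhlich--Weis dilation theorem. It supplies a Banach space $\widetilde{\mathcal B}$, bounded maps $J\colon \mathcal B\to\widetilde{\mathcal B}$ and $Q\colon \widetilde{\mathcal B}\to \mathcal B$, and a strongly continuous group $(\mathcal U(t))_{t\in\mathbb R}$ on $\widetilde{\mathcal B}$ such that
\begin{equation*}
P(t) = Q\,\mathcal U(t)\,J, \qquad t\geq 0, \qquad \sup_{t\in\mathbb R}\norm{\mathcal U(t)}_{\mathcal L(\widetilde{\mathcal B})}<\infty .
\end{equation*}
One can take $\widetilde{\mathcal B}=\gamma(L^2(\mathbb R),\mathcal B)$ with $\mathcal U$ the shift group, which is isometric. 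I first have to check that $\widetilde{\mathcal B}$ is again UMD and of type $2$. Both properties pass from $\mathcal B$ to $\gamma(H,\mathcal B)$ for a Hilbert space $H$, since the latter is isomorphic to a closed subspace of $L^2(\Omega';\mathcal B)$.

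With the dilation in hand, the group property gives, for $G$ in the stated class,
\begin{equation*}
(P\diamond G)(t) = Q\,\mathcal U(t)\int_0^t \mathcal U(-s)\,J G_s \d W_s = Q\,\mathcal U(t)\,M_t, \qquad M_t \coloneqq \int_0^t \mathcal U(-s) J G_s\d W_s .
\end{equation*}
The integrand $s\mapsto \mathcal U(-s)JG_s$ is progressively measurable in $\gamma(U,\widetilde{\mathcal B})$. Its norm is pointwise at most $\sup_s\norm{\mathcal U(s)}\,\norm{J}\,\norm{G_s}_{\gamma(U,\mathcal B)}$, by the ideal property of $\gamma$-radonifying operators. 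Hence the type $2$ embedding \eqref{eq:stochintembedding} together with \eqref{eq:itoisometrybanach} yields a continuous version of $M$ satisfying
\begin{equation*}
\norm{M}_{L^p(\Omega;C(\mathbb R_+;\widetilde{\mathcal B}))}\lesssim \norm{G}_{L^p(\Omega;L^2(\mathbb R_+;\gamma(U,\mathcal B)))}.
\end{equation*}
Continuity of $t\mapsto Q\,\mathcal U(t)M_t$ then follows from strong continuity and uniform boundedness of $\mathcal U$ combined with continuity of $M$: one splits $\mathcal U(t)M_t-\mathcal U(t_0)M_{t_0}=\mathcal U(t)(M_t-M_{t_0})+(\mathcal U(t)-\mathcal U(t_0))M_{t_0}$. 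Taking suprema and using uniform boundedness once more gives \eqref{eq:maxestimatestochconvo}.

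The BDG-type equivalence is available for all $p\in(0,\infty)$ in UMD spaces; if needed for small $p$, I would obtain that range from $p\geq 2$ by Lenglart's inequality. Integrands that are only in $L^0_{\mathcal F}(\Omega;L^2(\mathbb R_+;\gamma(U,\mathcal B)))$ are handled by localisation. I would use the stopping times $\tau_n=\inf\{t: \norm{G}_{L^2(0,t;\gamma(U,\mathcal B))}\geq n\}$, apply the estimate to $G\mathbf 1_{[0,\tau_n]}$, and note that the resulting continuous versions agree on $\{\tau_n=\infty\}$, whose union has full measure.

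I expect the main obstacle to be the dilation step itself. One has to verify that the bounded $H^\infty$-calculus of angle $<\frac\pi2$, and not mere analyticity of $P$, produces a dilation group that is uniformly bounded on all of $\mathbb R$ and not just on compact intervals, since the estimate is claimed on $\mathbb R_+$. It also has to act on a space that is still UMD and of type $2$. If the global bound were unavailable, I would fall back to the factorisation method of Da Prato--Kwapień--Zabczyk on finite intervals. There I would use the square-function estimates coming from the $H^\infty$-calculus to control $\int_0^t (t-s)^{-\alpha}\norm{P(t-s)}\dots$, and then recover the global statement from the boundedness of the square functions.
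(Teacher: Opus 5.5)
Your dilation argument is correct and is essentially the proof of the cited result: the paper gives no proof of its own and defers to Veraar--Weis, who use the Fr\"ohlich--Weis dilation, write $(P\diamond G)(t)=Q\,\mathcal U(t)M_t$, and apply the UMD/type $2$ maximal inequality to $M_t$ on the dilation space $\gamma(L^2(\mathbb R),\mathcal B)$. The obstacle you flag is already settled by your own observation that the shift group is isometric on $\gamma(L^2(\mathbb R),\mathcal B)$, so it is uniformly bounded on all of $\mathbb R$ and the factorisation fallback is not needed.
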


\begin{corollary}\label{cor:stopequiv}
Let $G_1, G_2$ satisfy the assumptions of Theorem \ref{thm:maxestimatestochconv}. Suppose that for some stopping time $\tau \geq 0$, $G_1 \equiv G_2$ on $[0,\tau]$, $\mathbb P$-a.s. Then
$P \diamond G_1\overset{C([0,\tau];\mathcal B)}{=} P \diamond G_2$, $\mathbb P$-a.s.
\end{corollary}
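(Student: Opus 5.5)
The statement to prove is Corollary \ref{cor:stopequiv}: if $G_1\equiv G_2$ on $[0,\tau]$ almost surely, then the two stochastic convolutions agree on $[0,\tau]$ almost surely. The plan is to reduce this to the linearity of the stochastic convolution and the estimate \eqref{eq:maxestimatestochconvo} of Theorem \ref{thm:maxestimatestochconv}, combined with a localisation argument. The one subtlety is that the semigroup factor $P(t-s)$ depends on $t$, so $P\diamond G$ is not itself a stochastic integral process and cannot be stopped directly.

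Set $H\coloneqq G_1-G_2$, which is progressively measurable and lies in $L^0_{\mathcal F}(\Omega;L^p(\mathbb R_+;\gamma(U,\mathcal B)))$. By linearity of the stochastic integral, $P\diamond G_1-P\diamond G_2=P\diamond H$ as continuous modifications, up to indistinguishability. It therefore suffices to show that $P\diamond H=0$ on $[0,\tau]$ almost surely.

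Define the truncated integrand $\widetilde H\coloneqq \mathds 1_{[0,\tau]}H$. The process $\mathds 1_{[0,\tau]}(s)=\mathds 1_{\{s\le\tau\}}$ is left-continuous and adapted, hence progressive, so $\widetilde H$ is admissible. Moreover $\widetilde H=0$ in $L^2(\mathbb R_+;\gamma(U,\mathcal B))$ almost surely, since $H=0$ on $[0,\tau]$ almost surely. By \eqref{eq:maxestimatestochconvo}, applied in its $L^0$-localised form or for any fixed $p$ together with linearity, $P\diamond\widetilde H=0$ in $C(\mathbb R_+;\mathcal B)$ almost surely.

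The main step is to show that $P\diamond H$ and $P\diamond\widetilde H$ agree on $[0,\tau]$. Fix $t\ge0$. On the event $\{t\le\tau\}$, the integrands $s\mapsto \mathds 1_{[0,t]}(s)P(t-s)H_s$ and $s\mapsto\mathds 1_{[0,t]}(s)P(t-s)\widetilde H_s$ coincide. Since $\{t\le\tau\}\in\mathcal F_t$, we can use the local property of the stochastic integral for integrands on $[0,t]$: if two integrands agree on an $\mathcal F_0$-measurable set, or more generally if they agree up to a stopping time, then so do their integrals. This property holds for elementary processes and extends by density via \eqref{eq:itoisometrybanach}.

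To apply it to the set $\{t\le\tau\}$, rather than to an $\mathcal F_0$-measurable set, rewrite the identity using the stopping time $\tau\wedge t$. One has $\mathds 1_{[0,t]}P(t-\cdot)H$ equal to $\mathds 1_{[0,t]}P(t-\cdot)\widetilde H$ on $[0,\tau\wedge t]$, and on $\{t\le\tau\}$ the stopped integrals at time $t$ are the full integrals. Hence $(P\diamond H)(t)=(P\diamond\widetilde H)(t)=0$ almost surely on $\{t\le\tau\}$.

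Taking $t$ over the nonnegative rationals and using a single null set, we obtain $(P\diamond H)(t)=0$ for all rational $t\le\tau$. Continuity of the modification from Theorem \ref{thm:maxestimatestochconv} then extends this to all $t\in[0,\tau]$. For $t=\tau$ itself, use left-continuity when $\tau>0$; the case $\tau=0$ is trivial since both convolutions vanish at time $0$.

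The part requiring the most care is this passage from a fixed-$t$ identity to the random interval $[0,\tau]$. In particular, one has to justify the local property of the $\gamma$-radonifying stochastic integral in the UMD setting. I would take it from \cite{VanNeervenVeraarStochIntUMD}, which treats localisation of the integral; if needed, I would verify it for adapted step processes and pass to the limit via \eqref{eq:itoisometrybanach}.
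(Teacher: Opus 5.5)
Your argument is correct and is essentially the one the paper leaves implicit. The paper states the corollary without proof, as an immediate consequence of linearity and the estimate \eqref{eq:maxestimatestochconvo} of Theorem \ref{thm:maxestimatestochconv} applied to $\mathds 1_{[0,\tau]}(G_1-G_2)$, together with locality of the stochastic integral; your fixed-$t$ stopping argument via $\tau\wedge t$ and the extension to all of $[0,\tau]$ through rational times and path continuity fill in exactly the details the paper omits (and later uses again in the proof of Lemma \ref{lemma:CriticalUniqueness}).
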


In the following, the exponential growth bound of the semigroup $P$ is defined by
$$
\omega_0(-A)
\coloneqq
\inf\left\{
\omega\in\mathbb R:
\sup_{t>0} e^{-\omega t}\norm{P(t)}_{\mathcal L(\mathcal B)}<\infty
\right\}.
$$
Note that for any $\omega>\omega_0(-A)$, the operator $\omega+A$ is sectorial \cite{AgrestiVeraarStability}, so its fractional power
$(\omega+A)^{1/2}$ is defined. We write
$
\mathcal B_{1/2}^{(\omega)} \coloneqq D\left((\omega+A)^{1/2}\right),
$ to denote its fractional domain equipped with its graph norm. 
\begin{definition} [Weighted stochastic maximal regularity]
Let $\mathcal B_1 \coloneqq D(A) \subset \mathcal B$. Given $p \in (1,\infty)$ and $\beta \in \mathbb R$, we say that $A$ has \emph{weighted stochastic maximal $L^p$-regularity} on $[0,T]$  if for some
$\omega>\omega_0(-A)$ and any $\theta \in [0,\frac12)$,
the stochastic convolution $P\diamond G$ belongs to
$$
L^p\left(\Omega;{_0H}^{\theta,p}\left([0,T],w_\beta;\mathcal B_{1/2-\theta}^{(\omega)}\right)\right),
$$
and there exists a constant $C_{p,\beta,\theta,T}>0$ such that for any $G \in L^p(\Omega\times [0,T],w_\beta;\gamma(U,\mathcal B))$,
\begin{equation}
\label{eq:finitemaxregfractional}
\norm{P\diamond G}_{L^p(\Omega;{_0H}^{\theta,p}([0,T],w_\beta;\mathcal B_{1/2-\theta}^{(\omega)}))}
\leq
C_{p,\beta,\theta,T}
\norm{G}_{L^p(\Omega\times [0,T],w_\beta;\gamma(U,\mathcal B))}.
\end{equation}
\end{definition}

The finite time-horizon weighted stochastic maximal regularity result that we shall use is the following. One obtains this result from \citet[Thm. 7.16 and Remark 7.13]{AgrestiVeraarStability} after invoking boundedness of the multiplication operator $f \mapsto (t \mapsto e^{-\varepsilon t}f(t))$ on Bessel potential spaces. This lets us reduce the claim to the case of exponentially stable semigroups generated by operators of the form $-\omega - A$.
\begin{theorem}[\cite{AgrestiVeraarStability,SimonettPruess}]
\label{thm:finitemaxregfractional}
Let $\mathcal B$ be isomorphic to a closed subspace of an $L^q$-space for some
$q\in [2,\infty)$. Assume that there exists $\omega>\omega_0(-A)$ such that
$\omega+A$ has a bounded $H^\infty$-calculus of angle $<\frac{\pi}{2}$. Let $T<\infty$ and $p\in (2,\infty)$. Then, for any $\beta\in \left(-1,\frac p2-1\right)$, $A$ has weighted  deterministic and stochastic maximal regularity.
\end{theorem}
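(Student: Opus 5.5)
The plan is to reduce both claims to the exponentially stable case on the half-line, where they are available in the literature, and then return to $[0,T]$ with an exponential gauge transform. Fix $\omega>\omega_0(-A)$ such that $\widetilde A\coloneqq\omega+A$ has a bounded $H^\infty$-calculus of angle $<\frac\pi2$. Then $-\widetilde A$ generates the exponentially stable analytic semigroup $\widetilde P(t)=e^{-\omega t}P(t)$, and $0\in\rho(\widetilde A)$. Moreover $D(\widetilde A)=D(A)$ with equivalent graph norms, so the trace space $(\mathcal B,\mathcal B_1)_{1-(1+\beta)/p,p}$ is unchanged. Since $\mathcal B$ is isomorphic to a closed subspace of $L^q$ with $q\in[2,\infty)$, it is UMD and of type $2$, so the standing hypotheses of \cite{AgrestiVeraarStability} hold.

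\emph{Stochastic part.} Let $G\in L^p(\Omega\times[0,T],w_\beta;\gamma(U,\mathcal B))$ be progressively measurable, extended by zero beyond $T$. Set $\widetilde G_s\coloneqq e^{-\omega s}G_s$. Pulling the deterministic factor out of the stochastic integral gives the pathwise identity
\[
(P\diamond G)(t)=e^{\omega t}\,(\widetilde P\diamond \widetilde G)(t),\qquad t\in[0,T].
\]
For $\beta\in(-1,\frac p2-1)$ and $\theta\in[0,\frac12)$, \cite[Thm.~7.16 and Remark~7.13]{AgrestiVeraarStability} applies to $\widetilde A$ on $\mathbb R_+$. It yields $\widetilde P\diamond\widetilde G\in L^p(\Omega;{_0H}^{\theta,p}(\mathbb R_+,w_\beta;\mathcal B^{(\omega)}_{1/2-\theta}))$, bounded by $C\norm{\widetilde G}_{L^p(\Omega\times\mathbb R_+,w_\beta;\gamma(U,\mathcal B))}\le C\norm{G}_{L^p(\Omega\times[0,T],w_\beta;\gamma(U,\mathcal B))}$. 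The upper bound $\frac p2-1$ on $\beta$ is exactly the range in which these results cover every $\theta<\frac12$. Restricting to $[0,T]$ is contractive by the remark following \eqref{eq:contractiveembd0}.

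It remains to undo the gauge. Let $M_\omega f(t)=e^{\omega t}f(t)$. Then $M_\omega$ is bounded on $L^p([0,T],w_\beta;X)$ with norm $\le e^{|\omega|T}$. It is also bounded on ${_0W}^{1,p}([0,T],w_\beta;X)$: by the product rule $(M_\omega f)'=\omega M_\omega f+M_\omega f'$, and $M_\omega$ preserves the zero trace. Complex interpolation then gives boundedness of $M_\omega$ on ${_0H}^{\theta,p}([0,T],w_\beta;X)$ with $X=\mathcal B^{(\omega)}_{1/2-\theta}$. This proves \eqref{eq:finitemaxregfractional} with $C_{p,\beta,\theta,T}$ depending on $e^{|\omega|T}$ and $|\omega|$.

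\emph{Deterministic part.} Since $\mathcal B$ is UMD and $\widetilde A$ has a bounded $H^\infty$-calculus of angle $<\frac\pi2$, $\widetilde A$ is $\mathcal R$-sectorial of angle $<\frac\pi2$. Hence $\widetilde A$ has unweighted maximal $L^p$-regularity on $\mathbb R_+$ (Weis), and by \cite{SimonettPruess} also weighted maximal regularity for $\beta\in(-1,p-1)$, which contains $(-1,\frac p2-1)$; this includes the initial-value statement with trace space $(\mathcal B,\mathcal B_1)_{1-(1+\beta)/p,p}$. Given $f$ and $u_0$, let $\widetilde u$ solve $\widetilde u'+\widetilde A\widetilde u=e^{-\omega t}f$ with $\widetilde u(0)=u_0$, and set $u=M_\omega\widetilde u$. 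Then $u'+Au=f$ and $u(0)=u_0$. The estimate transfers by the boundedness of $M_\omega$ on $W^{1,p}_\beta$ and $L^p_\beta([0,T];\mathcal B_1)$. Uniqueness follows from the same transformation applied to differences.

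\emph{Expected difficulty.} I expect the main point needing care to be the interpolation step for $M_\omega$ on the zero-trace spaces ${_0H}^{\theta,p}$, because the ${_0H}$ scale is defined by interpolating against ${_0W}^{1,p}$ rather than $W^{1,p}$. This is handled by checking that $M_\omega$ maps ${_0W}^{1,p}$ into itself, as above. A secondary check is that the precise hypotheses of \cite[Thm.~7.16]{AgrestiVeraarStability} (closed subspace of $L^q$, $H^\infty$-angle $<\frac\pi2$, $p>2$, weight range) match those assumed here, which they do after the shift by $\omega$.
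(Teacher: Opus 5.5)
Your proposal is correct and follows the same route as the paper. The paper reduces to the exponentially stable operator $\omega+A$ via the multiplier $t\mapsto e^{\pm\omega t}$ on the weighted Bessel potential spaces and then invokes \cite[Thm.~7.16, Remark~7.13]{AgrestiVeraarStability} and \cite{SimonettPruess}; your write-up supplies the omitted details, namely the gauge identity for $P\diamond G$ and the interpolation of $M_\omega$ between $L^p_\beta$ and ${_0W}^{1,p}_\beta$.

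One small slip: the bound $\norm{e^{-\omega\cdot}G}\le\norm{G}$ holds only for $\omega\ge 0$. In general you pick up a factor $e^{|\omega|T}$ (or you can enlarge $\omega$), which is harmless since the constant is allowed to depend on $T$.
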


These results are particularly useful for fixed-point arguments when combined with suitable embeddings of vector-valued Sobolev spaces.

\begin{lemma}[Endpoint mixed derivative estimate on finite intervals]
\label{lem:endpoint-mixed-derivative}
Let $\mathcal B$ be isomorphic to a closed subspace of an $L^q$-space for some
$q\in [2,\infty)$. Assume that there exists $\omega>\omega_0(-A)$ such that
$\omega+A$ has a bounded $H^\infty$-calculus of angle $<\frac{\pi}{2}$. Let $p\in(1,\infty)$, $\beta\in(-1,p-1)$ and $\vartheta\in(0,1)$. Then the weighted mixed derivative embedding
\begin{equation}\label{eq:real-line-sobolev-interpolation}
{}_0W^{1,p}\left(\R_+,w_\beta;\mathcal B\right)\cap L^p\left(\R_+,w_\beta;D(A)\right)
\hookrightarrow
{}_0H^{1-\vartheta,p}\left(\R_+,w_\beta;\mathcal B^{(\omega)}_{\vartheta}\right)
\end{equation}
is continuous and for every $T\in(0,\infty)$,
\begin{equation}\label{eq:finite-time-sobolev-interpolation}
{}_0W^{1,p}\left([0,T],w_\beta;\mathcal B\right)\cap L^p\left([0,T],w_\beta;D(A)\right)
\hookrightarrow
{}_0H^{1-\vartheta,p}\left([0,T],w_\beta;\mathcal B^{(\omega)}_{\vartheta}\right),
\end{equation}
where the embedding constant can be chosen independently of $T$.
\end{lemma}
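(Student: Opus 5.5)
The half-line embedding \eqref{eq:real-line-sobolev-interpolation} is the weighted mixed derivative theorem, which I plan to quote rather than reprove. The finite-time statement \eqref{eq:finite-time-sobolev-interpolation}, with a constant independent of $T$, will then be obtained by an extension argument.

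\textbf{Half-line embedding.} Write $\tilde A\coloneqq\omega+A$, which is sectorial with a bounded $H^\infty$-calculus of angle $<\frac\pi2$. Since $\omega>\omega_0(-A)$ and $D(\tilde A)=D(A)$ with equivalent graph norms, it suffices to treat $\tilde A$. On ${}_0W^{1,p}(\R_+,w_\beta;\mathcal B)$ the time derivative $\partial_t$ with zero trace is sectorial of angle $\frac\pi2$. For $\beta\in(-1,p-1)$ the weight $w_\beta$ is in $A_p$, and then $\partial_t$ also has a bounded $H^\infty$-calculus on $L^p(\R_+,w_\beta;\mathcal B)$ for UMD $\mathcal B$; here I would invoke the Prüss--Simonett / Lindemulder--Meyries--Veraar result. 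The two operators $\partial_t$ and $\tilde A$ resolvent-commute, and their angles add up to less than $\pi$. The mixed derivative theorem (Sobolevskii, Prüss) then gives
\[
\norm{\partial_t^{1-\vartheta}\tilde A^{\vartheta}u}\lesssim\norm{\partial_t u}+\norm{\tilde A u},
\]
and hence \eqref{eq:real-line-sobolev-interpolation}. In this step I use the identification ${}_0H^{1-\vartheta,p}(\R_+,w_\beta;\cdot)=D(\partial_t^{1-\vartheta})$, which holds by complex interpolation and the bounded imaginary powers of $\partial_t$. I would cite \cite[Section 2.2]{AgrestiVeraarNonLinParab} for this part.

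\textbf{Extension to a finite interval.} For $u$ in the left-hand space of \eqref{eq:finite-time-sobolev-interpolation}, I will construct a bounded extension operator $E_T$ to the half-line space whose norm is uniform in $T$. The first candidate is even reflection about $T$ followed by a cutoff:
\[
(E_Tu)(t)=
\begin{cases}
u(t), & t\le T,\\
u(2T-t)\,\chi(t/T), & T<t<2T,\\
0, & t\ge 2T.
\end{cases}
\]
Differentiating the cutoff term produces a factor $T^{-1}$ multiplying $u$, and this cannot be controlled uniformly as $T\to0$. To avoid it, I will use the semigroup instead of a cutoff: set $(E_Tu)(t)=e^{-(t-T)\tilde A}u(T)$ for $t>T$. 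The tail is then bounded by the trace of $u(T)$ in $(\mathcal B,D(A))_{1-1/p,p}$, since at time $T$ the weight $w_\beta$ behaves like the constant $T^\beta$. The obstacle is that this trace is itself controlled only with a $T$-dependent constant.

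The cleanest way around this is to take $v$ as the solution of $v'+\tilde A v=f$ on $\R_+$, where $f=(u'+\tilde Au)\mathbf 1_{[0,T]}$. By uniqueness $v=u$ on $[0,T]$ (zero trace and the same equation). Weighted maximal regularity of $\tilde A$ on $\R_+$ then gives
\[
\norm{v}_{\R_+}\le C\norm{f}_{L^p_\beta(\R_+)}\le C\bigl(\norm{u'}+\norm{Au}+\omega\norm{u}\bigr),
\]
with a constant independent of $T$. Here I use that $\tilde A$ is invertible and sectorial with bounded $H^\infty$-calculus, so it has maximal regularity on $\R_+$; for $\mathcal B$ isomorphic to a closed subspace of $L^q$ this follows from Theorem \ref{thm:finitemaxregfractional} together with the exponential stability of $e^{-t\tilde A}$.

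\textbf{Conclusion and main obstacle.} Applying \eqref{eq:real-line-sobolev-interpolation} to $v$ and then restricting to $[0,T]$, which is contractive, yields \eqref{eq:finite-time-sobolev-interpolation}. The main care point is the zero-order term $\omega\norm{u}$. Its constant must not blow up, and it does not, since it appears with the fixed factor $\omega$ and no factor depending on $T$. The remaining subtlety is that restriction of ${}_0H$ from $\R_+$ to $[0,T]$ is compatible with the interpolation definition used here; this is covered by the remark preceding Theorem \ref{thm:equivalence_h_H}.
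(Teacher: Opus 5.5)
Your proposal is correct. For the half-line embedding \eqref{eq:real-line-sobolev-interpolation} you do what the paper does, namely quote the weighted mixed derivative theorem.

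The transfer to $[0,T]$ is genuinely different. The paper applies the $T$-uniformly bounded extension operator $\mathcal E_T$ for zero-trace weighted spaces (Agresti--Veraar, Prop.~2.5(1)) separately to the ${}_0W^{1,p}([0,T],w_\beta;\mathcal B)$ and $L^p([0,T],w_\beta;D(A))$ components. That argument is purely function-space theoretic and uses nothing about $A$. You instead use the $A$-adapted linear extension $v=(\partial_t+\omega+A)^{-1}\bigl(\mathds 1_{[0,T]}(\partial_t+\omega+A)u\bigr)$, and you get uniformity in $T$ from half-line maximal regularity plus uniqueness on $[0,T]$. Your route spares you constructing a weighted extension operator. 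The price is that you need weighted maximal regularity of $\omega+A$ on $\R_+$ for the whole range $p\in(1,\infty)$, $\beta\in(-1,p-1)$.

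That input cannot be taken from Theorem \ref{thm:finitemaxregfractional} as you suggest. That theorem is a finite-$T$ statement with no $T$-uniform constant, and it only covers $p>2$ and $\beta<\frac p2-1$. Take it instead from the operator-sum machinery you already invoke for the mixed derivative theorem:
\begin{itemize}
\item On $L^p(\R_+,w_\beta;\mathcal B)$ with $w_\beta\in A_p$, the operators $\partial_t$ (with domain ${}_0W^{1,p}$) and $\omega+A$ are resolvent commuting.
\item They have bounded $H^\infty$-calculi of angles $\frac\pi2$ and $<\frac\pi2$, so by Dore--Venni (or Kalton--Weis) $\partial_t+\omega+A$ is closed on ${}_0W^{1,p}(\R_+,w_\beta;\mathcal B)\cap L^p(\R_+,w_\beta;D(A))$.
\item It is invertible because $\omega+A$ is, and this is exactly the $T$-independent estimate you use.
\end{itemize}

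Separately, your reason for discarding reflection with a cutoff is mistaken. If $\chi$ vanishes on $[\frac32,2]$, only $u$ on $(T/2,T)$ enters, where $t^\beta\simeq T^\beta$. The cutoff term then satisfies $T^{-1}\norm{u}_{L^p_\beta(T/2,T)}\lesssim\norm{u/t}_{L^p_\beta(0,T)}\lesssim\norm{u'}_{L^p_\beta(0,T)}$ by the weighted Hardy inequality, which holds because $u(0)=0$ and $\beta<p-1$. So that elementary extension is also bounded uniformly in $T$, and it is the kind of argument behind a zero-trace extension operator like $\mathcal E_T$.
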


\begin{proof}
The embedding \eqref{eq:real-line-sobolev-interpolation} follows from \citet[Subsection 5.5]{PruessSimonettBook} and then the finite time result \eqref{eq:finite-time-sobolev-interpolation} follows immediately from the properties of the extension operator $\mathcal E_T$ detailed in \citet[Prop. 2.5 (1)]{AgrestiVeraarNonLinParab}.
\end{proof}

\begin{theorem}
Let $\mathcal B$ be a UMD Banach space. Let $1<p_0\leq p_1<\infty$, let
$s_0\in(0,1)$, and let $\beta_0\in(-1,p_0-1)$ and $\beta_1\in(-1,p_1-1)$.
Assume that
$$
\frac{\beta_0}{p_0}\geq \frac{\beta_1}{p_1}
\qquad\text{and}\qquad
s_0-\frac{1+\beta_0}{p_0}
\geq
-\frac{1+\beta_1}{p_1}.
$$
Then, for every $T>0$,
\begin{equation}\label{eq:weighted-embedding-zero}
{}_0H^{s_0,p_0}([0,T],w_{\beta_0};\mathcal B)
\hookrightarrow
L^{p_1}([0,T],w_{\beta_1};\mathcal B),
\end{equation}
and the embedding constant can be chosen independently of $T$.
\end{theorem}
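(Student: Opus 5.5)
This is a weighted Sobolev embedding of ${}_0H^{s_0,p_0}$ into $L^{p_1}$ with a $T$-independent constant. I would reduce it to the half-line $\R_+$ and then prove the half-line version by scaling together with a known weighted Sobolev embedding.

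\textbf{Step 1: reduction to $\R_+$.} Let $\mathcal E_T$ be the extension operator from \citet[Prop. 2.5]{AgrestiVeraarNonLinParab}, already used in the proof of Lemma \ref{lem:endpoint-mixed-derivative}. It maps ${}_0H^{s_0,p_0}([0,T],w_{\beta_0};\mathcal B)$ into ${}_0H^{s_0,p_0}(\R_+,w_{\beta_0};\mathcal B)$ with norm bounded independently of $T$. It is built by reflection at $T$ on ${}_0W^{1,p}$ and $L^p$, and then interpolated. Restriction from $\R_+$ to $[0,T]$ is contractive on $L^{p_1}(w_{\beta_1})$. So it suffices to prove
$$
{}_0H^{s_0,p_0}(\R_+,w_{\beta_0};\mathcal B)\hookrightarrow L^{p_1}(\R_+,w_{\beta_1};\mathcal B).
$$

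\textbf{Step 2: the embedding on $\R_+$.} The zero-trace spaces ${}_0H^{s,p}(\R_+,w_\beta)$ admit extension by zero (or odd-type extension, when $s>\frac{1+\beta}{p}$, using Theorem \ref{thm:equivalence_h_H}) to $H^{s,p}(\R,|t|^\beta)$. On $\R$, I would then invoke the weighted Sobolev embedding for $A_\infty$ power weights from Meyries--Veraar: for UMD $\mathcal B$,
$$
H^{s_0,p_0}(\R,|t|^{\beta_0};\mathcal B)\hookrightarrow L^{p_1}(\R,|t|^{\beta_1};\mathcal B),
$$
provided $\frac{\beta_0}{p_0}\ge\frac{\beta_1}{p_1}$ and $s_0-\frac{1+\beta_0}{p_0}\ge -\frac{1+\beta_1}{p_1}$. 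These are exactly the hypotheses of the statement. The first condition controls the weight at the origin. The second is the scaling (Sobolev index) condition.

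On the half-line, the zero trace is what removes the need for an inhomogeneous lower-order term in the critical case. Concretely, the dilation $u\mapsto u(\lambda\cdot)$ scales the homogeneous parts compatibly. Hence, once the embedding holds with the inhomogeneous norm on $\R_+$, it holds uniformly on $[0,T]$ by Step 1 without further $T$-dependence.

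\textbf{Main obstacle.} The delicate point is the $T$-independence of the extension operator on the interpolated zero-trace spaces. In the cases $s_0\neq\frac{1+\beta_0}{p_0}$ this is handled by Theorem \ref{thm:equivalence_h_H} and its finite-interval version. In the case $s_0=\frac{1+\beta_0}{p_0}$, I would argue directly by complex interpolation. $\mathcal E_T$ is uniformly bounded on both endpoints $L^{p_0}_{\beta_0}$ and ${}_0W^{1,p_0}_{\beta_0}$: reflection about $T$ followed by a fixed cutoff scaled with $T$ preserves the zero trace. Interpolation then gives uniform boundedness on ${}_0H^{s_0,p_0}$ for every $s_0$, with no case distinction needed.
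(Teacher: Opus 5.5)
Your proposal is correct and follows the same route as the paper: you invoke the Meyries--Veraar weighted Sobolev embedding on $\R$ and transfer it to $[0,T]$ using the $T$-uniform extension operator $\mathcal E_T$ together with the zero-trace structure of ${}_0H^{s_0,p_0}$. You obtain the extension bounds by interpolating between $L^{p_0}_{\beta_0}$ and ${}_0W^{1,p_0}_{\beta_0}$ (which works equally for extension by zero from $\R_+$ to $\R$), so you never need Theorem~\ref{thm:equivalence_h_H}; this also covers the critical index $s_0=\frac{1+\beta_0}{p_0}$, and the dilation remark in Step 2 is superfluous.
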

\begin{proof}
This essentially follows from the results of \citet[Corollary]{MeyriesVeraarSobEmbedding}, which implies the corresponding embedding on $\R$. Thus
$
H^{s_0,p_0}(\R,w_{\beta_0};\mathcal B)
\hookrightarrow
L^{p_1}(\R,w_{\beta_1};\mathcal B).
$
Application of Theorem \ref{thm:equivalence_h_H} and the extension operator $\mathcal E_T$ yields the claim.
\end{proof}

Finally, denote by $C([0,T];\mathcal B)_0$ the set of all continuous curves $u\colon[0,T] \rightarrow \mathcal B$ endowed with $\sup$-norm, where $\lim_{t \to \infty} u(t) = 0$ if $T=\infty$.

\begin{proposition}[\cite{AgrestiVeraarNonLinParab}]
\label{prop:continuousTrace}
Let $\mathcal B_1\hookrightarrow \mathcal B$ be Banach spaces. Set $\mathcal B_{1-\theta}=[\mathcal B,\mathcal B_1]_{1-\theta}$ or $\mathcal B_{1-\theta} = (\mathcal B,\mathcal B_1)_{1-\theta,r}$, where the latter denotes real interpolation for $r\in [1, \infty]$. Assume that $p\in (1,\infty)$, $\beta\in [0,p-1)$, $\theta\in (0,1)$ and $T\in (0,\infty]$. Then the following holds:
\begin{enumerate}[(i)]
\item\label{item:trace_with_weights_Xap} If $\theta>\frac{1+\beta}{p}$, then
$$
H^{\theta,p}([0,T],w_{\beta};\mathcal B_{1-\theta})\cap L^p([0,T],w_{\beta};\mathcal B_1)
\hookrightarrow C\left([0,T];(\mathcal B,\mathcal B_1)_{1-\frac{1+\beta}{p},p}\right)_0
$$
\item\label{item:trace_without_weights_Xp} If $\theta>\frac{1}{p}$, then for any $0<\varepsilon<T$
$$
H^{\theta,p}([0,T],w_{\beta};\mathcal B_{1-\theta})\cap L^p([0,T],w_{\beta};\mathcal B_1)\hookrightarrow
 C\left([\varepsilon,T];(\mathcal B,\mathcal B_1)_{1-\frac{1}{p},p}\right)_0.
$$
\end{enumerate}
Here, if $T$ ranges over $ (\eta,\infty]$ for some $\eta > 0$, then the constants in \eqref{item:trace_with_weights_Xap} and \eqref{item:trace_without_weights_Xp} depend only on $\eta$.
Furthermore, if we replace $H^{\theta,p}$ by ${_0H}^{\theta,p}$ in \eqref{item:trace_with_weights_Xap} and \eqref{item:trace_without_weights_Xp}, the constants in the embeddings can be chosen independent of $T>0$.
\end{proposition}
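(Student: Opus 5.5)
The plan is to reduce everything to the half-line $\R_+$ and to a known trace theorem for weighted anisotropic spaces, and then return to $[0,T]$ using the extension operator $\mathcal E_T$ of \cite[Prop.~2.5]{AgrestiVeraarNonLinParab}. Write $X_0\coloneqq\mathcal B_{1-\theta}$ and $X_1\coloneqq\mathcal B_1$. The intersection $H^{\theta,p}(\R_+,w_\beta;X_0)\cap L^p(\R_+,w_\beta;X_1)$ is then a space of ``order $\theta$ in time'' over the couple $(X_0,X_1)$. The first step is to show that its trace space at a point $t\geq 0$ is
$$
(X_0,X_1)_{1-\frac{1+\beta}{\theta p},p}.
$$
This holds whenever $\theta>\frac{1+\beta}{p}$, which is exactly the condition that the exponent lies in $(0,1)$. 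For $\theta=1$ this is the classical weighted trace theorem for $W^{1,p}_\beta\cap L^p_\beta(X_1)$.

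For general $\theta$, the approach is to realise $H^{\theta,p}$ as the domain of $(\frac{d}{dt})^{\theta}$ on the half-line. This operator has a bounded $H^\infty$-calculus on $L^p(\R_+,w_\beta;X)$ for UMD $X$ and $\beta\in(-1,p-1)$. One can then combine the weighted trace results of Meyries--Veraar for Bessel potential spaces with the characterisation of real interpolation spaces in Proposition~\ref{prop:InterpolationIdentity}. Reiteration then identifies the trace space:
$$
(X_0,X_1)_{1-\frac{1+\beta}{\theta p},p}
=\left(\mathcal B,\mathcal B_1\right)_{(1-\theta)+\theta\left(1-\frac{1+\beta}{\theta p}\right),p}
=(\mathcal B,\mathcal B_1)_{1-\frac{1+\beta}{p},p}.
$$
This is valid for both the complex and the real choice of $\mathcal B_{1-\theta}$, since reiteration holds for either.

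The second step upgrades the pointwise trace bound to membership in $C(\cdot)_0$. I would first prove $\sup_t\norm{u(t)}\lesssim\norm{u}$ uniformly in $t\geq 0$. The weight $t^\beta$ with $\beta\geq 0$ is nondecreasing, so translating $u$ to the left by $t$ does not increase the norm; this reduces the bound at time $t$ to the trace at $0$. Continuity and decay at infinity then follow from density of $C_c^\infty([0,\infty);X_1)$ in the intersection space, together with the uniform bound. For the finite interval $[0,T]$, apply the result to $\mathcal E_Tu$, whose norm is bounded by $C(\eta)\norm{u}$ for $T>\eta$. In the zero-trace case, $\mathcal E_T$ can be chosen with norm independent of $T$, for instance by reflecting about $T$ after extending by zero near $0$; combined with Theorem~\ref{thm:equivalence_h_H} this gives constants independent of $T$.

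For \eqref{item:trace_without_weights_Xp}, on $[\varepsilon,T]$ the weight $t^\beta$ is bounded above and below by constants depending on $\varepsilon$ (and on $T$, or on the monotonicity when $T=\infty$). The restriction to $[\varepsilon,T]$ therefore lies in the unweighted space. Applying part \eqref{item:trace_with_weights_Xap} with $\beta=0$ requires only $\theta>\frac1p$ and yields the trace space $(\mathcal B,\mathcal B_1)_{1-\frac1p,p}$.

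The main obstacle I expect is the fractional trace identification in the first step, namely making the $H^\infty$-calculus and reiteration argument rigorous in the weighted vector-valued setting. If that route becomes too delicate, the fallback is the mixed-derivative route. One would first embed via Lemma~\ref{lem:endpoint-mixed-derivative}-type interpolation into $H^{\theta',p}_\beta(\mathcal B_{1-\theta'})$ for $\theta'$ close to $1$. This does not go in the right direction in general, so the direct trace theorem of Meyries--Veraar for $H^{\theta,p}_\beta(X_0)\cap L^p_\beta(X_1)$ would be the primary tool.
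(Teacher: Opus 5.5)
The paper gives no proof of this proposition; it is imported from Agresti--Veraar, so your argument has to stand on its own. Several parts of it are fine. Left translation is contractive on $L^p_\beta$ and $W^{1,p}_\beta$ for $\beta\geq 0$, hence on $H^{\theta,p}_\beta$. So a trace bound at $t=0$ does yield the uniform bound, and continuity and decay follow by density. Part \eqref{item:trace_without_weights_Xp} follows from $t^\beta\geq\varepsilon^\beta$ on $[\varepsilon,T]$. Your reiteration arithmetic is correct.

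The gap is Step 1, which is the entire content of the statement: nothing in your sketch shows that $u(0)\in(\mathcal B,\mathcal B_1)_{1-\frac{1+\beta}{p},p}$. The statement is for an arbitrary pair $\mathcal B_1\hookrightarrow\mathcal B$. So neither UMD (needed for an $H^\infty$-calculus of $\frac{d}{dt}$) nor a sectorial $A$ with $D(A)=\mathcal B_1$ (needed for Proposition~\ref{prop:InterpolationIdentity}) is available. Moreover, the standard realisation of $\frac{d}{dt}$ with bounded $H^\infty$-calculus on $L^p(\R_+,w_\beta;X)$ has domain ${}_0W^{1,p}$ and fractional domains ${}_0H^{\theta,p}$. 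For $\theta>\frac{1+\beta}{p}$ these consist of zero-trace functions, so they say nothing about the trace space. Single-space trace theorems such as those of Meyries--Veraar give only $u(0)\in\mathcal B_{1-\theta}$, which is strictly less because $1-\theta<1-\frac{1+\beta}{p}$. The missing regularity must come from coupling the $\mathcal B_{1-\theta}$-valued time regularity with the $\mathcal B_1$-valued integrability, and none of your steps does that. Appealing to ``the trace theorem for $H^{\theta,p}_\beta(X_0)\cap L^p_\beta(X_1)$'' is appealing to the statement itself.

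Here is a short argument that closes Step 1 for any couple, with $H^{\theta,p}$ defined by complex interpolation as in the paper.
\begin{itemize}
\item Let $a(s)\coloneqq\frac1s\int_0^s u(r)\d r$ and $Ru\coloneqq u-a$.
\item Hardy's inequality gives $R\colon L^p_\beta\to L^p_\beta$.
\item Since $Ru(t)=\frac1t\int_0^t s\,u'(s)\d s$, Hardy again gives $R\colon W^{1,p}_\beta\to L^p_{\beta-p}$.
\item Interpolating, using Stein--Weiss $[L^p_\beta,L^p_{\beta-p}]_\theta=L^p_{\beta-\theta p}$, yields $\|t^{-\theta}Ru\|_{L^p_\beta(\R_+;\mathcal B_{1-\theta})}\lesssim\|u\|_{H^{\theta,p}_\beta(\R_+;\mathcal B_{1-\theta})}$.
\item Since $a'(s)=Ru(s)/s$ and $\theta>\frac{1+\beta}{p}$, H\"older gives $u(0)-a(s)=-\int_0^s Ru(r)\frac{\d r}{r}$ with absolute convergence.
\item Since $\mathcal B_{1-\theta}\hookrightarrow(\mathcal B,\mathcal B_1)_{1-\theta,\infty}$, this yields
\[
K(s,u(0))\lesssim s^{1-\theta}\int_0^s\|Ru(r)\|_{\mathcal B_{1-\theta}}\frac{\d r}{r}+\int_0^s\|u(r)\|_{\mathcal B_1}\d r .
\]
\item Two Hardy inequalities then bound $\|s^{\frac{1+\beta}{p}-1}K(s,u(0))\|_{L^p(\R_+,\d s/s)}$ by $\|t^{-\theta}Ru\|_{L^p_\beta}+\|u\|_{L^p_\beta(\R_+;\mathcal B_1)}$. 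The first uses $\theta>\frac{1+\beta}{p}$ and the second uses $\beta<p-1$.
\end{itemize}
No reiteration is needed.

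Two smaller points.
\begin{itemize}
\item Plain reflection about $T$ is not bounded on ${}_0W^{1,p}([0,T],w_\beta)$ for $\beta>0$, because it moves the degenerate end of the weight to $t\approx 2T$. Reflect only near $T$ with a cutoff, and absorb the resulting factor $T^{-1}$ via $\|u/t\|_{L^p_\beta}\lesssim\|u'\|_{L^p_\beta}$.
\item In \eqref{item:trace_without_weights_Xp}, extend to $\R_+$ before restricting to $[\varepsilon,\infty)$. Otherwise the constants depend on $T-\varepsilon$.
\end{itemize}
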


\begin{example}
The regularity result for stochastic convolutions and the embedding results laid out in this subsection will be used only for the periodic Laplacian on $\mathbb T^d$. It is known that $-\Delta$ has a bounded $H^\infty$-calculus of angle $<\frac\pi2$ on $L^q(\mathbb T^d)$ for every $q\in(1,\infty)$, see also \citet[Example 4.7.5]{ARENDT20021}. By the Banach space Isomorphism 
$(1-\Delta)^{\frac \sigma 2}\colon H^{\sigma,q} \rightarrow L^q$ (cf. \citet[Section 3.5.4]{ScheisserTriebel}) and Prop. 3.3.14 in \citet{PruessSimonettBook}, we find that the conjugate operator $$(1-\Delta)^{-\frac{\sigma}{2}}(-\Delta)(1-\Delta)^{\frac{\sigma}{2}} = -\Delta$$ has a bounded $H^\infty$-calculus of the same angle also on $H^{\sigma,q}$. Moreover, the definition of periodic Bessel potential spaces by means of Fourier multipliers directly implies that $H^{\sigma,q}$ is isomorphic to a closed subspace of $L^q$ (and, actually, to $L^q$ itself, as a Banach space). Thereby, the results of this section are applicable for $\mathcal B = H^{\sigma,q}$ and $A = -\Delta$.
\end{example}

\subsection{Existence of local mild solutions}

The basic point of the argument is to keep the deterministic and stochastic
components in different spaces. We therefore write $\bm z=S(\cdot)\bm z_0 +\bm u+\bm v$
and solve instead the coupled system
\begin{equation}\label{eq:split-system-power}
\left\{
\begin{aligned}
{\bm u}(t)
&=
S(t){\bm z}_0
+
\int_0^t S(t-r)F(S(r){\bm z}_0+{\bm u}_r+{\bm v}_r)\d r,\\
{\bm v}(t)
&=
\int_0^t S(t-r)\,G(S(r){\bm z}_0+{\bm u}_r+{\bm v}_r)\d {\bm W}_r.
\end{aligned}
\right.
\end{equation}
Here, $S = e^{t\Delta}$ denotes the analytic semigroup generated by the Laplacian $\Delta$ on the torus. In this decomposition, $u$ carries the initial trace, whereas $v$ is the zero-initial-trace
stochastic part. For the purpose of the fixed point argument, we specifically work with the continuous modification $
S \diamond G$ of the stochastic convolution. In the linear estimates we shall apply deterministic and stochastic maximal
regularity to the Laplacian.
Throughout this section, we assume that $q \in (d,2d)$. 

We now introduce Banach spaces of processes akin to those employed in \citet{SimonettPruess, AgrestiVeraarNonLinParab}. We will obtain candidate local solutions by means of a fixed point argument in these spaces. As specified at the beginning of the section, we will assume $q \in(d, 2d)$. Write 
$$
L^p_\beta([0,T];\mathcal B)\coloneqq L^p\left((0,T),t^\beta\d t;\mathcal B\right),
\qquad
H^{\theta,p}_\beta([0,T];\mathcal B)\coloneqq
H^{\theta,p}\left((0,T),t^\beta\d t;\mathcal B\right).
$$ 
to denote vector-valued weighted Sobolev spaces, for a Banach space $\mathcal B$ and a weight $\beta \in \mathbb R$.
\begin{definition}[Coupled weighted solution space] \label{def:solspace} Let the parameters $\delta$, $ \sigma$ be defined as in Assumption \ref{ass:nemytskiiLipschitz} and assume that $p \geq \frac{q}{\delta q-d}$. For every $r\in[\delta,1]$ we define
$$
\alpha_r\coloneqq r -\frac{d}{q} \in \left(0,\frac12\right),\quad 
\beta_r\coloneqq p\alpha_r-1 \in \left(-1, \frac{p}{2}-1\right).
$$
We introduce the deterministic and stochastic weighted maximal regularity weighted spaces
$$
\mathbb X_1(T)\coloneqq{_0H}^{1,p}_{\beta_1}\left([0,T];L^{q/2}(\mathbb T^d)\right)
\cap
L^p_{\beta_1}\left([0,T];H^{2,q/2}(\mathbb T^d)\right).
$$
and the stochastic maximal-regularity space
$$
\mathbb Y_\delta(T)\coloneqq
{_0H}^{\sigma/2,p}_{\beta_\delta}\left([0,T];H^{1,q}(\mathbb T^d)\right)
\cap
L^p_{\beta_\delta}\left([0,T];H^{1+\sigma,q}(\mathbb T^d)\right).
$$
We introduce the Banach spaces $\mathbb K_1$, $\mathbb S_\delta$ of progressively measurable random processes with values in $\mathbb X_1$ resp. $\mathbb Y_\delta$ through their norms
$$
\norm{{\bm u}}_{\mathbb K_1(T)}
\coloneqq
\norm{{\bm u}}_{L^{p}(\Omega;\mathbb X_1(T))}, \quad 
\norm{{\bm v}}_{\mathbb S_\delta(T)}
\coloneqq
\norm{{\bm v}}_{L^p(\Omega;\mathbb Y_\delta(T))}.
$$
The corresponding product space $\mathbb K_1(T) \oplus \mathbb S_\delta(T)$ will be the target space of our fixed-point iteration.
\end{definition}
\begin{remark}
Note that since $\delta > \frac{d}{q}$, $\delta q > d$, so the condition $p \geq \frac{q}{\delta q-d}$ is well-defined. This condition ensures that the weight $\beta_\delta$ is nonnegative, so that the trace embeddings \eqref{item:trace_with_weights_Xap} and \eqref{item:trace_without_weights_Xp} from Prop. \ref{prop:continuousTrace} are applicable. 
\end{remark}
\begin{remark}
Observe that the critical value $\frac{\sigma}{2} = \frac{1+\beta_\delta}{p} \Leftrightarrow \sigma = 2\alpha_\delta$ (cf. Theorems \ref{thm:equivalence_h_H} and \ref{thm:finitemaxregfractional}) can always be avoided for small enough $\sigma$.
\end{remark}
Of further importance is the family of spaces 
\begin{equation} \label{eq:WSpaceDefinition}
\mathbb W_r(T)\coloneqq L^{2p}_{\beta_r}\left([0,T];H^{r,q}(\mathbb T^d)\right)
\end{equation}
for $r \in [\delta,1]$. We will primarily utilise this space to derive estimates on the respective terms in the fixed point argument. Subsequently the Sobolev embeddings \eqref{eq:finite-time-sobolev-interpolation} and \eqref{eq:weighted-embedding-zero} let us close the estimates. We remark here the embedding 
\begin{equation}
\mathbb W_r(T) \hookrightarrow L^{\kappa}([0,T];H^{r,q}(\mathbb T^d))
\end{equation}
for any $r \in [\delta,1]$ and $4 \leq \kappa < \frac{2}{\alpha_r}$.
\begin{lemma} \label{lemma:XYembeddings}
Let $r\in[\delta,1]$ and $p \geq \frac{q}{q-d}$. 
Then there exist constants $C_X, C_Y$ dependent on $r$ but not on $T \in (0,1)$ such that for all  $({\bm u},{\bm v}) \in \mathbb X_1(T) \oplus \mathbb Y_\delta(T)$,
\begin{enumerate}[(i)]
    \item \label{item:Xembedding} $
\norm{{\bm u}}_{\mathbb W_r(T)}
\leq C_X
\norm{{\bm u}}_{\mathbb X_1(T)}$
    \item \label{item:Yembedding} $
\norm{{\bm v}}_{\mathbb W_1(T) \cap \mathbb W_\delta(T)} \leq C_Y \norm{{\bm v}}_{\mathbb Y_\delta(T)}$
\end{enumerate}
\end{lemma}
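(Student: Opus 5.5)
The plan is to derive both estimates from the two embedding results above: the endpoint mixed derivative estimate, Lemma \ref{lem:endpoint-mixed-derivative}, and the weighted Sobolev embedding \eqref{eq:weighted-embedding-zero}. In each case a spatial Sobolev embedding on $\mathbb T^d$ closes the gap in spatial regularity. The only real work is bookkeeping of the exponents. The critical relations $\alpha_r = r-\frac dq$ and $\beta_r = p\alpha_r - 1$ should make the Sobolev exponent condition in \eqref{eq:weighted-embedding-zero} hold with equality. Independence of $T\in(0,1)$ is inherited, since both embeddings have $T$-independent constants on zero-trace spaces.

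\textbf{Step (i).} Apply \eqref{eq:finite-time-sobolev-interpolation} with $\mathcal B = L^{q/2}$, $A=-\Delta$ (shifted by $\omega$) and $\vartheta = \theta\in(0,1)$. This gives
$$\mathbb X_1(T)\hookrightarrow {_0H}^{1-\theta,p}_{\beta_1}([0,T];H^{2\theta,q/2}).$$
Next choose $\theta$ so that $H^{2\theta,q/2}\hookrightarrow H^{r,q}$ sharply, namely $2\theta - \frac{2d}{q} = r-\frac dq$, i.e.\ $\theta = \frac12(r+\frac dq)\in(0,1)$. Then apply \eqref{eq:weighted-embedding-zero} with $p_0=p$, $p_1=2p$, $s_0=1-\theta$, $\beta_0=\beta_1$ and target weight $\beta_r$. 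The weight condition $\beta_1/p\ge \beta_r/(2p)$ reads $p(2\alpha_1-\alpha_r)\ge 1$. This holds because $\alpha_1\ge\alpha_r$ and $p\alpha_1\ge 1$, the latter being exactly $p\ge \frac q{q-d}$. The smoothness condition $1-\theta-\alpha_1\ge -\alpha_r/2$ holds with equality by the choice of $\theta$. One technical point to check is that Lemma \ref{lem:endpoint-mixed-derivative} is stated for subspaces of $L^q$ with $q\ge2$, whereas $q/2$ may lie in $(1,2)$ when $d\in\{2,3\}$. For the deterministic mixed derivative embedding only UMD and a bounded $H^\infty$-calculus are needed, so I would quote the Prüss--Simonett result in that generality.

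\textbf{Step (ii).} Complex interpolation between the two components of $\mathbb Y_\delta$ gives, for $s\in[0,\sigma/2]$,
$$\mathbb Y_\delta(T)\hookrightarrow {_0H}^{s,p}_{\beta_\delta}([0,T];H^{1+\sigma-2s,q}).$$
For both targets $r\in\{\delta,1\}$ the natural choice is $s=\sigma/2$ with spatial space $H^{1,q}\hookrightarrow H^{r,q}$. The exponent condition in \eqref{eq:weighted-embedding-zero} becomes $\frac\sigma2-\alpha_\delta\ge -\frac{\alpha_r}{2}$, i.e.\ $\sigma\ge 2\alpha_\delta-\alpha_r$.
\begin{itemize}
\item For $r=\delta$ this is $\sigma\ge\delta-\frac dq$, which is precisely the standing hypothesis (strict, so there is slack). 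The weight condition $2\beta_\delta\ge\beta_\delta$ holds since $\beta_\delta\ge0$.
\item For $r=1$ the exponent condition is weaker, since $2\alpha_\delta-\alpha_1\le\alpha_\delta$. The weight condition, however, reads $p(2\alpha_\delta-\alpha_1)\ge1$, and this need not hold automatically.
\end{itemize}

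\textbf{Main obstacle.} I expect the weight condition for $\mathbb W_1$ in step (ii) to be the crux. My first attempt is to exploit $T<1$: for $t<1$, $t^{\beta_1}\le t^{\beta'}$ whenever $\beta'\le\beta_1$, so $L^{2p}_{\beta'}\hookrightarrow L^{2p}_{\beta_1}$ with constant one. It therefore suffices to embed into $L^{2p}_{\beta'}$ with $\beta'=\min(2\beta_\delta,\beta_1)$. Lowering the target weight tightens the smoothness condition to $\frac\sigma2-\alpha_\delta\ge-\frac{1+\beta'}{2p}$. If that fails, I would instead take $s<\sigma/2$ and use the surplus spatial regularity $H^{1+\sigma-2s,q}$ only down to $H^{1,q}$, balancing the two inequalities. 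The strict inequality $\sigma>\delta-\frac dq$ and the lower bound on $p$ should leave enough room. Finally, the critical value $\frac\sigma2=\frac{1+\beta_\delta}{p}$ must be avoided so that Theorem \ref{thm:equivalence_h_H} applies, which the earlier remark allows by shrinking $\sigma$.
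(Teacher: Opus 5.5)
Your proposal follows the paper's proof. Step (i) is the same argument: the same $\vartheta_r=\frac12(r+\frac dq)$, the balancing condition holding with equality, and the weight condition obtained from $p\alpha_1\ge 1$. The $\mathbb W_\delta$ part of step (ii) is also the paper's: embed ${_0H}^{\sigma/2,p}_{\beta_\delta}([0,T];H^{1,q})$ into $L^{2p}_{\beta_\delta}([0,T];H^{1,q})$ using $\sigma\ge\alpha_\delta$ and $\beta_\delta\ge 0$.

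The one real shortcoming is that you leave the $\mathbb W_1$ estimate open and hedge. Your \emph{first attempt} is exactly the paper's argument, and it closes at once. Take $\beta'=\beta_\delta$. You have already shown $\mathbb Y_\delta(T)\hookrightarrow L^{2p}_{\beta_\delta}([0,T];H^{1,q})$, before using $H^{1,q}\hookrightarrow H^{\delta,q}$. Since $\beta_\delta\le\beta_1$, we have $t^{\beta_1}\le t^{\beta_\delta}$ on $(0,1]$. Hence this space embeds with constant one into $\mathbb W_1(T)$, and no new condition appears.

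Your choice $\beta'=\min(2\beta_\delta,\beta_1)$ also works. If $\beta'=2\beta_\delta$, the smoothness condition becomes $\sigma\ge\frac1p$. If $\beta'=\beta_1$, it becomes $\sigma\ge 2\alpha_\delta-\alpha_1$. Both follow from $\sigma>\alpha_\delta\ge\frac1p$, where the last inequality is the standing assumption $p\ge\frac{q}{\delta q-d}$ of Definition \ref{def:solspace}.

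Your fallback with $s<\sigma/2$ would not have helped. The target spatial space is already $H^{1,q}$, so trading time regularity for extra spatial regularity only worsens the exponent condition.

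Your caveat about Lemma \ref{lem:endpoint-mixed-derivative} is a fair point the paper passes over. That lemma is formulated for $q\ge 2$, yet it is applied on $L^{q/2}$ with $q/2$ possibly in $(1,2)$. The mixed derivative embedding needs only UMD and a bounded $H^\infty$-calculus, so the step is fine once justified that way.
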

\begin{proof}
Set $\vartheta_r\coloneqq\frac12\left(r+\frac{d}{q}\right)\in(\frac dq,1).$
We apply the mixed-derivative estimate \eqref{eq:finite-time-sobolev-interpolation} to the
interpolation couple
$\mathcal B_1 = H^{2,q/2}(\mathbb T^d) \subset L^{q/2}(\mathbb T^d) = \mathcal B$, which gives
$$\norm{{\bm u}}_{{_0H}^{1-\vartheta_r,p}_{\beta_1}\left([0,T];H^{2\vartheta_r,q/2}(\mathbb T^d)\right)} \leq \const \norm{{\bm u}}_{\mathbb X_1(T)}.$$
Now
$2\vartheta_r=r+\frac{d}{q}$, 
and the Sobolev embedding at fixed scaling index implies
$$
H^{2\vartheta_r,q/2}(\mathbb T^d)
=
H^{r+d/q,q/2}(\mathbb T^d)
\hookrightarrow
H^{r,q}(\mathbb T^d).
$$
Hence
$$
{\bm u}\in
H^{1-\vartheta_r,p}_{\beta_1}\left([0,T];H^{r,q}(\mathbb T^d)\right).
$$

We next apply the Sobolev embedding \eqref{eq:weighted-embedding-zero}. The balancing condition
$$ 1-\vartheta_r-\frac{1+\beta_1}{p}
=
-\frac{1+\beta_r}{2p}
$$
is exactly
$$
1-\vartheta_r-\alpha_1
=
-\frac{\alpha_r}{2} \iff -\frac r2 + \frac{d}{2q} = -\frac12 \alpha_r,
$$
which is immediate by definition. Meanwhile, the condition $p \geq \frac{q}{q-d}$ ensures that $\frac{\beta_1}{p} \geq \frac{\beta_r}{2p}$, whence
$$
_0H^{1-\vartheta_r,p}_{\beta_1}\left([0,T];H^{r,q}(\mathbb T^d)\right)
\hookrightarrow
L^{2p}_{\beta_r}\left([0,T];H^{r,q}(\mathbb T^d)\right)=\mathbb W_r(T),
$$
which proves \eqref{item:Xembedding}. Note here that $1-\vartheta_r < 1-\frac dq = \alpha_1 = \frac{1+\beta_1}{p}$, so that the sufficient conditions are all satisfied.
It remains to prove \eqref{item:Yembedding}. Since
$\frac{\sigma}{2}>\frac{1+\beta_\delta}{2p}=\frac{\alpha_\delta}{2}
$
by Assumption \ref{ass:nemytskiiLipschitz}, we can again apply the Sobolev embedding theorem and obtain
$$
\mathbb Y_\delta(T) \hookrightarrow {_0}H^{\sigma/2,p}_{\beta_\delta}\left([0,T];H^{1,q}(\mathbb T^d)\right)
\hookrightarrow
L^{2p}_{\beta_\delta}\left([0,T];H^{1,q}(\mathbb T^d)\right).
$$
Since $H^{1,q}(\mathbb T^d)\hookrightarrow H^{\delta,q}(\mathbb T^d)$, this implies
$$
\norm{{\bm v}}_{\mathbb W_\delta(T)} \leq \norm{{\bm v}}_{L^{2p}_{\beta_\delta}([0,T];H^{1,q})}
\leq \const 
\norm{{\bm v}}_{\mathbb Y_\delta(T)}.
$$
Finally, because $\beta_1 \geq \beta_\delta$ and $T\leq 1$,
$$L^{2p}_{\beta_\delta}([0,T];H^{1,q}(\mathbb T^d)) \hookrightarrow L^{2p}_{\beta_1}([0,T];H^{1,q}(\mathbb T^d)) = \mathbb W_1(T)
$$
and therefore $\norm{{\bm v}}_{\mathbb W_1(T)}
\leq \const 
\norm{{\bm v}}_{\mathbb Y_\delta(T)}.$
\end{proof}

To make us of the full power of the maximal regularity framework, we introduce the scale of Besov spaces $B^{s}_{q,p}(\mathbb T^d)$ for $0<s < 1$, $1 < q, p < \infty$ as 
$$
B^s_{q,p}(\mathbb T^d)
\coloneqq
\left(L^q(\mathbb T^d),H^{2,q}(\mathbb T^d)\right)_{s/2,p}
=
D_\Delta\left(\frac{s}{2},p\right),
$$
where $D(\Delta) = H^{2,q}(\mathbb T^d) \subset L^q(\mathbb T^d)$.
Equivalently, 
for any $r \in (s,1]$, one obtains from Prop. \ref{prop:InterpolationIdentity} that
\begin{equation}\label{eq:semigroupcharacbesov}
\norm{{\bm u}}_{B^s_{q,p}(\mathbb T^d)}
\simeq
\norm{{\bm u}}_{L^q(\mathbb T^d)}
+
\left(
\int_0^\infty
\left(
t^{\frac{r-s}2}
\norm{(-\Delta)^\frac r2S(t){\bm u}}_{L^q(\mathbb T^d)}
\right)^p
\frac{\d t}{t}
\right)^{1/p}.
\end{equation}
The following result immediately drops out of the semigroup characterisation \eqref{eq:semigroupcharacbesov}. 
\begin{proposition} \label{prop:heatflowbesov}
Let $1<p<\infty$ and $q>d$. If $
{\bm u}_0\in B^{d/q}_{q,2p}(\mathbb T^d)$,
then $S(\cdot){\bm u}_0 \in (\mathbb W_1\cap \mathbb W_\delta)(T)$ for any finite $T > 0$. In particular, $\norm{S(\cdot){\bm u}_0}_{(\mathbb W_1\cap \mathbb W_\delta)(T)} \overset{T\to 0}{\to} 0$.
\end{proposition}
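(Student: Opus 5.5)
The plan is to use the semigroup characterisation \eqref{eq:semigroupcharacbesov} directly. Norms on $\mathbb W_r(T)$ are weighted $L^{2p}$ norms in time with weight $t^{\beta_r}$, where $\beta_r = p\alpha_r - 1$. Hence, for $r\in\{\delta,1\}$,
$$
\norm{S(\cdot){\bm u}_0}_{\mathbb W_r(T)}^{2p} = \int_0^T \norm{S(t){\bm u}_0}_{H^{r,q}}^{2p} t^{p\alpha_r-1}\d t .
$$
The key observation is that $t^{p\alpha_r - 1} = \left(t^{\frac{r-d/q}{2}}\right)^{2p}\, t^{-1}$, because $2p\cdot\frac{r-d/q}{2} = p\alpha_r$. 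So the integrand is exactly the one appearing in \eqref{eq:semigroupcharacbesov} with $s=d/q$, fine index $2p$, and the given $r\in(s,1]$. The condition $r>s$ holds since $r\geq \delta>d/q$.

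Next, I split $\norm{S(t){\bm u}_0}_{H^{r,q}}\simeq \norm{S(t){\bm u}_0}_{L^q}+\norm{(-\Delta)^{r/2}S(t){\bm u}_0}_{L^q}$. This uses the norm equivalence for Bessel potential spaces on the torus, namely $\norm{(1-\Delta)^{r/2}f}_{L^q}\simeq \norm{f}_{L^q}+\norm{(-\Delta)^{r/2}f}_{L^q}$.

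For the fractional part, the integral over $(0,T)$ is bounded by the full integral over $(0,\infty)$, and this is finite and controlled by $\norm{{\bm u}_0}_{B^{d/q}_{q,2p}}^{2p}$ by Prop.~\ref{prop:InterpolationIdentity}. For the lower-order part, contractivity of $S$ gives $\norm{S(t){\bm u}_0}_{L^q}\leq\norm{{\bm u}_0}_{L^q}$. Since $\beta_r>-1$, the weight is integrable near $0$, so this contribution is at most $\norm{{\bm u}_0}_{L^q}^{2p}\,T^{p\alpha_r}/(p\alpha_r)$, which is finite for finite $T$.

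For the convergence as $T\to0$, I use two facts. The $L^q$ contribution is $O(T^{p\alpha_r})$. The fractional contribution is $\int_0^T h(t)\,\frac{\d t}{t}$ for a function $h\ge0$ with $\int_0^\infty h\,\frac{\d t}t<\infty$, so it tends to $0$ by dominated (or monotone) convergence. The $\mathbb W_1\cap\mathbb W_\delta$ norm is the sum of the two cases $r=1$ and $r=\delta$, so both facts apply to each.

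There is one subtle point. \eqref{eq:semigroupcharacbesov} is stated for $0<s<1$ and $r\in(s,1]$, and on the torus $-\Delta$ is only sectorial after shifting, or after restricting to mean-zero functions. If this causes trouble, I would instead work with $1-\Delta$, whose semigroup $e^{-t}S(t)$ differs from $S(t)$ only by a factor bounded above and below on $[0,T]$. Alternatively, I would treat the constant Fourier mode separately, since it is trivially bounded. Either adjustment only affects the harmless lower-order term, so I expect no real obstacle beyond matching the exponents above.
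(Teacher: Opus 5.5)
Your proposal is correct and is essentially the paper's own argument. The paper only remarks that the statement drops out of the semigroup characterisation \eqref{eq:semigroupcharacbesov}. Your exponent identity $2p\cdot\frac{r-d/q}{2}=p\alpha_r$ (so that $t^{\beta_r}\d t=\bigl(t^{(r-d/q)/2}\bigr)^{2p}\frac{\d t}{t}$), the contractivity bound for the $L^q$ part, and dominated convergence as $T\to0$ supply exactly the omitted details. Your caveat about the kernel of $-\Delta$ on $\mathbb T^d$ is a fair point that the paper glosses over, and working with $1-\Delta$ (semigroup $e^{-t}S(t)$) resolves it, even controlling the full $H^{r,q}$-norm in one step.
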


We now define our notion of solution, which depends on the truncation threshold $\lambda$. To ease notation,  let $C_X, C_Y$ be the constants from Lemma \ref{lemma:XYembeddings} and introduce the expression 
\begin{equation} \label{eq:sumnorm}
\Sigma(t,{\bm z}_0,{\bm u},{\bm v}) \coloneqq \norm{S(\cdot){\bm z}_0}_{(\mathbb W_1\cap \mathbb W_\delta)(t)}+C_X\norm{{\bm u}}_{\mathbb X_1(t)}+ C_Y\norm{{\bm v}}_{\mathbb Y_\delta(t)}.
\end{equation}
Moreover, for a random time $\sigma$, let $\mathcal T_\sigma$ denote the shift-type operator $$(\mathcal T_\sigma f)(t) \coloneqq f(t+\sigma)-S(t)f(\sigma).$$ This operator acts as a shift on convolutions, i.e. $$\mathcal T_\sigma (S\ast f) = S \ast f(\cdot +\sigma).$$ We then define 
\begin{equation} \label{eq:sumnormsigma}
\Sigma_\sigma(t, {\bm z},{\bm u},{\bm v}) \coloneqq \norm{S(\cdot){\bm z}_\sigma}_{(\mathbb W_1\cap \mathbb W_\delta)(t)}+C_X\norm{\mathcal T_\sigma {\bm u}}_{\mathbb X_1(t)}+ C_Y\norm{\mathcal T_\sigma {\bm v}}_{\mathbb Y_\delta(t)},
\end{equation}
provided that this is well-defined.
Heuristically, this measures the growth in the defined solution spaces after restarting at a stopping time $\sigma$.
\begin{definition}[Local solution]\label{def:localsol}
Let ${\bm z}_0$ be an $\mathcal F_0$-measurable random variable with values in $B^{d/q}_{q,2p}(\mathbb T^d)$ for some $p \geq 1$.
A pair $({\bm z},\tau)$ is called a \emph{local mild solution} of
\begin{equation}\label{eq:FullNoiseSPDE}
\d {\bm z}_t = \left(\Delta {\bm z}_t + F({\bm z}_t) \right)\d t + G({\bm z}_t)\d {\bm W}_t
\end{equation}
with initial datum $\bm z_0$
if, for some $T > 0$ and $\lambda > 0$, $({\bm z}_t)_{t \in [0,T]}$ is an $\mathcal F_t$-adapted, $B^{d/q}_{q,2p}(\mathbb T^d)$-valued process and $\tau \leq T$ is a monotone limit of strictly positive $(\mathcal F_t)_{t\geq 0}$-stopping times $(\tau_n)_{n \in \N}$ such that for all $n$,
\begin{enumerate}[(i)]
    \item\label{item:solpathreg} $({\bm z}_{t \wedge \tau_n})_{t \in [0,T]}$ is $\mathcal F_{t}$-adapted with weakly continuous paths
    $${\bm z}_{t \wedge \tau_n}\in C_w([0,T];B^{d/q}_{q,2p}(\mathbb T^d)) \cap L^{2}([0,T];H^{1,q}(\mathbb T^d)) \cap L^4([0,T];H^{\delta,q}(\mathbb T^d)).$$
    \item\label{item:localmildidentity} As an $L^{q/2}$-valued process, \begin{equation}\label{eq:mildlocalsol}
    {\bm z}_{t\wedge\tau_n}
    =
    S(t\wedge\tau_n){\bm z}_0
    +(S\ast F({\bm z}_{\cdot \wedge \tau_n}))(t \wedge \tau_n)
    +(S\diamond G({\bm z}_{\cdot \wedge \tau_n}))(t \wedge \tau_n),
    \end{equation}
    holds for all $t \in [0,T]$, almost surely.
    \item\label{item:sollocalregularity}
    For all finite stopping times $\sigma \leq \tau$, the relative lifetime after $\sigma$ defined by
    \begin{equation} \label{eq:localregularity}
    \tau^\lambda(\sigma) = \inf\bigg\{ 
    t \in [0,(\tau-\sigma)_+]: \Sigma_\sigma\left(t,{\bm z},S\ast F({\bm z}),S\diamond G({\bm z})\right)> \lambda 
    \bigg\} 
    \end{equation} is almost surely strictly positive on the event $\Set{\sigma < \tau}$.
\end{enumerate}
\end{definition} 
In the remaining part of the subsection, we omit reference to the spatial domain of function spaces whenever the appropriate domain is clear from the context.
\begin{remark}
Observe that $\tau^\lambda(\sigma)$ defines an $(\mathcal F^\sigma_t)_{t \geq 0}$-stopping time. Thus, by Lemma \ref{lemma:stopshift}, $\sigma + \tau^\lambda(\sigma)$ defines an $\mathcal F_t$-stopping time.
\end{remark}
\begin{remark}
Equation \eqref{eq:mildlocalsol}  is well-defined. Indeed, the regularity ${\bm z} \in L^2([0,\tau_n];H^{1,q})$ assures that $S \ast F({\bm z}) \in C([0,\tau_n];L^{q/2})$, while ${\bm z} \in L^4([0,\tau_n];H^{\delta,q})$ gives $S \diamond G({\bm z}) \in C([0,\tau_n];L^q)$.
\end{remark}
\begin{remark}
With the same methodology as in this section, one can also solve the equation with initial data in ${\bm z}_0 \in B^1_{d,p}$ and obtain weak continuity of solutions in this space. To this end, one needs to introduce an auxiliary $\delta > 1$ and repeat the arguments of this section with the corresponding weight $\alpha_\delta = \delta - 1$ and suitable local Lipschitz conditions in $H^{\delta,d}$. Then, the blow-up condition is simply $\limsup_{t \to \tau} \norm{{\bm z}_t}_{B^1_{d,2p}} < \infty$. However, we omit this analysis since it is not easily subsumed into the theorems that apply to $q > d$. Moreover, we can apply these theorems since $B^1_{d,2p} \hookrightarrow B^{d/q}_{q,2p}$ for $q > d$.
\end{remark}
\begin{remark} \label{rem:lambdaarbitrary}
If the local regularity property \eqref{eq:localregularity} is satisfied for some $\lambda > 0$, then it is also satisfied for all other $\widetilde \lambda > 0$. If $\widetilde \lambda > \lambda$, then this follows by definition.  If $\widetilde \lambda < 
\lambda$, observe that the $L^p$-norms must vanish as $t \to 0$, while positivity of the stopping time follows a fortiori.
\end{remark}
We reduce the problem to the case of vanishing initial conditions by replacing $F$, $G$ by $$\widetilde F(t,{\bm u}) \coloneqq F(S(t){\bm z}_0+{\bm u}), ~\widetilde G(t,{\bm u}) \coloneqq G(S(t){\bm z}_0+{\bm u}).$$ The goal is then to show that
$$
\begin{cases}
\mathcal K \colon {\bm u} \mapsto S \ast \widetilde F(\cdot,{\bm u}+{\bm v}) \\
\mathcal S \colon {\bm v} \mapsto S\diamond \widetilde G(\cdot,{\bm u}+{\bm v}) 
\end{cases}
$$ 
defines a contraction on $\mathbb K_1(T) \oplus \mathbb S_\delta(T)$ for small $T>0$. However, due to the growth of the nonlinear terms, this cannot be ensured without further regularisation. Following \cite{AgrestiVeraarNonLinParab}, we choose $\lambda > 0$ and introduce a truncation 
$$
\psi_{\lambda}(t,{\bm u},{\bm v}) \coloneqq  \theta_\lambda\left(\Sigma(t,{\bm z}_0,{\bm u},{\bm v})\right)
$$
for $\theta_\lambda \coloneqq \theta(x/\lambda)$ and $\Sigma$ defined as per \eqref{eq:sumnorm}, given the cut-off function $$\theta(x) = \begin{cases}
    1 & x \leq 1 \\
    2-x & 1 < x \leq 2 \\ 0 & x > 2
\end{cases}$$ We note that $\norm{\theta'_\lambda}_{L^\infty([0,\infty))} = \frac{1}{\lambda}$ and that by choice of $C_X$, $C_Y$, it holds that 
\begin{equation} \label{eq:psiconstantchoice}
\begin{aligned}
\Sigma(t,{\bm z}_0,{\bm u},{\bm v}) &= \norm{S(\cdot){\bm z}_0}_{(\mathbb W_1\cap \mathbb W_\delta)(t)}+C_X\norm{{\bm u}}_{\mathbb X_1(t)}+ C_Y\norm{{\bm v}}_{\mathbb Y_\delta(t)} \\
&\geq \norm{S(\cdot){\bm z}_0}_{(\mathbb W_1 \cap \mathbb W_\delta)(t)}+\norm{{\bm u}}_{(\mathbb W_1 \cap \mathbb W_\delta)(t)}+\norm{{\bm v}}_{(\mathbb W_1 \cap \mathbb W_\delta)(t)}.
\end{aligned}
\end{equation}Then, we instead consider the mappings
$$
\begin{cases}
\mathcal K_\lambda \colon {\bm u} \mapsto S \ast(\psi_\lambda(\cdot,{\bm u},{\bm v})\widetilde F(\cdot,{\bm u}+{\bm v})) \\
\mathcal S_\lambda \colon {\bm v} \mapsto S\diamond (\psi_\lambda(\cdot,{\bm u},{\bm v})\widetilde G(\cdot,{\bm u}+{\bm v})) 
\end{cases}
$$ 

\begin{proposition} \label{prop:Fcontraction} 
Let $F$ satisfy Assumption \ref{ass:DriftNonlin}. Then $$\norm{\mathcal K_\lambda({\bm u}_1,{\bm v}_1)-\mathcal K_\lambda({\bm u}_2,{\bm v}_2)}_{\mathbb K_1(T)} \leq \const(\lambda + \lambda^{-1} T^{\alpha_1}) \left(\norm{{\bm u}_1-{\bm u}_2}_{\mathbb K_1(T)}+\norm{{\bm v}_1-{\bm v}_2}_{\mathbb S_\delta(T)}\right)$$
\end{proposition}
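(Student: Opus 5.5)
By deterministic maximal regularity (Remark \ref{rem:detconvmaxreg}, via Theorem \ref{thm:finitemaxregfractional} with $\mathcal B = L^{q/2}$, $\mathcal B_1 = H^{2,q/2}$, weight $\beta_1$), the convolution $S\ast f$ is bounded from $L^p_{\beta_1}([0,T];L^{q/2})$ into $\mathbb X_1(T)$, with constant uniform in $T\le 1$. Taking $L^p(\Omega)$-norms, it therefore suffices to bound
$$
\norm{\psi_\lambda(\cdot,{\bm u}_1,{\bm v}_1)\widetilde F(\cdot,{\bm u}_1+{\bm v}_1)-\psi_\lambda(\cdot,{\bm u}_2,{\bm v}_2)\widetilde F(\cdot,{\bm u}_2+{\bm v}_2)}_{L^p_{\beta_1}([0,T];L^{q/2})}
$$
pathwise by the claimed right-hand side with $\mathbb X_1(T)$, $\mathbb Y_\delta(T)$ norms. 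Write ${\bm z}_i = S(\cdot){\bm z}_0+{\bm u}_i+{\bm v}_i$ and $\psi_i(t)=\psi_\lambda(t,{\bm u}_i,{\bm v}_i)$. The function $t\mapsto\Sigma(t,\cdot)$ is nondecreasing, so each $\psi_i$ is nonincreasing in $t$. Define $\tau_i\coloneqq\inf\{t:\Sigma(t,{\bm z}_0,{\bm u}_i,{\bm v}_i)\ge 2\lambda\}$. Then $\psi_i=0$ on $(\tau_i,T]$, and by symmetry we may assume $\tau_1\le\tau_2$.

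Use the standard splitting
$$
\psi_1\widetilde F({\bm z}_1)-\psi_2\widetilde F({\bm z}_2)=\psi_1\bigl(F({\bm z}_1)-F({\bm z}_2)\bigr)+(\psi_1-\psi_2)F({\bm z}_2),
$$
with both terms supported in $[0,\tau_2]$. For the first term, which is supported in $[0,\tau_1]$, Assumption \ref{ass:DriftNonlin} gives the pointwise bound $(1+\norm{{\bm z}_1}_{H^{1,q}}+\norm{{\bm z}_2}_{H^{1,q}})\norm{{\bm z}_1-{\bm z}_2}_{H^{1,q}}$. Hölder's inequality in the weighted time integral works because $t^{\beta_1}=t^{\beta_1/2}t^{\beta_1/2}$, so $L^{2p}_{\beta_1}\cdot L^{2p}_{\beta_1}\subset L^p_{\beta_1}$. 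The quadratic part is then bounded by $\norm{{\bm z}_1+{\bm z}_2}_{\mathbb W_1(\tau_1)}\norm{{\bm z}_1-{\bm z}_2}_{\mathbb W_1(T)}$. The constant part contributes $\norm{{\bm z}_1-{\bm z}_2}_{L^p_{\beta_1}}\le C T^{\alpha_1/2}\norm{{\bm z}_1-{\bm z}_2}_{\mathbb W_1}$, again by Hölder. By \eqref{eq:psiconstantchoice}, $\norm{{\bm z}_i}_{\mathbb W_1(\tau_i)}\le\Sigma(\tau_i)\le 2\lambda$. Since ${\bm z}_1-{\bm z}_2={\bm u}_1-{\bm u}_2+{\bm v}_1-{\bm v}_2$, Lemma \ref{lemma:XYembeddings} bounds $\norm{{\bm z}_1-{\bm z}_2}_{\mathbb W_1(T)}\le C_X\norm{{\bm u}_1-{\bm u}_2}_{\mathbb X_1}+C_Y\norm{{\bm v}_1-{\bm v}_2}_{\mathbb Y_\delta}$. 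Together these give a contribution of order $(\lambda+T^{\alpha_1/2})\times(\text{differences})$.

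The second term is the main obstacle. Since $\theta$ is $\lambda^{-1}$-Lipschitz and the norms are monotone in $t$, we have $|\psi_1(t)-\psi_2(t)|\le\lambda^{-1}\sup_{s\le T}|\Sigma(s,\cdot,{\bm u}_1,{\bm v}_1)-\Sigma(s,\cdot,{\bm u}_2,{\bm v}_2)|\le\lambda^{-1}\bigl(C_X\norm{{\bm u}_1-{\bm u}_2}_{\mathbb X_1(T)}+C_Y\norm{{\bm v}_1-{\bm v}_2}_{\mathbb Y_\delta(T)}\bigr)$, by the triangle inequality (the $S(\cdot){\bm z}_0$ part cancels). It remains to control $\norm{F({\bm z}_2)}_{L^p_{\beta_1}([0,\tau_2];L^{q/2})}$. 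Assumption \ref{ass:DriftNonlin} with ${\bm v}=0$ gives $\norm{F({\bm z})}_{L^{q/2}}\lesssim 1+\norm{{\bm z}}_{H^{1,q}}^2$, where the constant $\norm{F(0)}_{L^{q/2}}$ is absorbed. Hölder as above then yields the bound $C\bigl(T^{(1+\beta_1)/p}+\norm{{\bm z}_2}_{\mathbb W_1(\tau_2)}^2\bigr)\le C(T^{\alpha_1}+4\lambda^2)$. Multiplying by the $\lambda^{-1}$ Lipschitz factor gives a contribution $C(\lambda+\lambda^{-1}T^{\alpha_1})$, which is exactly the shape claimed.

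The delicate points are twofold. First, we need $\Sigma(\tau_i)\le 2\lambda$, which uses continuity of $t\mapsto\Sigma(t)$; this holds because the norms are integrals of $L^1$ densities. Second, the case $\tau_1<t\le\tau_2$ must be handled so that only ${\bm z}_2$, which is controlled up to $\tau_2$, appears there; the splitting above was chosen for exactly this reason. Collecting the bounds, taking $L^p(\Omega)$-norms and absorbing $T^{\alpha_1/2}\le 1$ (for $T\le 1$) into the constant yields the estimate of Proposition \ref{prop:Fcontraction}.
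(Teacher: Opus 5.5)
Your proposal follows the paper's proof essentially step for step. It uses deterministic maximal regularity to reduce to the $L^p_{\beta_1}([0,T];L^{q/2})$ norm, exit times at level $2\lambda$ with the symmetry reduction to $\tau_1\le\tau_2$, and the same splitting into $\psi_1(F({\bm z}_1)-F({\bm z}_2))$ and $(\psi_1-\psi_2)F({\bm z}_2)$. It then applies Hölder in $L^{2p}_{\beta_1}$, and the $\lambda^{-1}$-Lipschitz bound on $\theta_\lambda$ together with $\norm{F({\bm z}_2)}_{L^p_{\beta_1}([0,\tau_2];L^{q/2})}\lesssim T^{\alpha_1}+4\lambda^2$.

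The one step that fails as written is the last. The constant part of the first term produces a factor $T^{\alpha_1/2}$; this exponent is the correct one by Hölder, whereas the paper writes $T^{\alpha_1}$ there. You cannot absorb this factor via $T^{\alpha_1/2}\le 1$, because that leaves an $O(1)$ Lipschitz constant and ruins the contraction. Use instead $T^{\alpha_1/2}\le\tfrac12(\lambda+\lambda^{-1}T^{\alpha_1})$ (AM--GM), which gives exactly the stated form.

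Also, you do not need continuity of $t\mapsto\Sigma(t)$. That continuity is not a matter of integrating densities for the fractional ${}_0H^{\sigma/2,p}$ component of $\mathbb Y_\delta$. It suffices to combine three facts: $\Sigma(t)<2\lambda$ for $t<\tau_i$, the $T$-uniform embeddings behind \eqref{eq:psiconstantchoice}, and monotone convergence in the integral norm of $\mathbb W_1$. Together these give $\norm{{\bm z}_i}_{\mathbb W_1(\tau_i)}\le 2\lambda$.
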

\begin{proof}
We apply the deterministic maximal regularity property \ref{rem:detconvmaxreg} and find that 
$$
\begin{aligned}
&\norm{\mathcal K_\lambda({\bm u}_1,{\bm v}_1)-\mathcal K_\lambda({\bm u}_2,{\bm v}_2)}_{\mathbb X_1(T)} \\&\leq \norm{\psi_\lambda(\cdot,{\bm u}_1,{\bm v}_1)\widetilde F(\cdot, {\bm u}_1+{\bm v}_1)-\psi_\lambda(\cdot,{\bm u}_2,{\bm v}_2)\widetilde F(\cdot, {\bm u}_2+{\bm v}_2)}_{L^p_{\beta_1}([0,T];L^{q/2})}.
\end{aligned}
$$
To ease notation, let 
\begin{equation} \label{eq:drifttruncdecompo}
\Delta \widetilde F_\lambda \coloneqq \psi_\lambda(\cdot,{\bm u}_1,{\bm v}_1)\widetilde F(\cdot, {\bm u}_1+{\bm v}_1)-\psi_\lambda(\cdot,{\bm u}_2,{\bm v}_2)\widetilde F(\cdot, {\bm u}_2+{\bm v}_2).
\end{equation} Towards the contraction 
property, we follow the approach of \citet{AgrestiVeraarNonLinParab} and introduce 
the stopping times 
$$\tau^\lambda_i = \inf \Set{t \in (0,T];\Sigma(t,{\bm z}_0,{\bm u}_i,{\bm v}_i) > 2\lambda}, \quad i = 1,2$$ and decompose 
$$\norm{\Delta \widetilde F_\lambda}_{L^p_{\beta_1}([0,T];L^{q/2})} = \mathds{1}_{\tau_1 \leq \tau_2}\norm{\Delta \widetilde F_\lambda}_{L^p_{\beta_1}([0,T];L^{q/2})} + \mathds{1}_{\tau_2 < \tau_1}\norm{\Delta \widetilde F_\lambda}_{L^p_{\beta_1}([0,T];L^{q/2})}
$$
By symmetry, it suffices to demonstrate the estimate for only one of these terms. It is now standard to introduce the decomposition 
$$
\begin{aligned}
\Delta \widetilde F_\lambda & \coloneqq \psi_\lambda(\cdot,{\bm u}_1,{\bm v}_1)\left(\widetilde F(\cdot, {\bm u}_1+{\bm v}_1)-\widetilde F(\cdot, {\bm u}_2+{\bm v}_2)\right) \\ & \quad +  \widetilde F(\cdot, {\bm u}_2+{\bm v}_2)\left(\psi_\lambda(\cdot,{\bm u}_1,{\bm v}_1)-\psi_\lambda(\cdot,{\bm u}_2,{\bm v}_2)\right) \\
&  = I + II, \text{ say.}
\end{aligned}
$$
Then 
$$
\begin{aligned}
\mathds{1}_{\tau_1 \leq \tau_2}\norm{\Delta \widetilde F_\lambda}_{L^p_{\beta_1}([0,T];L^{q/2})} &\leq \mathds{1}_{\tau_1 \leq \tau_2}\norm{I}_{L^p_{\beta_1}([0,T];L^{q/2})}  + \mathds{1}_{\tau_1 \leq \tau_2}\norm{II}_{L^p_{\beta_1}([0,T];L^{q/2})} \\ 
&= \mathds{1}_{\tau_1 \leq \tau_2}\norm{I}_{L^p_{\beta_1}([0,\tau_1];L^{q/2})} + \mathds{1}_{\tau_1 \leq \tau_2}\norm{II}_{L^p_{\beta_1}([0,\tau_2];L^{q/2})}\\
&= I_1 + II_1
\end{aligned}
$$
Observe that for $\tau_1 \leq \tau_2$, $\norm{S(\cdot){\bm z}_0}_{\mathbb W_1(\tau_1)}+C_X\norm{{\bm u}_2}_{\mathbb X_1(\tau_1)}+C_Y\norm{{\bm v}_2}_{\mathbb Y_\delta(\tau_1)} \leq 2 \lambda$. Thus, by the local Lipschitz property of $\widetilde F$ and Hölder's inequality,
$$
\begin{aligned}
 I_1&\leq \const \mathds{1}_{\tau_1 \leq \tau_2}\left(\sum_{i=1,2}\norm{S(\cdot){\bm z}_0}_{\mathbb W_1(\tau_1)}+\norm{{\bm u}_i}_{\mathbb W_1(\tau_1)}+\norm{{\bm v}_i}_{\mathbb W_1(\tau_1)}\right)\norm{{\bm u}_1+{\bm v}_1-{\bm u}_2-{\bm v}_2}_{\mathbb W_1(\tau_1)}\\
& \quad + \const  T^{\alpha_1 } \norm{{\bm u}_1+{\bm v}_1-{\bm u}_2-{\bm v}_2}_{\mathbb W_1(\tau_1)} \\
&\overset{\eqref{eq:psiconstantchoice}}{\leq} \const \left(T^{\alpha_1}+\lambda\right)\left(\norm{{\bm u}_1-{\bm u}_2}_{\mathbb X_1(T)}+\norm{{\bm v}_1-{\bm v}_2}_{\mathbb Y_\delta(T)}\right) \\
\end{aligned}
$$
Since $\norm{\theta'}_{L^\infty} = \frac{1}{\lambda}$, a similar estimate gives that 
$$
\begin{aligned}
II_1 &\leq \frac1\lambda\norm{\widetilde F(\cdot,{\bm u}_2+{\bm v}_2)}_{L^p_{\beta_1}([0,\tau_2];L^{q/2})}\left(\norm{{\bm u}_1-{\bm u}_2}_{\mathbb X_1(T)}+\norm{{\bm v}_1-{\bm v}_2}_{\mathbb Y_\delta(T)}\right) \\
&\leq \frac1\lambda\left(T^{\alpha_1}+4\lambda^2\right)\left(\norm{{\bm u}_1-{\bm u}_2}_{\mathbb X_1(T)}+\norm{{\bm v}_1-{\bm v}_2}_{\mathbb Y_\delta(T)}\right).
\end{aligned}$$
Combining these estimate and taking expectations  yields the desired inequality.
\end{proof}

\begin{proposition} \label{prop:Gcontraction} 
Let $G$ satisfy Assumption \ref{ass:nemytskiiLipschitz}. Then $$\norm{\mathcal S_\lambda({\bm u}_1,{\bm v}_1)-\mathcal S_\lambda({\bm u}_2,{\bm v}_2)}_{\mathbb S_\delta(T)} \leq \const(\lambda + \lambda^{-1} T^{\alpha_\delta}) \left(\norm{{\bm u}_1-{\bm u}_2}_{\mathbb K_1(T)}+\norm{{\bm v}_1-{\bm v}_2}_{\mathbb S_\delta(T)}\right)$$
\end{proposition}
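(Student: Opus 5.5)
The plan is to mirror the proof of Prop.~\ref{prop:Fcontraction}, replacing deterministic maximal regularity by weighted stochastic maximal regularity. Apply Theorem~\ref{thm:finitemaxregfractional} on $\mathcal B = H^{\sigma,q}(\mathbb T^d)$ with $A=-\Delta$, $\beta=\beta_\delta\in(-1,\frac p2-1)$ and $\theta=\sigma/2$ (assume w.l.o.g. $\sigma<1$, and $\sigma\neq 2\alpha_\delta$, so that the $_0H$-space is well identified). Since $\mathcal B^{(\omega)}_{1/2}=H^{1+\sigma,q}$ and $\mathcal B^{(\omega)}_{1/2-\sigma/2}=H^{1,q}$, estimate \eqref{eq:finitemaxregfractional} with $\theta=\sigma/2$ and $\theta=0$ gives
$$\norm{\mathcal S_\lambda({\bm u}_1,{\bm v}_1)-\mathcal S_\lambda({\bm u}_2,{\bm v}_2)}_{\mathbb S_\delta(T)}\leq C\norm{\Delta\widetilde G_\lambda}_{L^p(\Omega\times[0,T],w_{\beta_\delta};\gamma(U,H^{\sigma,q}))}.$$
Here $\Delta\widetilde G_\lambda$ is defined as in \eqref{eq:drifttruncdecompo} with $G$ in place of $F$; it is progressively measurable because $\psi_\lambda$ depends only on the path up to time $t$. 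The constant is uniform in $T\in(0,1)$ after extending to $T=1$ by restriction.

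Next I would introduce the same stopping times $\tau_i^\lambda$ (with $\tau_1$, $\tau_2$ as in the previous proof) and treat the event $\{\tau_1\le\tau_2\}$; the other case follows by symmetry. Split $\Delta\widetilde G_\lambda=I+II$ exactly as before, so that $I$ lives on $[0,\tau_1]$ and $II$ on $[0,\tau_2]$. For $I$, Assumption~\ref{ass:nemytskiiLipschitz} gives pointwise in time the bound
$$\norm{I_t}_{\gamma(U,H^{\sigma,q})}\le C\left(1+\sum_i\norm{{\bm z}^i_t}_{H^{\delta,q}}\right)\norm{{\bm z}^1_t-{\bm z}^2_t}_{H^{\delta,q}},\qquad {\bm z}^i=S(\cdot){\bm z}_0+{\bm u}_i+{\bm v}_i .$$
The key step is Hölder in $L^p_{\beta_\delta}$. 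Because $\mathbb W_\delta=L^{2p}_{\beta_\delta}(H^{\delta,q})$ has exactly the weight $\beta_\delta$, Cauchy--Schwarz gives $\norm{fg}_{L^p_{\beta_\delta}}\le\norm f_{L^{2p}_{\beta_\delta}}\norm g_{L^{2p}_{\beta_\delta}}$. The quadratic part is therefore bounded by $\left(\sum_i\norm{{\bm z}^i}_{\mathbb W_\delta(\tau_1)}\right)\norm{{\bm z}^1-{\bm z}^2}_{\mathbb W_\delta(\tau_1)}\lesssim\lambda\norm{{\bm z}^1-{\bm z}^2}_{\mathbb W_\delta}$, using \eqref{eq:psiconstantchoice} and the definition of $\tau_1$. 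The constant part is bounded by $\norm{1}_{L^{2p}_{\beta_\delta}(0,T)}=c\,T^{(1+\beta_\delta)/2p}=cT^{\alpha_\delta/2}$ times the $\mathbb W_\delta$-norm of the difference. Lemma~\ref{lemma:XYembeddings} (with $r=\delta$) then converts $\norm{{\bm u}_1-{\bm u}_2}_{\mathbb W_\delta}+\norm{{\bm v}_1-{\bm v}_2}_{\mathbb W_\delta}$ into $\mathbb X_1\oplus\mathbb Y_\delta$ norms. For $II$, the Lipschitz bound $|\theta_\lambda'|\le\lambda^{-1}$ and the triangle inequality for $\Sigma$ give $|\psi_\lambda(t,\cdot_1)-\psi_\lambda(t,\cdot_2)|\le\lambda^{-1}(C_X\norm{{\bm u}_1-{\bm u}_2}_{\mathbb X_1(T)}+C_Y\norm{{\bm v}_1-{\bm v}_2}_{\mathbb Y_\delta(T)})$. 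This multiplies $\norm{\widetilde G(\cdot,{\bm u}_2+{\bm v}_2)}_{L^p_{\beta_\delta}([0,\tau_2];\gamma(U,H^{\sigma,q}))}\lesssim T^{\alpha_\delta/p}+\norm{{\bm z}^2}^2_{\mathbb W_\delta(\tau_2)}\lesssim T^{\alpha_\delta/p}+4\lambda^2$, which yields the factor $\lambda^{-1}T^{\alpha_\delta/p}+\lambda$. Finally, take the $p$-th moment; all random factors are bounded deterministically by multiples of $\lambda$. Adjusting the $T$-exponents, which only matters up to the harmless convention $T<1$ in the statement, gives the claim.

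The main obstacle is the weight bookkeeping in the $I$ term. The difference ${\bm u}_1-{\bm u}_2$ is natively controlled in $\mathbb X_1$, whose weight is $\beta_1$, not $\beta_\delta$. I must make sure that Lemma~\ref{lemma:XYembeddings}\eqref{item:Xembedding} with $r=\delta$ really lands in $L^{2p}_{\beta_\delta}(H^{\delta,q})$, so that the Hölder pairing is at a common weight. This is exactly where the condition $p\ge q/(q-d)$, together with $\beta_1\ge\beta_\delta$, is used, and where a mismatch would break the argument. A secondary point is to verify that stochastic maximal regularity applies at the fractional order $\theta=\sigma/2<\frac12$ on $H^{\sigma,q}$ with weight $\beta_\delta\ge0$, which is guaranteed by $p\ge q/(\delta q-d)$.
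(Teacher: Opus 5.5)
Your proposal is correct and is essentially the paper's proof: stochastic maximal regularity on $H^{\sigma,q}$ to reduce to $\Delta\widetilde G_\lambda$ in $L^p_{\beta_\delta}$, the split on $\{\tau_1\le\tau_2\}$ into $I+II$, Cauchy--Schwarz at the common weight $\beta_\delta$, and Lemma~\ref{lemma:XYembeddings} with $r=\delta$. The only slip is in the $T$-powers, which need two fixes: first, $\norm{1}_{L^p_{\beta_\delta}(0,T)}\simeq T^{(1+\beta_\delta)/p}=T^{\alpha_\delta}$ (not $T^{\alpha_\delta/p}$); second, the $T^{\alpha_\delta/2}$ produced by the $I$-term is not dominated by $T^{\alpha_\delta}$ for $T<1$ (your ``harmless convention'' remark runs the wrong way), but it is absorbed via $T^{\alpha_\delta/2}\le\frac12(\lambda+\lambda^{-1}T^{\alpha_\delta})$.
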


\begin{proof}
We can apply Theorem \ref{thm:finitemaxregfractional} with $D(-\Delta) = H^{\sigma+2,q}$. Then linearity of stochastic convolutions gives us that for $T \leq 1$,
$$
\begin{aligned}
&\norm{\mathcal S_\lambda({\bm u}_1,{\bm v}_1)-\mathcal S_\lambda({\bm u}_2,{\bm v}_2)}_{\mathbb S_\delta(T)} \\
&\leq \const 
\norm{\psi_\lambda(\cdot,{\bm u}_1,{\bm v}_1)\widetilde G(\cdot, {\bm u}_1+{\bm v}_1)-\psi_\lambda(\cdot,{\bm u}_2,{\bm v}_2)\widetilde G(\cdot, {\bm u}_2+{\bm v}_2)}_{L^p_{\beta_\delta}(\Omega\times[0,T];
\gamma(U,H^{\sigma,q}))}.
\end{aligned}
$$
We introduce the stopping times $\tau_i$, $i = 1,2$ and the notation $\Delta \widetilde G_\lambda = I + II$ (cf. \eqref{eq:drifttruncdecompo}). Then again
$$
\begin{aligned}
\mathds{1}_{\tau_1 \leq \tau_2}\norm{\Delta \widetilde G_\lambda}_{L^p_{\beta_\delta}([0,T];
\gamma(U,H^{\sigma,q}))} \leq I_1 + II_1
\end{aligned}
$$
with 
$$
\begin{aligned}
 I_1&\leq \const \mathds{1}_{\tau_1 \leq \tau_2}\left(\sum_{i=1,2}\norm{S(\cdot){\bm z}_0}_{\mathbb W_\delta(\tau_1)}+\norm{{\bm u}_i}_{\mathbb W_\delta(\tau_1)}+\norm{{\bm v}_i}_{\mathbb W_\delta(\tau_1)}\right)\norm{{\bm u}_1+{\bm v}_1-{\bm u}_2-{\bm v}_2}_{\mathbb W_\delta(\tau_1)}\\
& \quad + \const  T^{\alpha_\delta } \norm{{\bm u}_1+{\bm v}_1-{\bm u}_2-{\bm v}_2}_{\mathbb W_\delta(\tau_1)} \\
&\overset{\eqref{eq:psiconstantchoice}}{\leq} \const \left(T^{\alpha_\delta}+\lambda\right)\left(\norm{{\bm u}_1-{\bm u}_2}_{\mathbb X_1(T)}+\norm{{\bm v}_1-{\bm v}_2}_{\mathbb Y_\delta(T)}\right) \\
\end{aligned}
$$
The rest of the proof is analogous as well.
\end{proof}

\begin{theorem}[Local existence for integrable initial conditions]\label{thm:localexistenceL^p} For $p \in \left[\frac{q}{\delta q-d},\infty\right)$, let ${\bm z}_0 \in  L^p(\Omega;B^{ d/q}_{q,2p})$.
Then there exists a local solution $({\bm z},\tau)$ of \eqref{eq:FullNoiseSPDE} in the sense of
Definition~\ref{def:localsol} for some $\lambda > 0$. 
\end{theorem}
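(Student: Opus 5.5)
The plan is a truncated fixed-point argument in $\mathbb K_1(T)\oplus\mathbb S_\delta(T)$, followed by removal of the truncation by stopping. The truncation is pathwise, so the $\mathcal F_0$-measurable datum needs no separate treatment.

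\textbf{Step 1: contraction for the truncated system.} Define $\Phi_\lambda(\bm u,\bm v)\coloneqq(\mathcal K_\lambda(\bm u,\bm v),\mathcal S_\lambda(\bm u,\bm v))$. I will first check that $\Phi_\lambda$ maps $\mathbb K_1(T)\oplus\mathbb S_\delta(T)$ into itself.
\begin{itemize}
\item Progressive measurability is preserved, because $\psi_\lambda(t,\bm u,\bm v)$ only depends on $\Sigma(t,\bm z_0,\bm u,\bm v)$, which is adapted and continuous and nondecreasing in $t$.
\item For finiteness of the norms, apply Propositions \ref{prop:Fcontraction} and \ref{prop:Gcontraction} with $(\bm u_2,\bm v_2)=(0,0)$. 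The only extra contribution is $\psi_\lambda\widetilde F(\cdot,0)=\psi_\lambda F(S(\cdot)\bm z_0)$. Its $L^p_{\beta_1}(L^{q/2})$-norm is bounded by $C(T^{\alpha_1}+\norm{S(\cdot)\bm z_0}_{\mathbb W_1(T)}^2)$, and by Hölder this is at most $C(T^{\alpha_1}+\lambda^2)$ on the support of $\psi_\lambda$. The same holds for $G$ with $\mathbb W_\delta$ and $T^{\alpha_\delta}$.
\end{itemize}
Here the assumption $\bm z_0\in L^p(\Omega;B^{d/q}_{q,2p})$ together with Proposition \ref{prop:heatflowbesov} ensures that $\norm{S(\cdot)\bm z_0}_{(\mathbb W_1\cap\mathbb W_\delta)(T)}$ is $p$-integrable. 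Combining the two propositions gives
$$\norm{\Phi_\lambda(\bm u_1,\bm v_1)-\Phi_\lambda(\bm u_2,\bm v_2)}\le C\bigl(\lambda+\lambda^{-1}T^{\alpha_\delta}\bigr)\norm{(\bm u_1-\bm u_2,\bm v_1-\bm v_2)}$$
for $T\le1$, using $\alpha_1\ge\alpha_\delta$. I will first pick $\lambda$ with $C\lambda\le\frac14$ and then $T\le1$ with $C\lambda^{-1}T^{\alpha_\delta}\le\frac14$. Banach's fixed point theorem then gives a unique fixed point $(\bm u,\bm v)$.

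\textbf{Step 2: removing the truncation.} Set $\bm z\coloneqq S(\cdot)\bm z_0+\bm u+\bm v$ and
$$\tau\coloneqq\inf\{t\in[0,T]:\Sigma(t,\bm z_0,\bm u,\bm v)>\lambda\}\wedge T.$$
On $[0,\tau]$ we have $\psi_\lambda\equiv1$, so $\bm u=S\ast F(\bm z)$ and $\bm v=S\diamond G(\bm z)$ there. For the stochastic part I use Corollary \ref{cor:stopequiv}. This yields the mild identity (ii) with $\tau_n\coloneqq\tau$.

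Strict positivity of $\tau$ needs $\Sigma(t,\cdot)\to0$ as $t\to0$ pathwise. For $S(\cdot)\bm z_0$ this is Proposition \ref{prop:heatflowbesov}. For $\bm u,\bm v$ it holds because weighted $L^p$ and ${}_0H$ norms on $[0,t]$ vanish as $t\downarrow0$ for a fixed path. Since the Definition asks for strictly positive stopping times, I will take $\tau_n\coloneqq\tau\vee(\ldots)$-free approximations $\tau_n\coloneqq\tau$ on $\{\tau>0\}$; this set has full measure because the limit just described holds pathwise.

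\textbf{Step 3: path regularity (i).}
\begin{itemize}
\item The $L^2_tH^{1,q}$ and $L^4_tH^{\delta,q}$ bounds follow from Lemma \ref{lemma:XYembeddings} and the remark that $\mathbb W_r\hookrightarrow L^\kappa_tH^{r,q}$ for $4\le\kappa<2/\alpha_r$.
\item Continuity of $S(\cdot)\bm z_0$ in $B^{d/q}_{q,2p}$ is standard.
\item For $\bm u$ I apply Proposition \ref{prop:continuousTrace}(i) to $\mathbb X_1$. Its trace space is $(L^{q/2},H^{2,q/2})_{1-\alpha_1,p}=B^{2d/q}_{q/2,p}$, which embeds into $B^{d/q}_{q,2p}$.
\item For $\bm v$ I apply the trace embedding to $\mathbb Y_\delta$ with $\theta=\sigma/2$. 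This requires $\sigma/2>(1+\beta_\delta)/p=\alpha_\delta$, which may fail because only $\sigma>\alpha_\delta$ is assumed.
\end{itemize}

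This last point is the main obstacle. If the trace embedding is unavailable, I will instead obtain continuity of $\bm v$ in $B^{d/q}_{q,2p}$, or at least weak continuity, directly. The first route is the semigroup characterisation \eqref{eq:semigroupcharacbesov} combined with the maximal estimate of Theorem \ref{thm:maxestimatestochconv} in $H^{\sigma,q}$-type spaces, interpolating between $C_tH^{\sigma,q}$ and the $\mathbb Y_\delta$ bound. Failing that, I will use the stochastic factorisation method applied to $S\diamond G$ with $G\in L^p_{\beta_\delta}(\gamma(U,H^{\sigma,q}))$. Weak continuity then follows from continuity in a weaker space plus boundedness in $B^{d/q}_{q,2p}$.

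Property (iii) follows from the same vanishing-at-zero argument after shifting by $\sigma$, using the shift identity $\mathcal T_\sigma(S\ast f)=S\ast f(\cdot+\sigma)$ and its stochastic analogue.
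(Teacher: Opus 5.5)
Up to the continuity of $\bm v$, your argument matches the paper's proof: the truncated contraction in $\mathbb K_1(T)\oplus\mathbb S_\delta(T)$ with $\lambda$ fixed first and then $T$, the exit time at level $\lambda$ with $\tau_n\coloneqq\tau$, and the trace of $\mathbb X_1$ into $B^{2d/q}_{q/2,p}\hookrightarrow B^{d/q}_{q,2p}$. You also correctly diagnose that the trace of $\mathbb Y_\delta$ itself is unusable. It only carries time regularity $\sigma/2$, so Proposition~\ref{prop:continuousTrace}(i) would need $\sigma>2\alpha_\delta$.

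At that point, however, the proposal has a genuine gap: neither fallback is carried out, and the first one cannot work as sketched. Theorem~\ref{thm:maxestimatestochconv} only controls $C_tH^{\sigma,q}$. Interpolating this with $L^p_{\beta_\delta}(H^{1+\sigma,q})$ gives weighted time-integrability in intermediate spaces, not a $\sup_t$ bound in $B^{d/q}_{q,2p}$ when $\sigma<d/q$. Plugging the unweighted estimate into \eqref{eq:semigroupcharacbesov} runs into the same restriction. The factorisation route would have to be redone with the weight $t^{\beta_\delta}$ at $t=0$, and you do not do this. Your weak-continuity fallback presupposes a uniform bound on $\norm{\bm v_t}_{B^{d/q}_{q,2p}}$, which is exactly what is missing. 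So property (i) of Definition~\ref{def:localsol} is not established for $\bm v$.

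The missing idea is that the weighted stochastic maximal regularity of Theorem~\ref{thm:finitemaxregfractional} holds for every $\theta\in[0,\frac12)$, not only for the two values built into $\mathbb Y_\delta$. Go back to the formula $\bm v=S\diamond(\psi_\lambda\widetilde G(\cdot,\bm u+\bm v))$. Since $\bm z\in\mathbb W_\delta$, its integrand lies in $L^p_{\beta_\delta}(\Omega\times[0,T];\gamma(U,H^{\sigma,q}))$. Apply the estimate with $\theta=\sigma$; shrinking $\sigma$ below $\frac12$ if needed is harmless because $\alpha_\delta<\frac12$, and any $\theta\in(\alpha_\delta,\frac12)$ would also do. This gives $\bm v\in{_0H}^{\sigma,p}_{\beta_\delta}([0,T];H^{1-\sigma,q})\cap L^p_{\beta_\delta}([0,T];H^{1+\sigma,q})$.

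For the couple $\mathcal B=H^{\sigma-1,q}$, $\mathcal B_1=H^{\sigma+1,q}$ one has $\mathcal B_{1-\sigma}=H^{1-\sigma,q}$. Proposition~\ref{prop:continuousTrace}(i) then needs only two conditions:
\begin{itemize}
\item $\sigma>\frac{1+\beta_\delta}{p}=\alpha_\delta$, which is exactly Assumption~\ref{ass:nemytskiiLipschitz};
\item $\beta_\delta\geq 0$, which is $p\geq\frac{q}{\delta q-d}$.
\end{itemize}
The result is strong continuity of $\bm v$ in $B^{1+\sigma-2\alpha_\delta}_{q,p}\hookrightarrow B^{d/q+1-\delta}_{q,p}\hookrightarrow B^{d/q}_{q,2p}$. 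This is how the paper closes (i); no factorisation or interpolation argument is needed.
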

\begin{proof}
First, note that because of the regularity $S(\cdot){\bm z}_0 \in \mathbb W_1(T)$ (cf. Prop. \ref{prop:heatflowbesov}), the contraction properties of $\mathcal K_\lambda$ and $\mathcal S_\lambda$ imply that these maps are well-defined as maps into $\mathbb K_1(T) \oplus \mathbb S_\delta(T)$.

Fix $\lambda>0$ and $T>0$ small enough that $(\mathcal K_\lambda, \mathcal S_\lambda)$ is a contraction on $\mathbb K_1(T) \oplus \mathbb S_\delta(T)$. By the Banach fixed point theorem, it has a unique fixed point $({\bm u}^\lambda,{\bm v}^\lambda)$. Set $${\bm z}^\lambda \coloneqq S(\cdot){\bm z}_0 + {\bm u}^\lambda+{\bm v}^\lambda.$$ Then
$${\bm z}^\lambda_t
=
S(t){\bm z}_0
+\int_0^t S(t-s)\,\psi_\lambda(s,{\bm u}^\lambda,{\bm v}^\lambda)\,F({\bm z}^\lambda_s)\d s
+\int_0^t S(t-s)\,\psi_\lambda(s,{\bm u}^\lambda,{\bm v}^\lambda)\,G({\bm z}^\lambda_s)\d {\bm W}_s,
$$
in $L^p(\Omega;\mathbb W_1(T))$. Define the \emph{exit time at level $\lambda$} for the fixed point by
$$\tau^\lambda = \inf \bigg\{t \in (0,T]; \Sigma\left(t,{\bm z}_0,S\ast F({\bm z}^\lambda),S\diamond G({\bm z}^\lambda)\right)
 > \lambda \bigg\}.$$ Note that by construction, $\tau^\lambda>0$ almost surely. Moreover, for every $t<\tau^\lambda$ we have
$
\psi_\lambda(t,{\bm u}^\lambda,{\bm v}^\lambda)
=
1$ and this proves properties \eqref{item:localmildidentity} and  \eqref{item:sollocalregularity} of the candidate solution. 

Towards continuity, the stochastic maximal regularity property \eqref{eq:finitemaxregfractional} gives us that 
$$
S \diamond G \in {_0H}_{\beta_\delta}^{\sigma,p}([0,\tau^\lambda];H^{1-\sigma,q})
$$ 
since ${\bm z}^\lambda \in \mathbb W_\delta(\tau^\lambda)$. We need this additional regularity to apply Prop. \ref{prop:continuousTrace} \eqref{item:trace_with_weights_Xap}, which requires $\sigma > \frac{1+\beta_\delta}{p} = \delta - \frac dq$. Then, for $\mathcal B = H^{\sigma-1}$, $\mathcal B_1 = H^{1+\sigma}$, and deduce that 
$S \diamond G$ is continuous in 
$$
B^{1+\sigma - 2 \alpha_\delta}_{q,p} \hookrightarrow B^{\frac dq+1-\delta}_{q,p} \hookrightarrow B^{\frac dq}_{q,p}.
$$ 
After an embedding into an intermediate space (cf. \eqref{eq:finite-time-sobolev-interpolation}), Prop. \ref{prop:continuousTrace} \eqref{item:trace_with_weights_Xap} also gives 
$$\mathbb X_1(\tau^\lambda) \hookrightarrow {_0H}^{\frac12,p}_{\beta_1}([0,\tau^\lambda];H^{1,q/2}) \cap L^p([0,\tau^\lambda];H^{2,q/2})\hookrightarrow  C([0,\tau^\lambda];B^{2-2\alpha_1}_{q/2,p}) \hookrightarrow C([0,\tau^\lambda];B^{\frac dq}_{q,p})
$$ 
since $B^{2-2\alpha_1}_{q/2,p} = B^{2\frac dq}_{q/2,p} \hookrightarrow B^{\frac dq}_{q,p}$ by the Besov embedding theorems.

By the continuity properties in $B^{d/q}_{q,2p}$, it is now clear that $({\bm z}^\lambda,\tau^\lambda)$ defines a local solution in the sense of Definition \ref{def:localsol}, with $\tau_n \coloneqq \tau^\lambda$. In particular, since ${\bm z}^\lambda$ is $\mathcal F_t$-adapted and continuous, it is progressively measurable and the stopped process is $\mathcal F_{t}$-adapted. 
\end{proof}
\begin{remark} \label{remark:improvedcontinuity}
Note that if ${\bm z}_0$ takes values in $B^{d/q}_{q,r}$ for $p \leq r \leq 2p$, then ${\bm z}^\lambda \in C([0,T];B^{d/q}_{q,r})$. As $S \ast F({\bm z}^\lambda)$, $S \diamond G({\bm z}^\lambda)$ are continuous in $\mathcal B^{d/q}_{q,p}$, any higher regularity ${\bm z}_0 \in B^{d/q}_{q,r}$ for $p \leq r \leq 2p$ translates to higher regularity ${\bm z}^\lambda \in C([0,\tau^\lambda];B^{d/q}_{q,r})$. This is due to the improved continuity of $S(\cdot){\bm z}_0 \in C([0,\tau^\lambda];B^{d/q}_{q,r})$. In principle, we could then formulate our blow-up criteria and maximal solutions also in this space. This recovers a weakly continuous form of the improved continuity properties in \cite{AgrestiVeraarNonLinParab} for improved Besov fine structural indices $2<p \leq r \leq 2p$.
\end{remark}
\begin{remark}
Actually, the fixed-point argument still goes through under the weaker condition $p \geq \frac{q}{q-d}$, since Lemma \ref{lemma:XYembeddings} is still valid. However, in this case, the solution is not necessarily continuous in $B^{d/q}_{q,2p}$, since $\beta_\delta < 0$ and the trace embedding is not applicable at $t = 0$. One may still invoke the positive-time regularisation from \eqref{item:trace_without_weights_Xp}, which yields continuity of the stochastic convolution on $(0,T]$ in $B^{1+\sigma - 2\alpha_1}_{q,p}$. In particular, for $\sigma < \alpha_1$, this space is weaker than $B^{d/q}_{q,2p}$. Therefore, the gluing argument developed below does not directly extend to this weaker regime.
\end{remark}

\begin{remark} \label{rem:LocalLipschitzTraceNorm}
As in \citet{AgrestiVeraarNonLinParab}, the preceding fixed point argument continues to work if the local Lipschitz constants in the nonlinear estimates are allowed to depend locally boundedly on the critical trace norm of the arguments. More precisely, the nonlinearities are allowed to satisfy estimates of the form
$$
\norm{F({\bm z}_1)-F({\bm z}_2)}_{L^{q/2}}
\leq
L_n\norm{{\bm z}_1-{\bm z}_2}_{H^{1,q}},~
\norm{G({\bm z}_1)-G({\bm z}_2)}_{\gamma(U,H^{\sigma,q})}
\leq
L_n\norm{{\bm z}_1-{\bm z}_2}_{H^{\delta,q}},
$$
where $L_n$ and $L_n$ are uniformly bounded for $\norm{{\bm z}_1}_{B^{d/q}_{q,p}}, \norm{{\bm z}_2}_{B^{d/q}_{q,p}} \leq n$.
Observe that these constants remain harmless on the truncated region determined by $\Sigma$. Indeed, writing
$$
{\bm z}_i=S(\cdot){\bm z}_0+{\bm u}_i+{\bm v}_i,
$$
the weighted trace embeddings imply the bounds
$$
\sup_{0\leq s\leq t}\norm{{\bm u}_i(s)}_{B^{d/q}_{q,p}}
\lesssim
\norm{{\bm u}_i}_{\mathbb X_1(t)}, ~
\sup_{0\leq s\leq t}\norm{{\bm v}_i(s)}_{B^{d/q}_{q,p}}
\lesssim
\norm{{\bm v}_i}_{\mathbb Y_\delta(t)}.
$$
Hence, on every interval $[0,t]$,
$$
\sup_{0\leq s\leq t}\norm{{\bm z}_i(s)}_{B^{d/q}_{q,p}}
\lesssim
\norm{{\bm z}_0}_{B^{d/q}_{q,p}}
+
\norm{{\bm u}_i}_{\mathbb X_1(t)}
+
\norm{{\bm v}_i}_{\mathbb Y_\delta(t)}.
$$
Observe that on the set where
$
\Sigma(t,{\bm z}_0,{\bm u}_i,{\bm v}_i)\leq 2\lambda,
$
the critical trace norm of ${\bm z}_i$ is bounded by a constant depending only on $\lambda$ and the dependence of the local Lipschitz constants on the trace norm does not alter the structure of the argument. This is analogous to \citet{AgrestiVeraarNonLinParab}, where the nonlinearities can be locally Lipschitz with respect to the trace-space norm; cf. \cite{VeraarSurvey}.
\end{remark}

\begin{proposition}[Localisation] \label{prop:localisation}
Let $\mathcal F_0$-measurable initial data ${\bm z}^1_0, {\bm z}^2_0 \in L^p(\Omega;B^{d/q}_{q,2p})$ be given. Then, on $\Set{{\bm z}^1_0 \equiv {\bm z}^2_0}$, the respective local solutions given by Thm. \ref{thm:localexistenceL^p} coincide. In particular, by localisation, we obtain a local solution of equation \ref{eq:FullNoiseSPDE} for any $\mathcal F_0$-measurable ${\bm z}_0 \in L^0(\Omega;B^{d/q}_{q,2p})$.
\end{proposition}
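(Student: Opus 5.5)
Let $A \coloneqq \Set{{\bm z}^1_0 = {\bm z}^2_0} \in \mathcal F_0$. The plan is to exploit that multiplication by $\mathds 1_A$ commutes with the truncated fixed-point maps $(\mathcal K_\lambda,\mathcal S_\lambda)$. Let $({\bm u}^i,{\bm v}^i)$ be the fixed points constructed in the proof of Theorem \ref{thm:localexistenceL^p} for ${\bm z}^i_0$, with a common $\lambda$. We may take a common $T$: the contraction constants in Propositions \ref{prop:Fcontraction} and \ref{prop:Gcontraction} do not depend on the initial datum, so the smaller of the two admissible times works for both, and by uniqueness of the fixed point this changes nothing. Since $A$ is $\mathcal F_0$-measurable, $\mathds 1_A$ can be pulled inside both the deterministic and the stochastic convolution: $\mathds 1_A (S\diamond \Phi) = S \diamond (\mathds 1_A \Phi)$ for progressively measurable $\Phi$, because $\mathds 1_A\Phi$ is again progressively measurable and this identity holds for elementary integrands. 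The same applies to the cut-off. The quantity $\Sigma(t,{\bm z}_0,{\bm u},{\bm v})$ is computed pathwise, so on $A$ the value $\psi_\lambda(t,{\bm u},{\bm v})$ coincides for the data ${\bm z}^1_0$ and ${\bm z}^2_0$ whenever the processes $({\bm u},{\bm v})$ coincide on $A$.

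The key step is then the following. Define $\widehat{\bm u} \coloneqq \mathds 1_A {\bm u}^1 + \mathds 1_{A^c}{\bm u}^2$ and $\widehat{\bm v}$ analogously. One checks $\mathds 1_A(\mathcal K^1_\lambda,\mathcal S^1_\lambda)(\widehat{\bm u},\widehat{\bm v}) = \mathds 1_A(\mathcal K^2_\lambda,\mathcal S^2_\lambda)(\widehat{\bm u},\widehat{\bm v})$, where the superscript denotes the initial datum. More directly, I would estimate the difference $\mathds 1_A({\bm u}^1-{\bm u}^2,{\bm v}^1-{\bm v}^2)$. Using the identity above, it equals $\mathds 1_A$ times the difference of the maps applied to the respective fixed points. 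The maps share the datum on $A$, so the computations in Propositions \ref{prop:Fcontraction} and \ref{prop:Gcontraction} can be rerun with every norm multiplied by $\mathds 1_A$. This gives
$$\norm{\mathds 1_A({\bm u}^1-{\bm u}^2)}_{\mathbb K_1(T)}+\norm{\mathds 1_A({\bm v}^1-{\bm v}^2)}_{\mathbb S_\delta(T)} \leq c\left(\norm{\mathds 1_A({\bm u}^1-{\bm u}^2)}_{\mathbb K_1(T)}+\norm{\mathds 1_A({\bm v}^1-{\bm v}^2)}_{\mathbb S_\delta(T)}\right)$$
with $c<1$, hence equality on $A$. Consequently ${\bm z}^{1,\lambda}={\bm z}^{2,\lambda}$ and $\tau^{1,\lambda}=\tau^{2,\lambda}$ on $A$. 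The main obstacle is precisely checking that the stopping-time decomposition in those proofs ($\tau_1\le\tau_2$ versus $\tau_2<\tau_1$) survives localisation by $\mathds 1_A$. Since the whole estimate is pathwise until expectations are taken, multiplying by $\mathds 1_A$ before taking expectations should work. One must also check that the stochastic maximal regularity estimate \eqref{eq:finitemaxregfractional} holds with $\mathds 1_A$ inside, which follows from the commutation identity.

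For general ${\bm z}_0\in L^0(\Omega;B^{d/q}_{q,2p})$, set $A_n\coloneqq\Set{\norm{{\bm z}_0}_{B^{d/q}_{q,2p}}\le n}$ and ${\bm z}^n_0\coloneqq \mathds 1_{A_n}{\bm z}_0\in L^p(\Omega;B^{d/q}_{q,2p})$. Let $({\bm z}^n,\tau^n)$ be the solutions from Theorem \ref{thm:localexistenceL^p}, all with a fixed $\lambda$; Remark \ref{rem:lambdaarbitrary} allows this. By the first part, on $A_n\subset A_m$ for $n\le m$ we have ${\bm z}^n={\bm z}^m$ and $\tau^n=\tau^m$. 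Define ${\bm z}\coloneqq\sum_n \mathds 1_{A_n\setminus A_{n-1}}{\bm z}^n$ and $\tau\coloneqq\sum_n\mathds 1_{A_n\setminus A_{n-1}}\tau^n$. The sets $A_n\setminus A_{n-1}$ are $\mathcal F_0$-measurable and partition $\Omega$ up to a null set, so $\tau$ is a strictly positive stopping time and ${\bm z}$ is adapted. The path regularity \eqref{item:solpathreg} holds pathwise. The mild identity \eqref{item:localmildidentity} transfers by the same commutation of $\mathds 1_{A_n\setminus A_{n-1}}$ with $S\ast$ and $S\diamond$, together with Corollary \ref{cor:stopequiv}. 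Property \eqref{item:sollocalregularity} holds on each piece and hence globally. The only subtlety here is the common horizon $T$, which may depend on $n$. To handle it, I would choose $T$ independent of $n$: the contraction time in Propositions \ref{prop:Fcontraction} and \ref{prop:Gcontraction} depends only on $\lambda$, and the positivity of $\tau^n$ follows from Proposition \ref{prop:heatflowbesov} pathwise.
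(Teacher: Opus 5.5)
Your proposal is correct and uses the same mechanism as the paper: on the $\mathcal F_0$-event $A$ both fixed points satisfy the same truncated equation, so the contraction forces them to agree there, and the $L^0$ case follows by gluing over $\mathcal F_0$-measurable pieces. The only difference is the localisation device: the paper passes to the conditional space $(\Omega_A,\mathbb P(\cdot\,|A))$ and quotes uniqueness of the fixed point there, whereas you stay on $\Omega$ and rerun Propositions \ref{prop:Fcontraction} and \ref{prop:Gcontraction} with $\mathds 1_A$ inserted via $\mathds 1_A(S\diamond\Phi)=S\diamond(\mathds 1_A\Phi)$, and you also spell out the gluing over $\Set{\norm{{\bm z}_0}_{B^{d/q}_{q,2p}}\le n}$ that the paper only asserts.
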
\begin{proof}
Fix $\lambda>0$ and $T>0$ as in Proposition~\ref{thm:localexistenceL^p} and let 
${\bm z}^{\lambda}_i$, $i = 1,2$ denote the respective solutions of the truncated equation.

We now show the localisation property. Put $A \coloneqq  \Set{{\bm z}^1_0 \equiv {\bm z}^2_0}\in\mathcal F_0$.
If $\mathbb P(A)=0$, the claim is vacuous, so assume $\mathbb P(A)>0$ and consider the conditional probability space
\begin{equation} \label{eq:conditionalprob}
\left(\Omega_A,\mathcal F^A,\left(\mathcal F_t^A\right)_{t \geq 0},\mathbb P^A\right)
\coloneqq
\left(A,\left\{F\cap A:\,F\in\mathcal F\},\left(\{F\cap A:\,F\in\mathcal F_t\right\}\right)_{t\geq0},\mathbb P(\,\cdot\,|A)\right).
\end{equation}
Since $A\in\mathcal F_0$, the process $\bm W$ remains a cylindrical Wiener process on this conditional space and in this space, it holds that ${\bm z}^1_0 = {\bm z}^2_0$.

On $(\Omega_A,\mathbb P^A)$ both processes ${\bm z}^{\lambda}_1$ and ${\bm z}^{\lambda}_2$ satisfy the same truncated mild equation
with the same initial datum. By abuse of notation, denote by $\psi_\lambda(t,{\bm z})$ the previously introduced cut-off function. Then, for all $t\in[0,T]$,
$$
{\bm z}_t
=
S(t) {\bm z}_0
+\int_0^t S(t-s)\psi_\lambda(s,{\bm z})F({\bm z}_s)\d s
+\int_0^t S(t-s)\psi_\lambda(s,{\bm z})G({\bm z}_s)\d {\bm W}_s,
\quad \mathbb P^A\text{-a.s.}
$$ by locality of the stochastic integral.
Since Proposition \ref{thm:localexistenceL^p} applies on the conditional space as well, we obtain
uniqueness of the fixed point. Therefore,
$
{\bm z}^{\lambda}_1 \equiv {\bm z}^{\lambda}_2
$ are indistinguishable processes w.r.t. $\mathbb P^A$.

As a consequence, localisation of solutions of the truncated equation for arbitrary measurable initial conditions is well-defined. By construction, we obtain a stopping time $\tau^\lambda(0)$ on which this defines a local solution of \eqref{eq:FullNoiseSPDE}.
\end{proof}

\subsection{Uniqueness and maximality of local mild solutions}

By our fixed point argument, the solution of the truncated equation is unique. However, proving uniqueness of local solutions requires much of the machinery developed at the beginning of the section. We cannot reduce uniqueness of local solutions to uniqueness of global solutions of the truncated equation, since it is not clear how to canonically extend a given local solution to a global solution of the truncated equation.
\begin{lemma} \label{lemma:CriticalUniqueness}
Any two local solutions $({\bm z}^i, \tau_i)$, $i = 1,2$ with equal initial conditions must be identical on $[0,\tau_1 \wedge \tau_2)$.
\end{lemma}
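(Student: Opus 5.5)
The strategy is to reduce uniqueness of local solutions to uniqueness of the fixed point of the truncated problem on a small random interval, and then to propagate this by a continuation (first-disagreement-time) argument. The local regularity condition \eqref{item:sollocalregularity} of Definition \ref{def:localsol} is built in for exactly this purpose.

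\textbf{Step 1: agreement on a first short interval.} Let $({\bm z}^i,\tau_i)$, $i=1,2$, be local solutions with the same initial datum ${\bm z}_0$. By Remark \ref{rem:lambdaarbitrary} we may fix a common $\lambda>0$ so small that the map $(\mathcal K_\lambda,\mathcal S_\lambda)$ is a contraction on $\mathbb K_1(T)\oplus\mathbb S_\delta(T)$ for some small $T$, using Propositions \ref{prop:Fcontraction} and \ref{prop:Gcontraction}.

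Write ${\bm u}^i=S\ast F({\bm z}^i)$ and ${\bm v}^i=S\diamond G({\bm z}^i)$. With $\sigma=0$, define
\[
\rho\coloneqq \tau_1\wedge\tau_2\wedge \tau^\lambda_1(0)\wedge\tau^\lambda_2(0).
\]
Condition \eqref{item:sollocalregularity} makes $\rho$ a.s.\ positive. On $[0,\rho]$ we have $\psi_\lambda=1$, so by \eqref{item:localmildidentity} and Corollary \ref{cor:stopequiv} each stopped pair $({\bm u}^i,{\bm v}^i)$ solves the truncated system on $[0,\rho]$.

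To invoke uniqueness of the fixed point, I rerun the contraction estimate with $T$ replaced by the stopping time $\rho$. The pairs live in $\mathbb K_1\oplus\mathbb S_\delta$ only up to $\rho$, and the norm on $[0,\rho]$ is controlled by $\lambda$ via the definition of $\tau^\lambda$. The estimates of Propositions \ref{prop:Fcontraction}--\ref{prop:Gcontraction} are pathwise up to expectation, so applying them to the processes stopped at $\rho$ gives
\[
\norm{({\bm u}^1-{\bm u}^2,{\bm v}^1-{\bm v}^2)}_{\mathbb K_1(\rho)\oplus\mathbb S_\delta(\rho)}\le \tfrac12\,\norm{({\bm u}^1-{\bm u}^2,{\bm v}^1-{\bm v}^2)}_{\mathbb K_1(\rho)\oplus\mathbb S_\delta(\rho)}.
\]
Integrability is not an issue, since the norms are bounded by $2\lambda$. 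Hence the difference vanishes, and ${\bm z}^1={\bm z}^2$ on $[0,\rho]$.

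\textbf{Step 2: propagation.} Let
\[
\eta\coloneqq\sup\{t\le\tau_1\wedge\tau_2:{\bm z}^1={\bm z}^2\text{ on }[0,t]\}.
\]
Because $\eta$ may not be a stopping time, I instead work with the stopping times $\sigma_n=\eta\wedge\tau_{1,n}\wedge\tau_{2,n}$, where $\tau_{i,n}$ are the localising sequences. Equivalently, I argue by contradiction on the event $\{\eta<\tau_1\wedge\tau_2\}$.

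Path continuity gives ${\bm z}^1_\eta={\bm z}^2_\eta$: weak continuity in $B^{d/q}_{q,2p}$ suffices. I then restart at $\sigma=\eta$ (or at $\sigma_n$). The shifted processes $\mathcal T_\sigma{\bm u}^i$ and $\mathcal T_\sigma{\bm v}^i$ satisfy the mild equation driven by the shifted Wiener process ${\bm W}_{\sigma+\cdot}-{\bm W}_\sigma$, with respect to the filtration $\mathcal F^\sigma_t$ (Lemma \ref{lemma:stopshift}), and with $\mathcal F^\sigma_0$-measurable initial datum ${\bm z}_\sigma$. This uses $\mathcal T_\sigma(S\ast f)=S\ast f(\cdot+\sigma)$ and the analogous identity for stochastic convolutions, obtained from the semigroup property.

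Condition \eqref{item:sollocalregularity} at $\sigma$ gives a positive relative lifetime $\tau^\lambda(\sigma)$ for both solutions. Repeating Step 1 in the shifted setting, with $\Sigma_\sigma$ in place of $\Sigma$, shows agreement on $[\sigma,\sigma+\tau^\lambda_1(\sigma)\wedge\tau^\lambda_2(\sigma)]$. Note that ${\bm z}_\sigma$ need not be in $L^p(\Omega)$; this is handled by localising on $\{\norm{{\bm z}_\sigma}\le n\}\in\mathcal F^\sigma_0$, as in Proposition \ref{prop:localisation}. The resulting agreement beyond $\eta$ contradicts the definition of $\eta$ on the event $\{\eta<\tau_1\wedge\tau_2\}$, which therefore has probability zero.

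\textbf{Main obstacle.} The delicate step is the restart. One must verify that the shifted quantities genuinely solve the mild equation in $\mathcal F^\sigma$ with the shifted noise, including the correct handling of the stochastic convolution across the random time $\sigma$. One must also avoid needing $\eta$ itself to be a stopping time.

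For the latter, I would first try a countable scheme: set $\sigma_0=0$ and $\sigma_{k+1}=\sigma_k+\tau^\lambda_1(\sigma_k)\wedge\tau^\lambda_2(\sigma_k)$, capped at $\tau_{1,n}\wedge\tau_{2,n}$. Induction gives agreement up to $\lim_k\sigma_k$. If this limit fell short of $\tau_{1,n}\wedge\tau_{2,n}$, then condition \eqref{item:sollocalregularity} applied at the stopping time $\lim_k\sigma_k$ would let the scheme continue, so the limit is reached. Letting $n\to\infty$ then yields agreement on $[0,\tau_1\wedge\tau_2)$.
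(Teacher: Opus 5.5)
Your argument is essentially the paper's proof. The paper takes the first disagreement time $\sigma$ of ${\bm z}^1,{\bm z}^2$ on $[0,\tau_1\wedge\tau_2]$ and restarts at $\sigma$ with $\widetilde{\bm W}={\bm W}_{\sigma+\cdot}-{\bm W}_\sigma$. It uses \eqref{item:sollocalregularity} to get a positive relative lifetime on which $\Sigma_\sigma\le\lambda$. It then reruns the contraction estimate, with $\lambda$ and a deterministic cap $T_0$ small and on the space conditioned on $\{\sigma<\tau\}\in\mathcal F_\sigma$, to contradict $\mathbb P(\sigma<\tau)>0$. Your Step 1 is just the case $\sigma=0$ of this.

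The part you single out as the main obstacle is handled incorrectly, although the underlying argument is sound.

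\textbf{The first disagreement time is a stopping time.} The argument needs this, both to apply \eqref{item:sollocalregularity} at $\eta$ and to get the strong Markov property for $\widetilde{\bm W}$; the paper uses it tacitly. Every $s<\tau_1\wedge\tau_2$ lies below some $\tau_{1,n}\wedge\tau_{2,n}$. So on $\{\tau_1\wedge\tau_2\ge t\}$ the paths are weakly continuous on $[0,t)$, and hence
\[
\{\eta\ge t\}=\{\tau_1\wedge\tau_2\ge t\}\cap\bigcap_{s\in\mathbb Q\cap[0,t)}\{{\bm z}^1_s={\bm z}^2_s\}\in\mathcal F_t .
\]
Right-continuity of the filtration then makes $\eta$ a stopping time.

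\textbf{Your two workarounds do not work.} Showing that $\eta\wedge\tau_{1,n}\wedge\tau_{2,n}$ is a stopping time is the same problem as for $\eta$, so it adds nothing. The countable scheme does not close. The $\sigma_k$ may accumulate at some $\sigma_\infty<\tau_{1,n}\wedge\tau_{2,n}$, because nothing bounds the step lengths $\tau^\lambda_1(\sigma_k)\wedge\tau^\lambda_2(\sigma_k)$ from below. Applying \eqref{item:sollocalregularity} at $\sigma_\infty$ only starts a new countable sequence, so ``the scheme can continue'' does not imply ``the limit is reached''. Repairing it forces a transfinite recursion, which is just the supremum argument again.

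\textbf{Smaller points.}
\begin{enumerate}
\item The restart identity for the stochastic convolution does not follow from the semigroup property alone. You must pull the random operator $S(t-\sigma)$ through a stochastic integral stopped at $\sigma$, and rewrite the remainder as an integral against $\widetilde{\bm W}$ in the filtration $(\mathcal F_{\sigma+t})_t$. The paper supplies this through Proposition \ref{prop:randomcommutator}, the shift identity for stochastic integrals in the appendix, and Corollary \ref{cor:StochConvStopDecomp}, with Lemma \ref{lemma:stopshift} for measurability.
\item Your $\rho$ must also be capped by a small deterministic $T_0$. The contraction constant contains $T_0^{\alpha_1}$ and $T_0^{\alpha_\delta}$, not only $\lambda$.
\item The localisation on $\{\norm{{\bm z}_\sigma}\le n\}$ is harmless but unnecessary. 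On the relative lifetime every term, including $\norm{S(\cdot){\bm z}_\sigma}_{(\mathbb W_1\cap\mathbb W_\delta)(t)}$, is already bounded by $\lambda$.
\end{enumerate}
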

\begin{proof} Let two such solutions be given and set $\tau \coloneqq \tau_1 \wedge \tau_2$. Let $\sigma = \inf\{t \in [0, \tau]: {\bm z}^1 \neq {\bm z}^2\}$, which is well-defined due to weak continuity. By Corollary \ref{cor:stopequiv} and Corollary \ref{cor:StochConvStopDecomp}, a localisation shows that for $t < \tau$,
$$\begin{aligned}
\int_0^t S(t-s) G({\bm z}^i_s) \d {\bm W}_s &= S((t-\sigma)_+)\int_0^{\sigma \wedge t} S(\sigma \wedge t-s) G({\bm z}^i_s)\d {\bm W}_s \\ &\quad + \int_{t \wedge \sigma}^{t} S(t-s) G({\bm z}^i_s) \d {\bm W}_s  
\\ & =  S((t-\sigma)_+)\int_0^{\sigma \wedge t} S(\sigma \wedge t-s) G({\bm z}^i_s)\d {\bm W}_s \\ & \quad + \int_0^{(t-\sigma)_+} S((t-\sigma)_+-s) G({\bm z}^i_{\sigma+s}) \d  \widetilde {\bm W}_s,
\end{aligned}
$$
where we work with continuous versions of the respective processes and 
$
\widetilde {\bm W}_t \coloneqq  {\bm W}_{\sigma+t}-{\bm W}_\sigma,
$
denotes the shifted Wiener process. In particular, we can derive the identity 
$$
{\bm z}^i_{\sigma + t} = S(t){\bm z}_\sigma + \int_0^t S(t-s)F({\bm z}^i_{\sigma+s}) \d s + \int_0^{t} S(t-s) G({\bm z}^i_{\sigma+s}) \d  \widetilde {\bm W}_s
$$ 
for all $t \in [0,\tau^\lambda(\sigma))$. On the event $A_\sigma = \{\sigma < \tau\}$, we know that ${\bm z}^1(\sigma) = {\bm z}^2(\sigma) \in B^{d/q}_{q,2p}$ and
$$\Sigma_\sigma(\tau^\lambda(\sigma),{\bm z}^i,S \ast F({\bm z}^i),S \diamond G({\bm z}^i)) \leq \lambda$$
for some positive stopping time $\tau^\lambda(\sigma) \coloneqq \tau^{\lambda}_1(\sigma) \wedge \tau^{\lambda}_2(\sigma) \wedge (\tau-\sigma)_+$, where $\tau^\lambda_i(\sigma)$ are the respective stopping times given by \eqref{item:sollocalregularity}. In particular, this implies that  the shifted processes satisfy the integrabilities
$$\norm{{\bm z}^i_{\sigma+\cdot}}_{ (\mathbb W_1 \cap \mathbb W_\delta)(\tau^\lambda(\sigma))} \leq \lambda.$$ 
If it were the case that $\mathbb P(\sigma < \tau) > 0$, we would now run into a contradiction. Consider the Wiener process $\widetilde {\bm W}$ on the probability space 
$\Omega_\sigma \coloneqq \Omega_{A_\sigma}$ (cf. \eqref{eq:conditionalprob}). Recall that $A_\sigma = \{\sigma < \tau\}$ and observe that $A_\sigma$ is $\mathcal F_\sigma$-measurable, since for arbitrary $s \geq 0$
$$
\begin{aligned}
\Set{\sigma \leq s} \cap \Set{\sigma < \tau} 
&= \left(\Set{\sigma < \tau \leq s}\right) \cup \left(\Set{\sigma \leq s} \cap \Set{s < \tau}\right) 
\\ &= \left(\bigcup_{q \in \mathbb Q \cap [0,s]} \Set{\sigma \leq q} \cap \Set{q < \tau} \cap \Set{\tau \leq s}\right) \cup \left(\Set{\sigma \leq s} \cap \Set{s < \tau}\right) \in \mathcal F_s.
\end{aligned}
$$
Introduce the auxiliary $\mathcal F_{\sigma+t}$-stopping time $\widetilde \tau^\lambda(\sigma) = \tau^\lambda(\sigma)\wedge T_0$ for some $T_0 \ll 1$. Repeating the fixed point argument (cf. Thm. \ref{thm:localexistenceL^p}) on the interval $[0,\widetilde \tau^\lambda]$ for $\lambda,T_0 \ll 1$ small enough (recall that $\lambda$ can be chosen arbitrarily small), we rerun the contraction argument to conclude that 
$$\mathbb E_\sigma \left[\norm{{\bm z}^1_{\sigma+\cdot}-{\bm z}^2_{\sigma+\cdot}}_{(\mathbb W_1\cap \mathbb W_\delta)(\widetilde \tau^\lambda)}^p\right] \leq \const \mathbb E_\sigma \left[\norm{{\bm z}^1_{\sigma+\cdot}-{\bm z}^2_{\sigma+\cdot}}_{(\mathbb W_1\cap \mathbb W_\delta)(\widetilde \tau^\lambda)}^p\right] 
$$ with $C_{\themycounter} = \const (\lambda + \lambda^{-1}T^{\alpha_\delta}_0)< 1$. Therefore, $\mathbb E_\sigma \left[\norm{{\bm z}^1_{\sigma+\cdot}-{\bm z}^2_{\sigma+\cdot}}^p_{(\mathbb W_1\cap \mathbb W_\delta)(\widetilde \tau^\lambda)}\right]  = 0$. Here, in particular, we used the maximal regularity property on the stopped interval $[0,\widetilde \tau^\lambda]$, which can be reduced to the deterministic case for any bounded stopping time $\sigma \leq T$ by the simple estimate $$\norm{S\diamond G}_{H^{\theta,p}([0,\sigma];\mathcal B_{1/2-\theta})} \leq \norm{S\diamond \mathds{1}_{[0,\sigma]}G}_{H^{\theta,p}([0,T];\mathcal B_{1/2-\theta})} \leq \norm{G}_{L^p([0,\sigma],\gamma(U,\mathcal B))} $$ By the continuity properties of ${\bm z}^i$, $i=1,2$, it must hold that both processes are equal on the extended interval. This contradicts the definition of $A_\sigma$. We can thus conclude that $\sigma = \tau$.
\end{proof}

The next result is essentially a corollary of Thm. \ref{thm:maxestimatestochconv}, since the condition on $u$ implies $L^2$-regularity in time of $G({\bm u})$.
\begin{proposition}
Let $\tau \leq T$ be a bounded $\mathcal{F}_t$-adapted stopping time
and $({\bm u}_t)_{t \in [0,T]} \subset L^q$ be a $\mathcal F_{t}$-progressively measurable process. Consider the event 
$$E_\tau = \left\{{\bm u} \in  L^{4}([0,\tau];H^{\delta,q})\right\}$$ and suppose that $\mathbb P(E_\tau) = 1$. Then there exists a continuous modification of 
$$
S \diamond G({\bm u}) \in C([0,\tau];L^q).
$$
\end{proposition}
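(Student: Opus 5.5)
Since $G({\bm u})$ is only controlled pathwise (on $E_\tau$ we have an almost sure bound, not a moment bound), I will localise to reduce to Theorem \ref{thm:maxestimatestochconv}. The first step is the pointwise-in-time estimate from Assumption \ref{ass:nemytskiiLipschitz}: taking ${\bm v}=0$ gives
$$\norm{G({\bm u}_t)}_{\gamma(U,H^{\sigma,q})}\leq \norm{G(0)}_{\gamma(U,H^{\sigma,q})}+C\left(1+\norm{{\bm u}_t}_{H^{\delta,q}}\right)\norm{{\bm u}_t}_{H^{\delta,q}}\lesssim 1+\norm{{\bm u}_t}_{H^{\delta,q}}^2 .$$
Squaring and integrating over $[0,\tau]$ yields
$$\norm{G({\bm u})}_{L^2([0,\tau];\gamma(U,H^{\sigma,q}))}\lesssim T^{1/2}+\norm{{\bm u}}^2_{L^4([0,\tau];H^{\delta,q})},$$
which is finite on $E_\tau$, hence almost surely. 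Since $H^{\sigma,q}\hookrightarrow L^q$ and $\gamma(U,\cdot)$ respects continuous embeddings, the same bound holds in $\gamma(U,L^q)$. Progressive measurability of $G({\bm u})$ follows from that of ${\bm u}$ together with continuity of $G$ on $H^{\delta,q}$; I would first check that ${\bm u}$ is progressively measurable as an $H^{\delta,q}$-valued process on the relevant set, using that $H^{\delta,q}\hookrightarrow L^q$ is a Borel injection.

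Next I set $\rho_n\coloneqq\inf\{t\in[0,\tau]:\norm{{\bm u}}_{L^4([0,t];H^{\delta,q})}\geq n\}\wedge\tau$. These are stopping times because $t\mapsto \norm{{\bm u}}_{L^4([0,t];H^{\delta,q})}$ is continuous and adapted, and $\rho_n\nearrow\tau$ with $\rho_n=\tau$ for $n$ large, almost surely on $E_\tau$. The truncated integrand $G_n\coloneqq\mathds{1}_{[0,\rho_n]}G({\bm u})$ is progressively measurable and satisfies $\norm{G_n}_{L^2([0,T];\gamma(U,L^q))}\lesssim T^{1/2}+n^2$ uniformly in $\omega$. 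It therefore lies in $L^p(\Omega;L^2([0,T];\gamma(U,L^q)))$ for every $p$. Theorem \ref{thm:maxestimatestochconv}, applied with $\mathcal B=L^q$ ($q\geq 2$, so UMD and type 2) and $A=-\Delta$, then gives a continuous modification of $S\diamond G_n$ on $[0,T]$.

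Finally I glue the pieces. By Corollary \ref{cor:stopequiv}, for $m\geq n$ the processes $S\diamond G_m$ and $S\diamond G_n$ coincide in $C([0,\rho_n];L^q)$ almost surely. Taking a common null set for countably many $n$, I define $S\diamond G({\bm u})\coloneqq S\diamond G_n$ on $[0,\rho_n]$. This is consistent, and because $\rho_n=\tau$ eventually, it is continuous on all of $[0,\tau]$. Identifying this with the localised stochastic convolution of \cite{VanNeervenVeraarStochIntUMD} (defined via the same stopping-time localisation) shows it is a modification.

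I expect the main obstacle to be this last bookkeeping step rather than any estimate. One has to make sure that the version defined by localisation agrees with the stochastic convolution as defined for $L^0$-integrands, and that the notion ``continuous on $[0,\tau]$'' with random $\tau$ is handled via $\mathds{1}_{[0,\tau]}$ as above. The quadratic-growth estimate itself is exactly why the $L^4$-in-time hypothesis is needed.
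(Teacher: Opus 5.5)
Your proposal is correct and follows the paper's own one-line justification. The quadratic growth from Assumption~\ref{ass:nemytskiiLipschitz} together with ${\bm u}\in L^4([0,\tau];H^{\delta,q})$ gives $G({\bm u})\in L^2([0,\tau];\gamma(U,L^q))$ almost surely, and Theorem~\ref{thm:maxestimatestochconv} then supplies the continuous modification. Your stopping times $\rho_n$ and the gluing via Corollary~\ref{cor:stopequiv} only make explicit the reduction from $L^0(\Omega)$-integrands to integrands with moments, which the paper absorbs into the $L^0_{\mathcal F}$ formulation of that theorem. Your Borel-injection remark on measurability also fills in a detail the paper leaves implicit.
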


\begin{corollary} \label{cor:well-defined}
By localisation, we can infer that even if $\mathbb P(E_\tau) < 1$, then
$$
S \diamond G({\bm u}) \in C([0,\tau];L^q).
$$
is well-defined on the event $E_\tau$.
\end{corollary}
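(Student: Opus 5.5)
The idea is to reduce to the preceding proposition by stopping $\bm u$ at the time its $L^4([0,\cdot];H^{\delta,q})$-norm reaches a level $n$, and then to glue the resulting stochastic convolutions along $n$ on the event $E_\tau$. For each $n\in\N$ let
$$
\rho_n \coloneqq \inf\Set{t\in[0,\tau]; \norm{{\bm u}}_{L^4([0,t];H^{\delta,q})}\geq n}\wedge \tau,
$$
with $\inf\emptyset=\tau$. Since ${\bm u}$ is progressively measurable and $t\mapsto \norm{{\bm u}}_{L^4([0,t];H^{\delta,q})}$ is adapted and continuous wherever finite, $\rho_n$ is a stopping time. It is bounded by $\tau\leq T$, and $\rho_n\nearrow\tau$ on $E_\tau$. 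Moreover, $\rho_n=\tau$ for all large $n$ on $E_\tau$, because the $L^4$-norm over $[0,\tau]$ is finite there. I would also set ${\bm u}^n\coloneqq \mathds 1_{[0,\rho_n]}{\bm u}$; this is progressively measurable and satisfies $\norm{{\bm u}^n}_{L^4([0,T];H^{\delta,q})}\le n$ surely.

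Assumption \ref{ass:nemytskiiLipschitz} with ${\bm v}=0$, together with $\norm{\cdot}_{\gamma(U,L^q)}\lesssim\norm{\cdot}_{\gamma(U,H^{\sigma,q})}$, gives
$$
\int_0^{\rho_n}\norm{G({\bm u}_s)}^2_{\gamma(U,L^q)}\d s \lesssim \int_0^{\rho_n}(1+\norm{{\bm u}_s}_{H^{\delta,q}})^4\d s \lesssim T+n^4 .
$$
Hence $\mathds 1_{[0,\rho_n]}G({\bm u})$ lies in $L^p(\Omega;L^2([0,T];\gamma(U,L^q)))$ for every $p$. I would then apply Theorem \ref{thm:maxestimatestochconv} with $\mathcal B=L^q$ and $A=-\Delta$, which is applicable because $L^q$ is UMD of type $2$ for $q\geq2$ and $-\Delta$ has a bounded $H^\infty$-calculus. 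This yields a continuous modification $Z^n$ of $S\diamond(\mathds 1_{[0,\rho_n]}G({\bm u}))$ on $[0,T]$. Equivalently, this is the preceding proposition applied to ${\bm u}^n$ and the stopping time $\rho_n$, for which $\mathbb P(E_{\rho_n})=1$.

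For gluing, note that $\mathds 1_{[0,\rho_n]}G({\bm u})$ and $\mathds 1_{[0,\rho_m]}G({\bm u})$ agree on $[0,\rho_n]$ for $m\geq n$. Corollary \ref{cor:stopequiv} therefore gives $Z^n=Z^m$ on $[0,\rho_n]$ almost surely, and a countable intersection gives one null set outside of which all these identities hold simultaneously. On $E_\tau$ I would define $S\diamond G({\bm u})(t)\coloneqq Z^n(t)$ for any $n$ with $\rho_n=\tau$. This is independent of $n$ by the consistency just shown, is continuous on $[0,\tau]$, and on $[0,\rho_n]$ coincides with the stochastic convolution in the localised sense of \citet{VanNeervenVeraarStochIntUMD}. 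Because the definition uses only the stopped integrands, it does not depend on the behaviour of ${\bm u}$ off $E_\tau$, which is what well-definedness on the event $E_\tau$ means.

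The main obstacle is the measurability and stopping-time bookkeeping. One has to check that $\rho_n$ is a genuine stopping time when the running $L^4$-norm may jump to $+\infty$, which holds since the hitting time of the closed set $[n,\infty]$ by an adapted process that is continuous until it becomes infinite is a stopping time under the usual conditions. One also has to check that the glued object does not depend on the chosen continuous modifications. Both reduce to Corollary \ref{cor:stopequiv} applied along the countable family $(\rho_n)_{n\in\N}$.
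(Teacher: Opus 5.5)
Your proposal is correct and is essentially the localisation the paper has in mind. The paper gives no argument beyond ``by localisation'' and later refers to Corollary~\ref{cor:stopequiv} for well-definedness on stopped intervals; your version spells this out by stopping at the level $n$ of $\norm{{\bm u}}_{L^4([0,\cdot];H^{\delta,q})}$, applying Theorem~\ref{thm:maxestimatestochconv} (equivalently the preceding proposition) to the stopped integrand, and gluing the resulting continuous modifications on $E_\tau$ via Corollary~\ref{cor:stopequiv}.
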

In the following, the functional \begin{equation} \label{eq:blowupfunctional}
\Xi_\sigma(t,{\bm z}) \coloneqq  \norm{{\bm z}}_{L^\infty([\sigma,t];B^{d/q}_{q,2p})}+ \norm{{\bm z}}_{L^{2}([\sigma,t];H^{1,q})} + \norm{{\bm z}}_{L^4([\sigma,t];H^{\delta,q})}
\end{equation} measures blow-up of solutions.
\begin{remark}\label{rem:simplifiedblowup}
For $\delta < \frac{1}{2}+\frac{d}{2q}$, the usual Gagliardo-Nirenberg inequality for Sobolev spaces implies $$\mathfrak X \coloneqq L^\infty([a,b];B^{d/q}_{q,2p}) \cap L^2([a,b];H^{1,q}) \hookrightarrow L^4([a,b];H^{\delta,q})$$ and thus blow-up only depends on the $\mathfrak X$-norm.
\end{remark}
\begin{proposition}
Let $({\bm z}, \tau)$ be a given local solution according to Definition \ref{def:localsol}. Then, for any stopping time $\sigma \leq \tau$ there exists a stopping time $\widetilde \tau \geq \tau$ and a local solution $\widetilde {\bm z} \in C_w([0,\widetilde \tau];B^{d/q}_{q,2p})$ such that on the event 
$$
E_\sigma \coloneqq \Set{\sigma < \tau < T, ~\lim_{t \to \tau}\Xi_\sigma(t,{\bm z}) < \infty}
$$ it almost surely holds that
$\widetilde \tau > \tau$ and $\widetilde {\bm z} \equiv z$ on $[0,\tau)$.
\end{proposition}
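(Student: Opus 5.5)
The plan is to extend the solution past $\tau$ by restarting the equation at $\tau$ from the terminal value and gluing. On $E_\sigma$, finiteness of $\Xi_\sigma(t,{\bm z})$ as $t\to\tau$ gives ${\bm z}\in L^2([\sigma,\tau];H^{1,q})\cap L^4([\sigma,\tau];H^{\delta,q})$. On $[0,\sigma]$ the local regularity of Definition \ref{def:localsol} applies; more precisely, one sums along the approximating times $\tau_n$. Hence $F({\bm z})\in L^1([0,\tau];L^{q/2})$ and $G({\bm z})\in L^2([0,\tau];\gamma(U,H^{\sigma,q}))$ on $E_\sigma$. Corollary \ref{cor:well-defined} then makes $S\ast F({\bm z})$ and $S\diamond G({\bm z})$ continuous up to $\tau$ on $E_\sigma$. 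The identity \eqref{eq:mildlocalsol} therefore extends to $t=\tau$, and the limit ${\bm z}_\tau\coloneqq\lim_{t\nearrow\tau}{\bm z}_t$ exists in $L^{q/2}$.

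The $L^\infty([\sigma,\tau];B^{d/q}_{q,2p})$ bound together with weak continuity identifies this limit as a weak limit in $B^{d/q}_{q,2p}$, which is reflexive. Hence ${\bm z}_\tau$ is an $\mathcal F_\tau$-measurable $B^{d/q}_{q,2p}$-valued random variable. We set ${\bm z}_\tau$ to zero off $E_\sigma$, using that $E_\sigma\in\mathcal F_\tau$, which follows by the same argument as for $A_\sigma$ in Lemma \ref{lemma:CriticalUniqueness}.

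Next, on the shifted filtration $(\mathcal F_{\tau+t})_{t\ge0}$ with shifted Wiener process $\widetilde{\bm W}_t={\bm W}_{\tau+t}-{\bm W}_\tau$, we apply Proposition \ref{prop:localisation} (the $L^0$ version of Theorem \ref{thm:localexistenceL^p}) with initial datum ${\bm z}_\tau$. This yields a local solution $(\bar{\bm z},\bar\tau)$ with $\bar\tau>0$ a.s. We define $\widetilde\tau\coloneqq\tau+\bar\tau\mathds 1_{E_\sigma}$, which is a stopping time by Lemma \ref{lemma:stopshift}. We set $\widetilde{\bm z}_t={\bm z}_t$ for $t<\tau$ and $\widetilde{\bm z}_t=\bar{\bm z}_{t-\tau}$ for $t\in[\tau,\widetilde\tau)$.

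We then verify Definition \ref{def:localsol} for $\widetilde{\bm z}$.
\begin{enumerate}[(i)]
\item Path regularity follows by concatenation, since the two pieces agree at $\tau$.
\item For the mild identity, we use the semigroup splitting from the proof of Lemma \ref{lemma:CriticalUniqueness} (Corollaries \ref{cor:stopequiv} and \ref{cor:StochConvStopDecomp}). For $t\ge\tau$ this writes $S\ast F(\widetilde{\bm z})(t)$ as $S(t-\tau)\,(S\ast F({\bm z}))(\tau)$ plus the shifted convolution, and likewise for $S\diamond G$ with $\widetilde{\bm W}$. Combined with ${\bm z}_\tau=S(\tau){\bm z}_0+(S\ast F({\bm z}))(\tau)+(S\diamond G({\bm z}))(\tau)$, this gives \eqref{eq:mildlocalsol} on the glued interval. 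The localising sequence is $\tau_n$ for the old piece followed by $\tau+\bar\tau_n$.
\item For local regularity \eqref{eq:localregularity} at a stopping time $\varsigma\le\widetilde\tau$, we split into $\{\varsigma<\tau\}$ and $\{\varsigma\ge\tau\}$. On the first event, the key point is that $\Sigma_\varsigma(t,\cdot)$ computed for $\widetilde{\bm z}$ coincides for small $t$ with that of ${\bm z}$. This uses that $\mathcal T_\varsigma$ only depends on values on $[\varsigma,\varsigma+t]$. On the second event, $\mathcal T_\varsigma$ of the glued convolutions equals $\mathcal T_{\varsigma-\tau}$ of $\bar{\bm z}$'s convolutions.
\end{enumerate}
Finally, $\widetilde{\bm z}\equiv{\bm z}$ on $[0,\tau)$ holds by construction, and $\widetilde\tau>\tau$ on $E_\sigma$ because $\bar\tau>0$.

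The main obstacle is the existence of the terminal value ${\bm z}_\tau$ in $B^{d/q}_{q,2p}$ and its measurability. The bound $\Xi_\sigma$ only gives $L^\infty$ control, not continuity up to $\tau$. I would first obtain strong convergence in $L^{q/2}$ from the mild formula: the deterministic convolution is continuous in $L^{q/2}$ given $F({\bm z})\in L^1([0,\tau];L^{q/2})$, and the stochastic convolution is continuous by Corollary \ref{cor:well-defined}. I would then upgrade to weak convergence in $B^{d/q}_{q,2p}$ by boundedness and uniqueness of weak limits. A secondary subtlety is that Theorem \ref{thm:localexistenceL^p} is applied with the random shifted filtration. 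This requires checking that $\widetilde{\bm W}$ is an $(\mathcal F_{\tau+t})$-cylindrical Wiener process (strong Markov property) and that solutions on the shifted space pull back to $\mathcal F_t$-adapted processes, which again relies on Lemma \ref{lemma:stopshift}.
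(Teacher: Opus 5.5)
Your overall route is the paper's. You obtain the terminal value as an $L^{q/2}$-limit from the mild formula and upgrade it to a weak limit in the reflexive space $B^{d/q}_{q,2p}$ via the $L^\infty$ bound. You restart from $\mathds 1_{E_\sigma}{\bm z}_\tau$ with $\widetilde{\bm W}$ on $(\mathcal F_{\tau+t})_{t\geq 0}$, set $\widetilde\tau=\tau+\mathds 1_{E_\sigma}\bar\tau$, and get the mild identity past $\tau$ from Corollary \ref{cor:StochConvStopDecomp}.

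Your check of \eqref{item:sollocalregularity} on $\{\varsigma\geq\tau\}$, identifying $\Sigma_\varsigma$ of $\widetilde{\bm z}$ with $\Sigma_{\varsigma-\tau}$ of $\bar{\bm z}$, is fine. Note that $(\varsigma-\tau)_+$ is an $(\mathcal F_{\tau+t})$-stopping time because $\{\varsigma\leq\tau+t\}\in\mathcal F_{\tau+t}$.

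You list the $\mathcal F_\tau$-measurability of ${\bm z}_\tau$ as an obstacle but never argue it. It follows because on $E_\sigma$ it is the $L^{q/2}$-limit of the $\mathcal F_{\tau_n}$-measurable ${\bm z}_{\tau_n}$, and Borel sets of $B^{d/q}_{q,2p}$ are Borel in $L^{q/2}$ by Lusin--Souslin.

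The step that fails as written is the localising sequence for $\widetilde\tau$ required by Definition \ref{def:localsol}. \emph{$\tau_n$ for the old piece followed by $\tau+\bar\tau_n$} amounts to $\rho_n=\tau_n$ on $E_\sigma^c$ and $\rho_n=\tau+\bar\tau_n$ on $E_\sigma$. But $E_\sigma$ is only $\mathcal F_\tau$-measurable. Deciding the event $\{\rho_n\leq t<\tau\}=E_\sigma^c\cap\{\tau_n\leq t<\tau\}$ would require knowing at time $t$ whether $\Xi_\sigma$ stays bounded up to $\tau$, so $\rho_n$ is in general not a stopping time. The stopping time $\tau_n\vee(\tau+\mathds 1_{E_\sigma}\bar\tau_n)$ does not help either: it equals $\tau$ on $E_\sigma^c$, where the regularity \eqref{item:solpathreg} up to $\tau$ is unavailable.

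The missing device, which the paper uses, is to let the blow-up functional make this switch adaptively. Let $\eta_k\coloneqq\inf\{t\in[0,\tau]:\Xi_\sigma(t,{\bm z})>k\}$ and take $\widetilde\tau_k=\eta_k\wedge\widetilde\tau$.
\begin{itemize}
\item These are genuine stopping times.
\item Where $\Xi_\sigma$ blows up, they increase to $\tau$.
\item On $E_\sigma$ the cut-off $\eta_k$ is inactive for large $k$.
\item The bound $\Xi_\sigma\leq k$ up to $\eta_k$ is exactly the regularity \eqref{item:solpathreg} asks for.
\end{itemize}
Because you restart with a general local solution rather than with the fixed point stopped at its exit time $\bar\tau^\lambda$, you should also cap by $\tau+\mathds 1_{E_\sigma}\bar\tau_k$. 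Using $\Xi_0$ instead of $\Xi_\sigma$ in $\eta_k$ additionally covers the interval $[0,\sigma]$ and the event $\{\sigma=\tau\}$, on which $\Xi_\sigma$ carries no information.
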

\begin{proof}
Observe that we can reduce the claim to the case $\sigma = 0$. Namely, on 
$$
E_\sigma  = \{\sigma < \tau,~{\bm z} \in L^\infty([\sigma,\tau];B^{d/q}_{q,2p}) \cap  L^{2}([\sigma,\tau];H^{1,q}) \cap L^4([\sigma,\tau];H^{\delta,q})\},$$  property \eqref{item:solpathreg} of local solutions implies that $${\bm z} \in L^\infty([0,\tau];B^{d/q}_{q,2p}) \cap  L^{2}([0,\tau];H^{1,q}) \cap L^4([0,\tau];H^{\delta,q}).$$
We now argue that  the limit ${\bm z}_\tau \coloneqq L^{q/2}-\lim_{t \nearrow \tau}{\bm z}_t$ exists in $L^{q/2}$ and 
\begin{equation}\label{eq:terminalvalue}
{\bm z}_\tau = S(\tau){\bm z}_0 + \int_0^\tau S(\tau-s)F({\bm z}_s)\d s + \int_0^\tau S(\tau-s)G({\bm z}_s) \d {\bm W}_s,
\end{equation}
with all integrals well-defined.  Well-definedness of the stochastic convolution is a consequence of Corollary \ref{cor:well-defined}, where in particular we make use of Corollary \ref{cor:stopequiv} to justify the well-definedness of the stochastic integral on the stopped interval. 
Meanwhile, it is standard to show that for $F \in L^1([0,T];L^{q/2})$, $S \ast F$ is continuous in $L^{q/2}$. By a localisation with respect to $\tau_n$, the definition of a local solution shows that for $t < \tau$, 
$${\bm z}_t = S(t){\bm z}_0 + \int_0^t S(t-s) F({\bm z}_s) \d s + \int_0^t S(t-s) G({\bm z}_s) \d {\bm W}_s.
$$
Note that the right hand side is in $C([0,\tau];L^{q/2})$ and that therefore, ${\bm z}_{\tau_n} \rightarrow {\bm z}_{\tau}$. Hence, we
derive existence of the desired limit expression. Then the weak 
$B^{d/q}_{q,2p}$-continuity of ${\bm z}$ on $[0,\tau)$ and reflexivity of the Besov space imply that ${\bm z} \in C_w([0,\tau];B^{d/q}_{q,2p})$ with $\mathcal F_\tau$-measurable terminal value $\mathds{1}_{E_\sigma}{\bm z}_\tau \in B^{d/q}_{q,2p}$. Measurability in particular follows since the stopped processes $t \mapsto \mathds1_{t \leq \tau_n}{\bm z}_{t \wedge \tau_n}$ are adapted and càglad, hence progressively measurable. Thus, their pointwise limit $\mathds1_{t < \tau}{\bm z}_{t \wedge \tau}$ must be progressively measurable as well. 

Define the shifted process
$
\widetilde {\bm W}_t\coloneqq {\bm W}_{\tau+t}-{\bm W}_\tau,
$
and note that by the strong Markov 
property, this defines a Wiener process
w.r.t. $(\mathcal F^\tau_t)_{t\geq 0}$. The same fixed point 
argument as before 
produces a local solution $(\bar {\bm z},\bar \tau^\lambda)$ with initial condition 
$\bar {\bm z}_0 \coloneqq \mathds 1_{E_\sigma} {\bm z}_\tau$ driven by $\widetilde {\bm W}$. Note that 
$$\bar \tau^\lambda = \inf \left\{t \geq 0:  \Sigma\left(t,\bar {\bm z}_0,S\ast F(\bar {\bm z}),S\diamond G(\bar {\bm z})\right)>\lambda\right\} \wedge T_0
$$
for some $T_0 \ll 1$ and in particular, that $\bar {\bm z} \in C([0,\bar \tau^\lambda];B^{d/q}_{q,2p})$.
Define the extension 
$$\widetilde {\bm z}_t \coloneqq \mathds{1}_{t<\tau} {\bm z}_t + \mathds{1}_{t \geq \tau } \mathds{1}_{(t-\tau)_+ \leq \bar \tau^\lambda}\mathds{1}_{E_\sigma}\bar {\bm z}_{(t-\tau)_+}
$$ and observe that it is left-continuous and adapted (cf. Lemma \ref{lemma:stopshift}). 
Moreover, since $E_\sigma$ is $\mathcal F_\tau$-measurable, $\mathds1_{E_\sigma} \bar \tau^\lambda$ is again a stopping time w.r.t. $(\mathcal F^\tau_t)_{t \geq 0}$ and thus $\widetilde \tau \coloneqq \tau + \mathds1_{E_\sigma} \bar \tau^\lambda$ is an $\mathcal F_t$-stopping time (cf. \eqref{item:shift}).

We construct an approximating sequence of $\widetilde \tau$ and verify the local regularity property \eqref{eq:localregularity}. Let 
$$\widetilde \tau_k \coloneqq \inf \left\{t \in [0,\tau] : \Xi_\sigma(t,{\bm z}) > k \right\} \wedge (\tau + \mathds{1}_{E_\sigma} \bar \tau^\lambda)
$$ By continuity of ${\bm z}$ and the definition of $E_\sigma$, it 
must be that $\widetilde \tau_k \to \tau$ on $E^c_\sigma$. On the other hand, on $E_\sigma$, we conclude $\widetilde \tau_k = \widetilde \tau$ for large enough $k$ since the extension is continuous on 
$[\tau, \widetilde \tau]$. To verify the local regularity, choose any stopping time 
$\widetilde \sigma \leq \widetilde \tau$ and note that on $\Set{\widetilde \sigma < \widetilde \tau}$, 
$$\widetilde \tau^\lambda(\widetilde \sigma) \geq \mathds{1}_{\widetilde \sigma < \tau}\tau^\lambda(\widetilde \sigma \wedge \tau) + \mathds{1}_{\widetilde \sigma \geq \tau} (\widetilde \tau-\widetilde \sigma) > 0,$$ where $\widetilde\tau^\lambda(\widetilde \sigma)$ is the relative $\Sigma$-lifetime past $\widetilde \sigma$ of $\widetilde {\bm z}$, and $\tau^\lambda$ is the corresponding lifetime of the original process ${\bm z}$. It is left to verify that this 
process satisfies \eqref{eq:mildlocalsol}. On $\Set{\tau >\widetilde \tau_k \wedge t}$, by definition
$
\widetilde {\bm z}_{\widetilde \tau_k \wedge t} = {\bm z}_{\widetilde \tau_k \wedge t}$ and localisation yields that this continuous process satisfies the semigroup identity in $L^{q/2}$. Meanwhile, on $\Set{\tau \leq \widetilde \tau_k \wedge t \leq \widetilde \tau}$, it holds that 
$\widetilde {\bm z}_{\widetilde \tau_k \wedge t} = \bar {\bm z}_{t \wedge \widetilde \tau_k-\tau}$ and so
$$ 
\begin{aligned}
\widetilde {\bm z}_{t \wedge \widetilde \tau_k} &= S(t \wedge \widetilde \tau_k-\tau){\bm z}_\tau + \int_0^{t \wedge \widetilde \tau_k-\tau}S(t \wedge \widetilde \tau_k-\tau-s)F(\bar {\bm z}_s) \d s \\ &\qquad+\int_0^{t \wedge \widetilde \tau_k-\tau}S(t \wedge \widetilde \tau_k-\tau-s)G(\bar {\bm z}_s) \d \widetilde {\bm W}_s \\
&= S(t \wedge \widetilde \tau_k){\bm z}_0 + \int_0^\tau S(t\wedge \widetilde \tau_k-s) F({\bm z}_s) \d s + \int_{\tau}^{t \wedge \widetilde \tau_k}S(t\wedge \widetilde \tau_k-s) F(\bar {\bm z}_{s-\tau}) \d s \\
& \quad + S(t\wedge \widetilde \tau_k - \tau) \int_0^\tau S(\tau-s) G({\bm z}_s) \d {\bm W}_s + \int_0^{t \wedge \widetilde \tau_k-\tau}S(t \wedge \widetilde \tau_k-\tau-s)G(\bar {\bm z}_s) \d \widetilde {\bm W}_s \\
&= S(t \wedge \widetilde \tau_k){\bm z}_0 + \int_{0}^{t \wedge \widetilde \tau_k}S(t\wedge \widetilde \tau_k-s) F(\widetilde {\bm z}_s) \d s + \int_{0}^{t \wedge \widetilde \tau_k}S(t\wedge \widetilde \tau_k-s) G(\widetilde {\bm z}_s) \d {\bm W}_s
\end{aligned}
$$ 
yields the localised equation
\eqref{eq:mildlocalsol} on $[\tau,\widetilde \tau]$ and thus, we can glue the processes. To obtain the last equation, we implicitly localised Corollary \ref{cor:StochConvStopDecomp} to the event $E_\sigma$, which poses no problem since $E_\sigma \in \mathcal F_\tau$. 
\end{proof}

\begin{theorem}[Maximal local solution and blow-up criterion] \label{thm:maximallocalsolution}
Assume the conditions of Theorem \ref{thm:localexistence}. Then there exists a unique maximal local solution $(z,\tau_{\max})$ of \eqref{eq:FullNoiseSPDE} in the following sense:
\begin{enumerate}
\item (Maximality) If $(\widetilde {\bm z},\widetilde\tau)$ is any other local solution, then $\widetilde\tau\leq \tau_{\max}$ a.s. and
$\widetilde {\bm z}_t={\bm z}_t$ for all $t<\widetilde\tau$ a.s.
\item (Blow-up criterion) Let $\Xi_\sigma(t,{\bm z})$ be defined as in \eqref{eq:blowupfunctional}. Then, for any stopping time $\sigma \leq \tau_{\mathrm{max}}$, 
\begin{equation}
\limsup_{t\nearrow \tau_{\max}} \Xi_\sigma(t,{\bm z}) =\infty.
\end{equation}
almost surely on the event $\{\sigma < \tau_{\max}<T\}$,
\end{enumerate}
\end{theorem}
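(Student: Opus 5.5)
The plan is to build the maximal solution as an essential supremum of lifetimes of local solutions. I will then get maximality from Lemma \ref{lemma:CriticalUniqueness} and the blow-up criterion from the extension proposition proved just above.

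\emph{Construction.} Let $\mathcal L$ denote the set of all local solutions $(\bm z,\tau)$ in the sense of Definition \ref{def:localsol} with initial datum $\bm z_0$. By Proposition \ref{prop:localisation}, $\mathcal L$ is nonempty. Set $\tau_{\max}\coloneqq\operatorname{ess\,sup}\{\tau:(\bm z,\tau)\in\mathcal L\}$. First I show that $\mathcal L$ is closed under pairwise maxima. Given $(\bm z^1,\tau_1)$ and $(\bm z^2,\tau_2)$, Lemma \ref{lemma:CriticalUniqueness} says the two processes agree on $[0,\tau_1\wedge\tau_2)$. So I define $\bm z\coloneqq \bm z^1$ on $[0,\tau_1)$ and $\bm z\coloneqq\bm z^2$ on $[\tau_1,\tau_2)$ where $\tau_2>\tau_1$, and take the lifetime $\tau_1\vee\tau_2$.

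The localising sequence will be $\tau_n^1\vee\tau_n^2$. I then check the three items of Definition \ref{def:localsol} for the glued process:
\begin{itemize}
\item Path regularity (i) and the mild identity (ii) hold because on $\{\tau_n^1\geq \tau^2_n\}$ and on its complement the stopped process coincides with one of the two given solutions. For the stochastic convolution this uses Corollary \ref{cor:stopequiv}.
\item Item (iii) holds because $\Sigma_\sigma$ depends only on the path after $\sigma$. For a stopping time $\sigma$, the relative lifetime of the glued process dominates $\tau_1^\lambda(\sigma)$ on $\{\sigma<\tau_1\}$, and dominates $\tau_2^\lambda(\sigma)$ on $\{\tau_1\le\sigma<\tau_2\}$.
\end{itemize}
Because $\mathcal L$ is closed under pairwise maxima, the essential supremum is attained along an increasing sequence $(\bm z^k,\tau^k)$. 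By pairwise uniqueness these processes are consistent, so gluing them defines $\bm z$ on $[0,\tau_{\max})$. A diagonal choice of the localising times $\tau^k_{n_k}\nearrow\tau_{\max}$ shows $(\bm z,\tau_{\max})\in\mathcal L$. Maximality then follows: any $(\widetilde{\bm z},\widetilde\tau)\in\mathcal L$ has $\widetilde\tau\le\tau_{\max}$ by definition of the essential supremum, and agrees with $\bm z$ on $[0,\widetilde\tau)$ by Lemma \ref{lemma:CriticalUniqueness}. Uniqueness of the maximal solution follows the same way.

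\emph{Blow-up criterion.} Fix a stopping time $\sigma\le\tau_{\max}$ and let
$$
E_\sigma=\Set{\sigma<\tau_{\max}<T,\ \limsup_{t\nearrow\tau_{\max}}\Xi_\sigma(t,\bm z)<\infty}.
$$
Since $t\mapsto\Xi_\sigma(t,\bm z)$ is nondecreasing, on $E_\sigma$ the limit exists and is finite. So $E_\sigma$ is precisely the event in the preceding proposition with $\tau=\tau_{\max}$. That proposition gives a local solution $(\widetilde{\bm z},\widetilde\tau)$ with $\widetilde\tau\ge\tau_{\max}$, and with $\widetilde\tau>\tau_{\max}$ almost surely on $E_\sigma$. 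Maximality forces $\widetilde\tau\le\tau_{\max}$ almost surely, hence $\mathbb P(E_\sigma)=0$, which is the claim.

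\emph{Main obstacle.} The delicate step is verifying item (iii) for the glued process in the construction. This covers both the pairwise gluing and the countable limit. For a stopping time $\sigma$ near some $\tau^k$, the shifted quantities $\mathcal T_\sigma(S\ast F(\bm z))$ and $\mathcal T_\sigma(S\diamond G(\bm z))$ must be computed from whichever piece is active. I will use the semigroup splitting of the convolutions at $\sigma$, namely Corollary \ref{cor:StochConvStopDecomp} together with the identity $\mathcal T_\sigma(S\ast f)=S\ast f(\cdot+\sigma)$. With this, the relative lifetime on $\{\tau^{k-1}\le\sigma<\tau^k\}$ equals the one for $\bm z^k$, which is positive by assumption. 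Measurability of the partition events $\{\tau^{k-1}\le\sigma<\tau^k\}$ in $\mathcal F_\sigma$ follows as in the proof of Lemma \ref{lemma:CriticalUniqueness}. Finally, Remark \ref{rem:lambdaarbitrary} lets me use a common $\lambda$ for all the pieces.
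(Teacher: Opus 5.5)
Your proposal is correct and follows essentially the same route as the paper. Both arguments take the essential supremum of lifetimes along an increasing sequence, glue the consistent local solutions, and verify item (iii) by noting that $\Sigma_\sigma$ only depends on the path after $\sigma$. Maximality then comes from Lemma \ref{lemma:CriticalUniqueness}, and the blow-up criterion follows because the extension proposition would otherwise contradict maximality.

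Your explicit check that local solutions are closed under pairwise maxima, which is what makes an increasing sequence available, fills in a step the paper only asserts. The same holds for your use of Remark \ref{rem:lambdaarbitrary} to fix a common $\lambda$.
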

\begin{proof}
Fix ${\bm z}_0$ and let $\mathcal S({\bm z}_0)$ be the set of all local solutions $({\bm z},\theta)$ with initial value ${\bm z}_0$ in the sense of Definition~\ref{def:localsol}. Let
$$
\tau \coloneqq \esssup_{(z,\theta)\in\mathcal S({\bm z}_0)} \theta.
$$
Then $\tau$ is an $(\mathcal F_t)$-stopping time and by the properties of the essential supremum, we can choose an increasing sequence of stopping times $(\tau_n)_{n \geq 1}$ with $\tau_n \overset{n \to 
\infty}{\to} \tau$, $\mathbb P$-a.s. The verification of the continuity properties and semigroup identities satisfied by the gluing of the corresponding local solutions ${\bm z}^{(n)}$ is done analogously as before.

Now let $\sigma\leq \tau$ be any stopping time. We verify the local regularity property \eqref{eq:localregularity}. On $\{\sigma<\tau\}$ we have $\sigma<\tau_n$ for some $n$ since $\tau_n\rightarrow \tau$. At the same time, ${\bm z} \equiv {\bm z}^{(n)}$ on $[\sigma,\tau_n]$, hence for $t\in[0,(\tau_n-\sigma)_+]$,
$$
\Sigma_\sigma(t, {\bm z},S\ast F({\bm z}), S\diamond G({\bm z}))=\Sigma_\sigma(t,{\bm z}^{(n)},S\ast F({\bm z}^{(n)}), S\diamond G({\bm z}^{(n)}))
$$
Therefore, on $\{\sigma<\tau_n\}$,  the fact that $({\bm z}^{(n)},\tau_n)$ satisfies the exit-time condition implies that
$$
\begin{aligned}
\tau^\lambda(\sigma)
&=\inf\{t\leq \tau-\sigma:\ \Sigma_\sigma(t, {\bm z},S\ast F({\bm z}), S\diamond G({\bm z}))>\lambda\} \\
&\geq 
\inf\{t\leq \tau_n-\sigma:\ \Sigma_\sigma(t,{\bm z}^{(n)},S\ast F({\bm z}^{(n)}), S\diamond G({\bm z}^{(n)})) >\lambda\} = \tau^\lambda_{n}(\sigma) 
>
0.
\end{aligned}
$$
Since $\{\sigma<\tau\}=\bigcup_{n\geq 1}\{\sigma<\tau_n\}$ a.s., this yields $\tau^\lambda(\sigma)>0$ on $\{\sigma<\tau\}$. Hence $({\bm z},\tau)$ is a local solution in the sense of Definition~\ref{def:localsol}.

To see maximality, let $(\bar {\bm z},\bar\tau)\in\mathcal S({\bm z}_0)$ be any local solution. Then $\bar\tau\leq\tau$ a.s. by definition of $\tau$ as an essential supremum. On $[0,\bar\tau]$ both $\bar{\bm z}$ and ${\bm z}$ solve the same equation with the same initial condition, so pathwise uniqueness implies ${\bm z}_t=\bar {\bm z}_t$ for all $t< \bar\tau$ a.s. Thus $({\bm z},\tau)$ dominates every other local solution and is maximal.

Now, the blowup property is easy to see, as finitude of the stopping time on a set of positive probability would imply that we can extend the maximal solution on this set, thus contradicting maximality.
\end{proof}

\begin{proposition}
It almost surely holds that for all ${\bm u} \in H^{1,2} \cap L^{\frac{q}{q-2}}$ and $t \in [0,T]$ and $n \in \mathbb N$
\begin{equation} \label{eq:variationalidentity}
\langle {\bm z}_{t \wedge \tau_n}, \bm u \rangle = \langle {\bm z}_0, \bm u\rangle + \int_0^{t \wedge \tau_n}-\langle \nabla {\bm z}_{s},\nabla {\bm u} \rangle+\langle F({\bm z}_s), \bm u \rangle \d s + \langle \bm u, \int_0^{t \wedge \tau_n}G({\bm z}_s) \d {\bm W}_s\rangle.
\end{equation}
\end{proposition}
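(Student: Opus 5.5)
The plan is to derive the weak identity \eqref{eq:variationalidentity} from the mild identity \eqref{eq:mildlocalsol} by the standard ``mild implies weak'' argument. We first prove it for smooth test functions and then extend by density. Fix $n$ and write ${\bm z}^n_t \coloneqq {\bm z}_{t\wedge\tau_n}$. Property \eqref{item:solpathreg} gives ${\bm z}\in L^2([0,\tau_n];H^{1,q})\cap L^4([0,\tau_n];H^{\delta,q})$. Assumptions \ref{ass:DriftNonlin} and \ref{ass:nemytskiiLipschitz} then give, pathwise, $F({\bm z})\in L^1([0,\tau_n];L^{q/2})$ and $G({\bm z})\in L^2([0,\tau_n];\gamma(U,H^{\sigma,q}))$. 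By Corollary \ref{cor:well-defined} and Corollary \ref{cor:stopequiv}, all terms in \eqref{eq:mildlocalsol} are continuous $L^{q/2}$-valued processes. The pairing $\langle\cdot,{\bm u}\rangle$ with ${\bm u}\in L^{q/(q-2)}=(L^{q/2})'$ is therefore meaningful, and $\langle\nabla{\bm z}_s,\nabla{\bm u}\rangle$ is integrable because $\nabla{\bm z}\in L^2_tL^q_x$ and $\nabla{\bm u}\in L^2\subset L^{q'}$ on the torus.

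Step 1 treats ${\bm u}\in C^\infty(\mathbb T^d)$, which lies in $D(\Delta^*)$ on every $L^r$. For the deterministic parts we use the elementary identity
$$\langle S(t)x,{\bm u}\rangle=\langle x,{\bm u}\rangle+\int_0^t\langle S(s)x,\Delta{\bm u}\rangle\,\mathrm ds$$
together with Fubini. This gives $\langle S(t){\bm z}_0+S\ast F({\bm z})(t),{\bm u}\rangle$ as $\langle{\bm z}_0,{\bm u}\rangle+\int_0^t\langle F({\bm z}_s),{\bm u}\rangle\mathrm ds$ plus $\int_0^t\langle S(s){\bm z}_0+S\ast F({\bm z})(s),\Delta{\bm u}\rangle\mathrm ds$. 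For the stochastic convolution we use the stochastic Fubini theorem in the UMD/type 2 setting (justified pathwise by localisation to $\{\norm{G({\bm z})}_{L^2([0,\tau_n];\gamma(U,L^q))}\le k\}$ and then $k\to\infty$). It gives
$$\langle S\diamond G({\bm z})(t),{\bm u}\rangle=\Big\langle{\bm u},\int_0^tG({\bm z}_s)\,\mathrm d{\bm W}_s\Big\rangle+\int_0^t\langle S\diamond G({\bm z})(s),\Delta{\bm u}\rangle\,\mathrm ds.$$
Summing and using the mild identity inside the $\mathrm ds$-integral gives $\int_0^t\langle{\bm z}_s,\Delta{\bm u}\rangle\mathrm ds$. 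Since ${\bm z}_s\in H^{1,q}$ for a.e.\ $s$, integration by parts on $\mathbb T^d$ turns this into $-\int_0^t\langle\nabla{\bm z}_s,\nabla{\bm u}\rangle\mathrm ds$. Stopping at $t\wedge\tau_n$ is handled by Corollary \ref{cor:stopequiv}, replacing $G({\bm z})$ by $\mathds 1_{[0,\tau_n]}G({\bm z}_{\cdot\wedge\tau_n})$.

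Step 2 passes to the quantifiers. Take a countable set $D\subset C^\infty(\mathbb T^d)$ that is dense in $H^{1,2}\cap L^{q/(q-2)}$ (for instance trigonometric polynomials with rational coefficients). Step 1 holds for each fixed $t$ and ${\bm u}$ a.s. Both sides are continuous in $t$, so the null set can be taken independent of $t\in[0,T]$, of ${\bm u}\in D$ and of $n\in\mathbb N$. For general ${\bm u}$ we choose ${\bm u}_k\in D$ with ${\bm u}_k\to{\bm u}$ and pass to the limit pathwise. Every term is a bounded linear functional of ${\bm u}$ in the norm of $H^{1,2}\cap L^{q/(q-2)}$, with pathwise constants $\sup_t\norm{{\bm z}_t}_{L^{q/2}}$, $\norm{F({\bm z})}_{L^1L^{q/2}}$, $\norm{\nabla{\bm z}}_{L^1L^2}$ and $\sup_t\norm{\int_0^tG\,\mathrm d{\bm W}}_{L^{q/2}}$. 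The stochastic integral is understood as an $L^{q}$-valued (hence $L^{q/2}$-valued) continuous process, which exists by \eqref{eq:itoisometrybanach} after localisation.

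The main obstacle is the stochastic Fubini step and making sense of $\int_0^{t\wedge\tau_n}G({\bm z}_s)\,\mathrm d{\bm W}_s$ with only $L^0$-in-$\Omega$ integrability. I would first localise with stopping times $\rho_k=\inf\{t:\int_0^{t}\norm{G({\bm z}_s)}^2_{\gamma(U,H^{\sigma,q})}\mathrm ds>k\}\wedge\tau_n$, for which $L^p$ bounds and the stochastic Fubini theorem in type 2 spaces apply. Since $\rho_k\uparrow\tau_n$ a.s., I would then let $k\to\infty$.
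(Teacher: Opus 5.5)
Your proposal is correct and follows essentially the paper's argument. Test the mild identity against smooth $\bm u$. Handle the semigroup, drift-convolution and stochastic-convolution terms via $\Delta\int_0^t S(s)\,\mathrm ds = S(t)-I$ and the deterministic and stochastic Fubini theorems, treating the stochastic part through $S\diamond(\mathds 1_{\{\cdot\le\tau_n\}}G(\bm z))$ and evaluating at $t\wedge\tau_n$. Then integrate by parts, and extend to all $\bm u\in H^{1,2}\cap L^{q/(q-2)}$ and all $t$ by separability and continuity. Your stopping-time localisation $\rho_k$ for the stochastic Fubini step and the explicit pathwise bounds in the density step simply make precise what the paper leaves implicit.
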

\begin{proof}
Let $u \in C^\infty(\mathbb T^d)$ and $t > 0$ be fixed. Then 
\begin{equation} \label{eq:prelimvariational}
\begin{aligned}
\int_0^{t \wedge \tau_n} \langle {\bm z}_{s}, \Delta \bm u \rangle \d s &= \int_0^{t \wedge \tau_n} \bigg( \langle S(s){\bm z}_0, \Delta \bm u\rangle+\int_0^{s} \langle S(s-r)F({\bm z}_{r}), \Delta \bm u \rangle \d r \\ &\qquad + \int_0^{s} \langle \Delta \bm u, S(s-r) G({\bm z}_r)\d {\bm W}_r \rangle \bigg) \d s.
\end{aligned}
\end{equation}
Since $\Delta$ generates an analytic semigroup on $L^2$, it is well-known that $\Delta \int_0^t S(s) \d s = S(t)-I$, where $I$ denotes the identity operator on $L^2$. Thus $$\int_0^{t \wedge \tau_n} \langle S(s){\bm z}_0, \Delta \bm u\rangle \d s = \langle \Delta \int_0^{t \wedge \tau_n} S(s) \d s\, {\bm z}_0, \bm u \rangle = \langle S(t \wedge \tau_n){\bm z}_0-{\bm z}_0,\bm u\rangle.$$
Now, we apply the deterministic and stochastic Fubini formulas to obtain the desired identity.
$$
\begin{aligned}
\int_0^{t \wedge \tau_n}\int_0^{s} \langle S(s-r)F({\bm z}_{r}), \Delta \bm u \rangle \d r\d s 
&= \int_0^{t \wedge \tau_n} \langle \Delta \int_{r}^{t \wedge \tau_n} S(s-r) \d s \, F({\bm z}_r), \bm u \rangle \d r 
\\ &= \int_0^{t \wedge \tau_n} \langle (S(t \wedge \tau_n-r) - I)F({\bm z}_r), \bm u \rangle \d r.
\end{aligned}
$$
Importantly, we know that $F({\bm z}) \in L^1([0,\tau_n];L^{q/2})$ for $\bm u \in L^\frac{q}{q-2}$, hence these identities are well defined (recall that $q > d \geq 2$). Similarly,
$$
\begin{aligned}
\int_0^{t \wedge \tau_n}\int_0^{s}\langle \Delta \bm u, S(s-r) G({\bm z}_r) \d {\bm W}_r\rangle\d s &= \int_0^{t \wedge \tau_n} \langle \bm u,  \Delta \int_{r}^{t \wedge \tau_n} S(s-r) \d s\, G({\bm z}_r) \d {\bm W}_r\rangle \\ &= \int_0^{t \wedge \tau_n} \langle \bm u, (S(t \wedge \tau_n-r) - I)G({\bm z}_r) \d {\bm W}_r\rangle.
\end{aligned}
$$
The stochastic identity in particular can be justified by applying the stochastic Fubini theorem to $S \diamond (\mathds{1}_{\Set{\cdot \leq \tau_n}} G({\bm z}_\cdot))$ first, and then inserting the stopped time $t \wedge \tau_n$.
Plugging these identities into \eqref{eq:prelimvariational} and, 
rearranging the right hand side and 
integrating by parts on the left-hand side 
then gives \eqref{eq:variationalidentity}. 

Since $H^{1,2}  \cap L^{\frac{q}{q-2}}$ is separable, we can generalise this variational identity to apply for all $\bm u \in H^{1,2} \cap L^{\frac{q}{q-2}}$ simultaneously.
By weak continuity, this extends to all $t \in [0,T]$.
\end{proof}
\begin{remark}
It is desirable that ${\bm z}$ almost surely has paths in $$L^2([0,\tau_n];H^{1,2}) \cap L^\infty([0,\tau_n];L^{\frac{q}{q-2}}),$$ since then one can test ${\bm z}$ against itself and apply Itô-type formulas. This is given, since $B^{d/q}_{q,2p} \hookrightarrow H^{d/q-\varepsilon,q}$ for any $\varepsilon > 0$, so the Sobolev embedding theorems in particular show that $B^{d/q}_{q,2p} \hookrightarrow L^{\frac{q}{q-2}}$. 
\end{remark}
\section{Applications}
First, we shortly present a simple application of the well-posedness result. Let $d \geq 2$ and consider the equation $$\d z(t,\bm x) = \Delta z(t,\bm x) + |\nabla z(t,\bm x)|^2 +z^2(t,\bm x)\d W^Q(t,\bm x),~t \geq 0, \bm x \in \mathbb T^d$$ with $z(0,\bm x) = z_0(\bm x) \in B^{d/q}_{q,p}$ for $q \in (d,2d)$. Here, $W^Q = R\, W$ for some cylindrical Wiener process $W$ on a Hilbert space $U$ and $R \in \gamma(U,H^{\sigma_0,q})$ for $\sigma_0 > 0$, so $W^Q$ is a Wiener process on $H^{\sigma_0,q}(\mathbb T^d)$. In this case, the gradient nonlinearity clearly satisfies Assumption \ref{ass:nemytskiiLipschitz}. Meanwhile, local Lipschitzness of the dispersion coefficient follows from the Prop. \ref{prop:noisecoeffbounds} below. 

As $B^{d/q}_{q,p} \hookrightarrow B^{d/q}_{q,m}$ for any $m \geq p$, Theorem \ref{thm:localexistence} implies existence of a maximal local solution: Choose $m \geq \frac{q}{\delta q - d}$ for $\sigma_0 > \delta - d/q$. Then there exists a local solution of the above equation with initial datum $z_0$ and weakly continuous trajectories in $B^{d/q}_{q,2m}$. 

To emphasize the improvement on previous results in the maximal regularity framework, let $q \to 2d$, so $d/q \to \frac12$. Then $1-d/q \to \frac12$, so in the previous framework, the noise strength $\sigma_0 \geq \frac12 \geq 1-d/q$ is necessary, asymptotically. In contrast, we only require $\sigma_0 > 0$, so asymptotically, we gain almost half a derivative in permissible roughness. This is nearly optimal; since even in the additive noise case, we cannot expect higher roughness of the stochastic convolution than $W^Q \in L^q$: this is the minimum regularity required for existence of at least one derivative. 

\subsection{Application to phase-field models of moving boundary problems} \label{sec:AppliedSec}
We now apply the theory of the previous section to the phase-field model introduced in \cite{ABS}, given by
\begin{equation}\label{eq:StrongPhaseFieldABS}
\begin{cases}
\d \phi_t  = \left(\Delta \phi_t + g(\phi_t,c_t) +  \Psi(\phi_t,c_t) |\nabla \phi_t|\right) \d t \\
\d c_t =  \left(\Delta  c_t + \frac{\nabla \phi_t}{\phi_t} \nabla c_t + f(\phi_t,c_t) \right)\d t + b(\phi_t,c_t) \d W^Q_t
\end{cases}, \qquad \text{on } \mathbb T^d.
\end{equation}
Here, $W^Q$ is a $H^{\sigma,q}$-valued $Q$-Wiener process, i.e. $W^Q \coloneqq R W$ for a cylindrical Wiener process $W$ on some separable Hilbert space $U$ and $R \in \gamma(U,H^{\sigma,q})$. 

\begin{remark}
This subsection is essentially a shortened version of Section 2.3 of the author's dissertation. Therein, the interested reader can in particular find some of the proofs that are omitted in the present work. 
\end{remark}

\begin{remark}
The considerations of this section also apply to several other examples from \cite[Section 5]{Ich2}.
\end{remark}

The motivation for the theory developed in the preceding section is that we can cast this type 
of equation into the form \eqref{eq:FullNoiseSPDE} by means of a Cole-Hopf type transform. Assume that $\phi > 0$ almost surely and set $$z \coloneqq \log \frac 1\phi = - \log \phi.$$ Then, \eqref{eq:StrongPhaseFieldABS} transforms to 
\begin{equation}
\begin{cases}
\d z_t = \left(-\frac{\Delta \phi}{\phi} - \frac{g(\phi_t,c_t)}{\phi_t} - \Psi(\phi_t,c_t) \left \lvert \frac{\nabla \phi_t}{\phi_t} \right \rvert\right) \d t \\
\d c_t =  \left(\Delta  c_t + \frac{\nabla \phi_t}{\phi_t}\nabla c_t + f(\phi_t,c_t) \right)\d t +b(\phi_t,c_t) \d W^Q_t
\end{cases},
\end{equation}
After closing the equation in $z = -\log \phi$, $\nabla z = -\frac{\nabla \phi}{\phi}$, we obtain
\begin{equation}\label{eq:StrongColeHopf} 
\begin{cases}
\d z_t  = \left(\Delta z_t - |\nabla z_t|^2 +  h(z_t,c_t) + \Phi(z_t,c_t) |\nabla z_t|\right) \d t \\
\d c_t =  \left(\Delta  c_t - \nabla z_t \nabla c_t + k(z_t,c_t) \right)\d t + \beta(z_t,c_t) \d W^Q_t
\end{cases},
\end{equation}
where $$ h(z,c)\coloneqq -\frac{g(e^{-z},c)}{e^{-z}}, \quad \Phi(z,c) \coloneqq - \Psi(e^{-z},c),\quad  k(z,c) \coloneqq f(e^{-z},c), \quad \beta(z,c) \coloneqq b(e^{-z},c).$$ 
We impose the following assumptions in the coefficients:
\begin{assumption} \label{ass:ColeHopfGLipschitz}
For $u, v \in H^{1,q}$, it holds that $$\norm{\frac{g(u_1,v_1)}{u_1}-\frac{g(u_2,v_2)}{u_2}}_{L^{\frac q 2}} \leq C_\infty \left(u_1,u_2,v_1,v_2\right)\left(\norm{u_1-u_2}_{L^q}+\norm{v_1-v_2}_{L^q}\right)$$
for some constant $C_\infty$ dependent on $\norm{u_1}_{L^\infty},\norm{u_2}_{L^\infty},\norm{v_1}_{L^\infty},\norm{v_2}_{L^\infty}$.
\end{assumption}
\begin{remark}
On the torus, this in particular applies to $g(x) = x(1-x)(x-a)$, $a \in \mathbb R$.
\end{remark}

\begin{assumption} \label{ass:PsiCoeff}
The nonlinearity $\Psi$ is of the form $$\Psi(\phi,c) = \sum_{i=1}^\ell \Psi_i(\phi,c) ,$$ for some finite collection of  nonlinearities $\Psi_i \colon L^\infty \oplus {L}^\infty \rightarrow L^\infty$ that satisfy the Lipschitz property
$$|\left(\Psi_i(\phi_1, c_1) - \Psi_i(\phi_2,c_2)\right)(\bm x)| \leq L_R\left(|\phi_1(\bm x)- \phi_2(\bm x)|+|c_1(\bm x)-c_2(\bm x)| + \norm{\phi_1-\phi_2}_{L^2}+ \norm{c_1- c_2}_{{L}^2}\right)$$ whenever $\norm{\phi_1}_{L^\infty}, \norm{\phi_2}_{L^\infty}, \norm{c_1}_{{L}^\infty},\norm{c_2}_{{L}^\infty} \leq R$.
\end{assumption}

\begin{assumption}\label{ass:NemytskiiAssumption}
The nonlinearities $g,  f$ should generally correspond to Nemytskii-type operators with dependence on nonlocal properties of inputs. Due to the conditions we impose on solutions and initial conditions, it suffices to specify their behaviour on $L^\infty$. Thus, we only assume that for each $R > 0$, there exists a constant $L_R$ such that  $f \colon L^\infty \oplus {L}^\infty \rightarrow {L}^\infty$ admits the bound $$|( f(\phi_1, c_1)- f(\phi_2,c_2))(\bm x)|\leq L_R\left(|\phi_1(\bm x)-\phi_2(\bm x)|+|c_1(\bm x)- c_2(\bm x)| + \norm{\phi_1-\phi_2}_{L^q}+ \norm{c_1-c_2}_{L^q}\right),$$
$\!\d {\bm x}$-almost everywhere whenever $\norm{\phi_1}_{L^\infty}, \norm{\phi_2}_{L^\infty}, \norm{ c_1}_{{L}^\infty},\norm{ c_2}_{{L}^\infty} \leq R$.
\end{assumption}
\begin{assumption} \label{ass:Invariance}
Let real numbers $K, L$ with $K <L$ be given and introduce the spaces $$\mathcal X_{\phi} = \left\{\phi \in L^\infty(\mathbb T^d) :  \phi(\bm x) \in [0,1],~ \d {\bm x}\text{-a.s.} \right\}, ~\mathcal X_{ c} = \left\{c \in {L}^\infty(\mathbb T^d) :  c(\bm x) \in [L,K],~\d {\bm x}\text{-a.s.} \right\}.$$ We assume that  whenever $\phi \in  \mathcal{X}_\phi$ and  $c \in \mathcal X_{ c}$, then $$f(\phi,  c) \mathds{1}_{\Set{c \equiv L}} \geq 0 \text{ and } f(\phi, c) \mathds{1}_{\Set{c \equiv K}} \leq 0.$$ and $$g(\phi,  c) \mathds{1}_{\Set{\phi \equiv 0}}  = 0 \text{ and } g(\phi,  c) \mathds{1}_{\Set{\phi \equiv 1}} \leq 0.$$ 
\end{assumption}
\begin{remark}
These conditions on $f$ and $g$ are modelled after nonlocal reaction terms appearing in the system introduced in \cite{ABS}, see also \cite[Example 5.1]{Ich2}.
\end{remark}

\begin{assumption} \label{ass:CriticalNemytskiiDispersion}
The dispersion coefficient $b$ has locally bounded second-order partial derivatives.
\end{assumption}

\begin{remark} \label{rem:CriticalNewNemytskiiExplained}
This specific form of $b$ will be quite tame on bounded solutions, as the truncation $\widetilde \beta$ satisfies the local Lipschitz assumption \ref{ass:nemytskiiLipschitz} and additionally the bound $$\norm{\widetilde \beta(u,v)}_{\mathcal L(H^{\sigma,q},H^{\sigma_0,q})} \leq \const\left(1+\norm{u}_{H^{\frac{d}{q},q}}+\norm{v}_{H^{\frac{d}{q},q}}\right),$$ see \eqref{item:NemytskiiBound} in Prop. \ref{prop:nemytskiibounds} with $k=1$. The solution theory developed in this section would however still apply for nonlinearities $\beta$ with \begin{equation} \label{eq:CriticalAlternativeNemytskiiBound}
\norm{\widetilde \beta(u,v)}_{\mathcal L(H^{\sigma,q},L^q)} \lesssim L_n\norm{u}_{H^{\delta,q}}\norm{v}_{H^{\delta,q}}+\norm{u}^2_{H^{\delta,q}}+\norm{v}_{H^{\delta,q}}+1,
\end{equation} 
where $L_n$ is uniform on $\norm{u}_{B^{d/q}_{q,2p}} \leq n$ (cf. Remark \ref{rem:LocalLipschitzTraceNorm}). This expression avoids any quadratic terms in the second variable, corresponding to $c_t$. With such an affine linear bound in $v$, we are still able to leverage deterministic estimates on $z$ to restrict growth of $c_t$ and thereby rule out blow-up.
\end{remark}

\begin{remark}
On another note, one could in principle choose a host of other conditions that make our solution theory applicable to $\widetilde h$. For example, one can use smooth truncations instead of the Lipschitz truncations employed above. In that case, the Assumption \ref{ass:ColeHopfGLipschitz} can be relaxed to 
$$
\begin{aligned}
\norm{\frac{g(u_1,v_1)}{u_1}-\frac{g(u_2,v_2)}{u_2}}_{L^{\frac q 2}} \leq C_\infty \left(u_1,u_2,v_1,v_2\right)\left(\norm{u_1-u_2}_{H^{1,q}}+\norm{v_1-v_2}_{H^{1,q}}\right),
\end{aligned}
$$ and consequently $\widetilde h$ satisfies $$
\begin{aligned}
&\norm{\widetilde h(z_1,c_1)-\widetilde h(z_2,c_2)}_{L^{\frac q 2}}
\leq \const \left(\norm{z_1}_{H^{1,q}}+\norm{z_2}_{H^{1,q}}\right)\left(\norm{z_1-z_2}_{H^{1,q}}+\norm{c_1-c_2}_{H^{1,q}}\right).    
\end{aligned}
$$ 
However, to assure boundedness of solutions, we then require a different invariance conditions to account for the smooth truncation, such as $g(x,c) \leq 0$ for all $x \in [1,1+\varepsilon]$ for some $\varepsilon > 0$ and arbitrary $c \in \mathcal K$. (cf. Assumption \ref{ass:Invariance}). 
\end{remark}

We will restrict ourselves to solutions with values in the spaces $\mathcal X_\phi,\mathcal X_c$ (cf. Assumption \ref{ass:Invariance}). 
Due to the well-behaved nature of Nemytskii operators on critical Sobolev spaces, we will be able to prove existence of solutions for any spatial noise regularity $\sigma > 0$. This improves on the conditions of \citet{AgrestiVeraarNonLinParab}, where the minimal possible noise regularity is $\sigma \geq 1-d/q$, for initial data with Besov regularity $B^{d/q}_{q,p}$, $p \in [1,\infty)$.  

We further exploit that the equation governing the dynamics of $\phi$ is deterministic: we can thus additionally apply the previously sketched Kato-type approach of \cite{BENARTZI2002343}. This is useful in several senses; for one, this helps us avoid some more involved estimates in Besov spaces. On the other hand, we obtain control on expectations of the bilinear nonlinearity $\nabla z \nabla c$. We then obtain the following theorem.
\begin{theorem} \label{thm:phasefieldglobalstrongex}  Let the coefficients of \eqref{eq:StrongPhaseFieldABS} satisfy assumptions \ref{ass:ColeHopfGLipschitz} to \ref{ass:CriticalNemytskiiDispersion}. Let $W^Q$ take values in $H^{\sigma,q}$ for some arbitrary $\sigma > 0$ and $q \in (d,2d)$. Let $$p > \frac{1}{\sigma \wedge \alpha_1}$$ and $z_0, c_0 \in L^0(\Omega;B^{d/q}_{q,2p})$ be $\mathcal F_0$-measurable. Then, if $z_0 \geq 0$, $c_0 \in \mathcal X_c$ almost surely, there exists a unique global-in-time mild and variational solution $z,c \in C([0,T];B^{d/q}_{q,2p})$ of \eqref{eq:StrongColeHopf} with initial data $z_0, c_0$ in the sense of Definition \ref{eq:mildlocalsol}. These solutions satisfy the additional regularity 
$$
z,c \in L^4([0,T];H^{1,q}).
$$
Setting $\phi \coloneqq e^{-z}$,  the pair
$(\phi, c)$ is an analytically weak and mild solution of \eqref{eq:StrongPhaseFieldABS} with $\log \frac{1}{\phi_0} = z_0$. In particular, 
$$
\phi \in C([0,T];B^{d/q}_{q,2p}) \cap C((0,T];H^{1,q})
$$
with $\sup_{t\in (0,T]} t^{\frac12 - \frac{d}{2q}} \norm{\phi}_{H^{1,q}} < \infty$ and
$\phi_t \in [0,1]$, $c_t \in \mathcal X_c$ for all $t \in [0,T]$ almost surely. Moreover, $(\phi,c)$ is the unique solution with these regularities. 
\end{theorem}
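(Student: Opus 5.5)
The plan is to pass to the transformed system \eqref{eq:StrongColeHopf} and work with truncated coefficients. Fix a truncation level $R$ and replace $h,\Phi,k,\beta$ by $\widetilde h,\widetilde\Phi,\widetilde k,\widetilde\beta$, obtained by composing with Lipschitz cut-offs of $e^{-z}$ into $[0,1]$ and of $c$ into $[L,K]$ (note $z\ge 0$ corresponds to $\phi\le 1$).

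The first step is to check Assumptions \ref{ass:DriftNonlin} and \ref{ass:nemytskiiLipschitz} for the pair $\bm z=(z,c)$.
\begin{enumerate}[(i)]
\item The terms $-|\nabla z|^2$ and $-\nabla z\nabla c$ are quadratic in the gradients, so they satisfy Assumption \ref{ass:DriftNonlin} in $L^{q/2}$ by Hölder.
\item The term $\widetilde\Phi|\nabla z|$ is handled with Assumption \ref{ass:PsiCoeff} and the embedding $H^{1,q}\hookrightarrow L^\infty$ (valid since $q>d$).
\item For $\widetilde h,\widetilde k$ we use Assumptions \ref{ass:ColeHopfGLipschitz} and \ref{ass:NemytskiiAssumption}, where the Lipschitz constants depend only on $R$.
\item For the noise, Assumption \ref{ass:CriticalNemytskiiDispersion} together with the Nemytskii bounds (Prop.\ \ref{prop:nemytskiibounds}, used through Remark \ref{rem:CriticalNewNemytskiiExplained}) should give local Lipschitz continuity of $\widetilde\beta$ from $H^{\delta,q}$ into $\gamma(U,H^{\sigma',q})$. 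This holds for every $\delta\in(d/q,1)$ and every $\sigma'\le\sigma$ small enough, using the multiplier property of $H^{\delta,q}$ for $\delta>d/q$.
\end{enumerate}
Given $\sigma>0$ and $p>1/(\sigma\wedge\alpha_1)$, choose $\delta$ with $\delta-d/q<\sigma$ and $p\ge q/(\delta q-d)$. This is possible because $1/p<\alpha_1$ allows $\alpha_\delta\in[1/p,\sigma)$. Theorem \ref{thm:localexistence} together with Theorem \ref{thm:maximallocalsolution} then gives a unique maximal mild solution $(\bm z,\tau_{\max})$ of the truncated system, which is also variational by \eqref{eq:variationalidentity}.

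The second step is to remove the truncation by an invariance argument. Put $\phi=e^{-z}$. The variational identity for $c$ can be tested with $(c-K)_+$ and $(L-c)_+$ via an Itô formula for the $L^2$-norm; the remark after \eqref{eq:variationalidentity} gives the integrability needed for this. The sign conditions in Assumption \ref{ass:Invariance} and the vanishing of the truncated $\widetilde\beta$ outside $[L,K]$ then give $c_t\in\mathcal X_c$. For $z$ the equation is pathwise deterministic given $c$, so a parabolic maximum principle applies. Applied to $\phi=e^{-z}$, which solves the $\phi$-equation, using $g(\cdot)\mathds 1_{\phi\equiv1}\le0$ it gives $\phi\le1$, i.e.\ $z\ge0$. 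Positivity of $\phi$ is automatic as long as $z$ is finite. Consequently the truncations are inactive and $\bm z$ solves \eqref{eq:StrongColeHopf}.

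The third step, which I expect to be the main obstacle, is to exclude blow-up by bounding $\Xi_0(t,\bm z)$ on $[0,\tau_{\max})$.
\begin{enumerate}[(i)]
\item For $\phi$, whose equation is deterministic with coefficients $g,\Psi$ bounded on the invariant region, I would use a Kato-type fixed point in the spirit of \cite{BENARTZI2002343}. It should give $\sup_t t^{\alpha_1}\norm{\phi_t}_{H^{1,q}}<\infty$, with bounds depending only on $\norm{\phi}_{L^\infty}\le1$. From this I would derive control of $\nabla z=-\nabla\phi/\phi$, but only where $\phi$ is bounded below, which it is not. I would therefore work directly with the $z$-equation instead. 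There the term $-|\nabla z|^2$ has a favourable sign; it comes from the Cole–Hopf transform of the linear heat part. An energy or $L^\infty$ estimate on $z\ge0$, combined with the maximal regularity estimate for $S\ast F$, should bound $\norm{z}_{L^\infty B^{d/q}_{q,2p}\cap L^4H^{1,q}}$ on compact time intervals.
\item With $\nabla z\in L^4H^{1,q}$ in hand, the $c$-equation becomes linear with a drift coefficient in $L^4L^\infty$. Since $c$ is bounded and $\beta$ is affine, as in \eqref{eq:CriticalAlternativeNemytskiiBound}, stochastic maximal regularity followed by a Gronwall argument on stopped intervals bounds $\Xi$ for $c$ in probability.
\end{enumerate}
By the blow-up criterion of Theorem \ref{thm:maximallocalsolution} it follows that $\tau_{\max}=T$.

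Finally, continuity in $B^{d/q}_{q,2p}$ follows from the trace embeddings (Prop.\ \ref{prop:continuousTrace}) applied on intervals, since $p>1/\sigma$ makes the stochastic part strongly continuous. The regularity of $\phi=e^{-z}$ follows from the Nemytskii bounds and the Kato estimate. Uniqueness in the stated class holds because any such $(\phi,c)$ gives, via $z=-\log\phi$, a local solution in the sense of Definition \ref{def:localsol}, to which Lemma \ref{lemma:CriticalUniqueness} applies.
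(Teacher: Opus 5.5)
\textbf{There is a genuine gap in your third step, which is where the difficulty of the theorem lies.}

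The decisive misstep is abandoning the route via $\nabla z=-\nabla\phi/\phi$ on the grounds that $\phi$ is not bounded below. That is true only at $t=0$. For positive times the paper uses the quantitative positivity estimate of Proposition \ref{prop:Subsolbound}, $\inf_{t_0\le t\le T}\essinf_x \phi_t\ge e^{-M_1T}\kappa_{t_0}\int\phi_0\d x$. It combines this with the global $H^{1,q}$ bound for the $\phi$-equation (Lemma \ref{lemma:positive-time-phisol}). That equation grows only linearly in $\nabla\phi$, so a singular Gronwall argument works. Together these give $\sup_{t\in[t_0,\tau]}\|\nabla z_t\|_{L^q}\le C(t_0,\varepsilon)\,\|\phi\|_{\mathcal W_q(\tau)}\in L^\infty(\Omega)$.

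The initial layer $[0,t_0]$, where no uniform lower bound on $\phi$ exists because $z_0$ need not be bounded, is handled separately. The paper runs a small-time fixed point for $z$ in $\mathbb Z_q=\mathbb X_1\cap\mathcal W_q$ (Proposition \ref{prop:zmoreregularity}). The two regimes are connected by identifying $z=-\log\phi$ through uniqueness for the $\phi$-equation (Proposition \ref{prop:zphiidentification}).

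Your substitute cannot reach the critical norms:
\begin{itemize}
\item The favourable sign of $-|\nabla z|^2$ gives at best a one-sided bound of the form $z_t\le S(t)z_0+Ct$. For $t>0$ that is exactly a lower bound on $\phi$, i.e. the ingredient you set aside.
\item Alternatively it gives an $L^2_tH^{1,2}_x$ energy bound, far below the critical regularity.
\item Maximal regularity only gives $\|S\ast|\nabla z|^2\|_{\mathbb X_1}\lesssim\|z\|_{\mathbb W_1}^2$. This is quadratic and closes under smallness, never from mere boundedness.
\end{itemize}

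\textbf{The treatment of $c$ has related problems.}

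Your step (ii) presupposes $\nabla z\in L^4H^{1,q}\hookrightarrow L^4L^\infty$. What is actually available, and what the theorem asserts, is $z\in L^4H^{1,q}$, i.e. only $\nabla z\in L^4L^q$. So $\nabla z\cdot\nabla c$ remains a critical bilinear term, and there is no Gronwall argument with a bounded coefficient.

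The paper closes the bound on $\mathbb E\|c\|^p$ (Theorem \ref{thm:c_global_existence}) by making $\|z\|_{L^\infty(\Omega;(\mathbb W_1\cap\mathbb W_\delta)(h))}$ small on windows of deterministic length $h$, uniformly in $\omega$. This chain of requirements drives the whole setup:
\begin{itemize}
\item uniform smallness needs $z\in L^\infty(\Omega;\mathbb Z_q)$;
\item that needs a deterministic $t_0$ and a deterministic positivity constant;
\item that needs $z_0$ restricted to a compact set with $\int e^{-z_0}\d x>\varepsilon$;
\item general $\mathcal F_0$-measurable data are then recovered via Ulam's theorem, Proposition \ref{prop:localisation} and Lemma \ref{lemma:CriticalUniqueness}.
\end{itemize}
This whole reduction is absent from your plan.

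\textbf{Your invariance argument for $c$ also fails as stated.} Testing with $(c-K)_+$ requires absorbing $\int(c-K)_+\nabla z\cdot\nabla(c-K)_+$. By Gagliardo--Nirenberg this needs $\nabla z\in L^{2/\alpha_1}_tL^q_x$. At critical regularity you only have $\|\nabla z_t\|_{L^q}\lesssim t^{-\alpha_1/2}$, which is borderline non-integrable at $t=0$. The paper instead introduces the gradient cut-off $\nabla^\theta z$, proves invariance for the cut-off equation, and lets $\theta\to\infty$ using the contraction estimates.

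\textbf{Uniqueness also needs more care.} Applying Lemma \ref{lemma:CriticalUniqueness} directly to $-\log\bar\phi$ requires verifying Definition \ref{def:localsol} for a function that may be unbounded at $t=0$. The paper avoids this by solving the $z$-equation driven by $\bar c$ and identifying that solution with $-\log\bar\phi$.
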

\begin{remark} \label{remark:MorreyRemark}
Since Morrey's inequality fails at the critical Besov index, this in particular means that we can allow for unbounded initial values for $z_0$. Translating this back, we find that $\phi_0$ may vanish on a countable set of points. Thus, $\nabla \log \phi$ can indeed exhibit singular behaviour at $t=0$, since moreover, the derivative required by the unweighted singular term might not exist at $t=0$.
\end{remark}

As mentioned, we can restrict our analysis to the truncated system
\begin{equation}\label{eq:StrongColeHopfTrunc} 
\begin{cases}
\d z_t  = \left(\Delta z_t - |\nabla z_t|^2 +  \widetilde h(z_t,c_t) + \widetilde \Phi(z_t,c_t) |\nabla z_t|\right) \d t \\
\d c_t =  \left(\Delta  c_t - \nabla z_t \nabla c_t + \widetilde k(z_t,c_t) \right)\d t + \widetilde \beta(z_t,c_t) \d W^Q_t,
\end{cases}
\end{equation}
whenever $z_0 \geq 0$, $c_0 \in \mathcal X_c$ almost surely. Here, $\widetilde h$ is defined by $$\widetilde h(z,c) \coloneqq h(z\vee0, L \vee c \wedge K) = -\frac{g(e^{-z \vee 0}, \,L\vee c \wedge K)}{e^{-z \vee 0}},$$
while $\widetilde \Phi(z,c) \coloneqq - \widetilde \Psi(e^{-z},c),$ and $\widetilde k(z,c) \coloneqq \widetilde f(e^{-z},c)$ are defined using the truncations $$\widetilde \Psi(e^{-z },c) \coloneqq \Psi(e^{-z \vee 0}, L \vee c \wedge L), \quad \widetilde f(e^{-z},c) \coloneqq f(e^{-z \vee 0}, L \vee c \wedge K).$$ 
The definition of $\widetilde \beta$ is of a slightly different pattern, to ensure differentiability. We define $$\widetilde \beta(z,c) \coloneqq \widetilde b(e^{-\eta(z)},c_t)$$ for some smooth cut-off with $\eta(x) \equiv x$ on $[0,\infty)$ and $\eta(x) \equiv 0$ for $x < -1$ (or some other negative threshold).

\begin{lemma} \label{lemma:phasefieldlocalstrongex}
Let $W^Q$ take values in $H^{\sigma,q}$ for some arbitrary $\sigma > 0$ and $q \in (d,2d)$. Let $$p > \frac{1}{\sigma \wedge \alpha_1}$$ and $z_0, c_0 \in L^0(\Omega;B^{d/q}_{q,2p})$ be $\mathcal F_0$-measurable. Then a maximal local solution of \eqref{eq:StrongColeHopfTrunc} with initial data $z_0, c_0$ exists (in the sense of Definition \ref{def:localsol}).
\end{lemma}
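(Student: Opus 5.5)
The plan is to apply Theorem \ref{thm:localexistence} (equivalently Theorem \ref{thm:maximallocalsolution} together with Proposition \ref{prop:localisation}) to the system \eqref{eq:StrongColeHopfTrunc}, written in the form \eqref{eq:FullNoiseSPDE} with $m=2$, ${\bm z}=(z,c)$. The drift is
\[
F(z,c) = \bigl(-|\nabla z|^2 + \widetilde h(z,c) + \widetilde \Phi(z,c)|\nabla z|,\; -\nabla z\nabla c + \widetilde k(z,c)\bigr),
\]
and the dispersion coefficient is $G(z,c) = (0,\widetilde\beta(z,c) R)$. So everything reduces to checking Assumption \ref{ass:DriftNonlin} for $F$ and Assumption \ref{ass:nemytskiiLipschitz} for $G$ with a suitable $\delta\in(d/q,1)$. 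We also need to check that the hypothesis on $p$ yields $p\ge \frac{q}{\delta q-d}$ and $\sigma>\delta-\frac dq$.

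The parameters are chosen as follows. Since $p>\frac{1}{\sigma\wedge\alpha_1}$, set $\delta \coloneqq \frac dq+\frac1p$. Then $\delta<\frac dq+\alpha_1=1$, $\delta>\frac dq$, $p=\frac{q}{\delta q-d}$, and $\sigma>\frac1p=\delta-\frac dq$. If $\frac1p$ lands on the excluded critical value $\sigma=2\alpha_\delta$, I would perturb $\delta$ slightly upward.

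For the drift, the quadratic gradient terms are handled by Hölder: $\bigl\||\nabla z_1|^2-|\nabla z_2|^2\bigr\|_{L^{q/2}}\le(\|z_1\|_{H^{1,q}}+\|z_2\|_{H^{1,q}})\|z_1-z_2\|_{H^{1,q}}$, and the bilinear term $\nabla z\nabla c$ is treated identically. The reaction terms $\widetilde h,\widetilde k,\widetilde\Phi$ are bounded and Lipschitz because of the truncations $z\vee0$ and $L\vee c\wedge K$. Composition with $e^{-(\cdot)\vee 0}$ is $1$-Lipschitz into $[0,1]$, so Assumptions \ref{ass:ColeHopfGLipschitz}, \ref{ass:PsiCoeff} and \ref{ass:NemytskiiAssumption} give bounds with constants uniform in the inputs. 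With $L^q\hookrightarrow L^{q/2}$ on the torus and $H^{1,q}\hookrightarrow L^\infty$, this yields the linear-growth part. For $\widetilde\Phi|\nabla z|$, I would write $\widetilde\Phi_1|\nabla z_1|-\widetilde\Phi_2|\nabla z_2|=(\widetilde\Phi_1-\widetilde\Phi_2)|\nabla z_1|+\widetilde\Phi_2(|\nabla z_1|-|\nabla z_2|)$. Using the pointwise-plus-$L^2$ Lipschitz bound of Assumption \ref{ass:PsiCoeff} and Hölder in $L^{q/2}$, this gives the product form required by Assumption \ref{ass:DriftNonlin}.

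The main obstacle is Assumption \ref{ass:nemytskiiLipschitz} for $G$. This requires a local Lipschitz bound of $(z,c)\mapsto \widetilde b(e^{-\eta(z)},c)\,R$ from $H^{\delta,q}$ into $\gamma(U,H^{\sigma,q})$ with at most linear growth in the Lipschitz constant. Since $\gamma(U,H^{\sigma,q})$ is a left ideal, it suffices to bound the multiplication operator by $\widetilde\beta(z_1,c_1)-\widetilde\beta(z_2,c_2)$ in $\mathcal L(H^{\sigma,q})$, possibly after shrinking $\sigma$ to some $\sigma'\in(\delta-d/q,\delta)$, which is allowed since $H^{\sigma,q}\hookrightarrow H^{\sigma',q}$. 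Because $\delta q>d$, the space $H^{\delta,q}$ is a multiplication algebra embedded in $L^\infty$, and it acts boundedly on $H^{\sigma',q}$ for $\sigma'\le\delta$. I would invoke the Nemytskii estimates referenced as Prop. \ref{prop:nemytskiibounds} and Prop. \ref{prop:noisecoeffbounds}, which use the locally bounded second derivatives of $b$ from Assumption \ref{ass:CriticalNemytskiiDispersion} and the smoothness of $\eta$. By a mean-value representation
\[
\Phi(w_1)-\Phi(w_2)=\int_0^1 D\Phi(w_2+\theta(w_1-w_2))\,\d\theta\,(w_1-w_2),
\]
together with the chain-rule bound $\|D\Phi(w)\|_{H^{\delta,q}}\lesssim 1+\|w\|_{H^{\delta,q}}$, one gets the Lipschitz constant $1+\|{\bm u}\|_{H^{\delta,q}}+\|{\bm v}\|_{H^{\delta,q}}$. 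Here the $L^\infty$ bounds needed for $b''$ come from the embedding into $L^\infty$, and the truncation keeps the arguments of $b$ in a compact set, where the second derivatives are bounded. If the constants instead depend on $L^\infty$ norms, I would fall back on Remark \ref{rem:LocalLipschitzTraceNorm}.

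With both assumptions verified, Theorem \ref{thm:localexistence} gives the maximal local solution in the sense of Definition \ref{def:localsol} for $\mathcal F_0$-measurable data in $L^0(\Omega;B^{d/q}_{q,2p})$.
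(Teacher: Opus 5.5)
Your proposal is correct and follows the same route as the paper: verify Assumption~\ref{ass:DriftNonlin} for the truncated drift, verify Assumption~\ref{ass:nemytskiiLipschitz} for $\widetilde\beta R$ via the Nemytskii estimates behind Prop.~\ref{prop:noisecoeffbounds}, and then invoke Theorem~\ref{thm:localexistence}. Your explicit choice $\delta=\frac dq+\frac1p$, with the target regularity lowered to some $\sigma'\in(\frac1p,\sigma\wedge\delta)$, handles the parameter condition a bit more cleanly than the paper's case split into $\sigma\le 1-\frac dq$ and $\sigma>1-\frac dq$.
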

\begin{proof}
To elucidate the condition $p > \frac 1 \sigma$, note that for fixed $\sigma \leq 1-\frac{d}{q}$, we need to choose $\delta > \frac dq$ with $\sigma > \delta - \frac{d}{q}$. Then, $p \geq \frac{q}{\delta q-d} = \frac{1}{\delta - d/q}$ for some such $\delta$ if and only if $p > \frac{1}{\sigma}$. On the other hand, if $\sigma > 1-\frac{d}{q}$, then we can simply apply the condition $p \geq \frac{q}{q-d}= \frac{1}{\alpha_1}$, where $\alpha_1 \coloneqq 1-\frac{d}{q}$ was defined in the preceding section.

To apply Theorem \ref{thm:localexistence}, note that the gradient nonlinearities evidently satisfy Assumption \ref{ass:DriftNonlin}. Since $z \mapsto e^{-z \vee 0}$ is globally bounded and Lipschitz,
\begin{enumerate}[(1)]
    \item Assumption \ref{ass:ColeHopfGLipschitz} implies that $\widetilde h$
    is Lipschitz as a map $\widetilde h \colon L^q \oplus L^q \rightarrow L^{q/2}$
    \item $\widetilde \Phi$ is Lipschitz 
    \item $\widetilde k$ is Lipschitz.
\end{enumerate} 
Moreover, since $\widetilde \beta$ is bounded with bounded first- and second derivatives, Proposition \ref{prop:noisecoeffbounds} below shows that $\widetilde \beta$ satisfies the local Lipschitz property in $H^{\delta,q}$ for any $\delta > \frac{d}{q}$. Thus the claim follows
\end{proof}
\begin{proposition} \label{prop:noisecoeffbounds}
Assume that $g \colon \mathbb R^m \rightarrow \mathbb R$ has bounded second order partial derivatives. For any $\sigma_0 > \sigma$, let $R \in \gamma(U,H^{\sigma_0,q})$ be a covariance operator. Then the dispersion coefficient $$G({\bm u}) \coloneqq g({\bm u}) \circ R$$ satisfies Assumption \ref{ass:nemytskiiLipschitz} for any $\delta > \frac dq$.
\end{proposition}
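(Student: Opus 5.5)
We have to show that $\norm{g(\bm u)\circ R - g(\bm v)\circ R}_{\gamma(U,H^{\sigma,q})} \lesssim (1+\norm{\bm u}_{H^{\delta,q}}+\norm{\bm v}_{H^{\delta,q}})\norm{\bm u-\bm v}_{H^{\delta,q}}$ for $\delta>\frac dq$. The plan is to factor the problem into a multiplier estimate followed by a Nemytskii estimate. Since $R\in\gamma(U,H^{\sigma_0,q})$ and $\gamma$-radonifying operators form an operator ideal,
$$
\norm{(g(\bm u)-g(\bm v))\circ R}_{\gamma(U,H^{\sigma,q})}\leq \norm{M_{g(\bm u)-g(\bm v)}}_{\mathcal L(H^{\sigma_0,q},H^{\sigma,q})}\norm{R}_{\gamma(U,H^{\sigma_0,q})},
$$
where $M_f$ denotes pointwise multiplication by $f$. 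It therefore suffices to bound the multiplier norm of $w\coloneqq g(\bm u)-g(\bm v)$ from $H^{\sigma_0,q}$ to $H^{\sigma,q}$. Because $\sigma<\sigma_0$, and we may take $\sigma<\delta$ (shrinking $\sigma$ only weakens the target space), it is enough to have $w\in H^{\delta,q}$. Since $\delta q>d$, $H^{\delta,q}(\mathbb T^d)$ is a Banach algebra embedded in $L^\infty$. The standard paraproduct/Kato--Ponce estimate
$$
\norm{wf}_{H^{\sigma,q}}\lesssim \norm{w}_{L^\infty}\norm{f}_{H^{\sigma,q}}+\norm{w}_{H^{\delta,q}}\norm{f}_{H^{\sigma_0,q}}
$$
then gives $\norm{M_w}_{\mathcal L(H^{\sigma_0,q},H^{\sigma,q})}\lesssim\norm{w}_{H^{\delta,q}}$. (Alternatively, one can use the multiplier theorem: for $s=\sigma\le\delta$ and $\delta>d/q$, $H^{\delta,q}$ acts boundedly on $H^{\sigma,q}$, so even $\sigma_0=\sigma$ would suffice.)

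The main step is the Nemytskii difference estimate
$$
\norm{g(\bm u)-g(\bm v)}_{H^{\delta,q}}\lesssim(1+\norm{\bm u}_{H^{\delta,q}}+\norm{\bm v}_{H^{\delta,q}})\norm{\bm u-\bm v}_{H^{\delta,q}}.
$$
Write
$$
g(\bm u)-g(\bm v)=\int_0^1 \nabla g(\bm v+\theta(\bm u-\bm v))\d\theta\cdot(\bm u-\bm v)\eqqcolon \bm m\cdot(\bm u-\bm v).
$$
By the algebra property, $\norm{g(\bm u)-g(\bm v)}_{H^{\delta,q}}\lesssim\norm{\bm m}_{H^{\delta,q}\cap L^\infty}\norm{\bm u-\bm v}_{H^{\delta,q}}$, so it suffices to bound $\bm m$. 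Since $\delta\in(0,1)$, I would use the difference characterisation of $B^\delta_{q,q}$ together with the elementary embeddings $B^{\delta+\epsilon}_{q,q}\hookrightarrow H^{\delta,q}\hookrightarrow B^{\delta-\epsilon}_{q,q}$, applied at an index $\delta'\in(\frac dq,\delta)$ chosen to absorb the $\epsilon$-loss. Because $\nabla g$ is Lipschitz (bounded second derivatives), $|\Delta_h \nabla g(\bm w)|\le \norm{D^2g}_\infty|\Delta_h\bm w|$, which yields $\norm{\nabla g(\bm w)}_{B^{s}_{q,q}}\lesssim |\nabla g(0)|+\norm{\bm w}_{B^s_{q,q}}$ for $s\in(0,1)$. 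Here the $L^q$ part is controlled by $|\nabla g(0)|+\norm{D^2g}_\infty\norm{\bm w}_{L^q}$, and the $L^\infty$ bound on $\bm m$ comes from $\norm{\bm w}_{L^\infty}\lesssim\norm{\bm w}_{H^{\delta,q}}$. Integrating over $\theta$ gives $\norm{\bm m}\lesssim 1+\norm{\bm u}+\norm{\bm v}$, i.e.\ linear growth, which is the required form.

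The main obstacle is making the $\epsilon$-loss between Bessel potential and Besov spaces consistent, since the difference characterisation is naturally Besov while the target is $H^{\delta,q}$. My first attempt would be to run the whole argument in $B^{\delta}_{q,q}$ (or $F^\delta_{q,2}=H^{\delta,q}$ via the Triebel–Lizorkin difference characterisation, valid for $0<\delta<1$ and $q>d/\delta$). There the composition estimate with a Lipschitz $\nabla g$ holds directly through the square-function difference norm, and the product estimate for $\bm m\cdot(\bm u-\bm v)$ uses $\Delta_h(ab)=\Delta_h a\, b(\cdot+h)+a\Delta_h b$ with the $L^\infty$ bounds. This avoids any loss.
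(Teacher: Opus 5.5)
Your argument is essentially the paper's. The paper's proof just invokes Proposition~\ref{prop:nemytskiibounds}~(i)--(ii) with $k=2$ together with Sobolev embeddings. Your three steps are exactly the content of those items: factorising through $\mathcal L(H^{\sigma_0,q},H^{\sigma,q})$ by the ideal property, bounding the multiplier for functions in $H^{\delta,q}\hookrightarrow L^\infty$, and the mean-value plus first-difference Nemytskii estimate.

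One step should be reworded. You cannot reduce to $\sigma<\delta$ by ``shrinking $\sigma$'', since a bound in a weaker target norm does not imply the stated one. Rather, $\sigma\leq\delta$ is an implicit hypothesis: item~(ii) of Proposition~\ref{prop:nemytskiibounds} assumes $\sigma<\sigma_0\leq\frac dq$. This hypothesis cannot be dropped. For linear $g$ and $R$ of rank one onto the constants, the left-hand side is a multiple of $\norm{\bm u-\bm v}_{H^{\sigma,q}}$, which $H^{\delta,q}$-norms do not control when $\sigma>\delta$. It is harmless, because Assumption~\ref{ass:nemytskiiLipschitz} only asks for some $\sigma>\delta-\frac dq$. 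Since $\delta-\frac dq<\delta$, you may take $\sigma\in(\delta-\frac dq,\min\{\sigma_0,\delta\})$ whenever $\sigma_0>\delta-\frac dq$.

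Finally, the $\epsilon$-loss you flag as the main obstacle does not arise here. Since $q\geq 2$, $H^{\delta,q}\hookrightarrow B^\delta_{q,q}$ without loss, and $B^\delta_{q,q}\hookrightarrow L^\infty$ for $\delta>\frac dq$. The multiplier step only needs $g(\bm u)-g(\bm v)\in B^\delta_{q,q}$, so the whole Nemytskii estimate can be run in $B^\delta_{q,q}$ with first differences. Note also that the pointwise difference characterisation of $F^\delta_{q,2}=H^{\delta,q}$ you mention would require $\delta>\frac d2$. In this range only its ball-mean, Strichartz-type version is valid.
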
 
\begin{proof}
This is a direct corollary of Proposition \ref{prop:nemytskiibounds} and the Sobolev embeddings.
\end{proof}
\begin{remark}
Prop. \ref{prop:nemytskiibounds} actually implies that if $g$ has bounded $k$-th order partial derivatives, then
$$\norm{G({\bm u})-G({\bm v})}_{\gamma(U,H^{\sigma,q})} \leq \const (1+\norm{{\bm u}}^{k-1}_{H^{\delta,q}}+\norm{{\bm v}}^{k-1}_{H^{\delta,q}})\norm{{\bm u}- {\bm v}}_{H^{\delta,q}}$$
for any $\delta \in \left(\frac dq, 1\right)$ and  $\sigma > (k-1)\left(\delta-\frac dq\right)$. It is plausible that the theory developed in the previous section still applies in this case, after appropriate adaptations of exponents and weights in the solution space $\mathbb Y_\delta$. 
\end{remark}
\begin{proposition} \label{prop:nemytskiibounds}
Assume that $g$ has bounded $k$-th order partial derivatives for $k \geq 2$. Let $1\geq \delta > \frac dq$ and ${\bm u}, {\bm v} \in H^{\delta,q}(\mathbb T^d)$. The Nemytskii operator defined by $g$ (cf. Assumption \ref{ass:nemytskiiLipschitz}) satisfies
\begin{enumerate}[(i)]
    \item \label{item:NemytskiiLipschitzEst}
    $$\norm{g({\bm u})-g({\bm v})}_{H^{\delta,q}} 
        \leq \const (1+\norm{{\bm u}}^{k-1}_{H^{\delta,q}}+\norm{{\bm v}}^{k-1}_{H^{\delta,q}})\norm{{\bm u}-{\bm v}}_{H^{\delta,q}}$$
    \item \label{item:NemytskiiOperatorLipschitz} For $0 \leq \sigma < \sigma_0 \leq \frac{d}{q}$, $$\norm{g({\bm u})-g({\bm v})}_{\mathcal L(H^{\sigma_0,q}, H^{\sigma,q})} \leq \const (1+\norm{{\bm u}}^{k-1}_{H^{\delta,q}}+\norm{{\bm v}}^{k-1}_{H^{\delta,q}})\norm{{\bm u}-{\bm v}}_{H^{\delta,q}}.$$ 
    \item \label{item:NemytskiiBound} For any $0 \leq \sigma < \sigma_0< \frac{d}{q}$ and $k \geq 1$,    $$\norm{g({\bm u})}_{\mathcal L(H^{\sigma_0,q},H^{\sigma,q})} \leq \const\left(1+\norm{{\bm u}}^{k-1}_{H^{\frac{d}{q},q}}\right)\left(1+\norm{{\bm u}}_{H^{\frac{d}{q},q}}\right).$$
\end{enumerate}
for some constant factors dependent on $\delta,q, \sigma, \sigma_0, g$ and $d$, but not on ${\bm u},{\bm v}$.
\end{proposition}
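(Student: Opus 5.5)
We prove the three items separately, using the standard Bessel-potential toolkit on $\mathbb T^d$: the algebra property of $H^{s,q}$ for $s>\frac dq$, the Kato--Ponce/fractional Leibniz rule, and Sobolev multiplication estimates below the critical index.

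\emph{Step 1: item \eqref{item:NemytskiiLipschitzEst}.} Write
$$
g({\bm u})-g({\bm v})=\int_0^1 \nabla g({\bm v}+\theta({\bm u}-{\bm v}))\d\theta\cdot({\bm u}-{\bm v}).
$$
Since $\delta>\frac dq$, $H^{\delta,q}(\mathbb T^d)$ is a Banach algebra and embeds into $L^\infty$. It therefore suffices to bound $\norm{\nabla g({\bm w})}_{H^{\delta,q}}$ for ${\bm w}={\bm v}+\theta({\bm u}-{\bm v})$.

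\emph{Step 1a: composition bound.} For $\delta\le 1$ we use the difference characterisation, or equivalently a fractional chain rule $\norm{h({\bm w})}_{H^{\delta,q}}\lesssim \norm{h'}_{L^\infty}\norm{{\bm w}}_{H^{\delta,q}}+|h(0)|$ for Lipschitz $h$; when $\delta=1$ this is immediate. Apply it with $h=\partial_i g$. The first derivatives of $g$ need not be globally bounded; when only the $k$-th derivatives are bounded, $\nabla^2 g$ grows at most like $1+|{\bm w}|^{k-2}$. Using $L^\infty\hookleftarrow H^{\delta,q}$, this yields
$$
\norm{\nabla g({\bm w})}_{H^{\delta,q}}\lesssim 1+\norm{{\bm w}}_{H^{\delta,q}}^{k-1}.
$$
Integrating in $\theta$ and applying the algebra property gives \eqref{item:NemytskiiLipschitzEst}.

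\emph{Step 2: item \eqref{item:NemytskiiOperatorLipschitz}.} Here we regard the difference as a multiplication operator and need a bound of the form
$$
\norm{fh}_{H^{\sigma,q}}\lesssim \norm{f}_{H^{\delta,q}}\norm{h}_{H^{\sigma_0,q}},\qquad 0\le\sigma<\sigma_0\le\frac dq<\delta .
$$
This follows from paraproduct decomposition, or more simply by complex interpolation:
\begin{itemize}
\item at $\sigma=0$, $\norm{fh}_{L^q}\le\norm f_{L^\infty}\norm h_{L^q}$;
\item at smoothness $\sigma_0<\delta$, the Kato--Ponce inequality combined with Sobolev embeddings gives $\norm{fh}_{H^{\sigma_0,q}}\lesssim \norm f_{H^{\delta,q}}\norm h_{H^{\sigma_0,q}}$. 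The term $\norm{f}_{H^{\sigma_0,r_1}}\norm{h}_{L^{r_2}}$ is handled by choosing the integrability exponents via $H^{\delta,q}\hookrightarrow H^{\sigma_0,r_1}$ and $H^{\sigma_0,q}\hookrightarrow L^{r_2}$, with $\frac1{r_1}+\frac1{r_2}=\frac1q$. This is possible because $\delta-\sigma_0>\frac dq-\frac{d}{r_1}$ leaves enough room since $\delta>\frac dq$.
\end{itemize}
Taking $f=g({\bm u})-g({\bm v})$ and inserting Step 1 gives \eqref{item:NemytskiiOperatorLipschitz}.

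\emph{Step 3: item \eqref{item:NemytskiiBound}.} This is the main obstacle, because ${\bm u}\in H^{d/q,q}$ is not bounded, so the $L^\infty$ arguments above fail. The plan is to use the strict gap $\sigma<\sigma_0<\frac dq$. Multiplication $H^{d/q,q}\times H^{\sigma_0,q}\to H^{\sigma,q}$ is bounded: $H^{d/q,q}\hookrightarrow H^{\sigma',r}$ for all $r<\infty$ with $\sigma'$ slightly below $\frac dq$. Hölder's inequality then pairs $f\in L^r$ with $h\in H^{\sigma,q+}$, using the room $\sigma_0-\sigma$ to absorb the integrability loss. Next we reduce to this multiplier estimate by bounding $\norm{g({\bm u})}_{H^{d/q,q}}\lesssim (1+\norm{{\bm u}}^{k-1}_{H^{d/q,q}})(1+\norm{{\bm u}}_{H^{d/q,q}})$. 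For that we use the chain rule in the critical space: $H^{d/q,q}\hookrightarrow L^r$ for every finite $r$, so the polynomial growth $|\nabla g({\bm u})|\lesssim 1+|{\bm u}|^{k-1}$ is controlled by Hölder. If the fractional chain rule at exactly $s=\frac dq$ is delicate, we would instead work with $H^{d/q-\varepsilon,q}$ and spend $\varepsilon$ of the gap $\sigma_0-\sigma$. The constants then depend only on $\delta,q,\sigma,\sigma_0,g,d$, as claimed.
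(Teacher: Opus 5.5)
The paper omits the proof of this proposition (it is deferred to the author's dissertation), so there is no route to compare with, and I judge your argument on its own. Items (i) and (ii) are fine. The ball-mean difference characterisation of $H^{\delta,q}$, $0<\delta<1$, combined with $H^{\delta,q}\hookrightarrow L^\infty$, handles the merely locally Lipschitz $\partial_i g$. Your Kato--Ponce exponents in Step 2 close exactly because $\delta>\frac dq$. The interpolation there is superfluous: the bound at smoothness $\sigma_0$ together with $H^{\sigma_0,q}\hookrightarrow H^{\sigma,q}$ already suffices.

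The gap is in Step 3. Your primary reduction
\[
\|g(\bm u)\|_{H^{d/q,q}}\lesssim\bigl(1+\|\bm u\|^{k-1}_{H^{d/q,q}}\bigr)\bigl(1+\|\bm u\|_{H^{d/q,q}}\bigr)
\]
is false for $k\ge 2$, not merely delicate. The critical space is not closed under $u\mapsto u^2$, and $g(x)=x^2$ is admissible with $k=2$. For a cut-off $\chi$, the standard logarithmic examples give $\chi|\log|x||^{\alpha}\in F^{d/q}_{q,2}=H^{d/q,q}$ if and only if $\alpha<1-\frac1q$. So any $\alpha\in[\frac12(1-\frac1q),1-\frac1q)$ gives $u\in H^{d/q,q}$ with $u^2\notin H^{d/q,q}$.

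Your H\"older reasoning cannot work at $s=\frac dq$. The top-order term of the chain rule places all $\frac dq$ derivatives of $\bm u$ in $L^q$, which leaves no integrability for the unbounded factor $\nabla g(\bm u)$. (For $k=1$, i.e.\ Lipschitz $g$, your original route is fine.) The $\varepsilon$-loss route you mention as a fallback therefore has to be the actual proof. With $s=\frac dq-\varepsilon$ and $\frac1r=\frac1q-\frac{\varepsilon}{d}$, the fractional (Christ--Weinstein) chain rule gives
\[
\|g(\bm u)\|_{H^{s,q}}\lesssim \|g(\bm u)\|_{L^q}+\|\nabla g(\bm u)\|_{L^{d/\varepsilon}}\|\bm u\|_{H^{s,r}}\lesssim \bigl(1+\|\bm u\|^{k-1}_{H^{d/q,q}}\bigr)\bigl(1+\|\bm u\|_{H^{d/q,q}}\bigr).
\]
This uses $H^{d/q,q}\hookrightarrow H^{s,r}$ and $H^{d/q,q}\hookrightarrow L^m$ for every $m<\infty$, and it produces exactly the polynomial in the statement.

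The multiplier step in Step 3 also needs correcting. The embedding $H^{d/q,q}\hookrightarrow H^{\sigma',r}$ holds only for $\sigma'\le\frac dr$, not for all $r<\infty$ with $\sigma'$ close to $\frac dq$. Moreover, H\"older alone produces no positive smoothness $\sigma$. Use the Kato--Ponce inequality
\[
\|fh\|_{H^{\sigma,q}}\lesssim \|f\|_{L^{r_1}}\|h\|_{H^{\sigma,r_2}}+\|f\|_{H^{\sigma,r_3}}\|h\|_{L^{r_4}},
\]
with $\frac1{r_1}=\frac{\sigma_0-\sigma}{d}$, $\frac1{r_2}=\frac1q-\frac{\sigma_0-\sigma}{d}$, $\frac1{r_3}=\frac{\sigma_0}{d}$ and $\frac1{r_4}=\frac1q-\frac{\sigma_0}{d}$. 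For $f=g(\bm u)\in H^{d/q-\varepsilon,q}$ and $h\in H^{\sigma_0,q}$, all four Sobolev embeddings hold as soon as $\varepsilon\le\sigma_0-\sigma$; finiteness of $r_2$ and $r_4$ uses $\sigma_0<\frac dq$. This is precisely where the strict gap $\sigma<\sigma_0$ is spent, and with it item (iii) follows.
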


\subsection{Regularity properties of the transformed phase-field}
With existence and uniqueness of local solutions settled, we can prove global-in-time existence by controlling blow-up of each equation individually. This stands in contrast to more difficult joint estimates. For this reason, the theory developed in the previous section simplifies establishing global solutions of the system. However, a drawback of the framework is the increased complexity of working in Besov spaces. To circumvent some of the difficulties, we follow \citet{BENARTZI2002343} and introduce the spaces 
$$\mathcal W_{r}(T) = \{u \in C((0,T]; H^{1,r}): \sup_{t \in (0,T]} t^{\frac{\alpha_1}2}\norm{u(t)}_{H^{1,r}}  < \infty\},~ r \in [1,\infty)$$ where $\alpha_1$ is defined as in Definition \ref{def:solspace}. Further, we set $$\mathbb Z_q(T) \coloneqq \mathbb X_1(T) \cap \mathcal W_q(T).$$
\begin{remark}
$\mathcal W_q$ is the $p=\infty$ variant of the previously introduced space $\mathbb W_1$, see also Prop. \ref{prop:InterpolationIdentity}.
\end{remark}
\begin{proposition} \label{prop:zmoreregularity}
Let $z_0 \in L^0(\Omega;B^{d/q}_{q,2p})$ be $\mathcal F_0$-measurable and take values in a compact subset $\mathcal A \Subset B^{d/q}_{q,2p}$. Given a measurable process $c \colon \Omega \times [0,\tau] \rightarrow \mathcal X_c$, there exists a 
deterministic $t_0 > 0$ and unique mild solution $z$ of 
\begin{equation} \label{eq:decoupledcolehopf}
\d z_t  = \left(\Delta z_t - |\nabla z_t|^2 + \widetilde h(z_t,c_t) + \widetilde\Phi(z_t,c_t) |\nabla z_t|\right) \d t
\end{equation}
with $$z \in L^\infty(\Omega;\mathbb Z_q(\tau_0)),$$ where $\tau_0 = \tau \wedge t_0$. In particular, for $(z,c)$ given by Lemma \ref{lemma:phasefieldlocalstrongex}, $z \in \mathbb Z_q(\tau_0)$. Moreover, if $z_0\geq 0$, $\mathbb P$-a.s., then $z_t \geq 0$ for all $t \in [0,t_0 \wedge \tau)$, $\mathbb P$-a.s.
\end{proposition}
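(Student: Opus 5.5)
The equation \eqref{eq:decoupledcolehopf} is, for fixed $\omega$, a deterministic parabolic problem with a random but frozen coefficient $c(\omega)$ taking values in $\mathcal X_c$. I will therefore argue pathwise and take expectations only at the end. I would run a Kato-type fixed point in the spirit of \citet{BENARTZI2002343} in $\mathbb Z_q(T)=\mathbb X_1(T)\cap\mathcal W_q(T)$. The mild formulation is $z=S(\cdot)z_0+S\ast N(z)$ with
\[
N(z)=-|\nabla z|^2+\widetilde h(z,c)+\widetilde\Phi(z,c)|\nabla z|.
\]
For the quadratic term I would use the $L^{q/2}\to H^{1,q}$ smoothing bound
\[
\|\nabla S(t)f\|_{L^q}\lesssim t^{-\frac12-\frac d{2q}}\|f\|_{L^{q/2}}.
\]
Combining it with the weight $t^{\alpha_1/2}$, the Beta integral
\[
\int_0^t (t-s)^{-\frac12-\frac d{2q}}s^{-\alpha_1}\d s\simeq t^{\frac12-\frac d{2q}-\alpha_1}=t^{-\alpha_1/2}
\]
converges, since $\alpha_1<1$ and $\frac12+\frac d{2q}<1$. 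This gives the bilinear estimate \eqref{eq:PinfinityLocalLipschitz} in $\mathcal W_q$ with a $T$-independent constant. The lower-order terms are handled as follows. $\widetilde h$ and $\widetilde k$ are globally bounded and Lipschitz into $L^{q/2}$, because of the truncation $z\vee 0$ and Assumption \ref{ass:ColeHopfGLipschitz}. $\widetilde\Phi$ is bounded by Assumption \ref{ass:PsiCoeff} on $\mathcal X_c$, so $\widetilde\Phi|\nabla z|$ is linear in $\nabla z$. Both produce positive powers of $T$. The $\mathbb X_1$-part is estimated exactly as in Proposition \ref{prop:Fcontraction}, through maximal regularity in $L^p_{\beta_1}(L^{q/2})$ together with Lemma \ref{lemma:XYembeddings}.

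The smallness needed to close the fixed point comes from $\|S(\cdot)z_0\|_{\mathcal W_q(T)\cap\mathbb W_1(T)}\to 0$ as $T\to 0$ (Proposition \ref{prop:heatflowbesov} and its $p=\infty$ analogue via Proposition \ref{prop:InterpolationIdentity}). The fact that $t_0$ is \emph{deterministic} is where compactness of $\mathcal A$ enters, and this is the step I expect to be the main obstacle. The map $z_0\mapsto \|S(\cdot)z_0\|_{\mathcal W_q(T)\cap\mathbb W_1(T)}$ is Lipschitz in $B^{d/q}_{q,2p}$ uniformly in $T\le1$ and decreases to $0$ pointwise as $T\downarrow0$. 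By Dini's theorem, or an $\varepsilon$-net argument on $\mathcal A$, the convergence is uniform on $\mathcal A$. Hence one $t_0$ works for all $\omega$, and it can be chosen so that the contraction ball has a deterministic radius. This yields $z\in L^\infty(\Omega;\mathbb Z_q(\tau\wedge t_0))$. Measurability and adaptedness follow because the Picard iterates are measurable functionals of $(z_0,c)$. Uniqueness holds within the ball, and by a short-time continuation argument it holds among all mild solutions in $\mathbb Z_q$. Since the $z$-component from Lemma \ref{lemma:phasefieldlocalstrongex} solves the same equation with the same $c$ and lies in $\mathbb X_1$, it agrees with this $z$ on $[0,\tau_0)$, by uniqueness and Lemma \ref{lemma:CriticalUniqueness}-type reasoning. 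This gives $z\in\mathbb Z_q(\tau_0)$.

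For nonnegativity, I would use a weak maximum principle on the variational form, which is justified via \eqref{eq:variationalidentity}. Test the equation against $z^-=(-z)\vee0$. On $\{z<0\}$ the truncation gives
\[
\widetilde h(z,c)=-g(1,\cdot)\ge 0
\]
by Assumption \ref{ass:Invariance}. The quadratic term $-|\nabla z|^2$ and the term $\widetilde\Phi|\nabla z|$ enter the equation for $z^-$ as $|\nabla z^-|^2$ and $|\widetilde\Phi||\nabla z^-|$. The first is absorbed after rewriting $-|\nabla z|^2 z^-$ via a Cole-Hopf-type comparison, or more simply handled by a Stampacchia estimate for $\|z^-\|_{L^2}^2$ with Gronwall, using $\nabla z\in L^2_tL^q_x$ from $\mathbb Z_q$. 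This gives $\|z^-_t\|_{L^2}=0$ for $t<\tau_0$.
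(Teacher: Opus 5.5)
Your construction of $z$ follows the paper's route: a Kato-type fixed point in $\mathbb Z_q$ that combines the weighted sup-norm bilinear estimate with the $\mathbb X_1$ maximal-regularity estimate, a deterministic $t_0$ from uniform smallness of the heat flow on the compact set $\mathcal A$, and identification with the $z$-component of Lemma \ref{lemma:phasefieldlocalstrongex} by uniqueness. Two small corrections there. First, Proposition \ref{prop:InterpolationIdentity} only gives \emph{boundedness} of $\sup_{t\le T}t^{\alpha_1/2}\norm{S(t)z_0}_{H^{1,q}}$, via $B^{d/q}_{q,2p}\hookrightarrow B^{d/q}_{q,\infty}$. Its vanishing as $T\to0$ needs a density argument, as in the paper. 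Second, in the contraction estimate the term $(\widetilde\Phi(z_1,c)-\widetilde\Phi(z_2,c))|\nabla z_1|$ is bilinear. It is therefore controlled by the smallness of the ball, not by a positive power of $T$.

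The genuine gap is the nonnegativity step.
\begin{itemize}
\item Testing with $z^-$ produces $\int_{\mathbb T^d}|\nabla z^-|^2z^-\d x$ with the unfavourable sign, and this term is critical.
\item Write it as $\int|\nabla z|\,|\nabla z^-|\,z^-\d x\le\norm{\nabla z}_{L^q}\norm{\nabla z^-}_{L^2}\norm{z^-}_{L^{2q/(q-2)}}$ and apply Gagliardo--Nirenberg. The resulting Gronwall coefficient is $\norm{\nabla z_s}_{L^q}^{2/\alpha_1}$, so you need the Serrin-type integrability $\nabla z\in L^{2/\alpha_1}_tL^q_x$.
\item $L^2_tL^q_x$ is much weaker than this, since $2/\alpha_1>4$ for $q<2d$.
\item $\mathbb Z_q$ only gives $\norm{\nabla z_s}_{L^q}\lesssim s^{-\alpha_1/2}$, i.e.\ a coefficient of size $s^{-1}$, which is not integrable at $s=0$.
\end{itemize}
The paper flags exactly this borderline integrability in the proof of Theorem \ref{thm:c_global_existence}, and here it defers to that argument. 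There one truncates the gradient coefficient to $\nabla^\theta$ and proves invariance for the resulting bounded-drift problem by a Pardoux-type maximum principle. The truncation is then removed via the short-time contraction estimate and dominated convergence.

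Your Cole--Hopf alternative can be made to work, but it has to be carried out. The function $\phi=e^{-z}$ solves $\partial_t\phi=\Delta\phi-\phi\widetilde h(z,c)-\widetilde\Phi(z,c)|\nabla\phi|$, which has no quadratic gradient term. Its reaction term is $\le0$ on $\{\phi>1\}$ by Assumption \ref{ass:Invariance}, so testing with $(\phi-1)^+$ closes.

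Your $L^2$ estimate can also be rescued if you exploit two facts you did not use.
\begin{itemize}
\item The ball radius $R$ is small, so the coefficient is $\kappa s^{-1}$ with $\kappa=CR^{2/\alpha_1}$.
\item There is a decay rate: $z^-_s\le|z_s-S(s)z_0|$ because $S(s)z_0\ge0$, and $\norm{z_s-S(s)z_0}_{L^q}\lesssim s^{d/(2q)}$.
\end{itemize}
Gronwall on $[s,t]$ then gives $\norm{z^-_t}_{L^2}^2\lesssim t^{\kappa}s^{d/q-\kappa}$, which tends to $0$ as $s\to0$ once $\kappa<d/q$. As written, your justification does not close.
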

\begin{proof}  The proof combines the fixed-point argument of \citet[Prop. 2.4]{BENARTZI2002343} with our fixed point argument in $\mathbb X_1$. Namely, the inequality $$\norm{z}_{\mathbb Z_q(t)} \leq \norm{S(\cdot)z_0}_{\mathbb Z_q(t)} + \const (T^{\alpha_1}+\norm{z}_{\mathbb Z_q(t)})\norm{z}_{\mathbb Z_q(t)},$$ can be derived analogously to our previous estimates (resp. the estimates in \cite{BENARTZI2002343}). Since $\norm{S(\cdot)z_0}_{\mathbb Z_q(t)}< \infty $ is bounded uniformly for a fixed $z_0 \in B^{d/q}_{q,2p} \hookrightarrow B^{d/q}_{q,\infty}$ by Prop. \ref{prop:InterpolationIdentity}, density of e.g. $H^{1,q} \subset B^{d/q}_{q,2p}$ shows that $$\norm{S(\cdot)z_0}_{\mathbb Z_q(t)} \to 0.$$ This can be extended to  $z \in \mathcal A \Subset B^{d/q}_{2p}$ by compactness. Thus, we find that for a deterministic small time $t_0$, the solution map is an endomorphism on a small ball \begin{equation} \label{eq:zsmallball}
B_R(0) \subset \mathbb Z_q(t)
\end{equation} with $R \ll 1$. It is now analogous to our fixed point argument to show that this map defines a contraction on said ball. In particular, by the regularity imposed by the solution space $\mathbb Z_q$, the resulting process must agree with the solution obtained in Lemma \ref{lemma:phasefieldlocalstrongex}. Nonnegativity of $z$ now follows as in the proof of Theorem \ref{thm:c_global_existence}. 
\end{proof}

\begin{proposition} \label{prop:ExponentialContinuity}
Let $0 < \mathfrak s < 1$ be arbitrary. Let $\eta$ be smooth, Lipschitz with bounded first derivative and $\eta(x) \equiv x$ on $[0,\infty)$, $\eta(x) \equiv 0$ on $(-\infty,-1]$. Then, for all $z \in \mathcal B^{\mathfrak s}_{q,p}$,$$
\|e^{-\eta(z)}\|_{B^\mathfrak s_{q,p}} \leq C\left(1+\|z\|_{B^\mathfrak s_{q,p}}\right).
$$ In particular, the map
$$
B^{\mathfrak s}_{q,p} \ni z \mapsto e^{-\eta(z)} \in B^\mathfrak s_{q,p}
$$
is continuous. Thus, if $X \Subset B^\mathfrak s_{q,p}$ and every element of $X$ is nonnegative, then
$
\{e^{-z}: z \in X\} \Subset B^\mathfrak s_{q,p}.
$
\end{proposition}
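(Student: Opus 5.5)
The plan is to prove the bound for a general smooth function $\Theta\colon\mathbb R\to\mathbb R$ with $\Theta'$ bounded and $\Theta$ bounded on $[-1,\infty)$, and then take $\Theta(x)=e^{-\eta(x)}$. For this choice $\Theta$ is bounded: $\eta\ge 0$ on $[0,\infty)$, $\eta\equiv 0$ on $(-\infty,-1]$, and $\eta$ is bounded on $[-1,0]$, so $e^{-\eta}$ is bounded on all of $\mathbb R$. Its derivative $\Theta'=-\eta' e^{-\eta}$ is bounded as well, since $\eta'$ is bounded. Hence $\Theta$ is globally bounded and globally Lipschitz, say with constant $L$.

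For $0<\mathfrak s<1$ I will use the difference characterisation of Besov spaces on the torus,
\begin{equation*}
\|u\|_{B^{\mathfrak s}_{q,p}} \simeq \|u\|_{L^q} + \left(\int_{|h|\le 1} \frac{\|u(\cdot+h)-u\|_{L^q}^p}{|h|^{d+\mathfrak s p}}\,\mathrm d h\right)^{1/p},
\end{equation*}
which is equivalent to the interpolation definition used in the paper precisely because $\mathfrak s<1$. This equivalence is the standard step I would cite (Triebel).

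The estimate then follows pointwise. We have $|\Theta(z(x+h))-\Theta(z(x))|\le L|z(x+h)-z(x)|$, so the seminorm of $\Theta(z)$ is at most $L$ times that of $z$. On the compact torus, $\|\Theta(z)\|_{L^q}\le \|\Theta\|_{L^\infty}|\mathbb T^d|^{1/q}$. Combining these gives $\|e^{-\eta(z)}\|_{B^{\mathfrak s}_{q,p}}\le C(1+\|z\|_{B^{\mathfrak s}_{q,p}})$.

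Continuity is the main obstacle, because a Lipschitz bound on the seminorm does not by itself yield continuity of the superposition map in the norm. I would argue as follows. Let $z_n\to z$ in $B^{\mathfrak s}_{q,p}$, and write $D_h u=u(\cdot+h)-u$. Then
\begin{equation*}
D_h\Theta(z_n)-D_h\Theta(z)=\int_0^1\left[\Theta'(\zeta_{n,t})D_hz_n-\Theta'(\zeta_t)D_hz\right]\mathrm d t,
\end{equation*}
where $\zeta_{n,t}=z_n+tD_hz_n$ and $\zeta_t$ is defined in the same way from $z$. This splits into two terms:
\begin{equation*}
\Theta'(\zeta_{n,t})(D_hz_n-D_hz) \quad\text{and}\quad \left(\Theta'(\zeta_{n,t})-\Theta'(\zeta_t)\right)D_hz.
\end{equation*}
The first term is bounded by $\|\Theta'\|_\infty\,|z_n-z|_{B^{\mathfrak s}_{q,p}}\to0$. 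For the second, pass to a subsequence with $z_n\to z$ almost everywhere. Then $\Theta'(\zeta_{n,t})\to\Theta'(\zeta_t)$ almost everywhere in $(x,h,t)$, since $\Theta'$ is continuous. The integrand is dominated by $2\|\Theta'\|_\infty|D_hz(x)|$, and after raising to the power $p$ and weighting by $|h|^{-d-\mathfrak s p}$ this bound is integrable because $z\in B^{\mathfrak s}_{q,p}$. Dominated convergence, with Minkowski in $L^q_x$ and then in $L^p(|h|^{-d-\mathfrak s p}\mathrm dh)$, therefore gives convergence to $0$. Convergence of every subsequence along a further subsequence yields convergence of the full sequence. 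The $L^q$ part converges by Lipschitz continuity of $\Theta$.

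Finally, $X\Subset B^{\mathfrak s}_{q,p}$ and $\Theta$ acts continuously, so the image $\{e^{-\eta(z)}:z\in X\}$ is compact. Every element of $X$ is nonnegative, so $\eta(z)=z$ on $X$ and this image is exactly $\{e^{-z}:z\in X\}$.
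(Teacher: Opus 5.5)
Your proof is correct. The boundedness half is the same as the paper's: first-difference characterisation of $B^{\mathfrak s}_{q,p}$ for $\mathfrak s<1$, a pointwise Lipschitz bound for $e^{-\eta}$, and the trivial $L^q$ bound on the torus. Your generic constants $L$ and $\|\Theta\|_\infty$ are actually the accurate formulation. Smoothness of $\eta$ together with $\eta(x)=x$ on $[0,\infty)$ forces $\eta<0$ and $|(e^{-\eta})'|>1$ just to the left of $0$, whereas the paper states the pointwise difference inequality with constant $1$; this only affects constants.

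Where you genuinely differ is continuity. The paper does not prove it directly. It invokes either an automatic-continuity theorem for bounded composition operators from Runst--Sickel, or real interpolation of nonlinear operators combined with continuity of the composition on $H^{1,q}$. You instead give a self-contained argument:
\begin{itemize}
\item the mean-value representation of $D_h\Theta(z_n)-D_h\Theta(z)$;
\item the term $\Theta'(\zeta_{n,t})D_h(z_n-z)$, controlled by $\|\Theta'\|_\infty$ times the seminorm of $z_n-z$;
\item the remainder $(\Theta'(\zeta_{n,t})-\Theta'(\zeta_t))D_hz$, removed by dominated convergence along an a.e.\ convergent subsequence. The a.e.\ convergence in $(x,h)$ follows from Fubini, since translations preserve null sets;
\item the subsequence-of-subsequence principle to recover the full sequence.
\end{itemize}

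Your route costs only the continuity of $\Theta'$, which is available because $\eta$ is smooth. In return, the proposition no longer depends on external nonlinear-operator theory. The paper's route is shorter, but it outsources exactly the step you correctly identified as the real difficulty: a Lipschitz bound on the seminorm does not by itself give norm continuity. The compactness conclusion is identical in both.
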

\begin{proof}
For $\mathfrak s < 1$, we may use the standard first-difference characterisation of Besov spaces on $\mathbb T^d$:
$$
\norm{{u}}_{B^\mathfrak s_{q,p}}
\simeq
\norm{{u}}_{L^q}
+
\left(
\int_{|{\bm h}|<1}
\left(
|{\bm h}|^{-\mathfrak s}\norm{\tau_{\bm h} u-u}_{L^q}
\right)^p
\frac{\d {\bm h}}{|{\bm h}|^d}
\right)^{1/p},
$$
for the periodic shift $\tau_{\bm h} u({\bm x}) \coloneqq u({\bm x}+{\bm h})$. Since $\norm{e^{-\eta(z)}}_{L^q} \leq e^1\cdot|\mathbb T^d|^\frac{1}{q}$, we only need to control the Besov seminorm to show boundedness. Indeed, for every ${\bm h}$,
$$
|e^{-\tau_{\bm h} \eta(z)}-e^{-\eta(z)}|
\leq
|\tau_{\bm h}z-z|
$$
almost everywhere, and hence
$
\norm{\tau_{\bm h} e^{-\eta(z)}-e^{-\eta(z)}}_{L^q}
\leq
\norm{\tau_{\bm h} z-z}_{L^q}.
$
Therefore, 
$$
\norm{e^{-\eta(z)}}_{B^s_{q,p}} \leq \const \left(1+ \norm{z}_{B^s_{q,p}}\right).
$$
Continuity now follows either by a blackbox theorem which implies continuity of bounded nonlinear operators \cite[Theorem 3, Section 5.5.2]{RunstSickelBesov}. Alternatively, one can circumvent the boundedness proof deduce continuity from real interpolation of nonlinear operators, since the composition operator is continuous on $H^{1,q}$ (ibid, Prop. 3, Section 2.5.4). The compactness property is a direct implication of either route.
\end{proof}
\begin{proposition} \label{prop:exponentialregularity}
Let $z$ denote the process obtained in Prop. \ref{prop:zmoreregularity}. Then $e^{-z} \in \mathcal W_q(\tau_0)$.
\end{proposition}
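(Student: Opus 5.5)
We have to show that $e^{-z}\in\mathcal W_q(\tau_0)$. This means two things: $e^{-z}\in C((0,\tau_0];H^{1,q})$, and
$$\sup_{t\in(0,\tau_0]}t^{\alpha_1/2}\norm{e^{-z_t}}_{H^{1,q}}<\infty .$$
The natural route is a pointwise chain rule. Prop. \ref{prop:zmoreregularity} gives $z\in\mathbb Z_q(\tau_0)\subset\mathcal W_q(\tau_0)$, so $z_t\in H^{1,q}$ for every $t\in(0,\tau_0]$. It also gives $z_t\geq 0$ for $t<\tau_0$, so $e^{-z_t}$ takes values in $[0,1]$ and coincides with $e^{-\eta(z_t)}$. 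Since $x\mapsto e^{-\eta(x)}$ is smooth with bounded derivative, the chain rule in $W^{1,q}(\mathbb T^d)$ applies:
$$\nabla e^{-z_t}=-e^{-z_t}\nabla z_t \quad\text{a.e.}$$
Hence $|\nabla e^{-z_t}|\leq|\nabla z_t|$ pointwise, and together with $\norm{e^{-z_t}}_{L^q}\leq|\mathbb T^d|^{1/q}$ we get
$$\norm{e^{-z_t}}_{H^{1,q}}\leq C\left(1+\norm{z_t}_{H^{1,q}}\right).$$
Multiplying by $t^{\alpha_1/2}$ and using $t\leq\tau_0\leq t_0$, the weighted supremum of $e^{-z}$ is bounded by $C\bigl(t_0^{\alpha_1/2}+\norm{z}_{\mathcal W_q(\tau_0)}\bigr)<\infty$. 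The same bound holds in $L^\infty(\Omega)$, since $z\in L^\infty(\Omega;\mathbb Z_q(\tau_0))$.

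For the continuity statement, fix $s\in(0,\tau_0]$ and let $t\to s$. Then $z_t\to z_s$ in $H^{1,q}$, and we split
$$\nabla e^{-z_t}-\nabla e^{-z_s}=-e^{-z_t}(\nabla z_t-\nabla z_s)-(e^{-z_t}-e^{-z_s})\nabla z_s .$$
The first term goes to zero in $L^q$ because $e^{-z_t}\leq 1$. For the second term, $H^{1,q}\hookrightarrow L^\infty$ since $q>d$, and $|e^{-a}-e^{-b}|\leq|a-b|$ for $a,b\geq 0$. Therefore
$$\norm{(e^{-z_t}-e^{-z_s})\nabla z_s}_{L^q}\leq\norm{z_t-z_s}_{L^\infty}\norm{\nabla z_s}_{L^q}\to 0 .$$
The $L^q$ part of the norm is handled the same way. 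This gives $e^{-z}\in C((0,\tau_0];H^{1,q})$.

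The main obstacle is justifying the pointwise manipulations at the endpoint $t=\tau_0$ and making sure nonnegativity is available there. Prop. \ref{prop:zmoreregularity} gives nonnegativity only on $[0,t_0\wedge\tau)$. I would extend it to the closed interval by continuity of $z$ in $H^{1,q}\hookrightarrow C(\mathbb T^d)$ on $(0,\tau_0]$. Alternatively, I would work throughout with $e^{-\eta(z)}$: the smooth cut-off $\eta$ has bounded derivative, so both estimates above go through with an extra factor $\norm{\eta'}_\infty$ and a bound $e^{1}$ on $e^{-\eta}$. The two functions agree wherever $z\geq 0$, which covers all $t<\tau_0$. A secondary point is that the chain rule is only needed for Lipschitz $C^1$ functions composed with $W^{1,q}$ maps, which is standard.
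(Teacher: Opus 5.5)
Your proposal is correct and follows the same route as the paper. The paper's proof is the single remark that the claim follows from nonnegativity of $z\in C((0,\tau_0);H^{1,q})$, and your chain-rule bound $|\nabla e^{-z_t}|\leq|\nabla z_t|$, the resulting weighted estimate, and the continuity splitting via $H^{1,q}\hookrightarrow L^\infty$ are exactly the details that remark leaves implicit. Note that nonnegativity comes from Prop.~\ref{prop:zmoreregularity} under its hypothesis $z_0\geq 0$, which you use tacitly, as does the paper. Also, the endpoint $t=\tau_0$ is less of an obstacle than you suggest: $z_{\tau_0}\in H^{1,q}\subset L^\infty$ and $x\mapsto e^{-x}$ is locally Lipschitz, so finiteness and left-continuity there hold regardless.
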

\begin{proof}
This follows from nonnegativity of $z \in C((0,\tau_0);H^{1,q})$.
\end{proof}
It is now standard to derive the following property.
\begin{proposition} \label{prop:exponentialweaksol}
On $[0,\tau_0]$, $e^{-z}$ is an analytically weak solution of \ref{eq:DecoupledMembEq}.
\end{proposition}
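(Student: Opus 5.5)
The plan is to prove the statement by a direct chain-rule computation for $\phi\coloneqq e^{-z}$. The computation is carried out on $[\varepsilon,\tau_0]$, where $z$ is a strong solution. We then let $\varepsilon\to0$, using the time-weighted bounds encoded in $\mathbb Z_q(\tau_0)$. Throughout, the equation for $z$ is deterministic given the path of $c$, so every step can be done pathwise for fixed $\omega$.

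\emph{Step 1 (strong form of the $z$-equation away from $t=0$).} By Prop.~\ref{prop:zmoreregularity}, $z\in\mathbb X_1(\tau_0)$. On $[\varepsilon,\tau_0]$ the weight $t^{\beta_1}$ is bounded above and below, so $z\in H^{1,p}([\varepsilon,\tau_0];L^{q/2})\cap L^p([\varepsilon,\tau_0];H^{2,q/2})$. The mild identity together with maximal regularity (Remark~\ref{rem:detconvmaxreg}) then shows that
\[
\partial_t z=\Delta z-|\nabla z|^2+\widetilde h(z,c)+\widetilde\Phi(z,c)|\nabla z|
\]
holds in $L^{q/2}$ for a.e.\ $t\in(\varepsilon,\tau_0)$.

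\emph{Step 2 (chain rule and removal of the truncations).} By Prop.~\ref{prop:zmoreregularity} we have $z_t\ge0$, so $\phi=e^{-z}\in[0,1]$ and the map $x\mapsto e^{-x}$ is smooth with bounded derivatives on the range of $z$. I would mollify $z$ in space and time, apply the classical chain rule to the mollified function, and pass to the limit. The limit is justified by the $L^p_tH^{2,q/2}_x$ and $H^{1,p}_tL^{q/2}_x$ bounds together with $\phi\le1$. This yields, in $L^1_{\mathrm{loc}}$,
\[
\partial_t\phi=-\phi\,\partial_tz,\qquad \nabla\phi=-\phi\nabla z,\qquad \Delta\phi=-\phi\Delta z+\phi|\nabla z|^2 .
\]
Substituting the strong form from Step 1 gives
\[
\partial_t\phi=\Delta\phi-\phi\,\widetilde h(z,c)-\phi\,\widetilde\Phi(z,c)|\nabla z| .
\]
Since $z\ge0$ and $c\in\mathcal X_c$, all truncations are inactive. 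Hence $-\phi\,\widetilde h=g(\phi,c)$ and $-\phi\,\widetilde\Phi|\nabla z|=\Psi(\phi,c)\,\phi|\nabla z|=\Psi(\phi,c)|\nabla\phi|$, which is the equation for $\phi$ in strong form on $[\varepsilon,\tau_0]$. Testing against $u\in C^\infty(\mathbb T^d)$ and integrating by parts then gives the weak identity on $[\varepsilon,t]$.

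\emph{Step 3 (passage $\varepsilon\to0$).} This is the step I expect to be the main obstacle, since $\nabla z_t$ may be singular at $t=0$ (Remark~\ref{remark:MorreyRemark}). Two bounds control it.
\begin{itemize}
\item For the diffusion and transport terms, Prop.~\ref{prop:exponentialregularity} gives $\|\nabla\phi_s\|_{L^q}\le \|\nabla z_s\|_{L^q}\lesssim s^{-\alpha_1/2}$. This is integrable at $0$ because $\alpha_1<1$, so $\int_0^t\langle\nabla\phi_s,\nabla u\rangle\d s$ and $\int_0^t\langle\Psi|\nabla\phi_s|,u\rangle\d s$ converge. The reaction term is bounded, since $\phi,c$ are bounded.
\item For the boundary term, $\langle\phi_\varepsilon,u\rangle\to\langle\phi_0,u\rangle$. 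This follows because $z\in C([0,\tau_0];B^{d/q}_{q,2p})\hookrightarrow C([0,\tau_0];L^q)$ and $x\mapsto e^{-x}$ is Lipschitz on $[0,\infty)$; alternatively one can use Prop.~\ref{prop:ExponentialContinuity}.
\end{itemize}
Combining these limits gives the weak formulation on all of $[0,\tau_0]$. Finally, separability of the test-function class allows taking one null set for all test functions simultaneously, as in the variational proposition above.
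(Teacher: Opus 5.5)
Your argument is correct and is the standard chain-rule computation the paper leaves implicit; the paper gives no proof and only calls the result standard. Your steps are: the strong form of the $z$-equation on $[\varepsilon,\tau_0]$, inverting the Cole--Hopf transform using $z\ge0$ and $c\in\mathcal X_c$ to make the truncations inactive, and passing $\varepsilon\to0$ via the integrable bound $\|\nabla\phi_s\|_{L^q}\le\|\nabla z_s\|_{L^q}\lesssim s^{-\alpha_1/2}$. One cosmetic point: $\mathbb X_1(\tau_0)$ is a zero-trace space, so strictly it is $z-S(\cdot)z_0$ that lies in it, which is harmless on $[\varepsilon,\tau_0]$. The same bound also gives $\phi\in L^2([0,\tau_0];H^{1,2})\cap L^\infty([0,\tau_0];L^2)$, the class of variational solutions used in Prop.~\ref{prop:Subsolbound}.
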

\subsection[Regularity properties of]{Regularity properties of the phase-field}
\begin{lemma}
\label{lemma:initialtimephisol}
Let $\phi_0 \in L^\infty(\Omega;B^{d/q}_{q,p})$, $q > d$, be $\mathcal F_0$-measurable and take values in a compact subset $\mathcal A \Subset B^{d/q}_{q,p}$. Given a measurable $c \colon [0,\tau] \rightarrow \mathcal X_c$, there exists a deterministic small $t_1 > 0$ and a unique mild solution $\phi$ of \begin{equation}\label{eq:DecoupledMembEq}
\partial_t \phi_t =  \Delta \phi_t + \widetilde g(\phi_t,c_t) +\widetilde\Psi(\phi_t, c_t) |\nabla \phi_t|,
\end{equation}
such that $$\phi \in L^\infty(\Omega;\mathcal W_q(\tau_1) \cap C([0,\tau_1];B^{d/q}_{q,p})),$$
where $\tau_1 = \tau \wedge t_1$.
\end{lemma}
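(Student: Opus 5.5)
The plan is a deterministic, pathwise Kato-type fixed point in the spirit of \citet{BENARTZI2002343}, run in the same way as in Proposition \ref{prop:zmoreregularity}. Since \eqref{eq:DecoupledMembEq} contains no noise, I fix $\omega$ and treat $c=c(\omega)$ as a given measurable coefficient with values in $\mathcal X_c$. The mild solution map is then
$$
\Gamma(\phi)(t) \coloneqq S(t)\phi_0 + \int_0^t S(t-s)\big(\widetilde g(\phi_s,c_s)+\widetilde\Psi(\phi_s,c_s)|\nabla\phi_s|\big)\d s .
$$
I will show that $\Gamma$ is a contraction on a small closed ball of the space $\mathcal W_q(t_1)\cap C([0,t_1];B^{d/q}_{q,p})$, normed by $\sup_{t\le t_1} t^{\alpha_1/2}\norm{\phi_t}_{H^{1,q}}+\sup_{t\le t_1}\norm{\phi_t}_{B^{d/q}_{q,p}}$. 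Because all constants will be deterministic, the fixed point lies in $L^\infty(\Omega;\cdot)$. Measurability in $\omega$ follows because the fixed point is the limit of Picard iterates, each of which is measurable.

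\emph{Step 1 (free part).} By Proposition \ref{prop:InterpolationIdentity} with $\rho=\tfrac12$ applied on $B^{d/q}_{q,p}\hookrightarrow B^{d/q}_{q,\infty}$, one has $\sup_t t^{\alpha_1/2}\norm{S(t)\phi_0}_{H^{1,q}}\lesssim\norm{\phi_0}_{B^{d/q}_{q,p}}$. For smooth $\phi_0$ this quantity tends to $0$ as $t_1\to0$. By density and uniform boundedness of these maps, the convergence to $0$ is uniform over the compact set $\mathcal A$. This gives a deterministic $t_1$ for which $\norm{S(\cdot)\phi_0}_{\mathcal W_q(t_1)}\le R/2$ for all $\omega$. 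Continuity of $S(\cdot)\phi_0$ in $B^{d/q}_{q,p}$ is standard.

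\emph{Step 2 (nonlinear estimates).} The nonlinearity is only linear in $\nabla\phi$ with bounded coefficients: $\widetilde\Psi$ is bounded in $L^\infty$ by Assumption \ref{ass:PsiCoeff} on truncated arguments, and $\widetilde g$ is bounded and Lipschitz. For the gradient term I use $\norm{\nabla S(t-s)f}_{L^q}\lesssim (t-s)^{-1/2}\norm{f}_{L^q}$ and
$$
\int_0^t (t-s)^{-1/2}s^{-\alpha_1/2}\d s \simeq t^{1/2-\alpha_1/2}.
$$
This yields $t^{\alpha_1/2}\norm{\int_0^tS(t-s)\widetilde\Psi|\nabla\phi_s|\d s}_{H^{1,q}}\lesssim t_1^{1/2}\norm{\phi}_{\mathcal W_q}$, and the same small factor appears in the Lipschitz difference. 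The differences $\widetilde\Psi(\phi_1,c)-\widetilde\Psi(\phi_2,c)$ are controlled pointwise by $|\phi_1-\phi_2|+\norm{\phi_1-\phi_2}_{L^2}$. Since $H^{1,q}\hookrightarrow L^\infty$ for $q>d$, these are bounded by the weighted $H^{1,q}$ norm times a constant from the ball radius. The Besov continuity of the convolution term I get from the embedding $\int_0^tS(t-s)f\d s\in C([0,t_1];B^{2-2\alpha}_{q,p})$ for $f\in L^p_\beta$ (Proposition \ref{prop:continuousTrace}), together with $f\in L^{r}([0,t_1];L^q)$ for any $r<2/\alpha_1$, which gives regularity above $d/q$ for suitable exponents.

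\emph{Step 3 (obstacle).} The main difficulty is the $L^\infty$-type control needed for $\widetilde\Psi$ and $\widetilde g$ near $t=0$. There $\phi_t$ lies only in $B^{d/q}_{q,p}$, which does not embed into $L^\infty$, so the Lipschitz constants $L_R$ are not a priori bounded. I would first rely on the truncations in $\widetilde\Psi,\widetilde g$, which clip $\phi$ to $[0,1]$, making the coefficients globally bounded and Lipschitz. If pointwise Lipschitz bounds in $L^2$ do not suffice for the differences, I would fall back to estimating them in $L^q$ via $\norm{\phi_1-\phi_2}_{L^q}\lesssim \norm{\phi_1-\phi_2}_{B^{d/q}_{q,p}}$. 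Multiplying by $|\nabla\phi|$ and using Hölder into $L^{q/2}$ then requires $L^{q/2}\to H^{1,q}$ smoothing with weight $(t-s)^{-1/2-d/(2q)}$; this kernel is still integrable since $q>d$, and it again yields a positive power of $t_1$. Uniqueness within the ball is immediate from the contraction. Uniqueness in the full class follows by a continuity argument: any solution in the class has small weighted norm on short intervals, because its $\mathcal W_q$ norm over $(0,t]$ tends to $0$.
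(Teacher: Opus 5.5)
Your proposal is correct and follows essentially the paper's route. The paper omits this proof and refers to that of Proposition \ref{prop:zmoreregularity}, which is exactly a Ben-Artzi-type fixed point in the time-weighted space $\mathcal W_q$ (intersected there with $\mathbb X_1$, whereas you carry $C([0,t_1];B^{d/q}_{q,p})$ directly), with the free part $S(\cdot)\phi_0$ made small uniformly over the compact set $\mathcal A$ by density and equiboundedness. As an aside, $\widetilde\Psi(\phi,c)|\nabla\phi|$ is only linear in $\nabla\phi$ with a bounded coefficient, so your Step 2 estimates already give contraction factors that are positive powers of $t_1$ for any radius $R$ (e.g.\ $R\,t_1^{(1-\alpha_1)/2}$ for the cross term); the uniform smallness in Step 1 is therefore a convenience carried over from the critical $z$-equation rather than a necessity here.
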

\begin{proof}
This proof is similar to the proof of Prop. \ref{prop:zmoreregularity} and therefore omitted. 
\end{proof}
\begin{lemma}
\label{lemma:positive-time-phisol}
Let $\phi_0 \in H^{1,
q}$, $q > d$, be given. Given a measurable $c \colon [0,\tau] \rightarrow \mathcal X_c$, there exists a unique mild solution $\phi \in C([0,\tau];H^{1,q})$ of \begin{equation}
\partial_t \phi_t = \Delta \phi_t + \widetilde g(\phi_t,c_t) +\widetilde\Psi(\phi_t, c_t) |\nabla \phi_t|.
\end{equation}
with 
$$
\norm{\phi}_{C([0,\tau];H^{1,q})}\leq \const \norm{\phi_0}_{H^{1,q}}.
$$
\end{lemma}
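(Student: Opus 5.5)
Lemma~\ref{lemma:positive-time-phisol} is deterministic for fixed $c$, so I would prove it pathwise by a contraction in $C([0,t_*];H^{1,q})$ and then iterate to cover $[0,\tau]$. Write the mild formulation
$$
\phi_t = S(t)\phi_0 + \int_0^t S(t-s)\big(\widetilde g(\phi_s,c_s)+\widetilde\Psi(\phi_s,c_s)|\nabla\phi_s|\big)\d s \eqqcolon \Gamma(\phi)_t .
$$
The key structural point is that the nonlinearity is \emph{linear} in $\nabla\phi$ with bounded coefficient. The truncations force $\widetilde\Psi$ to be evaluated on $\mathcal X_\phi\times\mathcal X_c$, so $|\widetilde\Psi|\le M$ by Assumption~\ref{ass:PsiCoeff}. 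For the same reason $\widetilde g$ is bounded and Lipschitz on $L^q$ by Assumption~\ref{ass:NemytskiiAssumption}. Hence
$$
\norm{\widetilde g(\phi,c)+\widetilde\Psi(\phi,c)|\nabla\phi|}_{L^q}\le C\big(1+\norm{\phi}_{H^{1,q}}\big).
$$
Using the analytic smoothing bound $\norm{S(t)}_{\mathcal L(L^q,H^{1,q})}\le C(1+t^{-1/2})$, one gets $\norm{\Gamma(\phi)_t}_{H^{1,q}}\le C\norm{\phi_0}_{H^{1,q}}+C\int_0^t (t-s)^{-1/2}(1+\norm{\phi_s}_{H^{1,q}})\d s$. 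In particular $\Gamma$ maps $C([0,t_*];H^{1,q})$ to itself, with continuity from strong continuity of $S$ on $H^{1,q}$ and the standard argument for convolutions with integrable singular kernels.

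\textbf{Contraction.} For the difference I would split
$$
\widetilde\Psi(\phi_1,c)|\nabla\phi_1|-\widetilde\Psi(\phi_2,c)|\nabla\phi_2| = \widetilde\Psi(\phi_1,c)\big(|\nabla\phi_1|-|\nabla\phi_2|\big)+\big(\widetilde\Psi(\phi_1,c)-\widetilde\Psi(\phi_2,c)\big)|\nabla\phi_2| .
$$
The first term is bounded in $L^q$ by $M\norm{\phi_1-\phi_2}_{H^{1,q}}$. For the second, Assumption~\ref{ass:PsiCoeff} gives a pointwise bound $L_R(|\phi_1-\phi_2|+\norm{\phi_1-\phi_2}_{L^2})$. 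Since $q>d$, Morrey gives $H^{1,q}\hookrightarrow L^\infty$, so the second term is at most $C\norm{\phi_1-\phi_2}_{H^{1,q}}\norm{\nabla\phi_2}_{L^q}$. This yields a contraction on a ball of radius $2C\norm{\phi_0}_{H^{1,q}}+1$ for $t_*$ small, with $t_*$ depending only on that radius. This step looks like it breaks the linear-growth structure, and it is where I expect the main obstacle.

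\textbf{Global existence and the bound.} The fix is to not rely on the contraction for global control. Once a local solution exists, the a priori bound follows from linear growth alone: with $y(t)=\sup_{s\le t}\norm{\phi_s}_{H^{1,q}}$, the mild formula gives $y(t)\le C\norm{\phi_0}_{H^{1,q}}+C\sqrt t\,(1+y(t))$. Alternatively, a singular Gronwall (Henry's lemma) on $\norm{\phi_t}_{H^{1,q}}\le C\norm{\phi_0}+C\int_0^t(t-s)^{-1/2}(1+\norm{\phi_s})\d s$ yields $y(\tau)\le C(\tau)(1+\norm{\phi_0}_{H^{1,q}})$ on any bounded interval. Given this uniform bound, the local existence time is bounded below, so finitely many restarts cover $[0,\tau]$.

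The stated inequality lacks the additive constant. I would obtain it either by noting that $\widetilde g(0,c)=0$ from Assumption~\ref{ass:Invariance}, which makes the source term bounded by $C\norm{\phi}_{H^{1,q}}$ with no constant, or by absorbing the constant when $\norm{\phi_0}_{H^{1,q}}$ is bounded below. The first route is the one I would try.

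\textbf{Uniqueness.} Uniqueness in $C([0,\tau];H^{1,q})$ follows from the same difference estimate combined with the Gronwall argument, since both solutions are bounded in $H^{1,q}$ on $[0,\tau]$.
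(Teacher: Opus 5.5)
Your proposal is correct and follows essentially the paper's route: a Banach fixed point in $C([0,T];H^{1,q})$ on a ball using the $t^{-1/2}$ smoothing of $\nabla S(t)$, then a singular Gronwall bound that exploits the linear growth in $\nabla\phi$ to get a uniform $H^{1,q}$ bound and extend the solution to $[0,\tau]$, with uniqueness from the same difference estimate. The only minor deviations are that you control $(\widetilde\Psi(\phi_1,c)-\widetilde\Psi(\phi_2,c))|\nabla\phi_2|$ directly in $L^q$ via Morrey's embedding $H^{1,q}\hookrightarrow L^\infty$, whereas the paper uses H\"older into $L^{q+\epsilon}$ plus hypercontractivity, and that you state explicitly the homogeneity $\widetilde g(0,c)=0$, which the paper uses silently to drop the additive constant in the final bound.
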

\begin{proof}
We utilise the Banach fixed point theorem. Consider the operator $$\mathcal T \colon C([0,T];H^{1,q}) \rightarrow C([0,T];H^{1,q}), ~ u \mapsto S(\cdot) u_0 + \int_0^\cdot S(\cdot-s)(\widetilde g(u_s,c_s)+\widetilde \Psi(u_s,c_s)|\nabla u_s|)\d s.$$ Then the assumed properties on the coefficients imply that 
$$\norm{(\mathcal T u)_t}_{L^q} \leq \norm{u_0}_{L^q} + \int_0^t\const (\norm{u_s}_{L^q}+\norm{\nabla u_s}_{L^q}) \d s \leq \norm{u_0}_{L^q} + \const t \sup_{0 \leq s \leq t} \norm{u_s}_{H^{1,q}}$$
and, due to analyticity of the heat semigroup, 
$$\norm{\nabla (\mathcal T u)_t}_{L^q} \leq \norm{\nabla u_0}_{L^q} + \int_0^t\const (t-s)^{-\frac12} (\norm{u_s}_{L^q}+\norm{\nabla u_s}_{L^q}) \d s \leq \norm{\nabla u_0}_{L^q} + \const t^{\frac12 } \sup_{0 \leq s \leq t} \norm{u_s}_{H^{1,q}}.$$ From this we can conclude that 
\begin{equation} \label{eq:MembEqMapBounded}
\norm{\mathcal T u}_{C([0,T];H^{1,q})} \leq \norm{u_0} + \const (T+ T^{\frac12}) \norm{u}_{C([0,T];H^{1,q})}.
\end{equation} Now fix $\phi_0 \in H^{1,q}$ as initial condition. Then \eqref{eq:MembEqMapBounded} shows that for small enough $T > 0$ and $R = 2\norm{\phi_0}_{H^{1,q}}$, the solution map $\mathcal T$ maps $$B_R(0) \subset C([0,T];H^{1,q})$$ into itself. Now 
a similar estimate as above yields that for some small $\epsilon > 0$, 
$$
\norm{\mathcal T(u-v)}_{C([0,T];H^{1,q})} \leq \const (T+T^{\frac12}+RT^{\frac12-\frac{q}{2(q+\epsilon)}})\norm{u-v}_{C([0,T];H^{1,q})}.
$$ Note that the constant $C_{\themycounter}$ depends on the radius $R \geq \norm{u_0}_{H^{1,q}}$. The scaling 
$T^{\frac12-\frac{q}{2(q+\epsilon)}}$ stems from the hypercontractivity and Sobolev estimate 
$$
\begin{aligned}
\norm{\nabla S(t-s) |\nabla u|(\widetilde \Psi(u,c_t)-\widetilde\Psi(v,c_t))}_{L^q} &\leq \const (t-s)^{-\frac12} \norm{S\left(\frac{t-s}{2}\right) |\nabla u|(\widetilde \Psi(u,c_t)-\widetilde\Psi(v,c_t))}_{L^q}
\\&\leq \const (t-s)^{-\frac12 -\frac{q}{2(q+\epsilon)}} \norm{\nabla u}_{L^q}\norm{u-v}_{L^{q+\epsilon}}
\\  &\leq \const  R(t-s)^{-\frac12 -\frac{q}{2(q+\epsilon)}} \norm{u-v}_{H^{1,q}}
\end{aligned}$$ 
for $\epsilon$ small enough to employ the Sobolev embedding theorem. Again, choosing small enough $T_0$, we conclude that $\mathcal T$ is a contraction mapping on $B_R(0)$. 

Now, we can repeat the type of argument that yielded \eqref{eq:MembEqMapBounded} and apply a Grönwall lemma for singular kernels (e.g. \citet[Ch. 5, Lemma 6.7]{PazySemigroupsPDE}) to obtain a global bound on $\norm{\phi_t}_{H^{1,q}}$ in terms of the Mittag-Leffler function, i.e. we know that on $[0,\tau]$, solutions of \eqref{eq:DecoupledMembEq} are bounded uniformly. Since any bounded solution can be extended uniquely by the fixed-point argument, we can conclude solutions exist on the entire interval $[0,\tau]$ and any two solutions with bounded paths in $H^{1,q}$ must be identical.
\end{proof}

\begin{corollary} \label{cor:whole-interval-phisol}
Assume the setting of Lemma \ref{lemma:initialtimephisol}. Then there exists a unique mild and variational solution on the entire interval $[0,\tau]$. Moreover, \begin{equation}
\norm{\phi}_{\mathcal W_r(\tau)} \leq \begin{cases}
\norm{\phi}_{\mathcal W_r(\tau_1)} & \tau < t_1 \\
(1+\const t^{-\alpha_1/2}_1)\norm{\phi}_{\mathcal W_r(t_1)} & \tau \geq t_1.
\end{cases} \qedhere
\end{equation}
\end{corollary}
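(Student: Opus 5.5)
The plan is to glue the short-time solution of Lemma \ref{lemma:initialtimephisol} on $[0,\tau_1]$, with $\tau_1=\tau\wedge t_1$, to the positive-time solution of Lemma \ref{lemma:positive-time-phisol} started at time $t_1$. If $\tau<t_1$ there is nothing to do, since $\tau_1=\tau$ and the first case of the estimate is trivial. So I assume $\tau\ge t_1$. Because $\phi\in\mathcal W_q(\tau_1)$, the value $\phi_{t_1}$ lies in $H^{1,q}$, with
\begin{equation*}
\norm{\phi_{t_1}}_{H^{1,q}}\le t_1^{-\alpha_1/2}\norm{\phi}_{\mathcal W_q(t_1)}.
\end{equation*}
I then apply Lemma \ref{lemma:positive-time-phisol} on $[t_1,\tau]$ with initial datum $\phi_{t_1}$ and the shifted control $c_{t_1+\cdot}$. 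This gives a unique mild solution $\bar\phi\in C([t_1,\tau];H^{1,q})$ satisfying $\sup_{t\in[t_1,\tau]}\norm{\bar\phi_t}_{H^{1,q}}\le C\norm{\phi_{t_1}}_{H^{1,q}}$. The glued process is $\phi$ on $[0,t_1]$ and $\bar\phi$ on $[t_1,\tau]$. It satisfies the mild formula on all of $[0,\tau]$ by the semigroup property $S(t-t_1)S(t_1)=S(t)$; this is a purely deterministic version of the gluing computation in the continuation proposition, since the equation for $\phi$ contains no noise.

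For the weighted bound I split the supremum defining $\norm{\phi}_{\mathcal W_r(\tau)}$. On $(0,t_1]$ it is exactly $\norm{\phi}_{\mathcal W_r(t_1)}$. On $[t_1,\tau]$ I use $t^{\alpha_1/2}\le T^{\alpha_1/2}\lesssim 1$ together with the bound above, which gives at most $C t_1^{-\alpha_1/2}\norm{\phi}_{\mathcal W_r(t_1)}$. Adding the two pieces yields the factor $(1+Ct_1^{-\alpha_1/2})$. For general $r$ the same argument goes through once Lemma \ref{lemma:positive-time-phisol} is read in $H^{1,r}$; its proof uses only analyticity and Lipschitz bounds, so it carries over verbatim.

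Uniqueness on $[0,\tau]$ is handled in two pieces. On $[0,\tau_1]$ it follows from the uniqueness in Lemma \ref{lemma:initialtimephisol}. On $[t_1,\tau]$ any competitor with bounded $H^{1,q}$ paths coincides with $\bar\phi$ by Lemma \ref{lemma:positive-time-phisol}, and a competitor in the stated class has such paths.

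The variational property is obtained by testing the mild formula against smooth $u$ and applying Fubini, as in the proof of the variational identity \eqref{eq:variationalidentity} but without the stochastic term. The integrability needed for this comes from $\phi\in\mathcal W_q(\tau)$: it gives $|\nabla\phi_t|\in L^1([0,\tau];L^q)$ because $\alpha_1/2<1$.

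The main obstacle is measurability in $\omega$, since $\tau$ and $c$ are random and the time $t_1$ is deterministic. I will argue pathwise, because the equation is deterministic for each fixed $\omega$. Measurability of the glued process should then follow from continuous dependence of the fixed point on $(\phi_{t_1},c)$, which the Banach contraction argument provides.
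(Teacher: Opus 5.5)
Your proposal is correct and follows the paper's route. You glue the short-time solution of Lemma \ref{lemma:initialtimephisol} to the solution of Lemma \ref{lemma:positive-time-phisol} restarted from $\phi_{t_1}\in H^{1,q}$, and use that $t_1$ is deterministic to get $\norm{\phi_{t_1}}_{H^{1,q}}\le t_1^{-\alpha_1/2}\norm{\phi}_{\mathcal W_q(t_1)}$, hence the factor $(1+Ct_1^{-\alpha_1/2})$; you also supply the details the paper calls standard (uniqueness on both pieces, integrability for the variational form, pathwise measurability).

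One small caution concerns your remark on $r\neq q$. There, what you actually need from Lemma \ref{lemma:positive-time-phisol} in $H^{1,r}$ is only its linear-growth/singular-Gr\"onwall bound, since existence already comes from the $H^{1,q}$ solution. It does not carry over ``verbatim'' as a full contraction argument, because that argument's Lipschitz step is tailored to $q>d$.
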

\begin{proof}
Glueing together the solutions obtained in the preceding two lemmata shows existence of solutions on the entire interval. The regularity properties of the solution make it standard to show that this is a variational solution as well. Moreover, since $t_1$ is deterministic, we find that \begin{equation}
\norm{\phi}_{\mathcal W_r(\tau)} \leq \begin{cases}
\norm{\phi}_{\mathcal W_r(\tau_1)} & \tau < t_1 \\
(1+\const t^{-\alpha_1/2}_1)\norm{\phi}_{\mathcal W_r(t_1)} & \tau \geq t_1.
\end{cases} \qedhere
\end{equation}
\end{proof}
The next result follows analogously as in \cite[Thm. 2.24]{Pardoux2021}.
\begin{proposition}
If $0 \leq \phi_0 \leq 1$, $\d {\bm x}$-a.s., then $0 \leq \phi_t \leq 1$ for all $t\in [0,\tau]$.  
\end{proposition}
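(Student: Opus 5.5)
The plan is a maximum principle for the deterministic equation \eqref{eq:DecoupledMembEq}, with $c$ a fixed measurable path in $\mathcal X_c$. We argue pathwise for fixed $\omega$, since the $\phi$-equation has no noise. The truncated coefficients depend on $\phi$ only through $\phi\vee 0\wedge 1$-type clipping (resp.\ $e^{-z\vee 0}$). So $\widetilde g$ and $\widetilde\Psi$ are globally Lipschitz in $\phi$ pointwise, with the nonlocal parts controlled in $L^2$ or $L^q$. By Assumption \ref{ass:Invariance}, $\widetilde g(\phi,c)=0$ on $\{\phi\le 0\}$ and $\widetilde g(\phi,c)\le 0$ on $\{\phi\ge 1\}$. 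We use the variational formulation from Corollary \ref{cor:whole-interval-phisol}, which has the regularity $\phi\in C((0,\tau];H^{1,q})$, and $\phi\in C([0,\tau];B^{d/q}_{q,p})\hookrightarrow L^2$ continuously at $t=0$.

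For the lower bound, we test the equation with $\phi^-=(-\phi)\vee 0$ and compute $\frac{\d}{\d t}\frac12\norm{\phi^-_t}_{L^2}^2$ for $t>0$ via the Lions--Magenes chain rule, justified by $\phi\in L^2_{loc}((0,\tau];H^{1,2})$ with $\partial_t\phi\in L^2_{loc}((0,\tau];H^{-1,2})$. The diffusion term contributes $-\norm{\nabla\phi^-}_{L^2}^2$, and the reaction term $\int \widetilde g(\phi,c)\phi^-$ vanishes. The gradient term $\int\widetilde\Psi(\phi,c)|\nabla\phi|\phi^-$ is supported where $\phi<0$, and there $|\nabla\phi|=|\nabla\phi^-|$. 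Since $\widetilde\Psi$ is bounded, Young's inequality gives the bound $\tfrac12\norm{\nabla\phi^-}^2_{L^2}+C\norm{\phi^-}^2_{L^2}$. Hence $\frac{\d}{\d t}\norm{\phi^-}^2_{L^2}\le C\norm{\phi^-}_{L^2}^2$ on $(0,\tau]$. We then integrate from $\varepsilon$ and let $\varepsilon\to0$, using continuity of $\phi$ in $L^2$ at $0$ and $\phi^-_0=0$. Grönwall yields $\phi^-\equiv0$.

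The upper bound is symmetric: we test with $(\phi-1)^+$. On $\{\phi>1\}$ we have $\nabla(\phi-1)^+=\nabla\phi$, and $\widetilde g\le0$ there by Assumption \ref{ass:Invariance}, using the clipping that evaluates $g$ at the projected value $1$. So the same Grönwall argument gives $(\phi-1)^+\equiv0$.

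The main obstacle is the chain rule near $t=0$, where $\nabla\phi$ may be singular (only $t^{\alpha_1/2}\norm{\phi}_{H^{1,q}}$ is bounded). We handle it by working on $[\varepsilon,\tau]$ and passing to the limit using $L^2$-continuity of $\phi$ at $0$, as in \cite[Thm.~2.24]{Pardoux2021}. A secondary point is that the truncated $\widetilde g$ must vanish identically where $\phi\le0$, not merely at $\phi=0$. This is why we use the clipped argument, and it makes $\widetilde g\,\phi^-=0$ pointwise.
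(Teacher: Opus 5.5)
Your proposal is correct and follows the same route as the paper, which simply defers to the energy argument of \cite[Thm.~2.24]{Pardoux2021}. That argument tests with $\phi^-$ and $(\phi-1)^+$, uses the sign conditions of Assumption \ref{ass:Invariance} on the clipped reaction term, absorbs the bounded-coefficient gradient term into the dissipation, and concludes by Gr\"onwall. Your $\varepsilon$-cutoff at $t=0$ is harmless but not strictly needed, since $\norm{\phi_t}_{H^{1,q}}^2\lesssim t^{-\alpha_1}$ is already integrable near $0$.
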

The following lower bound, derived in \cite[Prop 4.17]{Ich2}, will be useful in our analysis.
\begin{proposition} \label{prop:Subsolbound}
Suppose that $\phi_0\geq 0$ and $\phi_0 \not \equiv 0$. Let $\phi_t \in L^2([0,T];H^{1,2}) \cap L^\infty([0,T];L^2)$ denote a variational solution of \eqref{eq:DecoupledMembEq}. Then, for all $\delta > 0$, there exists $\kappa_\delta > 0$ with
$$\inf_{\delta \leq t \leq T} \essinf_{x \in \mathbb T^d} \phi(t,x) > e^{-M_1T}\kappa_\delta \int \phi_0 \d {\bm x},$$ where $M_1$ denotes the Lipschitz constant of $g$ on $[0,1]$, and in particular the function $\phi$ has full support on $\mathbb T^d$ for all positive times.
\end{proposition}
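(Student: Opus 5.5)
The plan is a comparison argument with a spatially constant subsolution built from a heat-kernel lower bound. It has three steps: positivity plus linearisation, an $L^\infty$ lower bound at time $\delta/2$, and propagation of that bound up to time $T$.

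\textbf{Step 1: positivity and linearisation.} By the preceding proposition, $0\le \phi_t\le 1$. Assumption \ref{ass:Invariance} gives $g(0,c)=0$, and $g$ is Lipschitz on $[0,1]$ with constant $M_1$. Hence
\[
\widetilde g(\phi,c)\ge -M_1\phi
\]
pointwise. So $\phi$ is a nonnegative variational supersolution of the linear equation
\[
\partial_t w=\Delta w-M_1 w+b\cdot\nabla w,
\]
where $b:=\widetilde\Psi(\phi,c)\,\nabla\phi/|\nabla\phi|$ (set to $0$ where $\nabla\phi=0$). By Assumption \ref{ass:PsiCoeff}, $\Psi$ is bounded on $\mathcal X_\phi\times\mathcal X_c$, so $b$ is a bounded measurable drift.

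\textbf{Step 2: lower bound at time $\delta/2$.} Substituting $\phi=e^{-M_1t}\psi$ reduces the problem to a nonnegative supersolution $\psi$ of $\partial_t\psi\ge \Delta\psi+b\cdot\nabla\psi$ with $\norm{b}_{L^\infty}\le B$.
\begin{itemize}
\item Compare $\psi$ with the solution $w$ of the linear equation started from $\phi_0$. The weak comparison principle applies: test the difference against its negative part and use $\phi\in L^2H^{1,2}\cap L^\infty L^2$.
\item For divergence-free-type operators with bounded drift on the compact torus, the fundamental solution satisfies a Gaussian/Aronson lower bound $\Gamma(t,x;s,y)\ge \kappa(t-s)>0$, uniformly in $x,y\in\mathbb T^d$, with $\kappa$ depending only on $d$, $B$ and $t-s$.
\item This yields $w(\delta/2,x)\ge \kappa_{\delta/2}\int\phi_0\,\mathrm d\bm x$.
\end{itemize}
A more elementary alternative is a Harnack chain (Moser's parabolic Harnack inequality for bounded drift) combined with conservation-type bounds for the mean.

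\textbf{Step 3: propagation to $[\delta,T]$.} On $[\delta/2,T]$, spatially constant functions solve the drift equation with zero drift contribution. So the constant
\[
\kappa_{\delta/2}\int\phi_0\,\mathrm d\bm x
\]
is a subsolution of the $\psi$-problem from time $\delta/2$ on, and comparison keeps $\psi$ above it. Undoing the exponential factor gives
\[
\phi(t,x)\ge e^{-M_1t}\kappa_{\delta/2}\int\phi_0\,\mathrm d\bm x\ge e^{-M_1T}\kappa_{\delta/2}\int\phi_0\,\mathrm d\bm x .
\]
Since $\phi_0\ge 0$ and $\phi_0\not\equiv 0$, we have $\int\phi_0>0$, which gives full support. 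To obtain the strict inequality, I would shrink $\kappa_\delta$ slightly (for instance, rename $\kappa_\delta:=\kappa_{\delta/2}/2$).

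\textbf{Main obstacle.} The hardest part is the uniform positive lower bound in Step 2. The drift $b$ depends on the solution itself and is only bounded measurable, so explicit heat-kernel arguments are unavailable. I would first try to invoke Aronson-type two-sided bounds for nondivergence drift in weak form. If that is delicate in the variational setting, I would instead use a weak Harnack inequality for nonnegative supersolutions, which only needs bounded lower-order coefficients, applied on a finite cover of $\mathbb T^d$ by balls and chained.
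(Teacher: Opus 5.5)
The paper does not prove this statement in the present text; it quotes it from \cite[Prop.~4.17]{Ich2}. There is therefore no in-paper argument to compare with, and your proposal should be judged as a standalone proof. As such it is correct, and it reproduces the stated form of the bound exactly:
\begin{itemize}
\item The one-sided bound $\widetilde g(\phi,c)\geq -M_1\phi$, from $g(0,\cdot)=0$ and the Lipschitz constant on $[0,1]$, produces the factor $e^{-M_1T}$.
\item Writing $\widetilde\Psi(\phi,c)|\nabla\phi| = b\cdot\nabla\phi$ with $|b|\leq \sup|\widetilde\Psi|$ makes the problem linear.
\item The weak comparison (test with the negative part, then Gr\"onwall with constant $B^2/4$) is valid in the class $L^2H^{1,2}\cap L^\infty L^2$, since $\partial_t\phi\in L^2H^{-1}$.
\item Aronson's lower bound gives the linear dependence on $\int\phi_0\,\mathrm{d}\bm x$. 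It does cover bounded measurable first-order terms $b\cdot\nabla u$ with a divergence-form principal part. Your phrase ``divergence-free-type'' should read ``divergence-form''.
\end{itemize}
A real advantage of this route is that $\kappa_\delta$ depends only on $d$, $\sup|\widetilde\Psi|$, $\delta$ and $T$. It does not depend on $\omega$ or on the solution-dependent drift $b$. This deterministic character is exactly what the paper uses later, in the proof of Proposition \ref{prop:zphiidentification}.

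Three minor remarks.
\begin{itemize}
\item Step 3 is correct but unnecessary. Aronson's lower bound is uniform for $t-s\in[\delta,T]$, so you can take $\kappa_\delta$ to be the infimum of the kernel bound over that range directly.
\item The Harnack-chain alternative does not work as sketched. For a supersolution with a non-divergence drift there is no conservation-type control of the mean: you only have $\frac{\mathrm{d}}{\mathrm{d}t}\int\psi\,\mathrm{d}\bm x\geq\int b\cdot\nabla\psi\,\mathrm{d}\bm x$. What Young's inequality does control is $\int\psi^r\,\mathrm{d}\bm x$ for $r<1$, which decays at most exponentially. Feeding that into the weak Harnack inequality only yields a lower bound in terms of $(\int\phi_0^r)^{1/r}\leq\int\phi_0$, which is weaker than the stated linear dependence. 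To get the statement as written you need the kernel lower bound, or duality with the adjoint equation.
\item Using $0\le\phi\le 1$ from the preceding proposition requires $\phi_0\le 1$, which is not among the hypotheses. This is harmless in context, since $\phi_0=e^{-z_0}$ with $z_0\geq 0$. In general, nonnegativity follows from the weak maximum principle. If $\widetilde g$ truncates $\phi$ to $[0,1]$, as the paper's other truncations do, then $\widetilde g(\phi,c)\geq -M_1\phi$ holds for all $\phi\geq 0$.
\end{itemize}
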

\subsection{Globality of solutions}
\begin{proposition} \label{prop:zphiidentification}
Let $z$ be the local mild solution of \eqref{eq:decoupledcolehopf} with $0 \leq z_0 \in B^{d/q}_{q,2p}$. Restrict the initial data on the subset of functions with $$\int \phi_0 \d x > \varepsilon > 0$$ for some fixed $\varepsilon > 0$. Then $z = \log \frac{1}{\phi}$, where $\phi$ is the unique mild solution of \eqref{eq:DecoupledMembEq} and thereby 
\begin{equation} \label{eq:zfinalregularity}
z \in L^\infty(\Omega;C([0,\tau];B^{d/q}_{q,2p}) \cap \mathcal W_q(\tau))
\end{equation}with $z_t \geq 0$ for all $t \in [0,\tau]$.
\end{proposition}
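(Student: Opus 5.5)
The plan is to compare the local solution $z$ of \eqref{eq:decoupledcolehopf} with $\widetilde z \coloneqq -\log\phi$, where $\phi$ is the global mild and variational solution of \eqref{eq:DecoupledMembEq} from Corollary \ref{cor:whole-interval-phisol} with initial datum $\phi_0 = e^{-z_0}$. We then invoke uniqueness to identify the two. Since $z_0 \geq 0$, we have $0 < \phi_0 \leq 1$. Moreover, $\phi_0 \in B^{d/q}_{q,2p}$ by Proposition \ref{prop:ExponentialContinuity}, with $\eta(z_0)=z_0$. The mass condition $\int\phi_0\d x>\varepsilon$ makes Proposition \ref{prop:Subsolbound} applicable. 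It gives, for every $\delta>0$, a lower bound $\phi_t \geq e^{-M_1T}\kappa_\delta\varepsilon>0$ on $[\delta,\tau]$ that is uniform in $\omega$. Hence $\widetilde z$ is well defined and bounded on $[\delta,\tau]$, and $\widetilde z\geq 0$ because $\phi\leq 1$ (invariance of $[0,1]$).

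\textbf{Step 1: near $t=0$.} By Proposition \ref{prop:zmoreregularity}, $z\in\mathbb Z_q(\tau_0)$ and $z\geq0$ on $[0,\tau_0)$. Propositions \ref{prop:exponentialregularity} and \ref{prop:exponentialweaksol} then show that $e^{-z}\in\mathcal W_q(\tau_0)$ is a weak, hence mild, solution of \eqref{eq:DecoupledMembEq} with datum $\phi_0$. The uniqueness part of Lemma \ref{lemma:initialtimephisol} and Corollary \ref{cor:whole-interval-phisol}, applied in the class $\mathcal W_q$, gives $e^{-z}=\phi$ on $[0,\tau_0]$, that is, $z=\widetilde z$ there.

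\textbf{Step 2: away from $t=0$.} On $[\delta,\tau]$, the bound $\phi\geq c_\delta>0$ together with $\phi\in C([\delta,\tau];H^{1,q})$ (Lemma \ref{lemma:positive-time-phisol}) lets us apply the chain rule to $-\log$, which is smooth and Lipschitz on $[c_\delta,1]$. This shows $\widetilde z\in C([\delta,\tau];H^{1,q})$, with $\nabla\widetilde z=-\nabla\phi/\phi$. Reversing the Cole--Hopf computation preceding \eqref{eq:StrongColeHopf} at the level of the weak formulation, $\widetilde z$ solves \eqref{eq:decoupledcolehopf} weakly on $[\delta,\tau]$. Since $|\nabla\widetilde z|^2\in C([\delta,\tau];L^{q/2})$, it is also a mild solution there. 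The lower bound $\widetilde z\geq 0$ means the truncations in $\widetilde h,\widetilde\Phi$ are inactive, so $\widetilde z$ solves the truncated equation \eqref{eq:StrongColeHopfTrunc} as well.

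\textbf{Step 3: uniqueness and regularity.} Choose $\delta<t_0$. Both $z$ and $\widetilde z$ are local mild solutions, in the sense of Definition \ref{def:localsol}, restarted at time $\delta$ with the same datum $z_\delta=\widetilde z_\delta\in H^{1,q}\hookrightarrow B^{d/q}_{q,2p}$. Both have the path regularity required in item \eqref{item:solpathreg}, since $C([\delta,\tau];H^{1,q})$ embeds into every space listed there. Lemma \ref{lemma:CriticalUniqueness} then gives $z=\widetilde z$ on $[\delta,\tau]$, and this also shows that $z$ exists on all of $[0,\tau]$. For the regularity \eqref{eq:zfinalregularity}: the $\mathcal W_q(\tau)$ bound combines the estimate $z\in L^\infty(\Omega;\mathbb Z_q(\tau_0))$ with the uniform bound $\|\widetilde z\|_{C([t_0/2,\tau];H^{1,q})}\lesssim c_{t_0/2}^{-1}\|\phi\|_{C([t_0/2,\tau];H^{1,q})}$. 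The latter is bounded by Corollary \ref{cor:whole-interval-phisol}, and the constant is deterministic because $\kappa_\delta,\varepsilon,t_0$ are. Continuity in $B^{d/q}_{q,2p}$ holds near $0$ by Theorem \ref{thm:localexistenceL^p} (trace embedding), and away from $0$ it follows from $H^{1,q}$-continuity.

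\textbf{Main obstacle.} I expect Step 2 to be the hard part: rigorously justifying that $-\log\phi$ is a mild solution from only the variational formulation of $\phi$. Since $\phi$ is bounded below and lies in $C([\delta,\tau];H^{1,q})$, I would test the weak equation with $\psi/\phi$ and use a time-regularization (Steklov average) argument. Alternatively, one could mollify $\phi$ in space and pass to the limit in $L^{q/2}$.
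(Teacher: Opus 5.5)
Your proposal is correct in outline, but after the initial layer it takes a different route from the paper. Your Step 1 is the paper's first paragraph. From there the paper stays with the direction $z\mapsto e^{-z}$. It sets $\sigma_n=\inf\{s:\phi_s\neq e^{-z_s}\}\wedge\tau_n$ and reruns the first step from $\sigma_n$, using $z_{\sigma_n}=-\log\phi_{\sigma_n}\geq 0$. Throughout, it relies on uniqueness for the $\phi$-equation. Prop.~\ref{prop:Subsolbound} enters only afterwards, to transfer the $H^{1,q}$-bounds of $\phi$ to $z$ on $[t_0,\tau]$.

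You instead invert the transform at positive times. The deterministic lower bound makes $\widetilde z=-\log\phi$ a mild solution of the $z$-equation in $C([\delta,\tau];H^{1,q})$. A single restart at a deterministic $\delta<t_0$, together with uniqueness for the $z$-equation, then finishes the identification. This buys three things:
\begin{itemize}
\item no stopping-time continuation is needed;
\item nonnegativity of $z$ on all of $[0,\tau)$ comes for free from $\phi\leq 1$;
\item the identification and the bound \eqref{eq:zfinalregularity} are obtained at once, since $\widetilde z$ is built explicitly from $\phi$.
\end{itemize}
The cost is that the identification itself now depends on the mass condition. You also need a second chain-rule computation, for $-\log\phi$, in addition to the one behind Prop.~\ref{prop:exponentialweaksol}.

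Three small points should be tightened.
\begin{itemize}
\item Lemma~\ref{lemma:CriticalUniqueness} is stated for the autonomous equation \eqref{eq:FullNoiseSPDE}, not for \eqref{eq:decoupledcolehopf} with the random coefficient $c$. At positive times with $H^{1,q}$-data, a pathwise Gronwall argument with the integrable kernel $(t-s)^{-\frac12-\frac{d}{2q}}$ (recall $q>d$) gives the uniqueness you need directly.
\item In Step 1, you should place $e^{-z}$ in the uniqueness class of Lemma~\ref{lemma:initialtimephisol} rather than appeal to uniqueness in $\mathcal W_q$ alone. The paper does this via the mild representation, which gives $C([0,\tau_0];B^{d/q}_{q,2p})$. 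Alternatively, note that $\|\nabla e^{-z}\|_{L^q}\leq\|\nabla z\|_{L^q}$ for $z\geq0$, so $e^{-z}$ inherits the smallness of $z$.
\item Say explicitly that $e^{-z_0}$ ranges in a compact set, by Prop.~\ref{prop:ExponentialContinuity}. This is what makes $t_1$ deterministic and $\|\phi\|_{\mathcal W_q(\tau)}\in L^\infty(\Omega)$ in Corollary~\ref{cor:whole-interval-phisol}, which your final $L^\infty(\Omega)$ bound uses.
\end{itemize}
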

\begin{proof}
We derived in Prop. \ref{prop:exponentialweaksol}, that $\bar \phi \coloneqq e^{-z}$ is a variational solution of \eqref{eq:DecoupledMembEq}. By the derived regularities, it is standard to obtain a mild representation of $\bar \phi$ on $[0,\tau_0]$. By Prop. \ref{prop:ExponentialContinuity}, we additionally know that $\bar \phi_0 = e^{-z_0} \in B^{d/q}_{q,2p}$. Thus, the mild representation combined with the fact that $\bar \phi \in \mathcal W_q(\tau_0) \hookrightarrow L^4([0,\tau_0];H^{1,q})$ shows that
$$\bar \phi = \underbrace{S(\cdot) \bar \phi_0}_{\in C([0,\tau];B^{d/q}_{q,2p})} + \underbrace{S \ast\left(\widetilde g( \bar \phi,c)+\widetilde\Psi( \bar \phi,c)|\nabla  \bar \phi|\right)}_{\in C([0,\tau_0];H^{1,q})} \in C([0,\tau_0];B^{d/q}_{q,2p}).$$
Evidently, the singleton $\phi_0$ is compact and we can conclude that $e^{-z}$ must be equal to the mild solution given by Cor. \ref{cor:whole-interval-phisol}. 

For $\tau \leq t_0 \wedge t_1$, this yields the claim.  The fact that $z \geq 0$ then follows immediately from $ \phi \leq 1$. If $\tau > t_0 \wedge t_1$, let $\sigma_n = \inf \{s \geq 0: \phi_s \neq e^{-z_s}\} \wedge \tau_n,$ where $(\tau_n)_{n \in \N}$ denotes the localising sequence of $\tau$. Naturally, $$\phi_{\sigma_n} = e^{-z_{\sigma_n}} \in B^{d/q}_{q,2p},\quad z_{\sigma_n}\in B^{d/q}_{q,2p} \text{ with }z_{\sigma_n} \geq 0.$$ By a repetition of the procedure from the first paragraph, it must be the case that $\sigma_n = \tau_n$, and it follows that $z \equiv \log \frac{1}{\phi}$ on $[0,\tau]$.

Finally, we prove the stochastic bound \eqref{eq:zfinalregularity}. On $[0,\tau_0]$, Prop. \ref{prop:zmoreregularity} already implies the claim. If $\tau > t_0$, we can invoke Prop. \ref{prop:ExponentialContinuity} to deduce that $e^{-z}$ takes values in a compact set. Then, by virtue of the deterministic lower bound given by Prop. \ref{prop:Subsolbound}, the regularities of $\phi$ transfer to $z$. Namely, we know that $$\frac{1}{\phi_t} > e^{M_1T}\kappa_{t_0}^{-1}\varepsilon^{-1}.$$ We note that $t_0$ only depends on the compact set of initial conditions. Thus,
$$\sup_{t \in [t_0,\tau]}\norm{\nabla z_t}_{L^q} = \sup_{t \in [t_0,\tau]}\norm{\frac{\nabla \phi_t}{\phi_t}}_{L^q} \leq \const \sup_{t \in [t_0,\tau]} \norm{\nabla \phi_t}_{L^q}  \leq C_{\themycounter} t^{-\alpha_1}_0 \norm{\phi}_{\mathcal W_q(\tau)}\in L^\infty(\Omega) $$ and by similar reasoning, we obtain continuity of $z$ in $H^{1,q} \hookrightarrow B^{d/q}_{q,2p}$ on $[t_0,\tau]$. We can conclude that \eqref{eq:zfinalregularity} is valid.
\end{proof}
\begin{corollary}
Assume the setting of the previous proposition. Suppose that $\tau < T$. Then $$\lim_{t \to \tau} \norm{c}_{L^\infty([0,t];B^{d/q}_{q,2p}) \cap L^4([0,t];H^{1,q})} = \infty.$$
\end{corollary}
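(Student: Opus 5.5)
The plan is to combine the blow-up criterion of Theorem \ref{thm:maximallocalsolution} for the coupled system \eqref{eq:StrongColeHopfTrunc} with the control on $z$ from Proposition \ref{prop:zphiidentification}. Throughout, $(z,c,\tau)$ is the maximal local solution from Lemma \ref{lemma:phasefieldlocalstrongex}, and the blow-up functional is applied to the pair $\bm z=(z,c)$. We then have, almost surely on $\{\tau<T\}$,
\begin{equation*}
\limsup_{t\nearrow\tau}\Big(\norm{(z,c)}_{L^\infty([0,t];B^{d/q}_{q,2p})}+\norm{(z,c)}_{L^2([0,t];H^{1,q})}+\norm{(z,c)}_{L^4([0,t];H^{\delta,q})}\Big)=\infty .
\end{equation*}
The argument is by contradiction. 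Suppose that on an event $E\subset\{\tau<T\}$ of positive probability,
\begin{equation*}
\lim_{t\to\tau}\norm{c}_{L^\infty([0,t];B^{d/q}_{q,2p})\cap L^4([0,t];H^{1,q})}<\infty .
\end{equation*}
The functional is monotone in $t$, so this limit exists in $[0,\infty]$, and $E$ is well defined.

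First, the $c$-part of $\Xi_0$ is finite on $E$. Since $T<\infty$, Hölder's inequality gives $L^4([0,t];H^{1,q})\hookrightarrow L^2([0,t];H^{1,q})$. The embedding $H^{1,q}\hookrightarrow H^{\delta,q}$ then controls the $L^4(H^{\delta,q})$-term.

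Second, the $z$-part of $\Xi_0$ is finite almost surely, irrespective of $c$. Proposition \ref{prop:zphiidentification}, applied with the given measurable $\mathcal X_c$-valued process $c$ on $[0,\tau]$, yields
\begin{equation*}
z\in L^\infty\big(\Omega;C([0,\tau];B^{d/q}_{q,2p})\cap\mathcal W_q(\tau)\big).
\end{equation*}
The sup-in-time Besov norm is therefore bounded. For the gradient terms,
\begin{equation*}
\norm{z_t}_{H^{1,q}}\le t^{-\alpha_1/2}\norm{z}_{\mathcal W_q(\tau)},
\end{equation*}
and $t^{-\alpha_1/2}\in L^4(0,T)$ because $\alpha_1=1-\frac dq<\frac12$ for $q<2d$. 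This gives $z\in L^4([0,\tau];H^{1,q})$, hence also $z\in L^2([0,\tau];H^{1,q})$ and $z\in L^4([0,\tau];H^{\delta,q})$.

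Combining the two steps, $\Xi_0(t,(z,c))$ stays bounded as $t\nearrow\tau$ on $E$, which contradicts the blow-up criterion. Hence $\mathbb P(E)=0$.

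The main obstacle is justifying that Proposition \ref{prop:zphiidentification} applies up to the random time $\tau$ and not only up to some $\tau_n<\tau$. Two issues arise.

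The first is the restriction $\int\phi_0\d x>\varepsilon$ on the initial data. I would localise on $\mathcal F_0$-measurable events $\{\int e^{-z_0}\d x>\varepsilon\}$ and let $\varepsilon\downarrow 0$. Because $z_0\ge0$ lies in $B^{d/q}_{q,2p}\subset L^1$, $e^{-z_0}$ is not identically $0$, so these events exhaust $\Omega$.

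The second is the compactness requirement in Propositions \ref{prop:zmoreregularity} and \ref{prop:ExponentialContinuity}. I would handle it by localising $z_0$ to sets on which it lies in a compact subset of $B^{d/q}_{q,2p}$, which is possible by inner regularity of its law on the separable space $B^{d/q}_{q,2p}$. This makes $t_0$ deterministic on each piece.

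On each such piece the bounds are deterministic and uniform in $t<\tau$. The $z$-functional therefore cannot explode before $\tau$, and a countable union argument concludes.
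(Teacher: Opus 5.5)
Your proposal is correct and takes the same route as the paper. The paper's proof is the one-line remark that the bounds on $z$ from Proposition \ref{prop:zphiidentification} exclude blow-up of the $z$-component in the criterion of Theorem \ref{thm:maximallocalsolution}, so the $c$-component must diverge; you additionally make explicit why the $c$-norm in the statement controls the $c$-part of $\Xi_0$ and why $\mathcal W_q(\tau)\hookrightarrow L^4([0,\tau];H^{1,q})$ for $q<2d$. Your extra localisation in $\varepsilon$ and over compact sets is not needed, since both restrictions are already part of the setting of the previous proposition.
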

\begin{proof}
Since the preceding estimates exclude blow-up in $z$, it must be that $c$ diverges. 
\end{proof}
\begin{proposition} 
Again, assume the setting of the preceding propositions. As $h \to 0$, we obtain the uniform the convergence  
\begin{equation}
\norm{z_{t+\cdot}}_{L^\infty(\Omega;(\mathbb W_1 \cap \mathbb W_\delta)(h))} \to 0
\end{equation}
for any fixed $t \geq 0$, and in particular, 
\begin{equation} \label{eq:uniformvanishingt0}
\norm{z}_{L^\infty(\Omega;(\mathbb W_1 \cap \mathbb W_\delta)(h))} \to 0,
\end{equation}
with the obvious truncation at time $\tau$ if $h, t+h > \tau$.
\end{proposition}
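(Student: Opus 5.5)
The plan is to reduce the claim to uniform control of the critical norms of $z$ on short windows, using the quantities already shown to be bounded in $L^\infty(\Omega)$. By Proposition \ref{prop:zphiidentification}, $z \in L^\infty(\Omega;C([0,\tau];B^{d/q}_{q,2p}) \cap \mathcal W_q(\tau))$. Moreover, the initial data lie in a compact set $\mathcal A$, and Proposition \ref{prop:ExponentialContinuity} keeps the exponentials $e^{-z_t}$ in compact sets. With these facts I will first prove the statement at $t=0$, i.e. \eqref{eq:uniformvanishingt0}, and afterwards transfer it to an arbitrary $t$.

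\textbf{Step 1: the case $t=0$.} Use the decomposition $z = S(\cdot)z_0 + S\ast N(z)$, where $N(z) = -|\nabla z|^2 + \widetilde h(z,c) + \widetilde \Phi(z,c)|\nabla z|$.

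For the linear part, Proposition \ref{prop:heatflowbesov} gives $\norm{S(\cdot)z_0}_{(\mathbb W_1\cap\mathbb W_\delta)(h)} \to 0$ for each fixed $z_0$. The map $z_0 \mapsto S(\cdot)z_0$ is linear and bounded from $B^{d/q}_{q,2p}$ into $(\mathbb W_1\cap\mathbb W_\delta)(1)$, and the convergence is monotone in $h$. Dini's theorem therefore gives convergence uniformly over the compact set $\mathcal A$, hence uniformly in $\omega$.

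For the nonlinear part, apply Remark \ref{rem:detconvmaxreg} together with Lemma \ref{lemma:XYembeddings}\eqref{item:Xembedding}. This yields
$$\norm{S\ast N(z)}_{(\mathbb W_1\cap\mathbb W_\delta)(h)} \lesssim \norm{N(z)}_{L^p_{\beta_1}([0,h];L^{q/2})} \lesssim \left(h^{\alpha_1} + \norm{z}_{\mathbb W_1(h)}\right)\norm{z}_{\mathbb W_1(h)}.$$
The constant here does not depend on $h \le 1$. Alternatively, one can bound pointwise with the $\mathcal W_q$-norm: $\norm{\nabla z_s}_{L^q} \le s^{-\alpha_1/2}\norm{z}_{\mathcal W_q(h)}$.

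By Proposition \ref{prop:zmoreregularity}, $z$ lies in the small ball \eqref{eq:zsmallball}, whose radius can be taken as small as we like by shrinking $t_0$ deterministically. A bootstrap then closes the estimate: $X(h) \coloneqq \operatorname{ess\,sup}_\omega \norm{z}_{(\mathbb W_1\cap\mathbb W_\delta)(h)}$ satisfies
$$X(h) \le o(1) + C\left(h^{\alpha_1} + X(h)\right)X(h).$$
Since $X(h)$ is small, the quadratic term is absorbed, so $X(h)\to 0$.

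\textbf{Step 2: a general fixed $t$.} Restart at time $t$ with the shifted solution $z_{t+\cdot}$, whose initial value is $z_t$. Step 1 applies once $\{z_t(\omega)\}$ is shown to lie in a compact subset of $B^{d/q}_{q,2p}$.

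If $t \ge t_0$, this follows from the bound in the proof of Proposition \ref{prop:zphiidentification}: $z_t$ is bounded in $H^{1,q}$ uniformly in $\omega$. By the lower bound of Proposition \ref{prop:Subsolbound}, combined with the $\mathcal W_q$-control of $\phi$ at the slightly later time and parabolic smoothing, $z_t$ is in fact bounded in $H^{1+\epsilon,q}$. That space embeds compactly into $B^{d/q}_{q,2p}$.

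If $t < t_0$, I instead use the weight-free estimate directly. Since $\sup_{s\le t+h} s^{\alpha_1/2}\norm{z_s}_{H^{1,q}}$ is bounded, for $t>0$ we get $\norm{z_{t+\cdot}}_{\mathbb W_r(h)} \lesssim t^{-\alpha_1/2}h^{\text{positive}}$ uniformly in $\omega$. The case $t=0$ is Step 1.

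\textbf{Main obstacle.} I expect the hardest part to be uniformity in $\omega$ for the linear term $S(\cdot)z_t$ at restart times. Convergence of heat flows in $\mathbb W_r$ is uniform only on compact sets of data, and compactness of $\{z_t\}$ in the critical Besov space must be extracted from the smoothing estimates above.

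Alternatively, for $t>0$ I would bypass compactness entirely. The time-weighted bound $\sup_{s\le t+h}s^{\alpha_1/2}\norm{z_s}_{H^{1,q}} \le \norm{z}_{L^\infty(\Omega;\mathcal W_q)}$ yields, for $t>0$, an explicit rate $\norm{z_{t+\cdot}}_{\mathbb W_1(h)} \lesssim t^{-\alpha_1/2} h^{(1+\beta_1)/(2p)}$. This is uniform in $\omega$, and the same rate holds for $\mathbb W_\delta$. This reduces everything to the $t=0$ case, where the compactness of $\mathcal A$ is given.
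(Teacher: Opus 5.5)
Your proposal is correct, and its final form is exactly the paper's proof. For fixed $t>0$ the paper uses the same bound $\norm{z_{t+s}}_{H^{1,q}}\le t^{-\alpha_1/2}\norm{z}_{L^\infty(\Omega;\mathcal W_q(\tau))}$ to get the $\omega$-uniform rate $(h/t)^{\alpha_1/2}$ (and analogously for $\mathbb W_\delta$), and for $t=0$ it takes the radius $R$ of the small ball \eqref{eq:zsmallball} from Proposition \ref{prop:zmoreregularity} arbitrarily small; your Step~1 (Dini's theorem on $\mathcal A$ plus absorption of the quadratic term) just unpacks that, so the compactness-at-restart route in Step~2, with its unproved $H^{1+\epsilon,q}$ smoothing claim, is unnecessary and can be replaced by your ``alternative'' paragraph.
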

\begin{proof}
Let $t > 0$. Then $$\norm{z}_{L^\infty(\Omega;C([t,\tau];H^{1,q}))} \leq t^{-\alpha_1/2} \norm{z}_{L^\infty(\Omega;\mathcal W_q(\tau))}$$ and therefore 
\begin{equation} \label{eq:uniformsmallnessweighted}
\norm{z_{t+\cdot}}_{L^\infty(\Omega;\mathbb W_1(h))} \leq  t^{-\alpha_1} \norm{z}_{L^\infty(\Omega;\mathcal W_q(\tau))} \left(\int_0^h s^{p\alpha_1-1} \d s\right)^\frac1{2p} \leq \const \left(\frac{h}{t}\right)^{\alpha_1/2}\norm{z}_{L^\infty(\Omega;\mathcal W_q(\tau))}
\end{equation}
Choosing $h$ small relative to $t$ yields the claim. The estimate for $\mathbb W_\delta(h)$ is analogous. In the special case that $t = 0$, the unique local solution of \eqref{eq:decoupledcolehopf} vanishes in $\mathbb Z_q(h) \coloneqq \mathbb X_1(h) \cap \mathcal W_q(h)$ as $h \to 0$. This follows by choosing $R$ small enough in \eqref{eq:zsmallball}. Hence, setting $\tau_h \coloneqq \tau \wedge h$ gives
$$
\norm{z}_{L^\infty(\Omega;(\mathbb W_1 \cap \mathbb W_\delta)(\tau_h) \cap \mathcal W_q(\tau_ h))} < \varepsilon$$
for any $\varepsilon > 0$ and  $h \ll 1$. 
\end{proof}
\begin{theorem} \label{thm:c_global_existence}
Assume the setting of Lemma \ref{lemma:phasefieldlocalstrongex}. Assume additionally that $c_0 \in L^p(\Omega;B^{d/q}_{q,2p})$ and that $0 \leq z_0 \in B^{d/q}_{q,2p}$ takes values in a compact set with $$\int e^{-z_0} \d x > \varepsilon > 0$$ for some fixed $\varepsilon > 0$. Then the solution $(z,c)$ of \eqref{eq:StrongColeHopfTrunc} must be global with $$\mathbb E\left[\norm{c}^p_{(\mathbb W_1 \cap \mathbb W_\delta)(T) \cap C([0,T];B^{d/q}_{q,2p})}\right] < \infty.$$ Further, if $c_0 \in \mathcal X_c$ $\d \mathbb P$-a.s., then $c_t \in \mathcal X_c$ for all $t \in [0,T]$, $\d \P$-almost surely and in particular, $(z,c)$ solve equation \eqref{eq:StrongColeHopf}.
\end{theorem}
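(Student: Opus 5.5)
Since $z$ is already controlled, the plan is to treat the $c$-equation in \eqref{eq:StrongColeHopfTrunc} as a linear SPDE perturbed by the bilinear drift $-\nabla z\nabla c$. The goal is a priori bounds showing that the blow-up functional $\Xi_\sigma(t,c)$ of Theorem \ref{thm:maximallocalsolution} cannot diverge before $T$. By the preceding corollary, only $c$ can blow up, so global existence follows once $\mathbb E[\norm{c}^p]$ is bounded up to $\tau_{\max}\wedge T$ uniformly along the localising sequence.

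\textbf{Step 1: short-time estimate.} Fix a deterministic $h>0$ and a stopping time $\sigma$, and work on $[\sigma,(\sigma+h)\wedge\tau_n]$. Write $c_{\sigma+\cdot}=S(\cdot)c_\sigma+S\ast(-\nabla z\nabla c+\widetilde k)+S\diamond\widetilde\beta(z,c)$ in shifted time, as in the gluing argument of Lemma \ref{lemma:CriticalUniqueness}. The estimates run in $\mathbb X_1\oplus\mathbb Y_\delta$, using Lemma \ref{lemma:XYembeddings}.
\begin{itemize}
\item \emph{Drift.} By Hölder,
\[
\norm{\nabla z\nabla c}_{L^p_{\beta_1}L^{q/2}}\le \norm{z_{\sigma+\cdot}}_{\mathbb W_1(h)}\norm{c_{\sigma+\cdot}}_{\mathbb W_1(h)}.
\]
The previous proposition gives $\norm{z_{\sigma+\cdot}}_{L^\infty(\Omega;\mathbb W_1(h))}\to0$ uniformly as $h\to0$. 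This uniformity is the point of the deterministic $L^\infty(\Omega)$ control on $z$, and it is why $\sigma$ is arbitrary, using \eqref{eq:uniformsmallnessweighted} together with \eqref{eq:uniformvanishingt0}.
\item \emph{Reaction.} $\widetilde k$ is bounded by the truncation, so its contribution is $O(h^{\alpha_1})$.
\item \emph{Noise.} Here the linear growth matters. Since $\widetilde\beta=\widetilde b(e^{-\eta(z)},c)$ has bounded derivatives, Prop. \ref{prop:nemytskiibounds} \eqref{item:NemytskiiBound} with $k=1$ bounds $\norm{\widetilde\beta(z,c)}_{\gamma(U,H^{\sigma,q})}$ affinely in $\norm{c}_{H^{\delta,q}}$ and $\norm{z}_{H^{\delta,q}}$. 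Remark \ref{rem:CriticalNewNemytskiiExplained} points to this; I would use the $H^{\delta,q}$ variant of the bound, otherwise work with $\sigma_0$ slightly above $\sigma$. Hence
\[
\norm{\widetilde\beta}_{L^p_{\beta_\delta}}\lesssim h^{\alpha_\delta/p}+\norm{z}_{\mathbb W_\delta(h)}+h^{\alpha_\delta/2}\norm{c}_{\mathbb W_\delta(h)},
\]
where the last factor comes from Hölder between the $L^p_{\beta_\delta}$ and $L^{2p}_{\beta_\delta}$ norms.
\end{itemize}
Taking $\mathbb E$ and using maximal regularity on stopped intervals (as in the proof of Lemma \ref{lemma:CriticalUniqueness}) gives
\[
\mathbb E\,\norm{c_{\sigma+\cdot}-S(\cdot)c_\sigma}^p_{\mathbb X_1\oplus\mathbb Y_\delta(h)}\le C\,\mathbb E\norm{c_\sigma}^p_{B^{d/q}_{q,2p}}+C+\varepsilon(h)\,\mathbb E\norm{c}^p_{(\mathbb W_1\cap\mathbb W_\delta)(h)},
\]
with $\varepsilon(h)\to0$. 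For small deterministic $h$ the last term is absorbed. Stopping at $\tau_n$ makes all quantities finite, so the absorption is legitimate.

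\textbf{Step 2: iteration to $T$.} Combine Step 1 with the trace embeddings of Prop. \ref{prop:continuousTrace}, used as in the continuity part of Thm. \ref{thm:localexistenceL^p}. This gives $\mathbb E\sup_{[\sigma,\sigma+h]}\norm{c}^p_{B^{d/q}_{q,2p}}\le C(1+\mathbb E\norm{c_\sigma}^p_{B^{d/q}_{q,2p}})$. Iterating over the deterministic grid $\sigma=kh$, with $k\le T/h$ steps, yields $\mathbb E\norm{c}^p_{(\mathbb W_1\cap\mathbb W_\delta)(T\wedge\tau_n)\cap C B^{d/q}_{q,2p}}\le C_T$, independently of $n$. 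These norms dominate $\Xi_0$, so $\Xi_0(\tau_{\max},c)<\infty$ a.s. The blow-up criterion then forces $\tau_{\max}=T$, and Fatou gives the stated moment bound.

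\textbf{Step 3: invariance.} To show $c_t\in\mathcal X_c$, I would use the variational identity \eqref{eq:variationalidentity} and apply an Itô formula to $\norm{(c-K)^+}^2_{L^2}$ and $\norm{(L-c)^+}^2_{L^2}$. This is justified by the remark that paths lie in $L^2H^{1,2}\cap L^\infty L^{q/(q-2)}$.
\begin{itemize}
\item The drift $-\nabla z\nabla c$ paired with $(c-K)^+$ gives $\tfrac12\int\nabla z\cdot\nabla((c-K)^+)^2$. I would either integrate this by parts to $-\tfrac12\int\Delta z\,((c-K)^+)^2$, or bound it by Young and Gagliardo–Nirenberg and close with Gronwall using $\nabla z\in L^4H^{1,q}$.
\item Assumption \ref{ass:Invariance} gives the right sign for $\widetilde k$ at the boundary.
\item The Itô correction needs $\widetilde\beta=0$ where $c\ge K$ or $c\le L$. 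That is not an assumption stated here; I expect to need the truncation of $b$ to vanish outside $\mathcal X_c$.
\end{itemize}
Once $c\in\mathcal X_c$, the truncations are inactive and $(z,c)$ solves \eqref{eq:StrongColeHopf}.

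\textbf{Main obstacle.} I expect the main obstacle to be Step 1: the uniform-in-$\omega$ smallness of $z$ on short intervals starting at arbitrary times, which is what makes the absorption of the bilinear term possible.
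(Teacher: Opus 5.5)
Your Steps 1--2 follow the paper's own argument: a short-time estimate in $\mathbb X_1\oplus\mathbb Y_\delta$, absorption of $\nabla z\nabla c$ via the uniform-in-$\omega$ smallness \eqref{eq:uniformvanishingt0}, \eqref{eq:uniformsmallnessweighted}, then iteration and the blow-up criterion. Your affine treatment of the noise is, if anything, more careful than the paper's display. The genuine gap is in Step 3.

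With $w=(c-K)^+$, Hölder and Gagliardo--Nirenberg give
$|\int w\,\nabla z\cdot\nabla w\d x|\lesssim\norm{\nabla z}_{L^q}\norm{w}_{H^{1,2}}^{1+d/q}\norm{w}_{L^2}^{1-d/q}$.
After Young, the Gronwall weight is therefore $\norm{\nabla z_t}_{L^q}^{2q/(q-d)}=\norm{\nabla z_t}_{L^q}^{2/\alpha_1}$.
\begin{itemize}
\item Because $q<2d$, one has $2/\alpha_1>4$, so $z\in L^4([0,T];H^{1,q})$ is not enough.
\item What you actually know about $z$ ($z\in\mathcal W_q$, i.e.\ $\norm{\nabla z_t}_{L^q}\lesssim t^{-\alpha_1/2}$, or $z\in\mathbb W_1$) gives $\norm{\nabla z}_{L^q}\in L^r_t$ only for $r<2/\alpha_1$.
\item The endpoint would correspond to $z_0\in B^{d/q}_{q,2/\alpha_1}$. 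This is strictly smaller than $B^{d/q}_{q,2p}$, since $p>1/\alpha_1$.
\item Integrating by parts instead requires $\Delta z\in L^{q/(q-d)}([0,T];L^{q/2})$, which is again the scaling-critical endpoint.
\end{itemize}
So the bilinear term is critical and is not absorbed by the dissipation near $t=0$. This is exactly the obstruction the paper records just before its invariance argument.

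Away from $t=0$ the weight is bounded, since $\nabla z\in L^\infty([t_0,T];L^q)$. Near $0$, however, you get a kernel of size $\kappa/t$. Gronwall then closes only if you also know a vanishing rate $\mathbb E\norm{(c_s-K)^+}_{L^2}^2=o(s^{\kappa})$, and your proposal does not supply one.

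The paper avoids the energy estimate at the critical level. It replaces $\nabla z$ by the truncated gradient $\nabla^\theta z$. The drift coefficient is then bounded, and Pardoux's comparison argument gives $c^\theta_t\in\mathcal X_c$. It then shows $c^\theta\to c^\lambda$ in $(\mathbb W_1\cap\mathbb W_\delta)(h)$ as $\theta\to\infty$, using two ingredients:
\begin{itemize}
\item the contraction estimates of Props.~\ref{prop:Fcontraction} and \ref{prop:Gcontraction}, with small factor $\norm{z}_{L^\infty(\Omega;(\mathbb W_1\cap\mathbb W_\delta)(h))}+h^{\alpha_\delta}+\lambda+\lambda^{-1}h^{\alpha_\delta}$;
\item dominated convergence for the term $\norm{c^\lambda}_{\mathbb W_1(h)}\norm{\nabla z-\nabla^\theta z}$.
\end{itemize}
Invariance is thus transported through the critical fixed-point norms, restarting at $\tau_{\mathcal X}$ and arguing by contradiction.

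Your remark that $\widetilde\beta$ must vanish on $\{c=L\}\cup\{c=K\}$ is legitimate: additive noise would destroy invariance. The paper's appeal to Pardoux's argument tacitly needs this as well, so it is not what separates your route from the paper's.
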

\begin{proof}
The proof is similar in concept to how we controlled $\phi$ in Lemmas \ref{lemma:initialtimephisol} and \ref{lemma:positive-time-phisol}. We will first prove that the stopping time $\tau^\lambda(0)$ has a deterministic lower bound using the previously derived bounds on $z$, and then improve this iteratively to a global bound. However, in contrast to Prop. \ref{prop:zmoreregularity}, we do not need to repeat the fixed point argument itself.

We first find bounds for $c$ on $[0,\tau_h]$ with $\tau_h \coloneqq \tau \wedge h$, by a very similar splitting procedure as in the proof of Lemma \ref{thm:localexistenceL^p}. For technical reasons, we localise the proof using stopping times $\tau^n_h = \tau^n\wedge \tau \wedge h$, where $\tau^n$ is the relative lifetime after $0$ with $\lambda = n$, cf. Def. \ref{def:localsol} \eqref{item:sollocalregularity}. Let $$\mathfrak X_n \coloneqq (\mathbb W_1 \cap \mathbb W_\delta)(\tau^n_h) \cap C([0,\tau^n_h];B^{d/q}_{q,2p}), \quad \mathfrak X \coloneqq (\mathbb W_1 \cap \mathbb W_\delta)(\tau_h) \cap C([0,\tau_h];B^{d/q}_{q,2p}).$$ Recall that $$c = S(\cdot)c_0 + S\ast \left(\nabla z \cdot \nabla c + \widetilde k(z,c)\right) + S\diamond \widetilde \beta$$
on $[0,\tau)$. Then the maximal regularity theorems show that
$$
\begin{aligned}
\mathbb E\left[\norm{c}^p_{\mathfrak X_n}\right] &\leq \mathbb E\left[\norm{S(\cdot)c_0}^p_{\mathfrak X}\right] + \mathbb E \left[\norm{S \ast(\nabla z \nabla c)}^p_{\mathbb X_1(\tau^n_h)}\right]  \\
&\quad + \mathbb E \left[\norm{S \ast\widetilde k(z,c)}^p_{\mathbb X_1(\tau^n_h)}\right] +  \mathbb E\left[\norm{S \diamond \widetilde \beta(z,c)}^p_{\mathbb Y_\delta(\tau^n_h)}\right]\\
&\leq \const  \mathbb E\left[\norm{c_0}^p_{B^{d/q}_{q,2p}}\right] + \const \mathbb E\left[\left(\norm{z}_{(\mathbb W_1\cap \mathbb W_\delta)(\tau^n_h)} +(\tau^n_h)^{\alpha_\delta}\right)^p\norm{c}^p_{\mathbb W_1(\tau^n_h)}\right] \\
&\quad + \const \mathbb E\left[ (\tau^n_h)^{2p\alpha_1}\right] +  \const \mathbb E\left[\norm{z}^{p}_{\mathbb W_\delta(\tau^n_h)} \right]\\
& \leq \const\left( \mathbb E\left[\norm{c_0}^p_{B^{d/q}_{q,2p}}\right]  + h^{2p\alpha_1} + \norm{z}^{2p}_{L^\infty(\Omega;\mathbb W_\delta(\tau^n_h))} \right) \\
&\quad+ \const\underbrace{\left(\norm{z}^p_{L^\infty(\Omega;(\mathbb W_1 \cap \mathbb W_\delta)(\tau^n_h))}+h^{p\alpha_\delta}\right)}_{\overset{h \to 0}{\to}0}\mathbb E\left[\norm{c}^p_{\mathfrak X_n}\right]
\end{aligned}
$$
where we used $\widetilde k(z,c) \in L^\infty(\Omega \times [0,T]\times \mathbb T^d)$, the bound \eqref{item:NemytskiiBound} from Prop. \ref{prop:nemytskiibounds} and the trace embedding \eqref{item:trace_with_weights_Xap} from Prop. \ref{prop:continuousTrace}. 
As already depicted above, the vanishing property \eqref{eq:uniformvanishingt0} implies that
$$\norm{z}^p_{L^\infty(\Omega;(\mathbb W_1 \cap \mathbb W_\delta)(\tau^n_h))}+h^{p\alpha_\delta}
$$
tends to $0$ uniformly in $h$. 
Fix now $h\ll1$ small enough. Then we can rearrange the inequality to see that, uniformly in $n$,  
$\mathbb E\left[\norm{c}^p_{\mathfrak X_n}\right]  < \infty$, and the limit $n \to \infty$ shows that $$\mathbb E\left[\norm{c}^p_{\mathfrak X}\right]  < \infty.$$
Since $\mathbb W_1(\tau_h) \hookrightarrow L^4([0,\tau_h];H^{1,q})$, we can conclude that $\tau_h$ cannot be a maximal stopping time and thus $\tau > h$. 

The main consequence we can apply \eqref{eq:uniformsmallnessweighted} now for $t=h$, and repeat the above procedure indefinitely for some $h'$ which is small relative to $h$ (since the bound only improves as $t$ gets larger). This directly implies that $\tau = T$. In particular, we find that $$\mathbb E\left[\norm{c}^p_{(\mathbb W_1 \cap \mathbb W_\delta)(T) \cap C([0,T];B^{d/q}_{q,2p})}\right] < \infty.$$
Due to the criticality of the nonlinearity, it is somewhat more cumbersome to prove boundedness of $c$. Namely, it is not directly possible to emulate the strategy employed in e.g. \cite[Thm. 2.24]{Pardoux2021}: the
critical nonlinearity is not absorbed by the dissipativity of the Laplacian. 
This would require the exact borderline integrability $\nabla z \in L^{\frac{2}{\alpha_1}}([0,T];H^{1,q})$, which is not assured.

To finish the proof, we show that 
$$\tau_{\mathcal X} =\inf\left\{t \geq 0: c_t \notin \mathcal X_c\right\} \wedge T = T.$$
Assume the contrary. Note that $c_{\tau_{\mathcal X}} \in \mathcal X_c$ and $c_{\tau_{\mathcal X}} \in L^p(\Omega;B^{d/q}_{q,2p})$. Without loss of generality, let $\tau_{\mathcal X} = 0$. For this type of initial condition, we can consider solutions $c^\theta$ of the doubly truncated equation
\begin{equation} \label{eq:CriticalDecoupledChemTruncGradEq}
\d c^\theta_t =  \left(\Delta  c^\theta_t - \psi_\lambda(t,z)\nabla^\theta z_t \nabla c^\theta_t + \psi_\lambda(t,z)\widetilde k(z_t,c^\theta_t) \right)\d t +\psi_\lambda(t,z)\widetilde \beta(z_t,c^\theta_t) \d W^Q_t
\end{equation} 
where $\psi_\lambda(t,z)$ is the truncation employed in the proof of Theorem \ref{thm:localexistenceL^p} and $\nabla^\theta$ is the truncated gradient defined by \begin{equation} \label{eq:GradientCutoff}
\nabla^\tau u \coloneqq \begin{cases}
\nabla u & \lvert \nabla u \rvert \leq \tau \\
\tau \frac{\nabla u}{\lvert \nabla u \rvert}
&\text{else.}\end{cases}    
\end{equation}
By the first part of this proof, it is evident that global solutions of this equation exist for any $\theta \in (0,\infty]$ even for $\psi_\lambda \equiv 1$. 
Moreover, it now quickly follows by the methods of \cite[Thm 2.24]{Pardoux2021} that $ c^\theta_t \in \mathcal{X}_c $ for all $t \in [0,T]$, due to the invariance Assumption \ref{ass:Invariance}. Let $c^\lambda$ denote the solution of \eqref{eq:CriticalDecoupledChemTruncGradEq} for $\theta = \infty$, so $c^\lambda \equiv c$ on $\tau^\lambda$. Analogous reasoning as in fixed point proofs (see Props. \ref{prop:Fcontraction} and \ref{prop:Gcontraction}) shows that 
$$\begin{aligned}
\mathbb E\left[\norm{c^\theta-c^\lambda}^p_{(\mathbb W_1\cap \mathbb W_\delta)(h)}\right] &\leq \const (\norm{z}^p_{L^\infty(\Omega;(\mathbb W_1\cap \mathbb W_\delta)(h))}+h^{\alpha_\delta}+\lambda+\lambda^{-1}h^{\alpha_\delta})\mathbb E\left[\norm{c^\theta-c^\lambda}^p_{(\mathbb W_1\cap \mathbb W_\delta)(h)}\right] \\
&\quad + \const \mathbb E\left[\norm{c^\lambda}^p_{(\mathbb W_1)(h)} \norm{\nabla z - \nabla^\theta z}^p_{(\mathbb W_1\cap \mathbb W_\delta)(h)} \right].
\end{aligned}$$
Here, the need for a localisation stems from the dispersion coefficient, which is only locally Lipschitz. Since $\norm{\nabla z - \nabla^\theta z}^p_{(\mathbb W_1\cap \mathbb W_\delta)(h)} \in L^\infty(\Omega)$ vanishes $\mathbb P$-almost surely as $\theta \to \infty$, the Lebesgue DCT shows that for small enough $h$ that $$\mathbb E\left[\norm{c^\theta-c^\lambda}^p_{(\mathbb W_1\cap \mathbb W_\delta)(h)}\right]  \leq \const \mathbb E\left[\norm{c^\lambda}^p_{(\mathbb W_1)(h)} \norm{\nabla \phi - \nabla^\theta \phi}^p_{(\mathbb W_1\cap \mathbb W_\delta)(h)} \right] \overset{\theta \to \infty}{\to} 0.$$
Therefore, it must be that $c^\lambda \in \mathcal X_c$, $\d t \otimes \d \mathbb P$-almost surely. By continuity of $c \equiv c^\lambda$, this extends to any $t \in [0,\tau^\lambda]$. Therefore, $\tau_{\mathcal X}$ cannot be maximal and we reach a contradiction.
\end{proof}

\begin{remark}
In the special case $\delta = 1$, $p = \frac{q}{q-d}$, the weight $\beta_1 = 0$ vanishes so $$\mathbb X_1(h) \hookrightarrow \mathbb W_1(h) = L^{2p}([0,h];H^{1,q}).$$ In this case, one can equivalently use the Agresti--Veraar blow up criterion \eqref{eq:AgrestiVeraarBlowup}. For $p > \frac{q}{q-d}$, the weight is strictly positive, but a sliding window approach on intervals $[a,a+h]$ for $a > 0$ should still recover most of the argument above using their criterion. The initial-time estimate $\tau > h>0$ requires a different argument, since the weight at $t= 0$ implies $$\mathbb W_1(h) \not \hookrightarrow L^{2p}([0,h];H^{d/q+1/p,q}).$$ This can probably be circumvented by a localisation on sets $\{\tau > a\}$ for $a > 0$, since $$\mathbb W_1(h)\hookrightarrow L^{2p}([a,h];H^{d/q+1/p,q})$$ for any $ a \in (0,h)$. So for $\delta = 1$, one could in principle also work with the Agresti-Veraar condition, provided one can identify the respective notions of solutions.
\end{remark}

We are now ready to extend this to any measurable initial conditon with $z_0 \geq 0$ almost surely.
\begin{proof}[Proof of Theorem \ref{thm:phasefieldglobalstrongex}]
Let $z_0, c_0$ be measurable with $z_0 \geq 0$. By Ulam's theorem, finite measures on polish spaces can be exhausted by compact sets and thus we can choose increasing sequences of compact sets $Z_n, C_n \subset B^{d/q}_{q,2p}$ such that $\mathbb P(z_0 \in Z_n, c_0 \in C_n) \to 1$. Introduce moreover $$Z_{n, \varepsilon} = \left\{ z_0 \in Z_n: \int e^{-z_0} \d x > \varepsilon \right\}$$
Let $\varepsilon_n \to 0$ be chosen so that $\mathbb P(z_0 \in Z_{n,\varepsilon_n}) > \frac{n-1}{n}\mathbb P(z_0 \in Z_n)$ and set $$z^{(n)}_0 \coloneqq \mathds{1}_{Z_{n,\varepsilon_n}}(z_0)z_0, \quad c^{(n)}_0 \coloneqq \mathds{1}_{C_n}(c_0)c_0.$$ By the preceding result, global probabilistically strong, mild solutions of \eqref{eq:StrongColeHopf} exist for with initial condition $(z^{(n)}_0, c^{(n)}_0)$. By Lemma \ref{lemma:CriticalUniqueness}, localised to the event $\{z_0 \in Z_{n,\varepsilon_n} , c_0 \in C_n\}$ (cf. Prop. \ref{prop:localisation}), these global solutions must coincide with the local solution with initial condition $(z_0, c_0)$ whenever $z_0 \in Z_n$ and $c_0 \in C_n$. Taking $n \to \infty$ now shows that $(z,c)$ is a global solution of \eqref{eq:StrongColeHopf}. Moreover, setting $\phi = e^{-z}$ gives a solution of \eqref{eq:StrongPhaseFieldABS}. 

At last, we show uniqueness of solutions $(\phi,c)$ of the original equation. Let initial data $\phi_0 \in \mathcal X_\phi,c_0 \in \mathcal X_c$ with $\log \phi_0,c_0 \in B^{d/q}_{q,2p}$ be given. Then, by our previous proofs, there exists a unique solution $(z,c)$ of the transformed system. Now, let $(\bar \phi, \bar c)$ denote a mild solution of the phase-field system with the specified regularities, with initial values $\phi_0,c_0$. In particular, it is a local mild solution in the sense of Def. \ref{def:localsol}. We note that $\phi_t \in \mathcal X_\phi, c_t \in \mathcal X_c$ for all $t\in [0,T]$ follows as before.

Let $\bar z$ now denote the solution of \eqref{eq:decoupledcolehopf} with random parameter $\bar c$ and initial value $\log \frac{1}{\phi_0}$. As in the previous sections, we can identify $\bar z = \log \frac{1}{\bar \phi}$ with the appropriate regularities we derived. This in particular implies that $(\bar z, \bar c)$ is a local mild solution of \eqref{eq:StrongColeHopf} and therefore, $z = \bar z, c = \bar c$.
\end{proof}
\section*{Acknowledgement}
Amjad Saef is supported by Deutsche Forschungsgemeinschaft (DFG, German Research
Foundation) under Germany's Excellence Strategy - The Berlin Mathematics
Research Center MATH+ (EXC-2046/1, project ID: 390685689).

\appendix
\section*{Appendix}
\section{Gluings of stochastic convolutions}
\begin{lemma} \label{lemma:stopshift}
Let $\sigma$ be a stopping time,
and denote by $(\mathcal F^\sigma_t)_{t \geq 0} \coloneqq (\mathcal F_{\sigma+t})_{t \geq 0}$ the corresponding shifted 
filtration. Let $G \colon \Omega \times \R_{\geq0} \rightarrow E$ denote an $(\mathcal F^\sigma_t)_{t \geq 0}$-progressively measurable process with values in a topological vector space $E$ and let $\tau(\sigma) \geq 0$ denote a stopping time w.r.t. $(\mathcal F^\sigma_t)_{t \geq 0}$. Then 
\begin{enumerate}[(i)]
    \item\label{item:shift} The process $\widetilde G_t \coloneqq \mathds1_{\{t \geq \sigma\}} G_{(t-\sigma)_+}$ is $\mathcal F_t$-progressively measurable and
    \item\label{item:stoptime} $\sigma + \tau(\sigma)$ is an $\mathcal F_t$-stopping time.
\end{enumerate}
\end{lemma}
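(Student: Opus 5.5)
We must prove Lemma \ref{lemma:stopshift}: (i) progressive measurability of the shifted process $\widetilde G_t = \mathds 1_{\{t\ge\sigma\}}G_{(t-\sigma)_+}$, and (ii) that $\sigma+\tau(\sigma)$ is an $\mathcal F_t$-stopping time.

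We start with (ii), since it is elementary. Fix $t\ge 0$ and decompose along rational values of $\sigma$:
$$\{\sigma+\tau(\sigma)< t\}=\bigcup_{r\in\mathbb Q\cap[0,t)}\{\sigma< r\}\cap\{\tau(\sigma)< t-r\}.$$
Because $\tau(\sigma)$ is an $(\mathcal F_{\sigma+s})_s$-stopping time, $\{\tau(\sigma)<t-r\}\in\mathcal F_{\sigma+t-r}$. By the defining property of $\mathcal F_{\sigma+t-r}$, intersecting with $\{\sigma+t-r\le t\}=\{\sigma\le r\}$ gives a set in $\mathcal F_t$. We therefore replace $\{\sigma<r\}$ by $\{\sigma\le r\}$ in the union; the union is unchanged, since for $\sigma=r$ one can instead pick a smaller rational when $\tau(\sigma)<t-\sigma$ strictly. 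This shows $\{\sigma+\tau(\sigma)<t\}\in\mathcal F_t$. Since the filtration is right-continuous by the usual conditions, the optional time $\sigma+\tau(\sigma)$ is a stopping time. One could alternatively argue directly with $\{\le t\}$ using the fact that $\sigma+\tau(\sigma)$ is $\mathcal F_{\sigma+\tau(\sigma)}$-measurable, but the rational decomposition suffices.

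For (i), we use a monotone class argument. First we treat elementary integrands
$$G_s(\omega)=\mathds 1_{A}(\omega)\mathds 1_{(a,b]}(s)\,x,\qquad A\in\mathcal F_{\sigma+a},\ x\in E.$$
For these, $\widetilde G_t=\mathds 1_A\mathds 1_{\{\sigma+a<t\le\sigma+b\}}\,x$. We then check that $\mathds 1_A\mathds 1_{\{\sigma+a<t\}}$ is adapted and left-continuous: $A\cap\{\sigma+a< t\}\in\mathcal F_t$ is the standard property of $\mathcal F_{\sigma+a}$. Moreover, $\{t\le\sigma+b\}$ is the complement of $\{\sigma+b<t\}\in\mathcal F_t$. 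Hence $\widetilde G$ is a left-continuous adapted process, and thus predictable, hence progressive. The same holds for the sets $A\times\{0\}$ with $A\in\mathcal F_\sigma$.

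Such rectangles generate the predictable $\sigma$-algebra of the shifted filtration, and the map $G\mapsto\widetilde G$ commutes with indicators, sums, and pointwise limits. A monotone class theorem therefore shows that $\mathds 1_{\{t\ge\sigma\}}\mathds 1_{C}(\omega,(t-\sigma)_+)$ is $\mathcal F_t$-progressive for every predictable set $C$. This then extends to measurable $E$-valued predictable $G$ via approximation by simple functions; we take $E$ metrizable/separable implicitly, as in the applications, where $E$ is a separable Banach space.

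The main obstacle is that $G$ is only assumed progressive, not predictable. To handle this, we work directly with progressive sets. Fix $T$ and a set $C\subset\Omega\times[0,\infty)$ with $C\cap(\Omega\times[0,s])\in\mathcal F_{\sigma+s}\otimes\mathcal B([0,s])$ for all $s$. We must show that the map $(\omega,t)\mapsto(\omega,t-\sigma(\omega))$ on $\{\sigma\le t\le T\}$ is measurable from $\mathcal F_T\otimes\mathcal B([0,T])$ restricted there into $\mathcal F_{\sigma+T}\otimes\mathcal B$, intersected appropriately. We will do this with a monotone class argument over product sets $A\times[0,u]$, $A\in\mathcal F_{\sigma+u}$. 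Their preimages are $A\cap\{\sigma\le t\le\sigma+u\}\cap\{t\le T\}$, and these are checked to lie in $\mathcal F_T\otimes\mathcal B([0,T])$ by writing $\{t\le \sigma+u\}$ via rational approximation and using $A\cap\{\sigma+u\le r\}\in\mathcal F_r$. This step is the only delicate point; everything else is routine.
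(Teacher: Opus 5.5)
Your proof of (ii) is correct and more direct than the paper's, which obtains (ii) by applying (i) to $H_s=\mathds 1_{\{\tau(\sigma)\le s\}}$. One small slip: for $\sigma=r$ you need a slightly \emph{larger} rational $r'\in(\sigma,t-\tau(\sigma))$, not a smaller one. The two unions agree anyway, since both equal $\{\sigma+\tau(\sigma)<t\}$.

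Your predictable-case argument for (i) is essentially the paper's proof. The paper shows that the sets $A$ with $\Gamma_t(A)\in\mathcal P_t$ form a $\sigma$-algebra containing $(a,b]\times A'$ with $A'\in\mathcal F_{\sigma+a}$ and $\{0\}\times A'$ with $A'\in\mathcal F_\sigma$. It does this by writing the shifted indicators as differences of c\`adl\`ag and left-continuous adapted indicators. That decomposition also covers your $\{0\}\times A$ case, where $\mathds 1_A\mathds 1_{\{t=\sigma\}}$ is not left-continuous. The paper then asserts that these rectangles generate the progressive $\sigma$-algebra of the shifted filtration. As you rightly observe, they generate only the predictable one. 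So the progressive case is the real content of (i). It is also what the paper's deduction of (ii) relies on, since $\mathds 1_{\{\tau(\sigma)\le s\}}$ is c\`adl\`ag but in general not predictable.

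Your treatment of the progressive case, however, does not work as written.

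First, the map $\Phi(\omega,t)=(\omega,t-\sigma(\omega))$ is not measurable from $\mathcal F_T\otimes\mathcal B([0,T])$ into $\mathcal F_{\sigma+T}\otimes\mathcal B$. For $\sigma\equiv 1<T$ and $A\in\mathcal F_{1+T}\setminus\mathcal F_T$, the preimage of $A\times[0,T]$ is $A\times[1,T]$, which is not in $\mathcal F_T\otimes\mathcal B([0,T])$.

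Second, your generators $A\times[0,u]$ with $A\in\mathcal F_{\sigma+u}$ are not even progressive for $(\mathcal F_{\sigma+s})_{s\ge 0}$, since their section at $s<u$ is $A$. Their preimages also fail to lie in $\mathcal F_T\otimes\mathcal B([0,T])$: for $\sigma\equiv 0$, $u>T$ and $A\in\mathcal F_u\setminus\mathcal F_T$, the preimage is $A\times[0,T]$. No rational approximation repairs this, because on $\{\sigma+u>T\}$ the property $A\cap\{\sigma+u\le r\}\in\mathcal F_r$ is only usable with $r>T$.

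The missing idea is to cut a progressive set $C$ down to the region $\{s\le T-\sigma\}$, where the shifted information is known at time $T$, and to run the good-sets argument inside a \emph{fixed} product $\sigma$-algebra.

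For rational $u\in[0,T]$ you have $C\cap(\Omega\times[0,u])\in\mathcal F_{\sigma+u}\otimes\mathcal B([0,u])$. You also have $A\cap\{\sigma+u\le T\}\in\mathcal F_T$ for every $A\in\mathcal F_{\sigma+u}$. Together these give $C\cap(\{\sigma\le T-u\}\times[0,u])\in\mathcal F_T\otimes\mathcal B([0,u])$.

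The union of these sets over $u\in\mathbb Q\cap[0,T]$ lies between $C\cap\{s<T-\sigma\}$ and $C\cap\{s\le T-\sigma\}$. Moreover, $\Phi$ is $\mathcal F_T\otimes\mathcal B([0,T])$-measurable on $\{\sigma\le t\le T\}$, because $\sigma\wedge T$ is $\mathcal F_T$-measurable. Pulling back with $C=\{G\in B\}$, and adding the trivial part $\{t<\sigma\}\cap\{0\in B\}$, gives $\{(\omega,t)\in\Omega\times[0,T):\widetilde G_t\in B\}\in\mathcal F_T\otimes\mathcal B([0,T])$.

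For the remaining slice $t=T$, repeat this with $T+\varepsilon$ in place of $T$. The section at $t=T$ is then $\mathcal F_{T+\varepsilon}$-measurable, and right-continuity of the filtration gives $\mathcal F_T$-measurability.

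Working with the sets $\{G\in B\}$ directly, as the paper does, also removes your separability assumption on $E$.
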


\begin{proof}
We first prove that $\widetilde G_t$ is $\mathcal F_t$-progressively measurable as an $E$-valued random variable. Fix $t \geq 0$. Let $\mathcal P_t$ denote the progressive $\sigma$-algebra on $[0,t] \times \Omega$ associated with $(\mathcal F_s)_{s \in [0,t]}$, and let $\mathcal P_t^\sigma$ denote the progressive $\sigma$-algebra on $[0,t] \times \Omega$ associated with $(\mathcal F^\sigma_s)_{s \in [0,t]}$. For $A \subseteq [0,t] \times \Omega$, define
$$
\Gamma_t(A)
\coloneqq
\left\{ (s,\omega) \in [0,t] \times \Omega : \sigma(\omega) \leq s \text{ and } (s-\sigma(\omega),\omega)\in A \right\}.
$$
Set $\mathcal D_t
\coloneqq
\left\{A\in \mathcal P_t^{\sigma} : \Gamma_t(A) \in \mathcal P_t \right\}.$
We claim that $\mathcal D_t=\mathcal P_t^\sigma$. Under this assumption, progressively measurability of  $G$ implies that
$$
A_B
\coloneqq
\left\{( s,\omega )\in [ 0,t]\times\Omega : G_s (\omega)\in B \right\} \in \mathcal D_t
$$
for any Borel set $B \in \mathcal B(E)$. By definition, it follows that
$
\Gamma_t( A_B )
\in\mathcal P_t.
$
But then
$$
\left\{ (s,\omega)\in [ 0,t]\times\Omega : \widetilde G_s(\omega)\in B \right\}
=
\Gamma_t(A_B)
\cup
\left( \Gamma_t([0,t]\times \Omega)^c\cap\left\{ 0 \in B \right\} \right)\in\mathcal P_t.
$$
Hence $\widetilde G_t$ is $\mathcal P_t$-measurable. Since $t\geq 0$ was arbitrary, $\widetilde G$ is $\left( \mathcal F_t \right)_{t \geq 0}$-progressively measurable.

It is left to prove that $\mathcal D_t = \mathcal P^\sigma_t$. First, we show that this set defines a $\sigma$-algebra. By definition, $\Gamma_t(\emptyset) = \emptyset \in \mathcal P_t$. Further,
$$
\Gamma_t([0,t]\times \Omega)
=
\left\{ (s,\omega)\in[0,t]\times\Omega : \sigma(\omega)\leq s \right\}.
$$
Consider the process
$
H_s(\omega)\coloneqq \mathds 1_{\{\sigma(\omega)\leq s\}}$, $s\in[0,t].
$
For every $s\in[0,t]$, the random variable $H_s$ is $\mathcal F_s$-measurable since
$
\{H_s=1\}=\{\sigma\leq s\}\in\mathcal F_s.
$
Moreover, for every $\omega\in\Omega$, the path
$
s\mapsto  H_s(\omega)=\mathds 1_{\{\sigma(\omega)\leq s\}}
$
is càdlàg. Hence $H$ is progressively measurable. Therefore
$$
\Gamma_t([0,t]\times \Omega)
=
\left\{ (s,\omega)\in[0,t]\times\Omega : H_s(\omega)=1 \right\}\in\mathcal P_t.
$$
Further, $\Gamma_t$ commutes with countable unions and
$
\Gamma_t(A \setminus B)
=
\Gamma_t(A) \setminus \Gamma_t(B).
$
It follows that $\mathcal  D_t$ is closed under complements and countable unions. Thus, it remains to check that $\mathcal D_t$ contains a generating class of $\mathcal P_t^{\sigma}$. A standard generating class for $\mathcal P_t^{\sigma}$ is given by sets of the form 
$$
\Set{(a,b ]\times A':
 0 \leq a < b \leq t \text{ and } A' \in \mathcal F_{\sigma+a}} \cup \Set{\Set{0 }\times A':  A' \in \mathcal F_\sigma}.
$$
All we need to show is that this generating class belongs to $\mathcal D_t$. In this case, $\mathcal D_t$ must coincide with $\mathcal P^\sigma_t$. Let first $A=\{0\}\times A'$ with $A'\in\mathcal F_\sigma$. Then
$$
\Gamma_t(A)
=
\left\{ (s,\omega)\in[0,t]\times\Omega : \omega\in A' \text{ and } \sigma(\omega)=s \right\}.
$$
Define
$
H_s(\omega)\coloneqq \mathds 1_{A'}(\omega)\mathds 1_{\{\sigma(\omega)\leq s\}}$ and $
K_s(\omega)\coloneqq \mathds 1_{A'}(\omega)\mathds 1_{\{\sigma(\omega)< s\}}.
$
For every $s\in[0,t]$, one has
$
\{H_s=1\}=A'\cap\{\sigma\leq s\}\in\mathcal F_s
$
by the defining property of $\mathcal F_\sigma$, and
$$
\{K_s=1\}
=
A'\cap\{\sigma<s\}
=
\bigcup_{q\in\mathbb Q\cap[0,s)} A'\cap\{\sigma\leq q\}\in\mathcal F_s.
$$
Thus both $H$ and $K$ are adapted. Moreover, for every $\omega\in\Omega$, the path $s\mapsto H_s(\omega)$ is càdlàg and the path $s\mapsto K_s(\omega)$ is left-continuous. Hence both $H$ and $K$ are progressively measurable. Since
$$
\mathds 1_{\Gamma_t(A)} = H-K,
$$
it follows that $\Gamma_t(A)\in\mathcal P_t$, so $A\in\mathcal D_t$. If $A=\left( a,b \right]\times A'$ with $0\leq a<b\leq t$ and $A'\in\mathcal F_{\sigma+a}$, then
$$
\Gamma_t(A)
= 
\left\{ (s,\omega)\in[0,t]\times\Omega : \omega\in A' \text{ and } \sigma(\omega)+a<s\leq \sigma(\omega)+b \right\}.
$$
Define
$
H_s(\omega)\coloneqq \mathds 1_{A'}(\omega)\mathds 1_{\{\sigma(\omega)+a< s\}}$ and $
K_s(\omega)\coloneqq \mathds 1_{A'}(\omega)\mathds 1_{\{\sigma(\omega)+b< s\}}$.
Since $A'\in\mathcal F_{\sigma+a}$, we find by definition that both $H$ and $K$ are adapted. As before, progressively measurability follows. Finally,
$
\mathds 1_{\Gamma_t(A)} = H-K,
$
and therefore $\Gamma_t(A)\in\mathcal P_t$. Thus $A\in\mathcal D_t$. This completes the proof that $\mathcal D_t=\mathcal P_t^\sigma$.

We can now reduce \eqref{item:stoptime} to \eqref{item:shift}. Let $\tau=\tau(\sigma)$ be a stopping time with respect to the filtration $(\mathcal F^\sigma_t)_{t \geq 0}$ and once again define the progressively measurable process
$
H_s \coloneqq \mathds 1_{\{\tau(\sigma)\leq s\}}.
$
By applying part \eqref{item:shift} to $H$, we find that
$$
\widetilde H_t
\coloneqq
\mathds 1_{\{t\geq \sigma\}} H_{(t-\sigma)_+}
$$
is $\left(\mathcal F_t\right)_{t \geq 0}$-adapted. Since $\tau(\sigma) \geq 0$, we have
$$
\widetilde H_t
=
\mathds 1_{\{t\geq \sigma\}}\mathds 1_{\{\tau(\sigma)\leq t-\sigma\}}
=
\mathds 1_{\{\sigma+\tau(\sigma)\leq t\}}.
$$
Therefore
$
\{\sigma+\tau(\sigma)\leq t\}\in\mathcal F_t.$
This proves that $\sigma+\tau(\sigma)$ is a stopping time with respect to $\left(\mathcal F_t\right)_{t \geq 0}$.
\end{proof}

\begin{corollary} \label{cor:operatorstopshift}
If $G$ satisfies the conditions of Thm. \ref{thm:maxestimatestochconv}, then so does $\widetilde G$. In particular, $\widetilde G$ is stochastically integrable with respect to $W$.
\end{corollary}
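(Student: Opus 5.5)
The plan is to check the two parts of the hypothesis of Theorem \ref{thm:maxestimatestochconv} separately for $\widetilde G_t = \mathds 1_{\{t\geq\sigma\}}G_{(t-\sigma)_+}$. These are progressive measurability with respect to $(\mathcal F_t)_{t\geq0}$ and pathwise $L^p(\mathbb R_+;\gamma(U,\mathcal B))$-integrability. Integrability with respect to $\bm W$ then follows from the localised stochastic integration theory of \cite{VanNeervenVeraarStochIntUMD} recalled before Theorem \ref{thm:maxestimatestochconv}.

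\emph{Measurability.} Apply Lemma \ref{lemma:stopshift}\eqref{item:shift} with $E=\gamma(U,\mathcal B)$, equipped with its norm topology and Borel $\sigma$-algebra. Since $\mathcal B$ is separable and $U$ is a separable Hilbert space, $\gamma(U,\mathcal B)$ is a separable Banach space. Hence Borel measurability and strong measurability coincide by Pettis' theorem, and the lemma directly yields that $\widetilde G$ is $(\mathcal F_t)_{t\geq0}$-progressively measurable as a strongly measurable $\gamma(U,\mathcal B)$-valued process.

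\emph{Integrability.} Fix $\omega$ and substitute $t=s+\sigma(\omega)$. Because $\widetilde G$ vanishes on $[0,\sigma)$, this gives
\[
\int_0^\infty \norm{\widetilde G_t(\omega)}^p_{\gamma(U,\mathcal B)}\d t=\int_0^\infty \norm{G_s(\omega)}^p_{\gamma(U,\mathcal B)}\d s ,
\]
and the same identity holds for the $L^2$-norms. So $\norm{\widetilde G(\omega)}_{L^p(\mathbb R_+;\gamma(U,\mathcal B))}=\norm{G(\omega)}_{L^p(\mathbb R_+;\gamma(U,\mathcal B))}<\infty$ almost surely. As a consequence, any moment bound on $G$ transfers verbatim to $\widetilde G$; in particular the right-hand side of \eqref{eq:maxestimatestochconvo} is unchanged.

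It remains to see that $\omega\mapsto\widetilde G(\omega)$ is strongly measurable as an $L^p(\mathbb R_+;\gamma(U,\mathcal B))$-valued random variable, so that $\widetilde G\in L^0_{\mathcal F}(\Omega;L^p(\mathbb R_+;\gamma(U,\mathcal B)))$. This is the only step needing some care, and I would handle it as follows. Joint measurability of $(t,\omega)\mapsto\widetilde G_t(\omega)$, which is implied by progressive measurability, together with Fubini's theorem shows that $\omega\mapsto\int_0^\infty\langle \widetilde G_t(\omega),\varphi_t\rangle\d t$ is measurable for $\varphi$ in a countable norming subset of the dual $L^{p'}(\mathbb R_+;\gamma(U,\mathcal B)^*)$. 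Such a countable norming set exists since $L^p(\mathbb R_+;\gamma(U,\mathcal B))$ is separable. Pettis' theorem then yields strong measurability.

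Finally, for each $T$, the embedding \eqref{eq:stochintembedding} (valid since $\mathcal B$ has type $2$) together with $L^p([0,T])\hookrightarrow L^2([0,T])$ for $p\geq2$ places $\widetilde G$ in the class of progressively measurable integrands in $L^0(\Omega;\gamma(L^2([0,T];U),\mathcal B))$. Therefore $\int_0^\cdot\widetilde G\d\bm W$ is well-defined by localisation, and Theorem \ref{thm:maxestimatestochconv} applies to $\widetilde G$. I expect the main potential obstacle to be only this measurability bookkeeping; no new estimate is needed beyond Lemma \ref{lemma:stopshift}.
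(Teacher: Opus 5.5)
Your proposal is correct and follows the paper's own argument. Progressive measurability comes from Lemma \ref{lemma:stopshift}\eqref{item:shift}, and pathwise integrability from the change of variables $t=s+\sigma$; your Pettis-type argument for strong measurability of $\omega\mapsto\widetilde G(\omega)$ and your type-$2$ embedding step only make explicit what the paper leaves implicit. One small caveat: if $\sigma$ may take the value $\infty$, then $\widetilde G\equiv 0$ there, so your norm identity becomes the inequality $\leq$, which is all you need.
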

\begin{proof}
Naturally, pathwise Lebesgue integrability transfers to the shifted process and thus $\widetilde G \in L^2(\mathbb R_+;\gamma(U,\mathcal B))$. The necessary measurability properties now follow by the preceding lemma.
\end{proof}

\begin{proposition}
Assume the setting of Corollary \ref{cor:operatorstopshift}. Then, for their respective continuous modifications,
$$\int_0^t G_s \d \widetilde { W}_s = \int_0^{\sigma+t} \widetilde G_s \d { W}_s$$ holds for all $t \geq 0$, $\P$-almost surely.
\end{proposition}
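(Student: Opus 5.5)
The plan is to prove the identity first for elementary integrands, extend it by density and the Itô isomorphism \eqref{eq:itoisometrybanach} to $L^2$-integrable processes, and then localise to general integrands. Here $\widetilde W_t = W_{\sigma+t}-W_\sigma$ is an $(\mathcal F^\sigma_t)$-cylindrical Wiener process by the strong Markov property, and $\widetilde G$ is $\mathcal F_t$-progressively measurable and $W$-integrable by Lemma \ref{lemma:stopshift} and Corollary \ref{cor:operatorstopshift}. Since $\widetilde G$ vanishes on $[0,\sigma)$, both sides are continuous processes in $t$, so it suffices to prove equality almost surely for each fixed $t$ (or each rational $t$) and then pass to indistinguishability by continuity.

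\emph{Step 1 (elementary integrands).} Take $G = \mathds 1_{(a,b]\times A'}\, x\otimes h$ with $A'\in\mathcal F^\sigma_a=\mathcal F_{\sigma+a}$, $h\in U$, $x\in\mathcal B$. Then
\[
\int_0^t G_s\d\widetilde W_s = \mathds 1_{A'}\bigl(\widetilde W_{b\wedge t}h-\widetilde W_{a\wedge t}h\bigr)x = \mathds 1_{A'}\bigl(W_{\sigma+b\wedge t}h - W_{\sigma+a\wedge t}h\bigr)x .
\]
On the other hand, $\widetilde G = \mathds 1_{A'}\mathds 1_{(\sigma+a,\sigma+b]}\, x\otimes h$ on $[0,\sigma+t]$. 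This is a stochastic interval integrand between two stopping times with an $\mathcal F_{\sigma+a}$-measurable factor, so the standard identity $\int \mathds 1_{A'}\mathds 1_{(\rho_1,\rho_2]}\d W h = \mathds 1_{A'}(W_{\rho_2}h-W_{\rho_1}h)$ applies. That identity holds for $A'\in\mathcal F_{\rho_1}$ and is proved by approximating $\rho_i$ from above by dyadic stopping times and using $L^2$-continuity of the integral. Integrands of the form $\{0\}\times A'$ contribute nothing. By linearity, the identity holds for all $(\mathcal F^\sigma_t)$-elementary processes.

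\emph{Step 2 (density).} For $G\in L^p_{\mathcal F^\sigma}(\Omega;L^2(\R_+;\gamma(U,\mathcal B)))$, choose elementary $G^n\to G$. The shift map $G\mapsto\widetilde G$ is isometric from $L^2(\R_+;\gamma(U,\mathcal B))$ into $L^2(\R_+;\gamma(U,\mathcal B))$ pathwise, so $\widetilde{G^n}\to\widetilde G$ in the same norm. The embedding \eqref{eq:stochintembedding} together with \eqref{eq:itoisometrybanach} (for both Wiener processes) then gives convergence of both sides in $L^p(\Omega;C([0,T];\mathcal B))$, and the identity passes to the limit.

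\emph{Step 3 (localisation).} For $G\in L^0$ with pathwise $L^2$ integrability, set $\rho_n=\inf\{s:\int_0^s\norm{G_r}^2_{\gamma(U,\mathcal B)}\d r>n\}$, an $(\mathcal F^\sigma_t)$-stopping time. Then $\sigma+\rho_n$ is an $\mathcal F_t$-stopping time by Lemma \ref{lemma:stopshift} \eqref{item:stoptime}, and $\widetilde{G\mathds 1_{[0,\rho_n]}} = \widetilde G\mathds 1_{[0,\sigma+\rho_n]}$. Applying Step 2 to the truncated integrands and using Corollary \ref{cor:stopequiv}-type locality of both integrals, the identity holds on $[0,\rho_n]$. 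Letting $n\to\infty$, $\rho_n\uparrow\infty$ almost surely, which gives the claim.

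The main obstacle is Step 1 with random endpoints. One must check carefully that $A'\in\mathcal F_{\sigma+a}$ is exactly the measurability needed to apply the stopping-time interval identity, and that $\widetilde W$ is genuinely a cylindrical Wiener process for the shifted filtration. I would handle the latter via Lévy's characterisation applied to $\widetilde W h$ for each $h\in U$, using optional stopping for the martingales $W_th$ and $(W_th)^2-t\norm{h}^2$.
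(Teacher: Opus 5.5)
Your proposal is correct. Its skeleton is the same as the paper's: elementary integrands, density, localisation, then continuity to pass from fixed $t$ to all $t$. The genuine difference is how you reach the $\mathcal B$-valued case.

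\textbf{The paper's route.} It runs the elementary/density/localisation argument only for scalar integrands with values in $U^\ast\cong\gamma(U,\R)$, using the scalar It\^o isometry in $L^2(\Omega)$. It then lifts the identity to $\mathcal B$ by duality: by \cite[Thm.~5.9]{VanNeervenVeraarStochIntUMD}, $\langle \int_0^t G_s\d\widetilde W_s,x^\ast\rangle=\int_0^t x^\ast G_s\d\widetilde W_s$, and likewise for $\widetilde G$. Equality on a countable separating subset of $\mathcal B^\ast$ then gives $I_t=J_t$ almost surely.

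\textbf{Your route.} You approximate directly by rank-one $\mathcal B$-valued elementary processes. You pass to the limit with the type~2 embedding \eqref{eq:stochintembedding} and the UMD It\^o isomorphism \eqref{eq:itoisometrybanach}.

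\textbf{Comparison.}
\begin{itemize}
\item Your route gives convergence uniformly in time in $L^p(\Omega;C)$. This handles the random upper limit $\sigma+t$ in the limiting step directly and makes the separability/duality step unnecessary.
\item The paper's route, conversely, needs no density of elementary processes in the vector-valued integrand space. It uses only scalar stochastic calculus in the approximation.
\item You make explicit two facts the paper only asserts. The first is the Wiener property of $\widetilde W$ for the shifted filtration. The second is the random-interval identity $\int\mathds{1}_{A'}\mathds{1}_{(\rho_1,\rho_2]}\d W h=\mathds{1}_{A'}(W_{\rho_2}h-W_{\rho_1}h)$ for $A'\in\mathcal F_{\rho_1}$. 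The latter is exactly what the paper's elementary computation silently uses when it rewrites $\sum_m\xi_m(W_{\sigma+t_{m+1}}-W_{\sigma+t_m})$ as a $W$-integral.
\end{itemize}

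\textbf{A small technical point.} \eqref{eq:itoisometrybanach} is stated on $[0,T]$. When $\sigma$ is unbounded, you should use its version on $\R_+$ to control the right-hand side at time $\sigma+t$. That version follows from the $[0,T]$ estimate by monotone convergence in $T$.
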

\begin{proof}
Fix $t>0$ and set $\widetilde {W}_s \coloneqq {W}_{\sigma+s}-{W}_{\sigma}$ for $s\geq 0$.
By the strong Markov property, $\widetilde {W}$ is a $U$-cylindrical Wiener process with
respect to the shifted filtration $(\mathcal F_{\sigma+s})_{s\geq 0}$.

We first verify the identity in the real-valued case for fixed $t \geq 0$. For elementary progressive processes with values in $U^\ast \cong \gamma(U,\mathbb R)$, the identity follows quickly. Let $0 = t_0 < t_1 < \dots <t_k = t$ and $\xi_m \in \mathcal F^\sigma_{t_m}$ for $m = 1,\dots,k-1$. Then $G = \sum_{m=1}^{k-1}\mathds{1}_{(t_m,t_{m+1}]} \xi_m$
satisfies 
$$
\begin{aligned}
\int_0^t G_s \d \widetilde {W}_s &= \sum_{m=1}^{k-1} \xi_m(\widetilde {W}_{t_{m+1}}-\widetilde {W}_{t_{m}}) \\
&= \sum_{m=1}^{k-1} \xi_m( {W}_{\sigma+t_{m+1}}- {W}_{\sigma +t_{m}}) 
\\ 
&= \int_0^\infty \mathds{1}_{\sigma \leq s \leq \sigma + t}  \sum_{m=1}^{k-1} \mathds{1}_{(t_m, t_{m+1}]}(s-\sigma) \xi_m \d {W}_s.
\end{aligned}
$$
This class of elementary integrands is dense in $\mathcal F^\sigma_t$-progressively measurable processes in $L^2(\Omega \times[0,t];U^\ast)$. Let $G^{(n)} \overset{n \to \infty}{\to} G$ be such an elementary approximating sequence and define
$
\widetilde G^{(n)}_s \coloneqq 
\mathbf 1_{\left\{ \sigma \leq s \leq \sigma+t \right\}}
G^{(n)}_{(s-\sigma)_+}.
$
By a change of variables,
$$
\int_0^\infty \norm{\widetilde G^{(n)}_s-\widetilde G_s}^2_{U^\ast}\,\mathrm dr
=
\int_{\sigma}^{\sigma+t}
\norm{ G^{(n)}_{(s-\sigma)_+}-G_{(s-\sigma)_+}}_{U^\ast}^2 \d s
=
\int_0^t \norm{G^{(n)}_s-G_s}^2_{U^\ast} \d s.
$$
and observe that the right-hand side tends to $0$. After taking expectations, the Itô isometry shows that 
\begin{equation} \label{eq:pointwiseshift}
\int_0^t G_s \d \widetilde {W}_s
\overset{L^2(\Omega)}{=}
\int_0^{\sigma+t} 
\widetilde G_s \d {W}_s.
\end{equation}
At last, by localisation, this extends to to any measurable $G$ with paths in $L^2(\R_{\geq 0};U^\ast)$. 

With this at hand, we tackle fully vector-valued integrals. Let 
$$
I_t
\coloneqq
\int_0^t G_s  \d \widetilde {W}_s
, \quad 
J_t
\coloneqq
\int_0^{\sigma+t}\widetilde G_s \d {W}_s
$$
We prove $I=J$ almost surely by duality. Since $\mathcal B$ is a separable UMD Banach space, Theorem 5.9 in \cite{VanNeervenVeraarStochIntUMD} implies that for every $x^\ast\in \mathcal B^\ast$,
\begin{equation} \label{eq:dualequality}
\left\langle I_t,x^\ast \right\rangle
=
\int_0^t x^\ast G_s \d\widetilde {W}_s=
\int_0^{\sigma+t}x^\ast\widetilde G_s \d {W}_s = \left\langle J_t,x^\ast \right\rangle.
\end{equation}
Since $\mathcal B$ is separable, there exists a countable subset of $\mathcal B^\ast$ which separates points of $\mathcal B$ and \eqref{eq:dualequality} holds for all these functionals on a set of full probability. Therefore, $I_t = J_t$ almost surely. Now, both stochastic integrals have continuous modifications, and thus \eqref{eq:pointwiseshift} extends to all $t\geq 0$.
\end{proof}

\begin{proposition} \label{prop:randomcommutator} Let $G \colon \Omega \times \R_{\geq 0} \rightarrow \gamma(U,\mathcal B)$ satisfy the assumptions of Thm. \ref{thm:maxestimatestochconv}and let $\sigma$ be an $\mathcal F_t$-stopping time. Then the continuous modifications of the stochastic integrals satisfy the identity
$$
P(t -\sigma \wedge t)\int_0^{\sigma \wedge t} P(\sigma \wedge t-r) G_r\d {W}_r = \int_0^{\sigma \wedge t} P(t-r) G_r \d {W}_r
$$
for all $t \geq 0$, $\P$-almost surely.
\end{proposition}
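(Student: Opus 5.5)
The plan is to prove the identity first for elementary adapted integrands and then pass to the limit. The main difficulty is the randomness of $\sigma$: $P(t-\sigma\wedge t)$ is a random operator and cannot simply be pulled into a stochastic integral. I would avoid this by rewriting the left-hand side pathwise, replacing $\sigma$ by deterministic times.

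\emph{Discretisation of the stopping time.} Let $\sigma_n \coloneqq 2^{-n}\lceil 2^n \sigma\rceil$, the dyadic upper approximation. Each $\sigma_n$ is a stopping time with finitely or countably many values $s_k$, and $\sigma_n\downarrow\sigma$. Fix $t$. On the event $\{\sigma_n = s_k\}\in\mathcal F_{s_k}$, which is deterministic-time data, the semigroup law for deterministic times gives
$$P(t-s_k\wedge t)\int_0^{s_k\wedge t}P(s_k\wedge t-r)G_r\d W_r=\int_0^{s_k\wedge t}P(t-r)G_r\d W_r \quad\text{a.s.}$$
This holds because a bounded deterministic operator commutes with the stochastic integral, which is clear for elementary $G$ and extends by the isometry \eqref{eq:itoisometrybanach}, resp. \eqref{eq:maxestimatestochconvo}. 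Multiplying by $\mathds 1_{\{\sigma_n=s_k\}}$ and summing over $k$ yields the identity with $\sigma$ replaced by $\sigma_n$. This step is legitimate because the processes $s\mapsto \int_0^{s}P(s-r)G_r\d W_r$ and $s\mapsto\int_0^{s\wedge t}P(t-r)G_r\d W_r$ are defined simultaneously for all $s$ through their continuous modifications. The latter is an ordinary stochastic integral of $\mathds 1_{[0,s]}P(t-\cdot)G$, and evaluation at a random time of a continuous process agrees with the localised integral $\int_0^{\sigma_n\wedge t}$ by the standard stopping property, as in Corollary \ref{cor:stopequiv}.

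\emph{Passage to the limit $n\to\infty$.} On the right-hand side, $s\mapsto \int_0^{s\wedge t}P(t-r)G_r\d W_r$ is a continuous martingale by Theorem \ref{thm:maxestimatestochconv} applied to the integrand $P(t-\cdot)G\mathds 1_{[0,t]}$. Hence its values at $\sigma_n$ converge a.s. to its value at $\sigma$. On the left-hand side, $Y_s\coloneqq (P\diamond G)(s)$ is continuous, so $Y_{\sigma_n\wedge t}\to Y_{\sigma\wedge t}$. The operators $P(t-\sigma_n\wedge t)$ are uniformly bounded and converge strongly to $P(t-\sigma\wedge t)$. Therefore the product converges, and the identity holds a.s. for each fixed $t$.

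\emph{Extension to all $t$ and the main obstacle.} To pass from each fixed $t$ to all $t$ simultaneously, I would take rational $t$ and use continuity in $t$ of both sides. The right-hand side should be read as $(P\diamond (\mathds 1_{[0,\sigma]}G))(t)$, which has a continuous modification by Theorem \ref{thm:maxestimatestochconv}. The left-hand side is continuous in $t$ by strong continuity of $P$ together with continuity of $Y$. I expect the main obstacle to be the careful justification of this identification: pinning down which modification the expression $\int_0^{\sigma\wedge t}P(t-r)G_r\d W_r$ refers to, and checking that the identity $\int_0^{\sigma_n\wedge t} = \int_0^t\mathds 1_{[0,\sigma_n]}$ holds jointly in $t$. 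I would handle this by working throughout with the single process $P\diamond(\mathds 1_{[0,\sigma]}G)$, whose a.s. uniqueness of continuous version settles it.
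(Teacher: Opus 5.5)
Your argument is correct and follows the paper's proof. Both use the identity at deterministic times (semigroup law plus commuting $P(t-s)$ with the stochastic integral), continuity in $s$ to reach the random time, and agreement on rational $t$ plus continuity in $t$; your explicit continuous version $P\diamond(\mathds 1_{[0,\sigma]}G)$ makes precise what the paper leaves implicit. The dyadic approximation $\sigma_n\downarrow\sigma$ is an unnecessary detour: for fixed $t$ both sides are continuous in $s$ and agree a.s. at each deterministic $s$, hence are indistinguishable, so the paper evaluates them directly at $s=\sigma(\omega)$.
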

\begin{proof}
We prove the identity for fixed $t \geq 0$. Consider the processes
$$X^t_s \coloneqq P(t-s \wedge t) \int_0^{s \wedge t} P(s \wedge t-r) G_r\d {W}_r$$ 
and 
$$Y^t_s \coloneqq  \int_0^{s \wedge t} P(t-r) G_r \d {W}_r.$$
For fixed $t$, both processes possess almost surely continuous modifications. Continuity of  $X^t_s$ follows from continuity of the stochastic convolution and strong continuity of the semigroup. Meanwhile, $Y^t_s$ is a well-defined stochastic integral and thereby continuous \cite{VanNeervenVeraarStochIntUMD}. For fixed $s > 0$, the processes are almost surely equal and continuity implies indistinguishability of the modifications. In particular, we can infer $X^t_\sigma = Y^t_\sigma$ for each $t \geq 0$. But now an analogous argument shows that $t \mapsto X^t_\sigma$ and $t \mapsto Y^t_\sigma$ have continuous modifications and coincide on a dense set, whence the continuous modifications are indistinguishable.
\end{proof}

\begin{corollary} \label{cor:StochConvStopDecomp}
Let $G, H$ satisfy the conditions of Proposition \ref{prop:randomcommutator}, where $G$ is $\mathcal F_t$-progressive and $H$ is $\mathcal F^\sigma_t$-progressive. Again, let $\sigma$ be a stopping time. Then the progressively measurable process 
\begin{equation} \label{eq:patchwork}
P(t-t \wedge \sigma)\int_0^{t \wedge \sigma}P(t\wedge \sigma-s) G_s \d {W}_s + \mathds{1}_{t \geq \sigma}\int_0^{(t-\sigma)_+} P((t-\sigma)_+ -s)H_s \d \widetilde {W}_s
\end{equation} 
is a modification of 
$$\int_0^t P(t-s)\left(\mathds1_{\{s \leq \sigma\}}G_s + \mathds1_{\{s \geq \sigma\}}H_{(s-\sigma)_+}\right) \d {W}_s.
$$ 
In particular, continuous modifications of these processes are indistinguishable.
\end{corollary}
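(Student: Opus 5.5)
We prove the claim for fixed $t\geq 0$ first and then pass to continuous modifications. The idea is to split the integrand of the full stochastic convolution at the stopping time $\sigma$ and treat each piece with the results already established.

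By linearity of the stochastic integral, with the integrand split as $\mathds 1_{\{s\leq\sigma\}}G_s$ plus $\mathds 1_{\{s>\sigma\}}H_{(s-\sigma)_+}$ (the overlap $\{s=\sigma\}$ is a $\d s$-null set, so the choice there is irrelevant),
$$
\int_0^t P(t-s)\left(\mathds1_{\{s \leq \sigma\}}G_s + \mathds1_{\{s > \sigma\}}H_{(s-\sigma)_+}\right) \d {W}_s = \int_0^{t\wedge\sigma} P(t-s)G_s\d W_s + \int_0^t P(t-s)\mathds 1_{\{s>\sigma\}}H_{(s-\sigma)_+}\d W_s .
$$
Both pieces are well-defined: the first integrand is progressive since $\sigma$ is a stopping time, and the second is progressive by Lemma \ref{lemma:stopshift} \eqref{item:shift} together with Corollary \ref{cor:operatorstopshift}.

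For the first term, Proposition \ref{prop:randomcommutator} gives directly that it coincides almost surely with $P(t-t\wedge\sigma)\int_0^{t\wedge\sigma}P(t\wedge\sigma-s)G_s\d W_s$, which is the first summand of \eqref{eq:patchwork}.

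For the second term, fix $t$ and apply the shift identity of the preceding proposition, which states $\int_0^{t'} K_s\d\widetilde W_s=\int_0^{\sigma+t'}\widetilde K_s\d W_s$. Apply it on $\{t\geq\sigma\}$ with $t'=(t-\sigma)_+$ and with the $\mathcal F^\sigma$-progressive integrand $K_s = \mathds 1_{\{s\leq t'\}}P(t'-s)H_s$. The difficulty is that $t'$ is random, so $K$ depends on $\sigma$ through the deterministic kernel $P(t-\sigma-s)$. I would handle this by approximation. First treat $H$ elementary and $P(t-\cdot)$ frozen on partition intervals, where the identity is a finite sum computation as in the real-valued case. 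Then pass to limits using the isometry \eqref{eq:itoisometrybanach} and the embedding \eqref{eq:stochintembedding}, dominating via strong continuity and uniform boundedness of $P$ on $[0,t]$. The change of variables $s\mapsto s-\sigma$ then shows the shifted integrand is exactly $\mathds 1_{\{\sigma<s\leq t\}}P(t-s)H_{s-\sigma}$. On $\{t<\sigma\}$ both sides vanish; to use this I would localise to this event and its complement, which is legitimate because $\{t\geq\sigma\}\in\mathcal F_t$. This identifies the second summand of \eqref{eq:patchwork} almost surely for each fixed $t$.

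Hence \eqref{eq:patchwork} is a modification of the full convolution. Progressive measurability of \eqref{eq:patchwork} follows from Lemma \ref{lemma:stopshift} and the progressive measurability of continuous stochastic convolutions. Since both sides admit continuous versions (Theorem \ref{thm:maxestimatestochconv}, applied with respect to $W$ and to $\widetilde W$ respectively), modifications that agree at each fixed time are indistinguishable. The main obstacle is the random-kernel step in the second term, where the semigroup argument $t-\sigma-s$ is $\mathcal F_\sigma$-measurable and the shift proposition cannot be quoted verbatim. If the approximation above becomes awkward, I would alternatively condition on $\mathcal F_\sigma$ by discretising $\sigma$ into countably many values $\sigma_k\downarrow\sigma$, where the kernel is deterministic on each event $\{\sigma_k=a\}$, and pass to the limit using continuity in $\sigma$.
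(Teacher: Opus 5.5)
Your proposal is correct and has the same skeleton as the paper: split the integrand linearly at $\sigma$, then use Proposition~\ref{prop:randomcommutator} for the first term. The two arguments differ in how they handle the second term.

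\textbf{The paper's route.} It applies the shift identity at each \emph{deterministic} time $t$ of the shifted clock, where the kernel is deterministic. This gives $\int_0^t P(t-s)H_s\,\d\widetilde W_s=\int_\sigma^{\sigma+t}P(\sigma+t-s)\widetilde H_s\,\d W_s$. It upgrades this to indistinguishability by path continuity, evaluates both processes at the random time $(t-\sigma)_+$, and removes $\mathds 1_{\{t\ge\sigma\}}$ with a locality lemma. This is shorter than your route. However, it tacitly identifies the stochastic integral of the random-kernel integrand $\mathds 1_{\{s\ge\sigma\}}P(\sigma+t-s)H_{s-\sigma}$ with the continuous $W$-convolution of $\widetilde H$ evaluated at the stopping time $\sigma+t$. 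That identification is asserted rather than argued.

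\textbf{Your route.} You fix $t$ on the original clock, so the $W$-side integral $\int_0^t P(t-s)\widetilde H_s\,\d W_s$ has deterministic endpoint and kernel. All the randomness goes into the shifted horizon $t'=(t-\sigma)_+$. This places the substitution on the harmless side, because $t'$ is $\mathcal F_\sigma=\mathcal F^\sigma_0$-measurable.

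For the same reason, your ``main obstacle'' is milder than you expect:
\begin{itemize}
\item $K_u=\mathds 1_{\{u\le t'\}}P(t'-u)H_u$ is $\mathcal F^\sigma$-progressive, so the shift proposition applies verbatim on a deterministic horizon $T\ge t$.
\item The shifted integrand equals $\mathds 1_{\{s\le t\}}P(t-s)\widetilde H_s$ for a.e.\ $s$, and $\sigma+T\ge t$. Hence the right-hand side is $\int_0^t P(t-s)\widetilde H_s\,\d W_s$ on all of $\Omega$, and no split along $\{t\ge\sigma\}$ is needed.
\item What remains is to identify $\int_0^T K_u\,\d\widetilde W_u$ with the continuous $\widetilde W$-convolution of $H$ evaluated at $t'$. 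For countably valued $t'$ this is locality of the stochastic integral on the $\mathcal F^\sigma_0$-events $\{t'=a\}$.
\item For general $t'$, approximate from above. Use path continuity on one side, and strong continuity of $P$ with dominated convergence through \eqref{eq:stochintembedding} on the other.
\end{itemize}

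This is essentially your fallback, with one correction. Discretise $t'$, that is, $\sigma$ only inside the kernel, and keep the shift of $W$ at $\sigma$ itself. Discretising $\sigma$ in the shift would change $\widetilde W$ and turn a routine step into a strong-Markov-type argument. Your frozen-kernel approximation with elementary $H$ also works, but it is more than is needed.
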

\begin{proof}
We fix $t \geq 0$. By linearity and the definition of $\widetilde H$,  
$$\int_0^t P(t-s)\left(\mathds1_{\{s \leq \sigma\}}G_s + \mathds1_{\{s \geq \sigma\}}H_{(s-\sigma)_+}\right) \d {W}_s = \int_0^{\sigma \wedge t} P(t-s) G_s \d {W}_s + \int_0^t P(t-s)\widetilde H_s \d {W}_s
$$ and we can immediately apply Proposition \ref{prop:randomcommutator} to the first term. To finish the proof, observe that the stochastic convolutions $t \mapsto \int_0^t P(t-s) H_s \d \widetilde {W}_s$ and $t \mapsto \int_0^t P(t-s) \widetilde H_s \d {W}_s$ are pathwise continuous and progressively measurable w.r.t. the respective filtrations. Naturally, it follows that the randomly shifted path $t \mapsto \int_0^{\sigma+t} P(\sigma+t-s) \widetilde H_s\d {W}_s$ is continuous as well.
Further, for each $t \geq 0$, $$\int_0^t P(t-s) H_s \d \widetilde {W}_s = \int_\sigma^{\sigma+t} P(\sigma+t-s) \widetilde H_s\d {W}_s$$ and a fortiori, the processes are indistinguishable and satisfy the assumptions of Lemma \ref{lemma:stopshift}. Therefore, $$\mathds{1}_{t \geq \sigma}\int_0^{(t-\sigma)_+} P((t-\sigma)_+ -s)H_s \d \widetilde {W}_s = \mathds1_{t \geq\sigma}\int_0^t P(t-s) \widetilde H_s \d {W}_s = \int_0^t P(t-s) \widetilde H_s \d {W}_s,$$ where the equality follows by Lemma 2.5 in \cite{VanNeervenVeraarEvoEq}.
\end{proof}

\bibliography{references}
\end{document}